\documentclass{amsart}

\usepackage{amsmath,amssymb,amsthm}

\usepackage{hyperref}

\usepackage{graphicx}

\newcommand{\dom}{\mathop{\mathrm{Dom}}}
\newcommand{\symm}[1]{\mathop{\mathrm{Symm}}\left(#1\right)}
\newcommand{\nuke}{\mathcal{N}}
\newcommand{\bdry}{\mathfrak{d}}
\newcommand{\coc}{\nu}
\newcommand{\be}{\mathsf{o}}
\newcommand{\en}{\mathsf{t}}
\newcommand{\img}[1]{\mathop{\mathrm{IMG}}\left(#1\right)}
\newcommand{\arr}{\longrightarrow}
\newcommand{\X}{\mathcal{X}}
\newcommand{\G}{\mathfrak{G}}
\newcommand{\gH}{\mathfrak{H}}
\newcommand{\cH}{\mathcal{H}}
\newcommand{\R}{\mathbb{R}}
\newcommand{\C}{\mathbb{C}}
\newcommand{\Z}{\mathbb{Z}}
\newcommand{\M}{\mathcal{M}}
\newcommand{\N}{\mathbb{N}}
\newcommand{\J}{\mathcal{J}}
\newcommand{\V}{\mathcal{V}}
\newcommand{\T}{\mathcal{T}}
\newcommand{\wt}{\widetilde}

\newtheorem{theorem}{Theorem}[section]
\newtheorem{proposition}[theorem]{Proposition}
\newtheorem{corollary}[theorem]{Corollary}
\newtheorem{lemma}[theorem]{Lemma}

\theoremstyle{definition}
\newtheorem{defi}{Definition}[section]
\newtheorem{example}{Example}[section]

\begin{document}
\title{Finitely presented groups associated with expanding maps}
\author{Volodymyr Nekrashevych}

\maketitle

\begin{abstract}
We associate with every locally expanding self-covering $f:\M\arr\M$ of a
compact path connected metric space a finitely presented group
$\V_f$. We prove that this group is a complete invariant of the
dynamical system: two groups $\V_{f_1}$ and $\V_{f_2}$ are isomorphic
as abstract groups if and only if the corresponding dynamical systems
are topologically conjugate. We also show that the commutator subgroup
of $\V_f$ is simple, and give a topological interpretation of
$\V_f/\V_f'$.
\end{abstract}

\tableofcontents

\section{Introduction}

A dynamical system is \emph{finitely presented} if it can be
represented as the factor of a shift of finite type by an equivalence
relation that is also a shift of finite type,
see~\cite{fried,curnpap:symb}. 
If we think of finite type as analogous
to finite generation (of a group or of a normal subgroup), then the
notion of a finitely presented dynamical system becomes analogous to the notion of a finitely
presented group. But the relation is deeper than just a superficial
analogy.

Condition of being finitely presented for a dynamical system is very
closely related to dynamical hyperbolicity, see~\cite{fried}. For
example, if $f:\J\arr\J$ is a locally expanding self-covering of a
compact metric space, then $f$ is finitely presented. Dynamical
hyperbolicity is very closely related to Gromov hyperbolicity for
groups, see~\cite{gro:hyperb,curnpap:symb,nek:hyperbolic}, and finite
presentation is an important property of hyperbolic groups.

The aim of this paper is to show a new connection between expanding
maps and finite presentations. We naturally associate with every
finite degree self-covering
$f:\M\arr\M$ of a path-connected space $\M$ a group $\V_f$ with the
following property (see Theorem~\ref{th:finitepresentation} and Theorem~\ref{th:classification}).

\begin{theorem}
\label{th:first}
If $f:\M\arr\M$ is a locally expanding self-covering of a compact
path connected metric space then $\V_f$ is finitely presented.

If $f_i:\M_i\arr\M_i$ are as above, then groups $\V_{f_1}$ and
$\V_{f_2}$ are isomorphic if and only if $f_1$ and $f_2$ are
topologically conjugate, i.e., there exists a homeomorphism
$\phi:\M_1\arr\M_2$ such that $f_1=\phi^{-1}\circ f_2\circ\phi$.
\end{theorem}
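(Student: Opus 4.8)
The statement bundles two independent facts — finite presentation and the isomorphism/conjugacy dichotomy — and I would prove each on its own, the theorem being their conjunction. Throughout I use the standard translation of the dynamics into algebra: from $f$ one builds the iterated monodromy group $\img{f}$ as a self-similar action on the rooted tree $\X^{*}$ over an alphabet $\X$ with $|\X|=\deg f$; since $f$ is locally expanding on a compact space this action is \emph{contracting}, i.e.\ has a finite nucleus $\nuke$, and its limit space is homeomorphic to $\M$. The group $\V_f$ is the Thompson-type (topological full) group of the associated \'etale groupoid $\G_f$ of germs acting on the totally disconnected boundary, graded by the expansion cocycle $\coc$. The easy half of the dichotomy is then immediate: a conjugacy $\phi\colon\M_1\arr\M_2$ with $f_1=\phi^{-1}\circ f_2\circ\phi$ transports the monodromy data, hence induces an isomorphism $\G_{f_1}\cong\G_{f_2}$ of graded groupoids and so an isomorphism $\V_{f_1}\cong\V_{f_2}$.

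For finite presentation I would exploit the finiteness of the nucleus. Finite generation is direct: every element of $\V_f$ is described by a finite \emph{table} — a bijection between two finite prefix partitions of the boundary, decorated by nucleus elements — and such tables are generated by finitely many of them. To pass from generators to a finite set of relations I would let $\V_f$ act on a contractible Stein--Thompson complex assembled from the poset of expansions (forests) and apply Brown's finiteness criterion, checking that there are finitely many orbits of cells in dimensions $\le 2$ and that cell stabilizers are finitely presented. The contracting hypothesis is exactly what makes the descending links of this complex highly connected and of finite type, so the criterion applies; the resulting relations are the Cuntz--Pimsner-type relations encoded by the recursion together with finitely many Thompson-type relations among expansions.

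The hard half is the implication $\V_{f_1}\cong\V_{f_2}\Rightarrow$ conjugacy, and I would attack it by a reconstruction argument. The action of $\V_f$ on its boundary space is faithful, minimal, and \emph{locally dense} — for every nonempty open $U$ the rigid stabilizer of $U$ moves points densely — because expansion forces an abundance of elements supported in arbitrarily small sets. Under these hypotheses a Rubin-type reconstruction theorem (equivalently, Matui's recovery of an essentially principal minimal ample groupoid from its topological full group) shows that any abstract isomorphism $\Psi\colon\V_{f_1}\arr\V_{f_2}$ is \emph{spatial}: it is induced by an isomorphism $\G_{f_1}\cong\G_{f_2}$ of the underlying groupoids, in particular by a homeomorphism of the boundary spaces intertwining the groupoid structure. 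The last step is to descend this to the dynamics: the groupoid $\G_f$ remembers $f$ up to conjugacy because the grading cocycle $\coc$ recovers the one-sided shift, and hence $f$ on $\M$, so tracking $\coc$ through the spatial isomorphism yields the homeomorphism $\phi\colon\M_1\arr\M_2$ conjugating $f_1$ to $f_2$.

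I expect the reconstruction step to be the main obstacle, for two reasons. First, the hypotheses of the reconstruction theorem (essential principality and local density) must be verified on the correct \emph{totally disconnected} model of the dynamics: since $\M$ is path connected the limit space itself is connected, so one must argue on the boundary of $\X^{*}$ (or the Cantor transversal of the limit solenoid), where $\G_f$ is ample and the theorem is available. Second, and more delicate, is recovering the \emph{map} rather than merely the space: one must show that the grading by $\coc$ is canonical, i.e.\ determined by the abstract group $\V_f$ and therefore preserved by $\Psi$, so that the spatial homeomorphism cannot scramble the dynamics. It is precisely here that the expanding hypothesis does the essential work, and making this canonicity precise is the crux of the classification.
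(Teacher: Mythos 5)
Your entry point to rigidity is exactly the paper's: since $\V_{f}$ contains the Higman--Thompson group, its action on $X^\omega$ is locally transitive, so Rubin's theorem~\cite{rubin:reconstr} (Theorem~\ref{th:vfrubin} in the paper) makes any abstract isomorphism $\V_{f_1}\arr\V_{f_2}$ spatial, hence the groupoids of germs of the actions on $X_i^\omega$ are isomorphic. The genuine gap is in everything after that. The cocycle you propose to track lives on the wrong groupoid: the groupoid of germs of $\V_f$ on $X^\omega$ does not contain $f:\M\arr\M$ at all, and the one-sided shift on $X^\omega$ that your grading recovers is the same (up to conjugacy, via $d$-ary coding) for \emph{every} degree-$d$ expanding map, so it cannot distinguish $f_1$ from $f_2$; the data distinguishing them sits in the transverse, connected direction. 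The paper bridges this by three nontrivial steps, none of which appears in your sketch: (i) the Cantor-side groupoid $\G$ is hyperbolic and its \emph{dual}, in the sense of~\cite{nek:hyperbolic}, is equivalent to the groupoid $\mathfrak{F}$ generated by germs of $f$ on $\M$, so the isomorphism class of $\G$ determines the equivalence class of $\mathfrak{F}$; (ii) the cocycle $\coc$ on (the lift of) $\mathfrak{F}$ is canonical: Propositions~\ref{pr:concomp} and~\ref{pr:cocycleunique} show that a connected component has $\coc=0$ exactly when it generates a proper groupoid, has infinite-order isotropy otherwise, and a further result of~\cite{nek:hyperbolic} is needed to tell $\coc$ from $-\coc$; (iii) even with the cocycle one only recovers the \emph{natural extension} $\hat f$ on the solenoid (Corollary~\ref{cor:solenoidflow}), and a separate argument --- the fiber product $\tilde\J\subseteq\J_1\times\J_2$ whose projections are shown to be degree-one coverings, using path connectedness of $\M$ essentially --- is required to descend from $\hat f$ to $(f,\M)$. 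You correctly identify canonicity of the grading as ``the crux,'' but naming the obstacle is not overcoming it, and your proposed mechanism for it points at the wrong dynamical system.

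For finite presentation the paper takes a completely different, and much more elementary, route than yours: it writes down an explicit finite presentation --- a finite presentation of $\V_X$, one generator $L(g)$ per element of the nucleus, and the commutation (\textbf{C}), nucleus (\textbf{N}) and splitting (\textbf{S}) relations --- and proves completeness by a normal-form rewriting argument (Theorem~\ref{th:finitepresentation}), using Propositions~\ref{pr:lengththree} and~\ref{pr:kernelcontracting} at the last step. Your Brown's-criterion approach on a Stein--Thompson complex would, if completed, prove more (higher finiteness properties), but as sketched it breaks precisely at the stabilizer hypothesis: in the natural complex of labeled forests the cell stabilizers contain copies of $\symm{d^n}\ltimes\img{f}^{d^n}$, and contracting groups such as iterated monodromy groups are typically \emph{infinitely} presented, so ``cell stabilizers are finitely presented'' is exactly the assertion that fails. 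Making Brown's method work requires using the splitting relations to absorb the relations of $\img{f}$, which is a serious project in its own right; indeed the paper's introduction explicitly leaves open the $F_\infty$ question that your method would settle, a sign that this route is not a routine verification.
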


We also show that the commutator subgroup of $\V_f$ is simple, and
give a dynamical interpretation of the abelianization $\V_f/\V_f'$,
see Theorem~\ref{th:commutatorsimple} and
Proposition~\ref{pr:homology}.

The groups $\V_f$ are defined in the following way. Let $f:\M\arr\M$
be a finite degree covering map, and suppose that $\M$ is path
connected. Choose $t\in\M$, and consider the tree $T_t$ of preimages of $t$
under the iterations of $f$. Its set of vertices is $\bigsqcup_{n\ge
  0}f^{-n}(t)$, and a vertex $v\in f^{-n}(t)$ is connected to the
vertex $f(v)\in f^{-(n-1)}(t)$. Let $\partial T_t$ be its boundary,
which can be defined as the
inverse limit of the discrete sets $f^{-n}(t)$ with respect to the
maps $f:f^{-(n+1)}(t)\arr f^{-n}(t)$.

Let $\gamma$ be a path in $\M$ connecting a vertex $v\in f^{-n}(t)$ to
a vertex $u\in f^{-m}(t)$ of the tree $T_t$. Considering lifts of
$\gamma$ by the coverings $f^k:\M\arr\M$, we get an isomorphism
$S_\gamma:T_v\arr T_u$ between subtrees $T_v, T_u$ of $T_t$. Namely,
if $\gamma_z$ is a lift of $\gamma$ starting at $z\in f^{-k}(v)$, then
$S_\gamma(z)\in f^{-k}(u)$ is the end of $\gamma_z$.
Denote by the same symbol
$S_\gamma$ the induced homeomorphism $\partial T_v\arr\partial T_u$ of
the boundaries of the subtrees, seen as clopen subsets of $\partial T_t$.

\begin{defi}
The group $\V_f$ is the group of all homeomorphisms $\partial
T_t\arr\partial T_t$ locally equal to homeomorphisms of the form
$S_\gamma:\partial T_v\arr\partial T_u$.
\end{defi}

The group $\V_f$ is generated by the Higman-Thompson group $G_{\deg f,
1}$ (see~\cite{hgthomp}) acting on $\partial T_t$ and the \emph{iterated monodromy group}
$\img{f}$ of $f:\M\arr\M$. The iterated monodromy group can be defined
as the subgroup of $\V_f$ consisting of homeomorphism
$S_\gamma:\partial T_v\arr\partial T_v$,
where $\gamma$ is a loop starting and ending at the basepoint $t$.
It is an
invariant of the topological conjugacy class of $f:\M\arr\M$, and it
becomes a complete invariant (in the expanding case), if we consider
it as a \emph{self-similar group}. Self-similarity is an additional
structure on a group, and it can be defined using one of several
equivalent approaches: virtual endomorphisms, wreath recursions,
bisets, or structures of an automaton group. The fact that self-similar
iterated monodromy group is a complete invariant of an expanding
self-covering is one of the main topics of~\cite{nek:book}.

From the point of view of group theory, $\V_f$ seems to be a
``cleaner'' object, since no additional structure is needed to make
it a complete invariant of a dynamical system. Besides, it is finitely
presented, unlike the iterated monodromy groups, which are typically
infinite presented. 
However, iterated monodromy groups have better functorial properties
than the groups $\V_f$, see~\cite{nek:filling}.

A group $\V_G$, analogous to $\V_f$, can be defined for any self-similar
group $G$, so that $\V_f=\V_{\img{f}}$. Groups of this type were for
the first time studied by C.~R\"over~\cite{roever,roever:comm}. In particular, he showed
that if $G$ is the Grigorchuk group~\cite{grigorchuk:80_en}, then $\V_G$ is finitely
presented, simple, and is isomorphic to the abstract commensurator of
the Grigorchuk group. The case of a general self-similar group $G$ was
studied later in~\cite{nek:bim}.

Several natural questions arise in connection with
Theorem~\ref{th:first}. For example, is the isomorphism problem
solvable for the groups $\V_f$? Equivalently, is the topological
conjugacy problem for expanding maps algorithmically solvable? 
Note that expanding maps can
be given in different ways by a finite amount of information: using
finite presentations in the sense of~\cite{fried}, using combinatorial
models in the sense of~\cite{ishiismillie,nek:models}, using iterated monodromy
groups, e.t.c..

Another natural question is whether the groups $\V_f$, similarly to the
Higman-Thompson groups (see~\cite{brown:finiteness}), 
satisfy the finiteness condition $F_\infty$,
i.e., if they have classifying spaces with finite $n$-dimensional
skeleta for all $n$.
It would be also interesting to study homology of $\V_f$ in relation with
homological properties of the dynamical system.

The structure of the paper is as follows. In 
``Definition of the groups $\V_f$'' we give a review of terminology
related to
rooted trees, and define the groups $\V_f$. In the next section
``Symbolic coding'' we encode the vertices of the tree of preimages
$T_t$ by finite words over an alphabet $X$, and show that $\V_f$
contains a natural copy of the Higman-Thompson group, and that $\V_f$
is generated by the Higman-Thompson group and the iterated monodromy
group. We also give a review of the basic notions of the theory of
self-similar groups, and define the groups $\V_G$ associated with
self-similar groups, following~\cite{nek:bim}.

In Section~4 we prove that the commutator subgroup $\V_G'$ of $\V_G$
is simple for any self-similar group $G$. In particular, $\V_f'$ is
simple for any map $f$.
Note that the fact that every proper quotient of $\V_G$ is abelian,
i.e., that every non-trivial normal subgroup of $\V_G$ contains
$\V_G'$ was already proved in~\cite{nek:bim}, and we use this fact in
our proof. Later, in the next section we give an interpretation of $\V_f/\V_f'$ in
topological terms. Namely, we prove the following (see Proposition~\ref{pr:homology}).

\begin{proposition}
Suppose that $f:\M_1\arr\M$ is expanding, $\M$ is path-connected and
semi-locally simply connected, and $\M_1\subseteq\M$. 

If $\deg f$ is even, then $\V_f/\V_f'$ is isomorphic to the quotient
of $H_1(\M)$ by the range of the endomorphism $1-\iota_*\circ f^!$.

If $\deg f$ is odd, then $\V_f/\V_f'$ is isomorphic to the quotient of
$\Z/2\Z\oplus H_1(\M)$ by the range of the endomorphism $1-\sigma_1$,
where $\sigma_1(t, c)=(t+\mathop{\mathrm{sign}}(c), \iota_*\circ f^!(c))$.
\end{proposition}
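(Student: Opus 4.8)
The plan is to compute $\V_f/\V_f'$ directly from the generating set. By the results of the previous sections $\V_f$ is generated by a copy of the Higman--Thompson group $V=G_{\deg f,1}$ acting on $\partial T_t$ together with the iterated monodromy group $\img{f}$, so $\V_f/\V_f'$ is generated by the image of $V^{ab}$ and by the classes $[S_\gamma]$ of the elements $S_\gamma$ for $\gamma$ a loop at $t$. Since $S_\gamma$ depends on $[\gamma]\in\pi_1(\M,t)$ only through its image in $\V_f$, and the target is abelian, the assignment $[\gamma]\mapsto[S_\gamma]$ factors through $H_1(\M)$ and defines a homomorphism $\beta:H_1(\M)\arr\V_f/\V_f'$. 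I would then invoke the classical computation of $V^{ab}$ (see~\cite{hgthomp}): it is trivial when $\deg f$ is even and $\Z/2\Z$ when $\deg f$ is odd, the nontrivial class $\epsilon$ being detected by the parity of the underlying permutation of subtrees. Thus $\V_f/\V_f'$ is generated by $\beta(H_1(\M))$, together with $\epsilon$ in the odd case.

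The key relation comes from lifting loops, that is, from the wreath recursion read inside $\V_f$. Writing $n=\deg f$, letting $t_1,\dots,t_n$ be the preimages of $t$ in $\M_1$, and letting $\gamma_i$ be the lift of $\gamma$ starting at $t_i$ and ending at $t_{\sigma(i)}$ where $\sigma=\sigma_\gamma$ is the monodromy permutation, there is an honest identity in $\V_f$,
\[
S_\gamma=\sigma_\gamma\cdot\prod_{i=1}^n S_{\gamma_i},
\]
in which $\sigma_\gamma\in V$ is the tree rearrangement permuting the first-level subtrees by $\sigma$ and the factors $S_{\gamma_i}:\partial T_{t_i}\arr\partial T_{t_{\sigma(i)}}$ have disjoint supports, hence commute. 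Fixing connecting paths $\ell_i$ from $t$ to $t_i$ and setting $\delta_i=\ell_i\gamma_i\ell_{\sigma(i)}^{-1}$, each $\delta_i$ is a loop at $t$ (viewing $\gamma_i$ in $\M$ via $\M_1\subseteq\M$), and $S_{\gamma_i}$ is conjugate to $S_{\delta_i}$ by the canonical isomorphisms $S_{\ell_i}$; abelianizing and summing over $i$ the correction terms cancel because $\sigma$ is a permutation, giving $\sum_i[S_{\gamma_i}]=\beta\bigl(\sum_i[\delta_i]\bigr)$. The class $\sum_i[\delta_i]\in H_1(\M)$ is, by definition of the transfer of the covering $f$, exactly $\iota_*\circ f^!([\gamma])$. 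Abelianizing the displayed identity therefore yields
\[
\beta\bigl((1-\iota_*\circ f^!)(c)\bigr)=[\sigma_\gamma]=\mathop{\mathrm{sign}}(c)\,\epsilon,\qquad c=[\gamma],
\]
where $\mathop{\mathrm{sign}}$ is induced by the monodromy and the right-hand side vanishes when $\deg f$ is even. This shows that $\beta$ (together with $\epsilon$) kills the range of $1-\iota_*\circ f^!$ (respectively of $1-\sigma_1$), producing a surjection from the stated group $Q$ onto $\V_f/\V_f'$.

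To obtain the reverse map and conclude, I would construct a homomorphism $\Psi:\V_f\arr Q$ and check that the composite $Q\arr\V_f/\V_f'\arr Q$ is the identity on the generators $\epsilon$ and $\beta(c)$, forcing both surjections to be isomorphisms. The homomorphism $\Psi$ is defined on the ``table'' description of an element of $\V_f$ --- a domain subdivision, a range subdivision, a bijection of the pieces, and an element of $\img{f}$ attached to each piece --- by recording the parity of the combinatorial data in the $\Z/2\Z$-coordinate (odd case) and the sum of the attached homology classes in the $H_1(\M)$-coordinate. Multiplicativity of $\Psi$ follows readily from composition of tables; the content is well-definedness.

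The main obstacle is precisely this well-definedness of $\Psi$ under refinement of a table: replacing one piece carrying a class $c$ by its $\deg f$ children multiplies the combinatorial permutation by cycles of total sign $\mathop{\mathrm{sign}}(c)$ and replaces $c$ by $\iota_*\circ f^!(c)$ in the homology sum, so the value changes exactly by an element of the range of $1-\sigma_1$ (respectively $1-\iota_*\circ f^!$) --- which is why we pass to $Q$. Verifying this expansion-invariance carefully, and checking its compatibility with the bimodule (self-similarity) structure used to present $\V_f$ in~\cite{nek:bim}, is where the real work lies; once $\Psi$ is a well-defined homomorphism, surjectivity is immediate and the two surjections compose to the identity, completing the proof.
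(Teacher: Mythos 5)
Your first half---the surjection from the stated quotient $Q$ onto $\V_f/\V_f'$---is sound in outline, but needs repairs: the identity $S_\gamma=\sigma_\gamma\cdot\prod_i S_{\gamma_i}$ is not literally an identity in $\V_f$, since the $S_{\gamma_i}$ are partial homeomorphisms with distinct domains and ranges. The correct factors are the elements $S_{\delta_i}\in\img{f}$ (with $\delta_i=\ell_{\sigma(i)}^{-1}\gamma_i\ell_i$) transported to $\partial T_{t_i}$ and extended by the identity elsewhere (the paper's localizations $L_v$), and the step you gloss as ``the correction terms cancel because $\sigma$ is a permutation'' is really the claim that $[L_v(h)]=[h]$ in $\V_f/\V_f'$; this is true but needs an argument (all $L_v(h)$ with $v$ nonempty are conjugate under the Higman--Thompson subgroup, and comparing the splitting recursions for $[h]$ and for $[L_v(h)]$ forces equality). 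Surjectivity also uses that $\V_f$ is generated by $\V_d\cup\img{f}$, which you should cite or prove. These are fixable.

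The genuine gap is in the definition of $\Psi$, and it is not the refinement-invariance you single out as ``where the real work lies'' (that computation is routine and is exactly what $Q$ was built to absorb). An element of $\V_f$ is a homeomorphism of $\partial T_t$, and it determines the entries of any table representing it only through their images in $\img{f}$: two tables of paths can define the same element of $\V_f$ without being related by splittings and homotopies, their entries differing by loops in the kernel $N$ of $\pi_1(\M,t)\arr\img{f}$. So before refinement is even an issue, well-definedness of your $H_1(\M)$-coordinate requires that the image of $N$ in $H_1(\M)$ lie in the range of $1-\iota_*\circ f^!$ (respectively, that $(0,[N])$ lie in the range of $1-\sigma_1$). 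This is precisely where the hypotheses \emph{expanding} and \emph{semi-locally simply connected} must enter, and your argument never uses either---which is fatal rather than cosmetic, because the statement is false without them. For instance, let $\M$ be a closed annulus, $\M_1$ a union of $d\ge 3$ disjoint open sub-annuli parallel to the core, and let $f:\M_1\arr\M$ map each of them homeomorphically onto $\M$: then every loop has trivial monodromy and all its lifts are homotopic to it, so $\img{f}$ is trivial and $\V_f=\V_d$, whose abelianization is $0$ ($d$ even) or $\Z/2\Z$ ($d$ odd), whereas $Q$ equals $\Z/(d-1)\Z$, respectively $\Z/2\Z\oplus\Z/(d-1)\Z$; the class of the core generator survives in $Q$ even though it lies in $N$. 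The paper supplies exactly the missing ingredient in Proposition~\ref{pr:expandingpi1}: expansion plus semi-local simple connectivity imply that every $g\in N$ acts trivially from the left on $\Phi^{\otimes n}$ for some $n$, whence in $Q$ one has $[g]=\sigma^n([g])=\sum_{v\in X^n}[g|_v]=0$, using $1-\sigma^n=(1-\sigma)(1+\sigma+\cdots+\sigma^{n-1})$ (with an analogous telescoping for the sign coordinate). Combined with Propositions~\ref{pr:kerneln} and~\ref{pr:kernelcontracting}, this identifies $\V_f$ with $\V_\Phi$ for the $\pi_1(\M)$-biset $\Phi$, after which the paper simply quotes the abelianization formula for $\V_\Phi$ (Theorem~\ref{th:abelquotient}, from~\cite{nek:bim}) with $G=\pi_1(\M)$ and translates $G/G'=H_1(\M)$, $\sigma=\iota_*\circ f^!$. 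Your route amounts to re-proving that cited theorem by hand, which is legitimate; but until the kernel control above is added, it proves at best the formula with $\img{f}/\img{f}'$ in place of $H_1(\M)$.
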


Here $\iota_*:H_1(\M_1)\arr H_1(\M)$ is the homomorphism induced by
the identical embedding $\iota:\M_1\arr\M$, the homomorphism $f^!:H_1(\M)\arr
H_1(\M_1)$ maps a cycle $c$ to its full preimage $f^{-1}(c)$, and
$\mathop{\mathrm{sign}}:H_1(\M)\arr\Z/2\Z$ maps a cycle $c$ defined by
a loop $\gamma$ to $1$ if $\gamma$ acts as an odd permutation on the
fiber of $f$, and to $0$ otherwise.

The main result of Section~5 is existence of finite presentation of
$\V_f$ when $f$ is expanding. More generally, we show that $\V_G$ is
finitely presented, if $G$ is a \emph{contracting self-similar group},
see Theorem~\ref{th:finitepresentation}. We also give (in
Subsection~5.2) a general
definition of the groups $\V_f$ for expanding maps $f:\M\arr\M$ (where
$\M$ is not required to be path connected). 

Section ``Dynamical systems and groupoids'' is an overview of the
theory of limit dynamical systems of contracting self-similar groups
and basic results of hyperbolic groupoids, following~\cite{nek:book}
and~\cite{nek:hyperbolic}. These results are needed for the proof the
fact that $\V_f$ is a complete invariant of the dynamical system in
the expanding case, which is proved (Theorem~\ref{th:classification})
in the last section of the paper.

The general scheme of the proof of Theorem~\ref{th:classification}
is as follows. First, we show, using a theorem of M.~Rubin~\cite{rubin:reconstr},
that two groups $\V_{f_1}$ and $\V_{f_2}$ are isomorphic if and only
if their actions on the corresponding boundaries of trees are
topologically conjugate. This implies, that the groupoid of germs $\G$
of
the action of the group $\V_f$ on the boundary of the tree is uniquely
determined by the group $\V_f$.

The groupoid $\G$ is hyperbolic, and hence it uniquely determines the
equivalence class of its dual (see~\cite{nek:hyperbolic}), which is
the groupoid generated by the germs of $f:\M\arr\M$. It remains to
show that the dynamical system $f:\M\arr\M$ is uniquely determined (up
to topological conjugacy) by the equivalence class of the groupoid of
germs generated by it. This is proved using the techniques of
hyperbolic groupoids. Connectedness of $\M$ is used in the proof in an
essential way.

It is a natural question to ask if Theorem~\ref{th:classification} is
true in general (without the condition that $\M$ is path connected). 

As a corollary of the proof of Theorem~\ref{th:classification}, we
clarify the relation between the groupoid-theoretic equivalence of
groupoids of germs associated with expanding dynamical systems, and
their topological conjugacy, see Theorem~\ref{th:groupoidequivalence}.

\subsection*{Acknowledgments}

I am grateful to the organizers of the LMS Durham Symposium
``Geometric and Cohomological Group Theory'' for inviting me to give a
talk, which inspired me to return to the topics of this paper.

The paper is based in part on work supported by NSF grant DMS1006280.

\section{Definition of the groups $\V_f$}

\subsection{Rooted trees}

Let $T$ be a locally finite rooted tree, and let $v, u$ be its
vertices. We write $v\preceq u$ if the path connecting the
root to $u$ passes through $v$. This defines a partial order on the
set of vertices of $T$, and $T$ is its Hasse diagram (though we tend
to draw rooted trees ``upside down'' with the root on top).

We denote by $T_v$ the sub-tree with root $v$ spanned by all vertices $u$ such that
$v\preceq u$. We have $v\preceq u$ if and only if $T_v\supseteq
T_u$. If $v$ and $u$ are incomparable, then $T_v$ and $T_u$
are disjoint.

\begin{figure}
\centering
\includegraphics{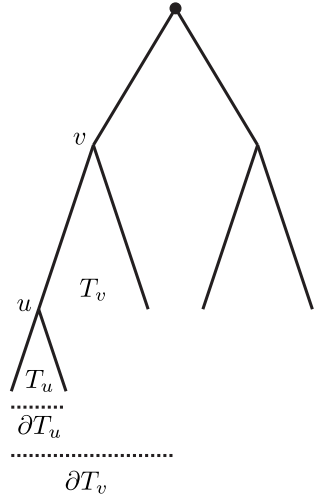}
\caption{Rooted tree}
\label{fig:tree}
\end{figure}

\emph{Boundary} $\partial T$ of the tree $T$ is the set of all
infinite simple paths starting at the root of $T$. The boundary
$\partial T_v$ is naturally identified with the set of paths $w\in\partial T$
passing through $v$. The sets $\partial T_v$ form a basis of open sets
of a natural topology on $\partial T$. The subsets $\partial T_v$ are clopen (closed and
open), and every clopen subset of $\partial T_v$ is disjoint union
of a finite number of sets of the form $\partial T_v$.

The \emph{$n$th level} of $T$ is the set of vertices that are on
distance $n$ from the root of the tree. 
An \emph{antichain} of a rooted tree $T$ is a set of
pairwise incomparable vertices. A finite antichain $A$ is said to be
\emph{complete} if it is maximal, i.e., if 
every set $B$ of vertices of $T$ properly containing $A$
is not an antichain. For example, every level of $T$ is a complete antichain.

A set $A$ is an antichain if and only if the sets $\partial T_v$ for
$v\in A$ are disjoint. It follows that $A$ is a complete antichain if
and only if $\partial T$ is disjoint union of the sets $\partial T_v$
for $v\in A$.

Let $X$ be a finite set. We denote by $X^*$ the free monoid generated
by $X$, i.e., the set of all finite words $x_1x_2\ldots x_n$ for
$x_i\in X$, together with the empty word $\varnothing$.
\emph{Length} of a word $v\in X^*$ is the number of its letters.

We introduce on the set $X^*$ structure of a rooted tree coinciding with
the left Cayley graph of the free monoid. Namely, two finite words are
connected by an edge if and only if they are of the form $vx$ and $v$
for $v\in X^*$ and $x\in X$. The empty word is the root of the tree
$X^*$.
We have $v\preceq u$ for $v, u\in X^*$ if and only if $v$ is a
beginning of $u$.
The subtree $T_v$ of $T=X^*$ for $v\in X^*$ is the
set $vX^*$ of all words starting with $v$. The $n$th level of $X^*$ is
the set $X^n$ of words of length $n$.

The boundary of the tree $X^*$ is naturally identified with the space
$X^\omega$ of right-infinite sequences $x_1x_2\ldots$ of elements of
$X$. The topology on the boundary coincides with the direct product
topology on $X^\omega$.

\subsection{Definition}

Let $\M$ be a topological space. A \emph{partial self-covering} is a finite degree covering map
$f:\M_1\arr\M$, where $\M_1\subseteq\M$. 

If $f:\M_1\arr\M$ is a partial self-covering, then we can iterated it
as a partial self-map of $\M$. Then the $n$th iteration
$f^n:\M_n\arr\M$ is also a partial self-covering. Here
$\M_n\subset\M_{n-1}\subset\ldots\subset\M_1$ are domains of the
iterations of $f$, defined by the condition $\M_{n+1}=f^{-1}(\M_n)$.

For a point $t\in\M$ denote by $T_t$ the \emph{tree of preimages} of
$t$ under the iterations of $f$, i.e., the tree with the set of
vertices equal to the formal disjoint union $\bigsqcup_{n\ge
  0}f^{-n}(t)$ of the sets of preimages of $t$ under the iterations
$f^n:\M_n\arr\M$. Here $f^{-0}(t)=\{t\}$ consists of the root of the
tree, and a vertex $v\in f^{-n}(t)$ is connected by an edge to the
vertex $f(v)\in f^{-(n-1)}(t)$, see Figure~\ref{fig:prtree}. 
If $v$ is a vertex of $T_t$, then the tree of preimages $T_v$ is in a
natural way a sub-tree of the tree $T_t$, and our notation agrees with
the notation of the previous subsection. 

\begin{figure}
\centering\includegraphics{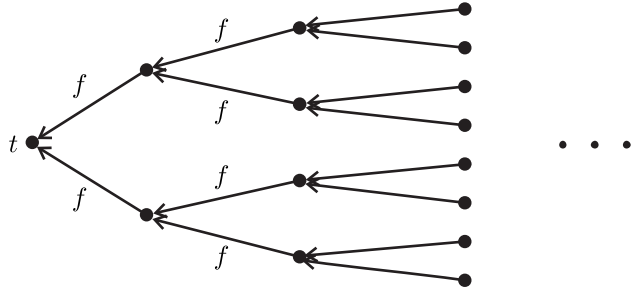}
\caption{Tree $T_t$}
\label{fig:prtree}
\end{figure}

Assume now that $\M$ is path connected. 
Let $t_1, t_2\in\M$, and let $\gamma$ be a path from $t_1$ to $t_2$ in
$\M$. Then for every $n\ge 0$ and every $v\in f^{-n}(t_1)$ there is a
unique lift by the covering $f^n:\M_n\arr\M$ of $\gamma$ starting in $v$. Let
$S_\gamma(v)\in f^{-n}(t_2)$ be its end. It is easy to see that
the map $S_\gamma:T_{t_1}\arr T_{t_2}$ is an isomorphism of the rooted
trees, see Figure~\ref{fig:sgamma}. It defines a homeomorphism of their boundaries
$S_\gamma:\partial T_{t_1}\arr\partial T_{t_2}$, which we will denote by the
same letter.

\begin{figure}
\centering\includegraphics{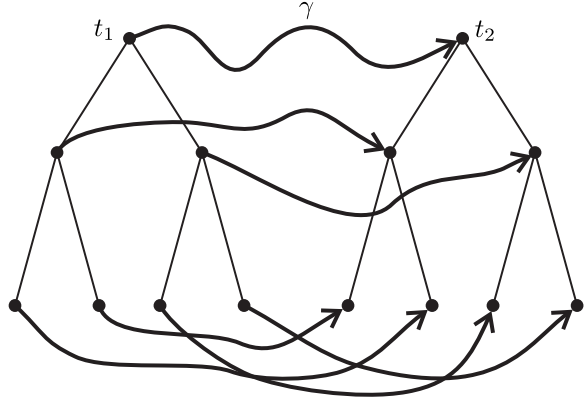}
\caption{Isomorphism $S_\gamma$}
\label{fig:sgamma}
\end{figure}

\begin{defi} Let $f:\M_1\arr\M$ be a partial self-covering, and let $t\in\M$.
Denote by $\T_f$ the semigroup of partial homeomorphisms of  $\partial T_t$
generated by the homeomorphisms of the form $S_\gamma:\partial T_{v_1}\arr
\partial T_{v_2}$, where $\gamma$ is a
path connecting points $v_1, v_2\in\bigcup_{n\ge 0}f^{-n}(t)$.
\end{defi}

The semigroup $\T_f$ contains the \emph{zero} map between empty
subsets of $T_t$. A product $S_{\gamma_1}S_{\gamma_2}$ is zero if the
range of $S_{\gamma_2}$ is disjoint from the domain of
$S_{\gamma_1}$.

Let $S_{\gamma_1}:\partial T_{v_1}\arr\partial T_{v_2}$ and
$S_{\gamma_2}:\partial T_{u_1}\arr
\partial T_{u_2}$ be two generators of $\T_f$. The product
$S_{\gamma_1}S_{\gamma_2}$ is non-zero if and only if $T_{u_2}$ and
$T_{v_1}$ are not disjoint, i.e., if either  $T_{u_2}\supseteq
T_{v_1}$, or $T_{u_2}\subseteq
T_{v_1}$. It the first case, $v_1$ is a
preimage of $u_2$ under some iteration $f^k$ of $f$. Let $\gamma_2'$
be the unique lift of $\gamma_2$ by $f^k$ that ends in $v_1$. Then it
follows from the definition of the transformations $S_\gamma$
that
\begin{equation}\label{eq:gamma1}
  S_{\gamma_1}S_{\gamma_2}=S_{\gamma_1\gamma_2'},\end{equation}
see Figure~\ref{fig:sg12}. Here and in the sequel, we multiply paths
as we compose functions: in the product $\gamma_1\gamma_2'$ the path
$\gamma_2'$ is passed before the path $\gamma_1$.

Similarly, if $T_{u_2}\subseteq T_{v_1}$, then $u_2$ is a
$f^k$-preimage of $v_1$ for some $k\ge 0$, and
\begin{equation}\label{eq:gamma2}S_{\gamma_1}S_{\gamma_2}=S_{\gamma_1'\gamma_2},\end{equation}
where $\gamma_1'$ is the lift of $\gamma_1$ by $f^k$ starting in
$u_2$.

\begin{figure}
\centering\includegraphics{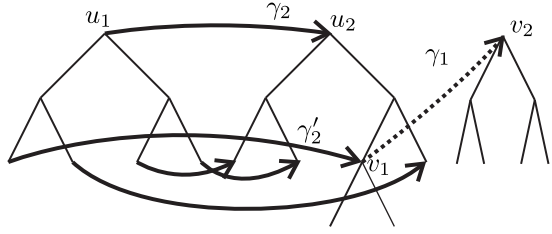}
\caption{Composition $S_{\gamma_1}S_{\gamma_2}$}
\label{fig:sg12}
\end{figure}

It follows that all non-zero elements of $\T_f$ are of the form
$S_\gamma$ for some path $\gamma$ in $\M$ connecting two vertices of $T_t$. Note
also that $\T_t$ is an inverse semigroup, where
$S_\gamma^*=S_{\gamma^{-1}}$.

Let $A_1, A_2$ be two complete antichains of $T_t$ of equal cardinality. Choose a
bijection $\alpha:A_1\arr A_2$ and a collection of paths $\gamma_a$
from $a\in A_1$ to the corresponding vertex
$\alpha(a)$, see Figure~\ref{fig:elements}.
Let $g:\partial T_t\arr\partial T_t$ be the map given by the rule
\[g(w)=S_{\gamma_v}(w),\qquad\text{if $w\in \partial T_v$.}\]
It is easy to see that $g:\partial T_t\arr\partial T_t$ is a
homeomorphism. Denote by $\V_f$ the set of all such homeomorphisms.

\begin{figure}
\centering
\includegraphics{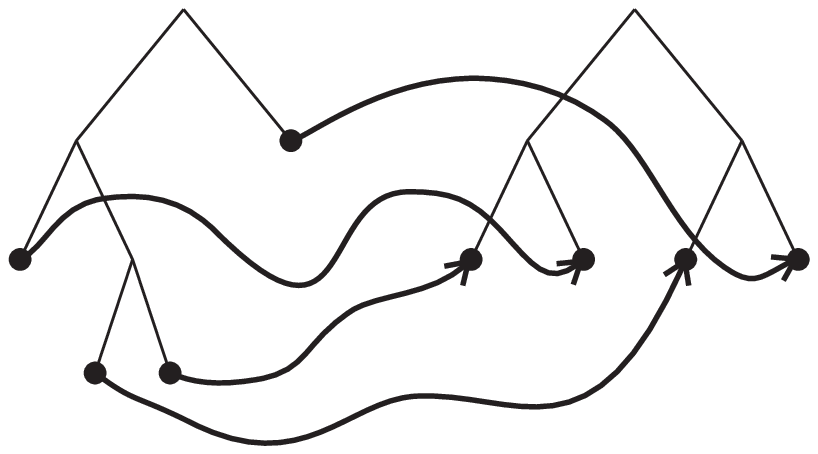}
\caption{Elements of $\V_f$}
\label{fig:elements}
\end{figure}

We will represent homeomorphisms $g\in\V_f$ by tables of the form
\begin{equation}\label{eq:table}\left(\begin{array}{cccc}v_1 & v_2 & \ldots & v_n\\ \gamma_{v_1} &
    \gamma_{v_2} & \ldots & \gamma_{v_n}\\ \alpha(v_1) & \alpha(v_2) &
    \ldots & \alpha(v_n)\end{array}\right),\end{equation}
where the first row is the list of vertices of a complete antichain,
and $g(w)=S_{\gamma_{v_i}}(w)$ for all $w\in\partial T_{v_i}$.

An \emph{elementary splitting} of such a table is the operation of replacing a column
$\left(\begin{array}{c} u \\ \gamma_u \\ \alpha(u)\end{array}\right)$
by the array $\left(\begin{array}{cccc}u_1 & u_2 & \ldots & u_d\\ \gamma_{u_1} &
    \gamma_{u_2} & \ldots & \gamma_{u_d}\\ w_1 & w_2 &
    \ldots & w_d\end{array}\right)$, where $\{u_1, u_2, \ldots,
u_d\}=f^{-1}(u)$, $\gamma_{u_i}$ is the lift of $\gamma_u$ by $f$
starting at $u_i$, and $w_i$ is the end of $\gamma_{u_i}$, see
Figure~\ref{fig:splitting}.
A \emph{splitting} of a table is the results of a finite sequence of
elementary splittings.

\begin{figure}
\centering
\includegraphics{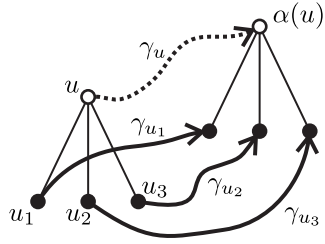}
\caption{Elementary splitting}
\label{fig:splitting}
\end{figure}

It follows directly from the definition that splitting of a table does
not change the homeomorphism $g\in\V_f$ that it defines.
It is obvious that if $g_1$ and
$g_2$ are defined by tables of the form
\[\left(\begin{array}{cccc}v_1 & v_2 & \ldots & v_n\\ \gamma_{v_1} &
    \gamma_{v_2} & \ldots & \gamma_{v_n}\\ u_1 & u_2 & \ldots &
    u_n\end{array}\right)\]
and
\[\left(\begin{array}{cccc}w_1 & w_2 & \ldots & w_n\\ \gamma_{w_1} &
    \gamma_{w_2} & \ldots & \gamma_{w_n}\\ v_1 & v_2 & \ldots &
    v_n\end{array}\right),\]
then the composition $g_1g_2$ is defined by the table
\[\left(\begin{array}{cccc}w_1 & w_2 & \ldots & w_n\\ \gamma_{v_1}\gamma_{w_1} &
    \gamma_{v_2}\gamma_{w_2} & \ldots & \gamma_{v_n}\gamma_{w_n}\\ u_1 & u_2 & \ldots &
    u_n\end{array}\right).\]

Let \[\left(\begin{array}{cccc}a_1 & a_2 & \ldots & a_n\\ \gamma_{a_1} &
    \gamma_{a_2} & \ldots & \gamma_{a_n}\\ b_1 & b_2 & \ldots &
    b_n\end{array}\right),\quad \left(\begin{array}{cccc}c_1 & c_2 & \ldots & c_m\\ \gamma_{c_1} &
    \gamma_{c_2} & \ldots & \gamma_{c_m}\\ a_1' & a_2' & \ldots &
    a_m'\end{array}\right)\] be tables defining elements of $\V_f$. We can find a
complete antichain $A$ such that for every $v\in A$ the subtree $T_v$
is contained in a subtree $T_{a_i}$ and a subtree $T_{a_j'}$ for some
$i$ and $j$. For
example, we can take $A$ to be equal to the $k$th level of the tree
$T_t$ for $k$ big enough. Then
there exists a splitting of the first table such that its first row is
$A$, and there exists a splitting of the second table such that its
last row is $A$.

It follows that if $g_1, g_2\in\V_f$, then their composition $g_1g_2$
also belongs to $\V_f$. As a corollary, we get the following proposition.

\begin{proposition}
The set $\V_f$ is a group.
\end{proposition}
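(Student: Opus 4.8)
The plan is to verify the three remaining group axioms — existence of an identity, closure under taking inverses, and associativity — since the discussion immediately preceding the statement already establishes that $\V_f$ is closed under composition (via the table formalism together with the splitting-invariance of the homeomorphism defined by a table). Associativity requires no separate argument: every element of $\V_f$ is by construction a genuine homeomorphism of $\partial T_t$, and composition of maps is associative.

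For the identity I would exhibit it by an explicit table. Take the one-column table whose (complete) antichain is the root $\{t\}$, whose path is the constant path $c_t$ at $t$, and whose image vertex is again $t$. A lift of a constant path is constant, so $S_{c_t}$ is the identity isomorphism of $T_t$, and hence $S_{c_t}=\mathrm{id}$ on $\partial T_t$. Thus $\mathrm{id}\in\V_f$.

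For inverses, suppose $g\in\V_f$ is represented by a table with top row the complete antichain $A_1=\{v_1,\dots,v_n\}$, paths $\gamma_{v_i}$, and bottom row $A_2=\{\alpha(v_1),\dots,\alpha(v_n)\}$. I would form the transposed table with top row $A_2$, paths $\gamma_{v_i}^{-1}$, and bottom row $A_1$. Since $A_2$ is itself a complete antichain (it is the bottom row of a valid table for $g$), this transposed array is a legitimate table and defines some $h\in\V_f$. Because the bottom row of the table for $h$ coincides with the top row of the table for $g$, the composition rule recorded above applies with no splitting required, and on each $\partial T_{v_i}$ the composite $hg$ restricts to $S_{\gamma_{v_i}^{-1}}\circ S_{\gamma_{v_i}}$. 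Using the inverse-semigroup relation $S_\gamma^{*}=S_{\gamma^{-1}}$ together with the fact that each $S_{\gamma_{v_i}}$ is a bijective homeomorphism $\partial T_{v_i}\to\partial T_{\alpha(v_i)}$, this composite equals the identity on $\partial T_{v_i}$; ranging over $i$ gives $hg=\mathrm{id}$, and the symmetric computation gives $gh=\mathrm{id}$. Hence $h=g^{-1}\in\V_f$.

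There is no serious obstacle here, since the substantive step — closure under composition — was carried out before the statement. The only points requiring care are bookkeeping: confirming that the transposed array really is a valid table (which amounts to noting that the two rows one swaps are both complete antichains of equal cardinality), and checking that the one-sided composites $gh$ and $hg$ both collapse, via $S_\gamma^{*}=S_{\gamma^{-1}}$, to the identity on the relevant clopen pieces $\partial T_{v_i}$ and $\partial T_{\alpha(v_i)}$. Once these are observed, all four axioms hold and $\V_f$ is a group.
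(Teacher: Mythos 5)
Your proof is correct and follows essentially the same route as the paper: the paper deduces the proposition directly as a corollary of the preceding table/splitting discussion establishing closure under composition, leaving the identity and inverses implicit, while you simply make those routine verifications (the constant-path table and the transposed table with reversed paths) explicit. No gaps.
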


We will use sometimes instead of tables the following notation.
If $F$ and $G$ are two partial transformations with disjoint domains,
then we denote by $F+G$ their union, i.e., the map equal to $F$ on the
domain of $F$ and equal to $G$ on the domain of $G$. Then the
transformation defined by a table
\[\left(\begin{array}{cccc}v_1 & v_2 & \ldots & v_n\\ \gamma_1 &
    \gamma_2 & \ldots & \gamma_n\\ u_1 & u_2 & \ldots &
    u_n\end{array}\right)\]
is written $S_{\gamma_1}+S_{\gamma_2}+\cdots+S_{\gamma_n}$.

An elementary splitting of a table is equivalent then to application
of the identity
\[S_\gamma=\sum_{\delta\in f^{-1}(\gamma)}S_\delta,\] where
$f^{-1}(\gamma)$ is the set of all lifts of $\gamma$ by $f$.

\section{Symbolic coding}
\subsection{Two trees}
\label{ss:2trees}

Let $\{t_1, t_2, \ldots, t_d\}=f^{-1}(t)$, and choose paths $\ell_i$ in $\M$
from $t$ to $t_i$. Then $S_{\ell_i}:T_t\arr T_{t_i}$ is an
isomorphism. The elements $S_{\ell_i}\in\mathcal{T}_f$ satisfy the
relations:
\[S_{\ell_i}^*S_{\ell_i}=1,\qquad\sum_{i=1}^dS_{\ell_i}S_{\ell_i}^*=1.\]

The $C^*$-algebra defined by such relations is called the \emph{Cuntz
algebra}~\cite{cuntz}. If we denote by $S$ the row
$(S_{\ell_1}, S_{\ell_2}, \ldots, S_{\ell_d})$, and by $S^*$ the
column $(S_{\ell_1}^*, S_{\ell_2}^*, \ldots,
S_{\ell_d}^*)^\top$, then the relations can be written as matrix equalities
\[S^*S=I_d,\qquad SS^*=I_1,\]
where $I_n$ denotes the $n\times n$ identity matrix. Rings given by
these and similar defining relations were studied by
W.~Leavitt~\cite{leavitt:moduleswords,leavitt:duke}.

Let $\Gamma_t$ be the graph with the set of vertices equal to the set of
vertices of $T_t$ in which two vertices $v\in f^{-n}(t)$ and $v\in
f^{-(n+1)}(t)$  are connected by an edge if
and only if they are connected by a path equal to a lift of a path
$\ell_i$ by the covering $f^n$. In other words, the
graph $\Gamma_t$ is obtained by taking preimages of the paths $\ell_i$
under all iterations of $f$, see Figure~\ref{fig:lambda}.

\begin{figure}
\centering
\includegraphics{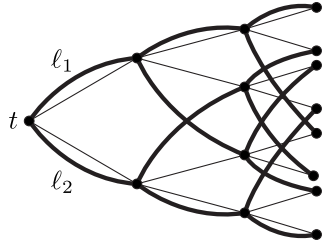}
\caption{Trees $\Gamma_t$ and $T_t$}
\label{fig:lambda}
\end{figure}

It is easy to see that $\Gamma_t$ is a tree, and its $n$th level is
equal to the $n$th level of the tree $T_t$.
It follows that any two vertices of $T_t$ are connected by a unique
simple path in $\Gamma_t$.

Let $i_{n-1}\ldots i_1i_0\in\{1, 2, \ldots, d\}^n$, and consider the
product $S_{i_{n-1}}\cdots S_{i_1}S_{i_0}\in\mathcal{T}_f$. 
According to the multiplication rule~\eqref{eq:gamma2}, it is equal to
$S_{\gamma_{n-1}\ldots\gamma_1\gamma_0}$, where $\gamma_0=\ell_{i_0}$, and
$\gamma_k$ is the lift of $\ell_k$ by $f^k$ starting at the end of
$\gamma_{k-1}$. Denote by $\Lambda(i_{n-1}\ldots i_1i_0)$ the end of
the last path $\gamma_{n-1}$. Then $S_{i_{n-1}}\cdots
S_{i_1}S_{i_0}=S_{\gamma_{i_{n-1}\ldots i_2i_1}}$, where
$\gamma_{i_{n-1}\ldots i_2i_1}$ is unique simple path in $\Gamma_t$
starting at the root and ending in $\Lambda(i_{n-1}\ldots i_1i_0)$.
The path $\gamma_{i_{n-1}\ldots i_2i_1}$ and its end
$\Lambda(i_{n-1}\ldots i_1i_0)$ satisfy the
recurrent rule:
\[\gamma_{i_{n-1}\ldots i_2i_1}=\ell_{\Lambda(i_{n-2}\ldots i_1i_0),
  i_{n-1}}\gamma_{i_{n-2}\ldots i_2i_1},\]
where $\ell_{\Lambda(i_{n-2}\ldots i_1i_0), i_{n-1}}$ is the lift of
$\ell_{i_{n-1}}$ by $f^{n-1}$ starting at $\Lambda(i_{n-2}\ldots
i_1i_0)$ (and hence ending in $\Lambda(i_{n-1}\ldots i_1i_0)$).

The map $\Lambda$ is a bijection between $\{1, 2, \ldots, d\}^n$ and the
$n$th level $f^{-n}(t)$ of the trees $T_t$ and $\Gamma_t$. It
follows directly from the description of the path
$\gamma_{i_{n-1}\ldots i_1i_0}$ that
$\Lambda(i_{n-1}\ldots i_1i_0)$ is adjacent to $\Lambda(i_{n-1}\ldots
i_2i_1)$ in $T_t$ and to $\Lambda(i_{n-2}\ldots i_1i_1)$ in $\Gamma_t$.
In other words, $T_t$ and $\Gamma_t$ are identified by $\Lambda$ with
the right and the left Cayley graphs of the free monoid generated by
$X=\{1, 2, \ldots, d\}$, respectively.

For any two sequences $i_1i_2\ldots i_n$ and $j_1j_2\ldots j_m\in
X^*$, the product \[S_{\ell_{i_1}}S_{\ell_{i_2}}\cdots
S_{\ell_{i_n}}(S_{j_1}S_{j_2}\ldots S_{j_m})^*\] is
equal to $S_\gamma$, where $\gamma$ is the path inside $\Gamma_t$
from $\Lambda(j_1j_2\ldots j_m)$ to $\Lambda(i_1i_2\ldots i_n)$. This
follows directly from the definitions of $\Gamma_t$, $\Lambda$, and
rules~\eqref{eq:gamma1} and~\eqref{eq:gamma2}.

We will use notation $S_x=\Lambda^{-1}S_{\ell_x}\Lambda$ and $S_{x_1x_2\ldots
  x_n}=S_{x_1}S_{x_2}\cdots S_{x_n}$, for $x, x_i\in X$. Then, by the definition of
$\Lambda$, the transformations  $S_{x_1x_2\ldots x_n}$ of $X^\omega$ are given by the rule
\[S_{x_1x_2\ldots x_n}(v)=x_1x_2\ldots x_nv.\]
For $v, u\in X^*$, the transformation $S_vS_u^*$ is defined on
$uX^\omega$, and acts by the rule
\[S_vS_u^*(vw)=uw.\]
In particular, $\sum_{x\in X}S_xS_x^*=1$, and we obviously have $S_x^*S_x=1$.

\subsection{The Higman-Thompson group}

Let $A_1$ and $A_2$ be complete antichains in $X^*$, and let
$\alpha:A_1\arr A_2$ be a bijection. Then $g_\alpha=\sum_{v\in
  A_1}S_{\alpha(v)}S_v^*$ is a homeomorphism of $X^\omega$ defined
by the rule
\[g_\alpha(vw)=\alpha(v)w,\]
for all $v\in A_1$ and $w\in X^\omega$. The set of such homeomorphisms
$g_\alpha$ is the \emph{Higman-Thompson group} group $G_{|X|, 1}$, see~\cite{hgthomp}, which we
will denote by $\V_X$ of $\V_d$, where $d=|X|$. 

Its copy $\Lambda\cdot\V_X\cdot\Lambda^{-1}$ in $\V_f$ is the
group defined by the paths in the tree $\Gamma_t$. Namely, for any bijection
$\alpha:A_1\arr A_2$ between complete antichains of $T_t$ there exist
unique simple paths $\gamma_v$ connecting $v\in A_1$ to
$\alpha(v)\in A_2$ inside the tree $\Gamma_t$. Then the corresponding
element of $\V_f$ is equal to $\sum_{v\in A_1}S_{\gamma_v}$.

The following simple lemma will be useful later (for a proof, see, for
example~\cite[Lemma~9.12]{nek:bim}).

\begin{lemma}
\label{lem:incomplete}
Let $A_1, A_2\subset X^*$ be finite incomplete (i.e., non-maximal)
antichains, and let $\alpha:A_1\arr A_2$ be a bijection. Then there
exists $g\in\V_X$ such that $g(vw)=\alpha(v)w$ for all $v\in A_1$ and
$w\in X^\omega$.
\end{lemma}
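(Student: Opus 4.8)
The plan is to extend the partial bijection $\alpha$ to a bijection between \emph{complete} antichains, since by definition such a bijection produces an element of $\V_X$; the partial transformation $vw\mapsto\alpha(v)w$ will then be a restriction of this element. The only work is to match the ``leftover'' parts of $X^\omega$ outside the cylinders indexed by $A_1$ and $A_2$ after suitable refinement, and this is where the arithmetic of the $d$-ary tree ($d=|X|$) enters.

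First I would complete each antichain. Since $A_1$ is incomplete, the set $X^\omega\setminus\bigcup_{v\in A_1}vX^\omega$ is a non-empty clopen subset of $X^\omega$, hence a finite disjoint union $\bigsqcup_{v\in B_1}vX^\omega$ of cylinders. Thus $B_1$ is a non-empty finite antichain disjoint from $A_1$ with $A_1\sqcup B_1$ complete, and likewise I obtain a non-empty $B_2$ with $A_2\sqcup B_2$ complete. In general $|B_1|\ne|B_2|$, so the two completions cannot be compared directly; the next step repairs this.

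I would match the cardinalities using the standard congruence for complete antichains. The one-vertex antichain $\{\varnothing\}$ is complete of cardinality $1$, and every complete antichain is obtained from it by a finite sequence of elementary splittings, each of which replaces one vertex by its $d$ children and so increases the cardinality by $d-1$. Hence every complete antichain of $X^*$ has cardinality $\equiv 1\pmod{d-1}$. Applying this to $A_1\sqcup B_1$ and $A_2\sqcup B_2$, and using $|A_1|=|A_2|$, gives $|B_1|\equiv|B_2|\pmod{d-1}$. Assuming without loss of generality $|B_1|\le|B_2|$, write $|B_2|-|B_1|=k(d-1)$ with $k\ge 0$, and split $B_1$ a total of $k$ times (possible since $B_1\ne\varnothing$ and splitting only enlarges a non-empty antichain) to obtain $\wt B_1$ with $A_1\sqcup\wt B_1$ still complete and $|\wt B_1|=|B_2|$. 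Choosing any bijection $\beta:\wt B_1\arr B_2$, the map $\alpha\sqcup\beta$ is a bijection between the complete antichains $A_1\sqcup\wt B_1$ and $A_2\sqcup B_2$, so $g:=\sum_{v\in A_1}S_{\alpha(v)}S_v^*+\sum_{v\in\wt B_1}S_{\beta(v)}S_v^*$ lies in $\V_X$ and satisfies $g(vw)=\alpha(v)w$ for all $v\in A_1$, $w\in X^\omega$.

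I expect the only genuine content to be the congruence $|B_1|\equiv|B_2|\pmod{d-1}$: without it the two leftover clopen sets need not admit a bijection of their cylinder decompositions, and it is precisely this obstruction that the $k$ elementary splittings remove. Everything else is routine bookkeeping with clopen subsets of $X^\omega$; the degenerate case $d=1$ (in which the only non-empty antichains are already complete) makes the statement vacuous and may be ignored.
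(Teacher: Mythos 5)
Your proof is correct and takes the standard route: the paper itself gives no in-text proof of Lemma~\ref{lem:incomplete} (it defers to \cite[Lemma~9.12]{nek:bim}), and your argument --- complete both antichains, then use the fact that every complete antichain of $X^*$ has cardinality $\equiv 1 \pmod{d-1}$ to equalize the leftover pieces by elementary splittings before gluing $\alpha$ with a bijection of the completions --- is exactly that standard argument, and the congruence you isolate is the same invariant $m(U)\in\Z/(d-1)\Z$ that the paper later formalizes in Proposition~\ref{pr:VXorbits}. One point worth keeping explicit in a write-up: your self-contained derivation of the congruence (every complete antichain is obtained from $\{\varnothing\}$ by elementary splittings, proved e.g.\ by unsplitting a deepest vertex together with its siblings) is genuinely needed, since quoting Proposition~\ref{pr:VXorbits} instead would be circular --- its proof in the paper invokes Lemma~\ref{lem:incomplete}.
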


\subsection{The iterated monodromy group}

Every element $\gamma$ of the fundamental group $\pi_1(\M, t)$ 
defines an element $S_\gamma:\partial T_t\arr\partial T_t$ of $\V_f$. We get in
this way a natural homomorphism $\gamma\mapsto S_\gamma$ from
$\pi_1(\M, t)$ to $\V_f$. Its image is called the \emph{iterated
  monodromy group} of $f$ and is denoted $\img{f}$. It acts on $T_t$
by automorphisms, so that the action on the $n$th level coincides with
the natural \emph{monodromy action} associated with the covering
$f^n:\M_n\arr\M$, see~\cite[Chapter~5]{nek:book},~\cite{bgn,nek:bath}.

Let us choose paths $\ell_i$ connecting the root $t$ to the vertices
of the first level $f^{-1}(t)$ of the tree $T_t$. Let $\Gamma_t$ be the
tree obtained by taking lifts of the paths $\ell_i$ by iterations of
$f$, as in Subsection~\ref{ss:2trees}.

For a vertex $v$ of $T_t$, denote by $\ell_v$ the unique simple path
inside $\Gamma_t$ from $t$ to $v$.
Then for an arbitrary path $\gamma$ in $\M$ starting in a vertex $v$ and ending in
a vertex $u$ of $T_t$, the path $\ell_u^{-1}\gamma\ell_v$ is a loop
based at $t$. Let $g=S_{\ell_u^{-1}\gamma\ell_v}$ be the corresponding
element of $\img{f}$. Then we have
\[S_\gamma=S_{\ell_u}S_{\ell_u^{-1}\gamma\ell_v}S_{\ell_v}^*=S_{\ell_u}gS_{\ell_v}^*.\]
Hence we get the following description of the elements of $\V_f$.
\begin{lemma}
\label{lem:imginside}
Let $g\in\V_f$ be defined by a table 
$\left(\begin{array}{cccc}
   v_1 & v_2 & \ldots & v_n\\
   \gamma_1     & \gamma_2     & \ldots & \gamma_n \\
   u_1 & u_2 & \ldots & u_n
\end{array}\right)$. Denote
$g_i=S_{\ell_{u_i}}^*S_{\gamma_i}S_{\ell_{v_i}}$. Then
$g_i\in\img{f}$, and
$g=\sum_{i=1}^n S_{\ell_{u_i}}g_iS_{\ell_{v_i}}^*$.
\end{lemma}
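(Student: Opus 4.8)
The plan is to verify the claimed factorization column by column, since the table notation means that $g = S_{\gamma_1} + S_{\gamma_2} + \cdots + S_{\gamma_n}$, where each $S_{\gamma_i} \colon \partial T_{v_i} \arr \partial T_{u_i}$ is the generator associated with the path $\gamma_i$ from $v_i$ to $u_i$. Because the $v_i$ form a complete antichain, the sum is a genuine disjoint union of partial homeomorphisms, so it suffices to treat a single path $\gamma$ from a vertex $v$ to a vertex $u$ and establish both that $g_\gamma := S_{\ell_u}^* S_\gamma S_{\ell_v} \in \img{f}$ and that $S_\gamma = S_{\ell_u} g_\gamma S_{\ell_v}^*$; summing over $i$ then yields the statement.

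First I would recall that $\ell_v$ denotes the unique simple path inside $\Gamma_t$ from the root $t$ to $v$, so that $S_{\ell_v} \colon \partial T_t \arr \partial T_v$ is an isomorphism onto the clopen set $\partial T_v$ with inverse $S_{\ell_v}^* = S_{\ell_v^{-1}}$, and similarly for $u$. The key observation, already recorded in the paragraph preceding the lemma, is that $\ell_u^{-1} \gamma \ell_v$ is a loop based at $t$: the path $\ell_v$ runs from $t$ to $v$, then $\gamma$ runs from $v$ to $u$, and then $\ell_u^{-1}$ returns from $u$ to $t$. Hence this concatenation represents an element of $\pi_1(\M, t)$, and so $S_{\ell_u^{-1} \gamma \ell_v}$ lies in the image $\img{f}$ of $\pi_1(\M, t)$ by the definition of the iterated monodromy group.

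The remaining step is to identify $g_\gamma = S_{\ell_u}^* S_\gamma S_{\ell_v}$ with this loop element and to invert the factorization. Here I would appeal to the composition rules~\eqref{eq:gamma1} and~\eqref{eq:gamma2}, which show that products of the generators $S_\delta$ compose by concatenating (appropriately lifted) paths, so that $S_{\ell_u}^* S_\gamma S_{\ell_v} = S_{\ell_u^{-1}} S_\gamma S_{\ell_v} = S_{\ell_u^{-1} \gamma \ell_v}$; this is exactly the element $g$ displayed just before the lemma, confirming $g_\gamma \in \img{f}$. Conjugating back, since $S_{\ell_v}^* S_{\ell_v} = 1$ and $S_{\ell_u} S_{\ell_u}^* = 1$ on the relevant clopen sets (these are the Cuntz-type relations from Subsection~\ref{ss:2trees}), we recover $S_{\ell_u} g_\gamma S_{\ell_v}^* = S_{\ell_u} S_{\ell_u}^* S_\gamma S_{\ell_v} S_{\ell_v}^* = S_\gamma$, as the domain of $S_\gamma$ is $\partial T_v$ and its range is $\partial T_u$.

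The main obstacle, and the only point requiring genuine care rather than routine symbol-pushing, is the bookkeeping of the inverse-semigroup products $S_{\ell_u}^* S_\gamma S_{\ell_v}$: one must check that the domains and ranges line up so that none of the intermediate products is the zero map, and that the concatenation $\ell_u^{-1} \gamma \ell_v$ produced by the rules~\eqref{eq:gamma1} and~\eqref{eq:gamma2} is genuinely the homotopy-honest composite loop rather than merely a path with the same endpoints. Once the path-composition rules are applied in the correct order — remembering the convention that in a product the right-hand path is traversed first — both the membership $g_i \in \img{f}$ and the factorization $g = \sum_i S_{\ell_{u_i}} g_i S_{\ell_{v_i}}^*$ follow immediately.
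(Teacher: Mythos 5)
Your proposal is correct and follows essentially the same route as the paper: the paper's ``proof'' is the paragraph immediately preceding the lemma, which derives $S_\gamma=S_{\ell_u}S_{\ell_u^{-1}\gamma\ell_v}S_{\ell_v}^*$ from the observation that $\ell_u^{-1}\gamma\ell_v$ is a loop at $t$ together with the composition rules~\eqref{eq:gamma1} and~\eqref{eq:gamma2}, and the lemma is then read off column by column exactly as you do. Your extra care with domains and the relations $S_{\ell_v}^*S_{\ell_v}=1$, $S_{\ell_u}S_{\ell_u}^*=\mathrm{id}_{\partial T_u}$ just makes explicit the bookkeeping the paper leaves implicit.
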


Let $g\in\img{f}$ be defined by a loop $\gamma$, and let
$x\in f^{-1}(t)$ be a vertex of the first level. Let $\gamma_x$ be the
lift of $\gamma$ by $f$ starting at $x$, and let $y$ be its end. Then
we have
\begin{equation}\label{eq:ssimilarity}
gS_{\ell_x}=S_\gamma S_{\ell_x}=S_{\gamma_x\ell_x}=
S_{\ell_y\ell_y^{-1}\gamma_x\ell_x}=S_{\ell_y}S_{\ell_y^{-1}\gamma_x\ell_x}.
\end{equation}
Note that $\ell_y^{-1}\gamma_x\ell_x$ is a loop based at $t$, i.e., an
element of $\pi_1(\M, t)$, see Figure~\ref{fig:recurn}. 

\begin{figure}
\centering
\includegraphics{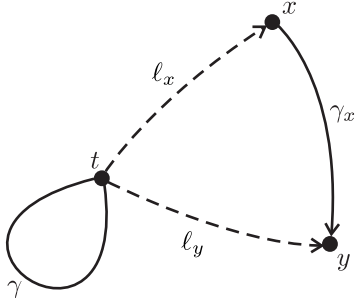}
\caption{Iterated monodromy recursion}
\label{fig:recurn}
\end{figure}

Let us conjugate the action of $\V_f$ on $\partial T_t$ (and the
action of $\img{f}$ on $T_t$) to an action on $X^\omega$ (and $X^*$)
using the isomorphism $\Lambda:X^*\arr T_t$. We call the obtained
actions of $\V_f$ and $\img{f}$ \emph{standard}. Then
formula~\eqref{eq:ssimilarity}
proves the following lemma, see also~\cite[Proposition~5.2.2]{nek:book}.

\begin{lemma}
\label{lem:ssimilarity}
For every $g\in\img{f}$ and every $x\in f^{-1}$ there exist
$h\in\img{f}$ and $y\in f^{-1}$ such that
\[gS_x=S_yh.\]
Moreover, if $g$ is defined by a loop $\gamma$, then $h$ is defined by
the loop $\ell_y^{-1}\gamma_x\ell_x$, where $\gamma_x$ is the lift of
$\gamma$ by $f$ starting at $x$.
\end{lemma}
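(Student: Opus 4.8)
The statement to prove is Lemma~\ref{lem:ssimilarity}: for every $g\in\img{f}$ and every $x\in f^{-1}(t)$ there exist $h\in\img{f}$ and $y\in f^{-1}(t)$ with $gS_x=S_yh$, and moreover if $g=S_\gamma$ for a loop $\gamma$ based at $t$, then $h=S_{\ell_y^{-1}\gamma_x\ell_x}$ where $\gamma_x$ is the lift of $\gamma$ by $f$ starting at $x$.

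The plan is to read off the result from the chain of equalities already recorded in equation~\eqref{eq:ssimilarity}, transported to the standard action via the coding isomorphism $\Lambda:X^*\arr T_t$. First I would fix a loop $\gamma$ based at $t$ representing $g\in\img{f}$, and fix $x\in f^{-1}(t)$ (which, under $\Lambda$, is an element of the alphabet $X$). Let $\gamma_x$ be the unique lift of $\gamma$ by the covering $f$ that starts at $x$, and let $y\in f^{-1}(t)$ be its endpoint. The key geometric observation, already made in the excerpt, is that $\ell_y^{-1}\gamma_x\ell_x$ is a loop based at $t$: indeed $\ell_x$ runs from $t$ to $x$, then $\gamma_x$ runs from $x$ to $y$, and $\ell_y^{-1}$ returns from $y$ to $t$. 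Hence $h:=S_{\ell_y^{-1}\gamma_x\ell_x}$ is a genuine element of $\img{f}$.

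Next I would verify the identity $gS_x=S_yh$ directly by manipulating the transformations $S_\delta$ using the composition rules~\eqref{eq:gamma1} and~\eqref{eq:gamma2}. Writing $g=S_\gamma$ and $S_x=S_{\ell_x}$ (in the standard coding), the product $S_\gamma S_{\ell_x}$ falls under rule~\eqref{eq:gamma2}: the path $\gamma$ is a loop at $t$ while $\ell_x$ ends at $x\in f^{-1}(t)$, so the lift of $\gamma$ by $f$ starting at $x$ is exactly $\gamma_x$, giving $S_\gamma S_{\ell_x}=S_{\gamma_x\ell_x}$. Then I insert the trivial path $\ell_y\ell_y^{-1}$ to factor $\gamma_x\ell_x=\ell_y\,(\ell_y^{-1}\gamma_x\ell_x)$, and apply the composition rule in the reverse direction to split $S_{\gamma_x\ell_x}=S_{\ell_y}S_{\ell_y^{-1}\gamma_x\ell_x}=S_y h$. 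This reproduces the four equalities of~\eqref{eq:ssimilarity} and establishes both the existence claim and the explicit formula for $h$.

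I do not expect a serious obstacle here, since the heavy lifting is done by the composition rules established earlier and by the monodromy-lifting description of $S_\gamma$. The only point requiring mild care is bookkeeping of the path-multiplication convention (paths composed right-to-left, as functions, as stated after~\eqref{eq:gamma1}), so that the concatenations $\gamma_x\ell_x$ and $\ell_y^{-1}\gamma_x\ell_x$ are read in the correct order and the domains of the $S_\delta$ match when applying~\eqref{eq:gamma1}--\eqref{eq:gamma2}. One should also confirm that $h$ lands in $\img{f}$ rather than merely in $\T_f$, which is immediate once one checks that $\ell_y^{-1}\gamma_x\ell_x$ is based at $t$; this is precisely the observation accompanying Figure~\ref{fig:recurn}. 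With the convention fixed and the basepoint check done, the lemma follows from~\eqref{eq:ssimilarity} by transporting everything through $\Lambda$ to the standard action, as the statement requests.
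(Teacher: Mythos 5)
Your proposal is correct and follows essentially the same route as the paper: the paper's proof of Lemma~\ref{lem:ssimilarity} consists precisely of the chain of equalities in~\eqref{eq:ssimilarity} (obtained from the composition rules~\eqref{eq:gamma1}--\eqref{eq:gamma2}), the observation that $\ell_y^{-1}\gamma_x\ell_x$ is a loop at $t$, and transport through $\Lambda$ to the standard action. Nothing is missing; your care with the right-to-left path convention and the basepoint check matches the paper's intent exactly.
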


If $gS_x=S_yh$ for $g, h\in\img{f}$ and $x, y\in X$, then we
denote $h=g|_x$ and $y=g(x)$. Note that the last equality agrees with
the definition of $g$ as an automorphism $S_\gamma$ of the tree $T_t$.

Since $1=\sum_{x\in X}S_xS_x^*$, we have
\begin{equation}
\label{eq:gsplitting}
g=\sum_{x\in X}gS_xS_x^*=\sum_{x\in X}S_{g(x)}g|_xS_x^*,
\end{equation}
which gives us a splitting rule for the expressions for elements of
$\V_f$ given in Lemma~\ref{lem:imginside}.

Namely, we get the following description of $\V_f$ in terms of
$\img{f}$ and formula~\eqref{eq:gsplitting}.

\begin{proposition}
\label{pr:symbolic}
The group $\V_f$ is isomorphic to homeomorphisms of $X^\omega$ of the form
\begin{equation}\label{eq:SugSv}
\sum_{v\in A_1}S_{\alpha(v)}g_vS_v^*,
\end{equation}
where $g_v\in\img{f}$, $A_1$ is a complete antichain in $X^*$, and
$\alpha:A_1\arr A_2$ is a bijection of $A_1$ with a complete antichain
$A_2$. Two elements of $\V_f$ given by expressions of the form~\eqref{eq:SugSv}
are equal if and only if they can be made equal after repeated
applications of the splitting rules~\eqref{eq:gsplitting} to the elements
$g_v$.
\end{proposition}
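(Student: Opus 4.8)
The plan is to establish Proposition~\ref{pr:symbolic} in two halves: first that every element of $\V_f$ can be written in the form~\eqref{eq:SugSv}, and second that two such expressions represent the same homeomorphism exactly when a common splitting makes them coincide.

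For the first half, I would start from the table description~\eqref{eq:table} of an arbitrary element of $\V_f$, together with Lemma~\ref{lem:imginside}. That lemma already rewrites a table with rows $v_i, \gamma_i, u_i$ as $g=\sum_{i=1}^n S_{\ell_{u_i}} g_i S_{\ell_{v_i}}^*$ with $g_i=S_{\ell_{u_i}}^*S_{\gamma_i}S_{\ell_{v_i}}\in\img{f}$. The only gap between this and~\eqref{eq:SugSv} is that the indexing antichain $\{v_i\}$ and its image $\{u_i\}=\{\alpha(v_i)\}$ are antichains of $T_t$, which under the identification $\Lambda$ become complete antichains in $X^*$; so after conjugating to the standard action by $\Lambda$ the expression is literally of the form $\sum_{v\in A_1}S_{\alpha(v)}g_vS_v^*$. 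Thus the first half is essentially a transcription of Lemma~\ref{lem:imginside} into the symbolic coordinates, and I expect it to be routine.

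For the second half, the forward direction is immediate: the splitting rule~\eqref{eq:gsplitting} was derived from $1=\sum_{x\in X}S_xS_x^*$ together with the self-similarity relation $g_vS_x=S_{g_v(x)}g_v|_x$ of Lemma~\ref{lem:ssimilarity}, and each application manifestly preserves the underlying homeomorphism of $X^\omega$, so two expressions related by splittings define the same element. The substantive direction is the converse: if $\sum_{v\in A_1}S_{\alpha(v)}g_vS_v^*$ and $\sum_{w\in B_1}S_{\beta(w)}h_wS_w^*$ define \emph{equal} homeomorphisms, I must produce a common refinement. The idea is to choose a complete antichain $C$ refining both $A_1$ and $B_1$ — for instance a level $X^k$ with $k$ large enough that every $c\in C$ lies below some $v\in A_1$ and some $w\in B_1$ — and apply repeated splittings to both expressions so that each is indexed by $C$. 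After splitting, the first expression becomes $\sum_{c\in C} S_{\alpha(v)g_v(\text{suffix})}\,(g_v|_{\cdots})\,S_c^*$ and likewise for the second; on the cylinder $cX^\omega$ each expression acts by $c u\mapsto p\,(k|_{\cdots})(u)$ for a prefix $p$ and a restricted element of $\img{f}$. The key point to extract is that a homeomorphism of the form $S_p\,k\,S_c^*$, with $k\in\img{f}$, determines the prefix $p$ and the element $k$ uniquely: the prefix is read off as the common initial segment of the images, and $k$ is then recovered as $S_p^* \circ (\text{map}) \circ S_c$. Hence on each $cX^\omega$ the two split expressions agree as functions only if they have identical prefix data and identical $\img{f}$-components, i.e. the split tables are literally equal column by column.

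The main obstacle I anticipate is this uniqueness bookkeeping on the cylinders: I must be careful that equality of the two homeomorphisms on $cX^\omega$ forces equality of both the prefix $p$ (where $\img{f}$ acts trivially on the top level after the self-similarity has been fully unwound) and the tail element $k\in\img{f}$, and that no cancellation between different columns can occur. This is guaranteed because the antichain $C$ is common to both split expressions, so the domains $cX^\omega$ match up exactly and there is no cross-talk between distinct $c$. The only genuine content is the injectivity statement $S_pkS_c^*=S_{p'}k'S_{c'}^*$ on a common cylinder $\Rightarrow p=p'$, $c=c'$, $k=k'$, which follows by evaluating on points and using that $\img{f}$ acts faithfully on $X^\omega$; faithfulness holds because $g=S_\gamma$ is by construction the identity tree-automorphism only when $\gamma$ lifts to closed paths at every level, which forces the trivial homeomorphism of $\partial T_t$. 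Assembling these observations gives the column-by-column equality of the two fully split tables, which is exactly the assertion that the original expressions become equal after repeated splittings.
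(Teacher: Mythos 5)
Your proof is correct and follows essentially the same route as the paper: the paper states Proposition~\ref{pr:symbolic} without a separate proof, presenting it as a direct consequence of Lemma~\ref{lem:imginside} (which gives the form~\eqref{eq:SugSv} after conjugating by $\Lambda$) and the splitting rule~\eqref{eq:gsplitting}, and these are exactly the two ingredients you use. The one point the paper leaves implicit --- that equality of the homeomorphisms forces column-by-column equality once both expressions are split to a common level, because the prefix $p$ is recovered as the image cylinder $pX^\omega$ of the restriction to $cX^\omega$ and the factor in $\img{f}$ is then determined since $\img{f}$ is by definition a group of homeomorphisms of $X^\omega$ --- is precisely what you supply, and your argument for it is sound.
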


Equivalently, we can use the table notation, and represent the
element~\eqref{eq:SugSv} by the table
\begin{equation}
\label{eq:tablesG}
\left(\begin{array}{cccc} v_1 & v_2 & \ldots & v_m\\ g_{v_1} &
    g_{v_2} & \ldots & g_{v_m}\\ \alpha(v_1) & \alpha(v_2) & \ldots &
    \alpha(v_m)\end{array}\right),
\end{equation}
where the splitting rule is the operation of replacing a column by the
array
\begin{equation}
\label{eq:tablesspittingG}
\left(\begin{array}{c}v \\ g\\ u\end{array}\right)\mapsto\left(\begin{array}{cccc}vx_1 & vx_2 & \ldots & vx_d\\ g|_{x_1} &
    g|_{x_2} & \ldots & g|_{x_d}\\ ug(x_1) & ug(x_2) & \ldots &
    ug(x_d)\end{array}\right).
\end{equation}

\begin{example}
\label{ex:addingmachine}
Consider the self-covering $f:x\mapsto 2x$ of the circle $\R/\Z$. Take
$t=0$ as the basepoint. Its preimages are $0$ and $1/2$. Let $\ell_0$
be the trivial path at $0$, and let $\ell_1$ be the path from $0$ to
$1/2$ equal to the image of the segment $[0, 1/2]\subset\R$. Let $\gamma$
be the generator of $\pi_1(\R/\Z, 0)$ equal to the image of the
segment $[0, 1]\subset\R$ with the natural (increasing on $[0, 1]$)
orientation. It has two lifts by the covering $f$: \[\gamma_0=[0,
1/2],\qquad\gamma_1=[1/2, 1].\]
Note that $\gamma_0=\ell_1$.

By~\eqref{eq:gamma2}, 
\begin{equation}
\label{eq:admach1}
S_\gamma S_{\ell_0}=S_{\gamma_0\ell_0}=S_{\ell_1},
\end{equation}
and
\begin{equation}
\label{eq:admach2}
S_\gamma
S_{\ell_1}=S_{\gamma_1\ell_1}=S_{\ell_0\gamma}=S_{\ell_0}S_{\gamma}.
\end{equation}

Let $X=\{0, 1\}$, and consider the corresponding standard actions on
$X^*$ and $X^\omega$. Denote by $a$ the generator of $\img{f}$
corresponding to $S_\gamma$. Then, by~\eqref{eq:admach1} and~\eqref{eq:admach2},
\[aS_0=S_1,\qquad aS_1=S_0a.\]
In other words, the action of $a$ on $X^\omega$ is given by
the recurrent formulas
\[a(0v)=1v,\qquad a(1v)=0a(v).\]
We see that $a$ acts as the \emph{binary adding machine}, see~\cite[Section~1.7.1]{nek:book}.

\begin{figure}
\centering
\includegraphics{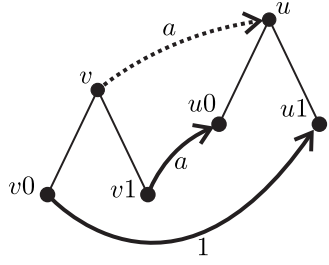}
\caption{The adding machine}
\label{fig:splittinga}
\end{figure}

The group $\V_f$ is generated by the Higman-Thompson group $\V_2$
(also coinciding with the Thompson group $V$, see~\cite{intro_tomp}) and an element
$a$ satisfying the splitting rule $a=S_1S_0^*+S_0aS_1^*$, i.e.,
\[\left(\begin{array}{c}v\\ a\\ u\end{array}\right)=
\left(\begin{array}{cc}v0 & v1\\ 1 & a\\ u1 & u0\end{array}\right).\]
See Figure~\ref{fig:splittinga}.
\end{example}

\begin{example}
\label{ex:basilica}
Consider the complex polynomial $z^2-1$ as a partial self-covering
$f:\M_1\arr\M$, where $\M=\C\setminus\{0, -1\}$, and
$\M_1=\C\setminus\{0, \pm 1\}$. Alternatively, we can consider it as a
self-covering of its Julia set, see Figure~\ref{fig:basilica}.

\begin{figure}
\centering
\includegraphics{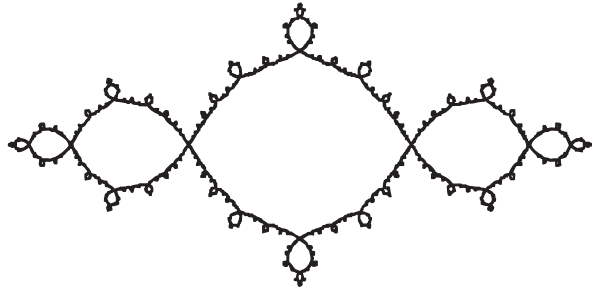}
\caption{Julia set of $z^2-1$}
\label{fig:basilica}
\end{figure}

Then $\V_{z^2-1}$ is generated by the Thompson group $\V_2$ and two
elements $a, b$ satisfying
\[a=S_1S_0^*+S_0bS_1^*,\qquad b=S_0S_0^*+S_1aS_1^*.\]

For a detailed proof of the recurrent definitions of the elements $a$
and $b$, see~\cite[Subsection~5.2.2.]{nek:book}.
\end{example}

\subsection{Self-similar groups}

The description of $\V_f$ in terms of the iterated monodromy
group given in Proposition~\ref{pr:symbolic}
can be generalized in the following way.

\begin{defi}
\label{def:ssimilar}
Let $G$ be a group acting faithfully by automorphisms of the tree
$X^*$. We say that $G$ is a \emph{self-similar group} if for all $g\in G$ and
$x\in X$ there exist $h\in G$ and $y\in X$ such that
\[g(xw)=yh(w)\]
for all $w\in X^*$.
\end{defi}

We will usually denote self-similar groups as pairs $(G, X)$.
Note that the equation in Definition~\ref{def:ssimilar} is equivalent
to the equality
\[g\cdot S_x=S_y\cdot h,\]
of compositions of self-maps of $X^\omega$,
where $S_x$ is, as before, the transformation $S_x(w)=xw$ of
$X^\omega$.

\begin{defi}
\label{def:VG}
Let $G$ be a self-similar group acting on $X^*$. The group $\V_G$ is
the set of all homeomorphisms $g$ of $X^\omega$ for which there exist
complete antichains $A_1, A_2\subset X^*$, a bijection $\alpha:A_1\arr
A_2$, and elements $g_v\in G$, for $v\in A_1$ such that
\[g=\sum_{v\in A_1}S_{\alpha(v)}g_vS_v^*,\]
i.e.,
\[g(vw)=\alpha(v)g_v(w)\]
for all $v\in A_1$ and $w\in X^\omega$.
\end{defi}

If $G$ is a self-similar group acting on $X^*$, then for every $v\in
X^*$ there exists a unique element of $G$, denoted $g|_v$, such that
\[g(vw)=g(v)g|_v(w)\]
for all $w\in X^\omega$. We call $g|_v$ the \emph{section} of $g$ in
$v$. 

The elements of $\V_G$ are represented by tables of the
form~\eqref{eq:tablesG} with the splitting
rule~\eqref{eq:tablesspittingG}. The following proposition follows now
directly from the described constructions.

\begin{proposition}
Consider $\img{f}$ as a self-similar group with respect to a standard
action on $X^*$. Then $\V_f$ (with the corresponding standard action
on $X^\omega$) is equal to $\V_{\img{f}}$.
\end{proposition}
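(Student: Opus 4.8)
The plan is to show that the two descriptions of $\V_f$ coincide by matching their generators and their defining relations through the symbolic coding established in the previous subsections. Both $\V_f$ and $\V_{\img{f}}$ are, by their respective definitions, groups of homeomorphisms of $X^\omega$ represented by tables of the form~\eqref{eq:tablesG} subject to the splitting rule~\eqref{eq:tablesspittingG}; the content of the proposition is that the standard action converts one literally into the other. First I would recall that the isomorphism $\Lambda:X^*\arr T_t$ of Subsection~\ref{ss:2trees} conjugates the action of $\V_f$ on $\partial T_t$ to an action on $X^\omega$, and that under this conjugation the generating partial homeomorphisms $S_\gamma$ become the operators $S_vS_u^*$ and, more generally, the expressions $S_{\ell_u}gS_{\ell_v}^*$ furnished by Lemma~\ref{lem:imginside}.

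The heart of the argument is a two-way containment. For the inclusion $\V_f\subseteq\V_{\img{f}}$, I would take an arbitrary $g\in\V_f$ given by a table with entries $S_{\gamma_i}$ and invoke Lemma~\ref{lem:imginside} to write
\[
g=\sum_{i=1}^n S_{\ell_{u_i}}g_iS_{\ell_{v_i}}^*,\qquad g_i=S_{\ell_{u_i}}^*S_{\gamma_i}S_{\ell_{v_i}}\in\img{f}.
\]
After transporting through $\Lambda$, the factors $S_{\ell_{u_i}}$ and $S_{\ell_{v_i}}^*$ become the symbolic operators $S_{\Lambda^{-1}(u_i)}$ and $S_{\Lambda^{-1}(v_i)}^*$, so that $g$ takes exactly the form~\eqref{eq:SugSv} with $\img{f}$ playing the role of the self-similar group $G$ in Definition~\ref{def:VG}; this exhibits $g$ as an element of $\V_{\img{f}}$. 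For the reverse inclusion $\V_{\img{f}}\subseteq\V_f$, I would start from an element written as $\sum_{v\in A_1}S_{\alpha(v)}g_vS_v^*$ with $g_v\in\img{f}$, realize each $g_v$ as $S_{\delta_v}$ for a loop $\delta_v$ at $t$, and use the multiplication rules~\eqref{eq:gamma1} and~\eqref{eq:gamma2} together with the identity $S_{\ell_{\alpha(v)}}S_{\delta_v}S_{\ell_v}^*=S_{\ell_{\alpha(v)}\delta_v\ell_v^{-1}}$ to rewrite each summand as a single $S_\gamma$; collecting these over the complete antichain $A_1$ produces precisely a table of the form~\eqref{eq:table} defining an element of $\V_f$.

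Finally I would check that the two notions of section agree, so that the splitting rule is the same on both sides: the relation $gS_x=S_yh$ of Lemma~\ref{lem:ssimilarity} with $h=g|_x$ is exactly the self-similarity equation of Definition~\ref{def:ssimilar}, and formula~\eqref{eq:gsplitting} is the same elementary splitting~\eqref{eq:tablesspittingG}. Since equality of two table expressions is determined in both groups by reducibility to a common splitting (Proposition~\ref{pr:symbolic}), the bijection of generators intertwines the equivalence relations and hence descends to a group isomorphism. I expect the main obstacle to be purely bookkeeping rather than conceptual: one must verify carefully that the paths $\ell_v$ used to define $\Lambda$ are the same ones used in Lemma~\ref{lem:imginside} and in the recursion~\eqref{eq:ssimilarity}, so that the sections computed symbolically via~\eqref{eq:gsplitting} literally coincide with the sections $g|_v$ of $\img{f}$ as a self-similar group, and that the complete-antichain condition is preserved in both directions (here Lemma~\ref{lem:incomplete} is available should one need to pass between incomplete and complete antichains). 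Given the care taken in Subsection~\ref{ss:2trees} to identify $T_t$ and $\Gamma_t$ with the right and left Cayley graphs of the free monoid, this matching is essentially forced, and the proposition follows directly from the constructions, as the statement already asserts.
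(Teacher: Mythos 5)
Your proof is correct and takes essentially the same route as the paper, which records this proposition as following ``directly from the described constructions'': those constructions are precisely Lemma~\ref{lem:imginside}, the splitting formula~\eqref{eq:gsplitting} (equivalently Lemma~\ref{lem:ssimilarity}), and Proposition~\ref{pr:symbolic}, which are exactly the ingredients of your two-way containment. You merely spell out the bookkeeping (transport through $\Lambda$, realizing each $g_v\in\img{f}$ as $S_{\delta_v}$ for a loop $\delta_v$, and the agreement of the two notions of section) that the paper leaves implicit.
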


Let us describe more examples of groups $\V_f$ and $\V_G$.

\begin{example}
\label{ex:GrigorchukG}
The Grigorchuk group $G$ is generated by the transformations
\[a=S_1S_0^*+S_0S_1^*,\quad b=S_0aS_0^*+S_1cS_1^*,
c=S_0aS_0^*+S_1dS_1^*, d=S_0S_0^*+S_1bS_1^*,\]
see~\cite{grigorchuk:80_en}.

The corresponding group $\V_G$ was defined and studied 
by C.~R\"over in~\cite{roever,roever:comm}. This was the first example
of a group $\V_G$.
C.~R\"over proved that $\V_G$ is isomorphic to the
abstract commensurator of $G$, that it is finitely presented, and
simple. We will study the last two properties of the groups $\V_G$,
generalizing the results of C.~R\"over for a wide class of self-similar groups.
\end{example}

\begin{example}
\label{ex:kneadingv}
Let $f(z)=z^2+c$ be a complex quadratic polynomial such that
$f^n(0)=0$ for some $n$ (we assume that $n$ is the smallest number
with this property). Then $f$ is a self-covering of its Julia set,
which is path connected. The iterated monodromy groups $\img{f}$
associated with such polynomials were described
in~\cite{bartnek:mand}.
There exists a
sequence $v=x_1x_2\ldots x_{n-1}\in\{0, 1\}^{n-1}$ such that $\img{f}$ is isomorphic to
the group $\mathfrak{K}_v$ generated by $n$ elements $a_0, a_1,
\ldots, a_{n-1}$ given by the recurrent relations
\[a_0=S_1S_0^*+S_0a_{n-1}S_1^*,\]
and
\[a_i=\left\{\begin{array}{cc} S_0a_{i-1}S_0^*+S_1S_1^* & \text{if $x_i=0$}\\
S_0S_0^*+S_1a_{i-1}S_1^* & \text{if $x_i=1$},\end{array}\right.\]
for $i=1, 2, \ldots, n-1$. For example, $\img{z^2-1}=\mathfrak{K}_0$.
\end{example}

\subsection{Wreath recursions}

Let $(G, X)$ be a self-similar group. Every element $g\in G$ defines a
permutation $\sigma_g$ of $X=X^1\subset X^*$, and an element of $G^X$
equal to the function $f_g:x\mapsto g|_x$. It is easy to check that 
the map $\psi:G\arr \symm{X}\ltimes G^X$ mapping $g$ to
$(\sigma_g, f_g)$ is a homomorphism of groups, which we call the
\emph{wreath recursion} associated with the self-similar group.

Let $X=\{1, 2, \ldots, d\}$. We will write elements of
$\symm{d}\ltimes G^d=\symm{X}\ltimes G^X$ as products
$\sigma(g_1, g_2, \ldots, g_d)$, where $\sigma\in\symm{d}$ and $(g_1,
g_2, \ldots, g_d)\in G^d$.
Multiplication rule for elements of the wreath
product $G\wr\symm{d}=\symm{d}\ltimes G^d$ is given by the formula
\begin{equation}
\label{eq:multiplicationwreath}
\sigma(g_1, g_2, \ldots, g_d)\pi(h_1, h_2, \ldots,
h_d)=\sigma\pi(g_{\pi(1)}h_1, g_{\pi(2)}h_2, \ldots, g_{\pi(d)}h_d).
\end{equation}

The wreath recursion completely describes the self-similar group $G$
by giving recurrent formulas for the action of its elements on $X^*$.

\begin{example}
The adding machine, see Example~\ref{ex:addingmachine},
is given by the recursion $a=\sigma(1, a)$, where
$\sigma$ is the transposition $(0, 1)$. The generators of
$\img{z^2-1}$ are given by
\[a=\sigma(1, b),\qquad b=(1, a),\]
see Example~\ref{ex:basilica}.
\end{example}

Any homomorphism
$\psi:G\arr\symm{d}\ltimes G^d$ defines an action of $G$ on $X^*$ (for
$X=\{1, 2, \ldots, d\}$) by
the recurrent rule:
\[g(xw)=\sigma(x)g_x(w),\]
where $\sigma$ and $g_x$ are defined by the condition
$\psi(g)=\sigma(g_1, g_2, \ldots, g_d)$. This action is not
faithful in general. The quotient of $G$ by the kernel of its action
on $X^*$ is called the \emph{faithful quotient} of $G$, and it is a
self-similar group in the sense of Definition~\ref{def:ssimilar}.

Let $\psi:G\arr\symm{d}\ltimes G^d$ be an arbitrary homomorphism. Then
we can define the group $\V_\psi$ associated with it in the same way
as the groups $\V_G$ were defined for self-similar groups. Namely,
elements of $\V_\psi$ are defined by tables of the form
\[\left(\begin{array}{cccc} v_1 & v_2 & \ldots & v_n\\ g_1 & g_2 &
    \ldots & g_n\\ u_1 & u_2 & \ldots & u_n\end{array}\right),\]
where $g_i\in G$, and $\{v_1, v_2, \ldots, v_n\}$ and $\{u_1, u_2,
\ldots, u_n\}$ are complete antichains of $X^*$. Two tables define the
same element if they can be made equal (up to permutations of the
columns) by iterated replacement of a
column $\left(\begin{array}{c}v\\ g\\ u\end{array}\right)$
by the columns
\[\left(\begin{array}{cccc} v1 & v2 & \ldots & vd\\ g_1 & g_2 & \ldots &
    g_d\\
u\sigma(1) & u\sigma(2) & \ldots & u\sigma(d)\end{array}\right),\]
where $\psi(g)=\sigma(g_1, g_2, \ldots, g_d)$.
Multiplication of the tables is defined by the rule
\[\left(\begin{array}{ccc} w_1 & \ldots & w_n\\ g_1 &
    \ldots & g_n\\ u_1 & \ldots & u_n\end{array}\right)
\left(\begin{array}{cccc} v_1 & \ldots & v_n\\ h_1 & \ldots & h_n\\ w_1
    & \ldots & w_n\end{array}\right)=
\left(\begin{array}{ccc} v_1 & \ldots & v_n\\ g_1h_1 & \ldots & g_nh_n\\
    u_1 & \ldots & u_n\end{array}\right).\] 

\subsection{Bisets}
A formalism equivalent to wreath recursions is provided by the notion of a
\emph{covering biset}.
If $(G, X)$ is a self-similar group, then 
the set of transformations $S_xg:w\mapsto xg(w)$ of $X^\omega$ is
invariant under the left and right multiplications by elements of $G$:
\[S_xg\cdot h=S_x(gh),\qquad h\cdot S_xg=S_{h(x)}(h|_xg).\]
We get therefore commuting left and right actions of $G$ on the set
$\Phi=\{S_xg\;:\;x\in X, g\in G\}$. 
We will write elements $S_x\cdot g$ of $\Phi$ just as $x\cdot g$.

We adopt the following definition.

\begin{defi}
Let $G$ be a group. A \emph{$G$-biset} is a set $\Phi$ together with
commuting left and right $G$-actions. It is called a \emph{covering
biset} if the right action is free (i.e., if $x\cdot g=x$ for
$x\in\Phi$ and $g\in G$ implies $g=1$) and has a finite number of
orbits.
\end{defi}

Let $\Phi_1, \Phi_2$ be $G$-bisets. Then their tensor product
$\Phi_1\otimes\Phi_2$ is defined as the quotient of the set
$\Phi_1\times\Phi_2$ by the identifications
\[(x\cdot g)\otimes y=x\otimes (g\cdot y),\qquad g\in G.\]

Let $\Phi$ be the biset $\{x\cdot g\;:\;x\in X, g\in
G\}$ associated with a self-similar group. Then every element
of $\Phi^{\otimes n}$ can be uniquely written in the form $x_1\otimes
x_2\otimes\cdots\otimes x_n\cdot g$, where $x_i=x_i\cdot 1$ are
elements of $X$. It follows that $n$th tensor power
$\Phi^{\otimes n}$ is naturally identified with the set of pairs
$v\cdot g$, for $v\in X^n$ and $g\in G$, with the actions
\[h\cdot (v\cdot g)=h(v)\cdot (h|_vg),\qquad (v\cdot g)\cdot h=v\cdot (gh).\]

Let $\Phi$ be an arbitrary covering $G$-biset. Choose a transversal
$X\subset\Phi$ of the orbits of the right action. Then every element
of $\Phi$ is uniquely written in the form $x\cdot g$ for $x\in X$ and
$g\in G$. For every $g\in G$ and $x\in X$ there exist $h\in G$ and
$y\in X$ such that \[g\cdot x=y\cdot h,\]
and the elements $y, h$ are uniquely determined by $g$ and $x$. We get
hence a homomorphism $\psi:G\arr \symm{X}\ltimes G^X$, called the
\emph{wreath recursion} associated with $\Phi$ and $X$. Namely,
$\psi(g)=\sigma\cdot f$, where $\sigma\in\symm{X}$ and $f\in G^X$ satisfy
\[g\cdot x=\sigma(x)\cdot f(x)\]
for all $x\in X$ (where
$G^X$ is seen as the set of functions $X\arr G$). If we change the
orbit transversal $X$ to an orbit transversal $Y$, then the
homomorphism $\psi:G\arr\symm{|X|}\ltimes G^{|X|}$ is composed with an
inner automorphism of the wreath product (after we identify $X$
with $Y$ by a bijection).
Note that the biset $\Phi$ is uniquely determined, up to an
isomorphism of biset, by the homomorphism $\psi$.

\begin{defi}
\label{def:ssequivalent}
Two self-similar actions $(G, X_1)$ and $(G, X_2)$ of a group $G$ are
called \emph{equivalent} if their associated bisets $\Phi_i=X_i\cdot
G$ are isomorphic, i.e., if there exists a bijection
$F:\Phi_1\arr\Phi_2$ such that $F(g_1\cdot a\cdot g_2)=g_1\cdot
F(a)\cdot g_2$ for all $g_1, g_2\in G$ and $a\in\Phi_1$. Two
self-similar actions of groups $G_1, G_2$ are equivalent if they
become equivalent after identification of the groups $G_1, G_2$ by an
isomorphism $G_1\arr G_2$.
\end{defi}

The wreath recursion can be defined invariantly, without a choice
of the orbit transversal. Namely, let $\mathop{\mathrm{Aut}}(\Phi_G)$
be the automorphism group of the right $G$-set $\Phi$, i.e., the set
of all bijections $\alpha:\Phi\arr\Phi$ such that $\alpha(x\cdot
g)=\alpha(x)\cdot g$. Then $\mathop{\mathrm{Aut}}(\Phi_G)$ is
isomorphic to the wreath product $\symm{d}\ltimes G^d$, where $d$ is the
number of the orbits of the right action on $\Phi$, since the right
$G$-set $\Phi$ is free and has $d$ orbits, i.e., is isomorphic to the
disjoint union of $d$ copies of $G$. For every element
$g\in G$ the map $\psi(g):x\mapsto g\cdot x$ is an automorphism of the right
$G$-set $\Phi$. Then $\psi:G\arr\mathop{\mathrm{Aut}}(\Phi_G)$ is the
wreath recursion. For more on wreath recursions and bisets, 
see~\cite{nek:book,nek:filling,nek:models}.

\begin{example} Let $f:\M_1\arr\M$ be a self-covering map, and let
  $\iota:\M_1\arr\M$ be a continuous map (for example, $f$ is a
  partial self-covering, and $\iota$ is the identical embedding).

Suppose that $\M$ is path-connected. Choose a basepoint $t\in\M$, and 
consider the set $\Phi$ of pairs $(z, \ell)$, where $z\in f^{-1}(t)$,
and $\ell$ is a homotopy class of a path in $\M$ from $t$ to
$\iota(z)$. Then $\pi_1(\M, t)$ acts on $\M$ be appending loops to the
beginning of the path $\ell$:
\[(z, \ell)\cdot\gamma=(z, \ell\gamma).\]
It also acts by appending images of lifts of $\gamma$ to the end of
the path $\ell$:
\[\gamma(z, \ell)=(z', \iota(\gamma_z)\ell),\]
where $\gamma_z$ is the lift of $\gamma$ by $f$ starting at $z$, and
$z'$ is the end of $\gamma_z$. Here, as before, we multiply paths as
functions (second path in a product is passed first).

Then $\Phi$ is a covering $\pi_1(\M, t)$-biset. The associated wreath
recursion coincides with the wreath recursion associated with the
standard action of $\img{f}$.
\end{example}

Let us show a more canonical definition of the groups $\V_\psi$ in terms
of bisets.
Let $\Phi$ be a covering $G$-biset. Consider the biset $\Phi^*$ equal
to the disjoint union of the bisets $\Phi^{\otimes n}$ for all
integers $n\ge 0$. Here $\Phi^{\otimes 0}$ is the group $G$ with the
natural $G$-biset structure. The set $\Phi^*$ is a semigroup with respect to the
tensor product operation.

Let us order the semigroup $\Phi^*$ with respect to the left divisibility,
i.e., $v\preceq u$ if and only if there exists $w$ such that
$u=v\otimes w$. It is easy to check that $\Phi^*$ is
left-cancellative, i.e., that $v\otimes w_1=v\otimes w_2$ implies that
$w_1=w_2$.

The quotient of $\Phi^*$ by the right $G$-action is a rooted
$d$-regular tree, and the image of $\preceq$ under the quotient map is the natural order
on the rooted tree $\Phi^*/G$. If $X$ is a right orbit transversal of
$\Phi$, then $X^{\otimes n}=X^n$ is a right orbit transversal of
$\Phi^{\otimes n}$, and the identical embedding of $X^*=\bigcup_{n\ge
  0}X^{\otimes n}$ into $\Phi^*$
induces an isomorphism of the rooted tree $X^*$ with
$\Phi^*/G$.

The left action of $G$ on $\Phi^*$ permutes the orbits of the right
action and preserves the relation $\preceq$, hence $G$ acts on the tree $\Phi^*/G$ by
automorphisms. The corresponding action on $X^*$ is the self-similar
action defined by the wreath recursion associated with $X$.

Let $A_1, A_2\subset\Phi^*$ be finite maximal
antichains with respect to the divisibility order $\preceq$. Note that
a subset $A\subset\Phi^*$ is a finite maximal antichain if and only if
its image in $\Phi^*/G$ is a maximal antichain. Choose a
bijection $\alpha:A_1\arr A_2$. 

If $w\in\Phi^*$ is such that $v\preceq
w$ for some $v\in A_1$, then there exists a unique $u\in\Phi^*$ such
that $w=v\otimes u$. Consider then the transformation $h_{A_1, \alpha,
  A_2}:w\mapsto
\alpha(v)\otimes u$. The map $h_{A_1, \alpha, A_2}$ is defined for all elements of
$\Phi^*$ bigger than some element of $A_1$, hence for all elements of
$\Phi^{\otimes n}$, where $n$ is big enough. We will identify to
transformation $h_{A_1, \alpha, A_2}$ and $h_{A_1', \alpha', A_2'}$ if
their actions on the sets $\Phi^{\otimes n}$ agree for all $n$ big
enough. It is not hard to prove that the set of equivalence classes of
such maps is a group, which we will denote $\V_\Phi$. It is also
straightforward to show that $\V_\Phi$ coincides with $\V_\psi$, where
$\psi$ is the wreath recursion associated with $\Phi$, and that 
if $\Phi$ is the usual biset associated
with a self-similar group $G$, then $\V_\Phi$ coincides with $\V_G$.

\subsection{Epimorphism onto the faithful quotient}

Consider a covering $G$-biset $\Phi$ and the corresponding group $\V_\Phi$.
The faithful quotient $\overline G$ is a self-similar group acting on $X^*$. We
have, therefore two groups: $\V_\Phi$ and $\V_{\overline G}$, which
are non-isomorphic in general (the group $\V_{\overline G}$ is a
homomorphic image of $\V_\Phi$).

\begin{proposition}
\label{pr:kerneln}
Let $\Phi$ be a covering biset. Denote by $K_n$ the subgroup of
elements of $G$ acting trivially from the left on $\Phi^{\otimes n}$,
i.e., the kernel of the wreath recursion associated with the biset
$\Phi^{\otimes n}$. Then $K_n\supseteq K_{n-1}$. If $\bigcup_{n\ge
  0}K_n$ is equal to the kernel of the epimorphism
$G\arr\overline{G}$, then the natural epimorphism
$\V_\Phi\arr\V_{\overline G}$ is an isomorphism.
\end{proposition}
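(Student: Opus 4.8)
The plan is to analyze the natural epimorphism $\pi:\V_\Phi\arr\V_{\overline G}$ and show that under the stated hypothesis its kernel is trivial. First I would establish the monotonicity $K_{n-1}\subseteq K_n$, which is the easy bookkeeping step: if $g$ acts trivially from the left on $\Phi^{\otimes(n-1)}$, then for any $v\otimes u\in\Phi^{\otimes n}$ with $v\in\Phi^{\otimes(n-1)}$ and $u\in\Phi$ we have $g\cdot(v\otimes u)=(g\cdot v)\otimes u=v\otimes u$, using that the left action distributes over the tensor product via the recursion. Hence $K_n\supseteq K_{n-1}$, the $K_n$ form an increasing chain, and $\bigcup_n K_n$ is a subgroup of $G$; it is always \emph{contained} in the kernel $K$ of $G\arr\overline G$, and the hypothesis is precisely that equality holds.

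Next I would translate the condition ``$\pi(h)=1$'' into the biset language. An element $h\in\V_\Phi$ is represented by a triple $(A_1,\alpha,A_2)$ of maximal antichains in $\Phi^*$ with a bijection, acting by $v\otimes u\mapsto\alpha(v)\otimes u$; its image $\pi(h)$ is the induced transformation on $\overline G$-tree, i.e.\ on the quotient $\Phi^*/K$. The key observation is that $\pi(h)=1$ means that after passing to the faithful quotient, $h$ agrees with the identity on $\Phi^{\otimes n}/K$ for all large $n$. Using Proposition~\ref{pr:symbolic}'s normal-form philosophy (two expressions are equal iff a common splitting makes them equal), I would refine the antichain $A_1$ by splitting until it lies at a single level, say $A_1=A_2=\Phi^{\otimes n}/G$ for $n$ large. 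At that level $h$ is given by a permutation of the orbits together with a tuple of elements $g_v\in G$, one per $v\in X^n$, acting as $v\cdot g\mapsto \alpha(v)\cdot(g_v\, g)$ in the identification $\Phi^{\otimes n}=X^n\cdot G$.

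Now I would exploit faithfulness on the quotient. Since $\pi(h)=1$, the induced permutation of the level-$n$ vertices of $X^*$ must be trivial for all large $n$, forcing $\alpha=\mathrm{id}$ on that level (so $h$ acts without moving vertices), and each $g_v$ must lie in the kernel of the action of $G$ on the \emph{remaining} tree below $v$. Concretely, $g_v$ acts trivially from the left on $\Phi^{\otimes m}$ for all $m$, hence $g_v\in\bigcap_m K_m$; but more to the point, the condition that $h$ maps to the identity in $\V_{\overline G}$ says each $g_v$ projects to $1\in\overline G$, i.e.\ $g_v\in K=\ker(G\arr\overline G)$. By the hypothesis $K=\bigcup_m K_m$, so each $g_v\in K_{m}$ for some $m$; after splitting $h$ further down to level $n+m$, each section of $g_v$ acts trivially on the corresponding $\Phi^{\otimes m}$, meaning that in the refined table every group entry is the identity and the antichain map is the identity. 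Therefore the refined table represents the identity of $\V_\Phi$, so $h=1$ and $\pi$ is injective, hence an isomorphism.

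The main obstacle I expect is the finiteness/uniformity step: a priori each $g_v$ enters $\bigcup_m K_m$ at a possibly different depth $m_v$, and one must split to a common level to conclude that the \emph{whole} table becomes trivial simultaneously. Because the antichain is finite, one may take $m=\max_v m_v$, but verifying that splitting to this common depth genuinely kills every section---that is, that ``acts trivially on $\Phi^{\otimes m}$'' is inherited correctly by all sections appearing after the splitting, using the recursion $h\cdot(v\otimes u)=h(v)\otimes(h|_v\cdot u)$ and the already-established chain $K_{m}\subseteq K_{m+1}$---is the delicate bookkeeping that makes the argument go through.
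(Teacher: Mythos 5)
Your proof is correct and takes essentially the same approach as the paper: a kernel element is written as a table, triviality of its image in $\V_{\overline G}$ forces the vertex bijection to be the identity and every entry $g_v$ to lie in $\ker(G\arr\overline G)=\bigcup_m K_m$, finiteness of the table gives a common $k$ with all $g_v\in K_k$ (using $K_{m}\subseteq K_{m+1}$), and $k$ elementary splittings then produce the identity table, so the element is trivial in $\V_\Phi$. The only blemish is the unused aside claiming that $g_v$ acts trivially from the left on $\Phi^{\otimes m}$ for all $m$, hence $g_v\in\bigcap_m K_m$: this is false in general---membership in $\ker(G\arr\overline G)$ only makes the sections of $g_v$ trivial in $\overline G$, not in $G$, and if the claim were true the hypothesis $\bigcup_m K_m=\ker(G\arr\overline G)$ would be superfluous---but you discard it immediately in favor of the correct deduction, so the argument stands.
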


\begin{proof}
Suppose that an element $g$ of the kernel of $\V_\Phi\arr\V_{\overline
  G}$ is defined by a table $\left(\begin{array}{cccc}v_1 & v_2 &
    \ldots & v_n\\ g_1 & g_2 & \ldots & g_n\\ u_1 & u_2 & \ldots &
    u_n\end{array}\right)$. Then the table $\left(\begin{array}{cccc}v_1 & v_2 &
    \ldots & v_n\\ \overline{g_1} & \overline{g_2} & \ldots & \overline{g_n}\\ u_1 & u_2 & \ldots &
    u_n\end{array}\right)$ represents the trivial element of
$\V_{\overline G}$, where $g\mapsto\overline g$ is the epimorphism
$G\arr\overline G$. But this means that $v_i=u_i$ and
$\overline{g_i}=1$ for all $i$. Consequently, there exists $k$ such
that $g_i\in K_k$ for all $i$. It follows that after applying elementary
splittings $k$ times to all column of the table defining $g$, we will
get a table defining the trivial element of $\V_\Phi$, which means
that $g=1$.
\end{proof}

\section{Simplicity of the commutator subgroup}

\subsection{Some general facts}
\label{ss:somegeneralfacts}

Let $G$ be a group acting faithfully by homeomorphisms on an infinite
Hausdorff space $\mathcal{X}$. For an open subset $U\subset\mathcal{X}$, denote
by $G_{(U)}$ the subgroup of elements of $G$ acting trivially on
$\mathcal{X}\setminus U$. 
Denote by $R_U$ the normal closure in $G$ of the derived subgroup
$G_{(U)}{}'=[G_{(U)}, G_{(U)}]$.

The following simple lemma has appeared in many papers in different
forms, see, for example \cite[Lemma~5.3]{handbook:branch}, 
\cite[Theorem~4.9]{matui:fullI}.

\begin{lemma}
\label{lem:basic}
Let $N$ be a non-trivial normal subgroup of $G$. Then there exists a
non-empty open subset $U\subset\mathcal{X}$ such that $R_U\le N$.
\end{lemma}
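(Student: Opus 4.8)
The plan is to attach to a nontrivial element of $N$ a single open set $V$, and then run the standard commutator argument to show $G_{(V)}'\le N$, so that $R_V\le N$ follows immediately from normality; thus $U=V$ will be the desired set.

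First I would choose any $g\in N$ with $g\neq 1$. Since $G$ acts faithfully, there is a point $x\in\X$ with $g(x)\neq x$, and since $\X$ is Hausdorff I can separate $x$ and $g(x)$ by disjoint open sets $W_1\ni x$ and $W_2\ni g(x)$. Setting $V=W_1\cap g^{-1}(W_2)$ then gives a nonempty open set with $V\cap g(V)=\varnothing$. The claim is that this $V$ works. The key geometric observation is that if $h\in G_{(V)}$, so that $h$ is supported in $V$, then $ghg^{-1}$ is supported in $g(V)$, which is disjoint from $V$; consequently $ghg^{-1}$ commutes with every element of $G_{(V)}$, because homeomorphisms with disjoint supports commute.

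The crux of the proof is then a commutator computation. For $h_1,h_2\in G_{(V)}$, write $c=[g,h_1]=(gh_1g^{-1})h_1^{-1}$. Using repeatedly that $gh_1g^{-1}$ commutes with everything supported in $V$, one checks that
\[ [h_2, c] = [h_2, h_1^{-1}]. \]
This is the step where the disjointness $V\cap g(V)=\varnothing$ enters in an essential way, and tracking which factors commute is the only delicate point. To finish, I would observe that $c=[g,h_1]\in N$: indeed $g\in N$ and $N$ is normal, so $c=g\cdot(h_1g^{-1}h_1^{-1})\in N$. Normality of $N$ then gives $[h_2,c]\in N$, hence $[h_2,h_1^{-1}]\in N$. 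As $h_1$ and $h_2$ range over $G_{(V)}$ and $h_1\mapsto h_1^{-1}$ is a bijection of $G_{(V)}$, this shows that every commutator of elements of $G_{(V)}$ lies in $N$, i.e.\ $G_{(V)}'\le N$. Since $N$ is normal and $R_V$ is by definition the normal closure of $G_{(V)}'$, we conclude $R_V\le N$.

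I expect the only genuine obstacle to be verifying the identity $[h_2,[g,h_1]]=[h_2,h_1^{-1}]$ cleanly; everything else (separating the points, the disjoint-support commuting principle, and passing to the normal closure) is routine. Note that beyond $N$ being nontrivial the argument uses only faithfulness of the action and the Hausdorff property of $\X$, so no special structure of $\X$ or $G$ is required.
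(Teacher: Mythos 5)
Your proof is correct and takes essentially the same route as the paper's: pick $g\in N$ moving a point, use the Hausdorff property to separate $V$ from $g(V)$, and evaluate a double commutator (your $[h_2,[g,h_1]]=[h_2,h_1^{-1}]$ versus the paper's $[[g^{-1},h_1],h_2]=[h_1,h_2]$) to get $G_{(V)}{}'\le N$, whence $R_V\le N$ since $N$ is normal. The only cosmetic difference is that the paper checks the identity pointwise on $U$, $gU$, and their complement, while you argue via the disjoint-support commuting principle and the bijectivity of inversion on $G_{(V)}$.
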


\begin{proof}
It is sufficient to prove that there exists an open subset $U$ such
that $G_{(U)}{}'\le N$.

Let $g\in N\setminus\{1\}$, and let $x\in\mathcal{X}$ be such that
$g(x)\ne x$. Then there exists an open subset $U$ such that $x\in U$
and $U\cap gU=\emptyset$. For example, find disjoint neighborhoods $U_x$
and $U_{g(x)}$ of $x$ and $g(x)$, and set $U=U_x\cap
g^{-1}(U_{g(x)})$. 

Let $h_1, h_2\in G_{(U)}$. Then $gh_1^{-1}g^{-1}$ acts trivially
outside $gU$. Consequently, $[g^{-1}, h_1]=gh_1^{-1}g^{-1}h_1$ acts as $h_1$
on $U$, as $gh_1^{-1}g^{-1}$ on $gU$, and trivially outside 
$U\cup gU$.  It follows that $[[g^{-1}, h_1], h_2]$ acts as $[h_1, h_2]$
on $U$ and trivially outside $U$, i.e., $[[g^{-1}, h_1], h_2]=[h_1,
h_2]$. On the other hand, $[[g^{-1}, h_1], h_2]\in N$, since $N$ is normal
and $g\in N$. It follows that $[h_1, h_2]\in N$ for all $h_1, h_2\in
G_{(U)}$, i.e., that $G_{(U)}{}'\le N$.
\end{proof}

\begin{lemma}
\label{lem:Ru}
Let $U$ be an open subset of $\mathcal{X}$ such that its $G$-orbit is
a basis of topology of $\mathcal{X}$. Then every non-trivial normal subgroup of
$G$ contains $R_U$.
\end{lemma}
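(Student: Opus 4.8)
The plan is to bootstrap from Lemma~\ref{lem:basic}. Let $N$ be a non-trivial normal subgroup of $G$. By Lemma~\ref{lem:basic}, there exists a non-empty open subset $W\subset\X$ with $R_W\le N$, where $R_W$ is the normal closure of $G_{(W)}{}'$. The goal is to promote this to the specific subset $U$ from the hypothesis, using that the $G$-orbit of $U$ forms a basis of the topology. The key observation I would exploit is a monotonicity property: if $U_1\subseteq U_2$, then $G_{(U_1)}\le G_{(U_2)}$, and hence $R_{U_1}\le R_{U_2}$. So it suffices to find a single element $g\in G$ such that $g(U)\subseteq W$, because then $R_{g(U)}\le R_W\le N$, and since $R_{g(U)}=gR_Ug^{-1}$ (conjugation by $g$ carries $G_{(U)}$ to $G_{(g(U))}$ and normal closures to normal closures), normality of $N$ gives $R_U=g^{-1}R_{g(U)}g\le N$.

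First I would establish the conjugation identity $R_{g(U)}=gR_Ug^{-1}$ precisely. For any homeomorphism $g\in G$, conjugation $h\mapsto ghg^{-1}$ sends an element acting trivially off $U$ to one acting trivially off $g(U)$, so $gG_{(U)}g^{-1}=G_{(g(U))}$; this respects commutators, giving $gG_{(U)}{}'g^{-1}=G_{(g(U))}{}'$, and since $g$-conjugation is an automorphism of $G$ it permutes normal closures, yielding $gR_Ug^{-1}=R_{g(U)}$. Next I would produce the element $g$ with $g(U)\subseteq W$. Since the $G$-orbit $\{h(U):h\in G\}$ is a basis of the topology and $W$ is non-empty open, there is some $h\in G$ with $h(U)\subseteq W$; set $g=h$.

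The remaining step is to derive $R_U\le N$ from $R_{h(U)}=hR_Uh^{-1}\le R_W\le N$. Here normality of $N$ is exactly what is needed: $R_U=h^{-1}R_{h(U)}h\le h^{-1}Nh=N$. This completes the argument, and the conclusion $R_U\le N$ for \emph{every} non-trivial normal $N$ is precisely the statement.

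The main obstacle I anticipate is verifying the two facts about $R_U$ cleanly, namely the conjugation equivariance $gR_Ug^{-1}=R_{g(U)}$ and the containment $R_{g(U)}\le R_W$ when $g(U)\subseteq W$. The latter follows from $G_{(g(U))}\le G_{(W)}$ (an element trivial outside the smaller set $g(U)$ is automatically trivial outside the larger set $W$), hence $G_{(g(U))}{}'\le G_{(W)}{}'$, and taking normal closures in $G$ preserves the inclusion. Neither step is deep, but both must be stated carefully since $R_U$ is a normal closure rather than $G_{(U)}{}'$ itself; the point is that $g$-conjugation is a group automorphism and $U\subseteq W$ gives the subgroup inclusion before normal closure, so both operations are compatible with the respective relations.
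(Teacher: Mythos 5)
Your proof is correct and follows essentially the same route as the paper: invoke Lemma~\ref{lem:basic} to get a non-empty open $W$ with $R_W\le N$, use the basis hypothesis to find $g$ with $gU\subseteq W$, and combine the conjugation identity $gG_{(U)}g^{-1}=G_{(gU)}$ with monotonicity of $V\mapsto R_V$ under inclusion. The only cosmetic difference is that the paper observes directly that $R_{gU}=R_U$ (a normal closure is invariant under conjugating its generating set), whereas you conjugate $N$ instead; these are interchangeable since $R_U$ is normal in $G$.
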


\begin{proof}
For any $g\in G$ and $U\subset\mathcal{X}$, we have
$gG_{(U)}g^{-1}=G_{(gU)}$. Consequently,
$gG_{(U)}{}'g^{-1}=G_{(gU)}{}'$, and $R_U$ is
equal to the group generated by $\bigcup_{g\in G}G_{(gU)}{}'$. In
particular, $R_{gU}=R_U$ for all $g\in G$.

Since $\{gU\;:\;g\in G\}$ is a basis of topology, for every open
subset $W\subset\X$ there exists $g\in G$ such that $gU\subseteq W$,
hence $R_U=R_{gU}\le R_W$. This implies, by Lemma~\ref{lem:basic},
that $R_U$ is contained in every non-trivial normal subgroup of $G$.
\end{proof}

\begin{proposition}
\label{pr:RU}
Suppose that $U$ is an open subset of $\X$ such that its $R_U$-orbit
is equal to its $G$-orbit and
is a basis of topology of $\X$. Then $R_U$ is simple and is contained
in every non-trivial normal subgroup of $G$.
\end{proposition}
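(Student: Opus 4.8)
The second assertion requires no work beyond what is already available: since the $G$-orbit of $U$ coincides with its $R_U$-orbit and is by hypothesis a basis of the topology of $\X$, Lemma~\ref{lem:Ru} applied to $G$ states precisely that $R_U$ is contained in every non-trivial normal subgroup of $G$. The plan for simplicity is to run the same machinery a second time, now with $R_U$ in the role of the ambient group. Note that $R_U$ acts faithfully on $\X$, being a subgroup of $G$, and that its orbit of $U$ is a basis of the topology; since a basis of an infinite Hausdorff space is infinite, this orbit is infinite and in particular $R_U\ne\{1\}$. Writing $(R_U)_{(W)}$ for the subgroup of elements of $R_U$ trivial on $\X\setminus W$, let $\wt R$ denote the normal closure in $R_U$ of $(R_U)_{(U)}{}'$. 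Applying Lemma~\ref{lem:Ru} to the group $R_U$ shows that $\wt R$ is contained in every non-trivial normal subgroup of $R_U$. Hence it suffices to prove the single equality $\wt R=R_U$: once this is known, any non-trivial normal subgroup of $R_U$ contains $\wt R=R_U$ and therefore equals $R_U$.

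The key step is to show that $\wt R$ is normal not merely in $R_U$ but in all of $G$. Exactly as in the proof of Lemma~\ref{lem:Ru}, conjugation by $r\in R_U$ carries $(R_U)_{(U)}$ onto $(R_U)_{(rU)}$, so that
\[\wt R=\langle (R_U)_{(rU)}{}'\;:\;r\in R_U\rangle.\]
For an arbitrary $g\in G$ one has $g(R_U)_{(rU)}g^{-1}=(R_U)_{(grU)}$, using that $R_U$ is normal in $G$ so that conjugation by $g$ preserves $R_U$ while moving the support from $rU$ to $grU$. Now $grU=(gr)U$ lies in the $G$-orbit of $U$, which equals the $R_U$-orbit of $U$, so $grU=r'U$ for some $r'\in R_U$. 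Thus $g\wt R g^{-1}$ is generated by subgroups $(R_U)_{(r'U)}{}'$ with $r'\in R_U$, whence $g\wt R g^{-1}\subseteq\wt R$; replacing $g$ by $g^{-1}$ gives equality, so $\wt R\trianglelefteq G$. Granting that $\wt R\ne\{1\}$, Lemma~\ref{lem:Ru} forces $R_U\subseteq\wt R$, and since the reverse inclusion is obvious we obtain $\wt R=R_U$, as required.

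The remaining point, and the one I expect to be the main obstacle, is the non-triviality $\wt R\ne\{1\}$, equivalently the statement that the local group $(R_U)_{(U)}$ is non-abelian; this is where the \emph{richness} of the action enters, and it cannot be read off formally from normality alone. The plan is to produce a non-trivial commutator of two elements of $R_U$ \emph{supported in} $U$. Using that $U$ is a non-empty open subset of an infinite Hausdorff space whose orbit is a basis, I would choose three pairwise disjoint translates $r_1U,r_2U,r_3U\subseteq U$ and a mover $s\in R_U$ with $sU\cap U=\varnothing$, and then invoke the commutator identity $[[s^{-1},h_1],h_2]=[h_1,h_2]$ from the proof of Lemma~\ref{lem:basic} to convert a non-trivial commutator of elements of $G_{(U)}$ (which exists, since otherwise $G_{(U)}$ is abelian and $R_U=\{1\}$, contradicting $R_U\ne\{1\}$) into a non-trivial commutator of genuine elements of $R_U$ supported in $U$. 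The delicate part is precisely the verification that the elements so constructed lie in $R_U$ rather than in the larger group $G$; controlling their support inside $U$ and inside the chosen translates, while keeping them inside $R_U$, is the technical heart of the argument, and it relies essentially on the fact that the conjugates $G_{(rU)}{}'$ generate $R_U$ together with the existence of the mover $s\in R_U$.
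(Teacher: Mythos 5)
Your reduction of simplicity to the single equality $\wt R=R_U$ is correct, and your route to that equality is genuinely different from the paper's and arguably cleaner: you show that $\wt R$ itself is normal in all of $G$, using the hypothesis that the $G$-orbit of $U$ equals its $R_U$-orbit to rewrite $g(R_U)_{(rU)}{}'g^{-1}=(R_U)_{(grU)}{}'=(R_U)_{(r'U)}{}'$ with $r'\in R_U$, and then invoke Lemma~\ref{lem:Ru} for $G$. The paper instead shows that every non-trivial $N\unlhd R_U$ contains $\bigcup_{g\in G}G_{(gU)}{}''$, whose generated subgroup is normal in $G$ and non-trivial; both arguments consume the orbit hypothesis at the same point, but yours isolates the mechanism in one clean statement ($\wt R\unlhd G$).

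The genuine gap is exactly the step you flag as the main obstacle: the non-triviality $\wt R\ne\{1\}$, i.e., that $(R_U)_{(U)}$ is non-abelian, and the construction you sketch does not close it. With $h_1,h_2\in G_{(U)}$ and a mover $s\in R_U$ satisfying $sU\cap U=\emptyset$, the element $a=[s^{-1},h_1]$ does lie in $R_U$ (write it as $s\cdot(h_1^{-1}s^{-1}h_1)$ and use normality of $R_U$ in $G$), but it is supported in $U\cup sU$, not in $U$; and $h_2$ need not lie in $R_U$ at all. So the identity $[[s^{-1},h_1],h_2]=[h_1,h_2]$ exhibits a non-trivial \emph{element} of $(R_U)_{(U)}$, namely $[h_1,h_2]$, but not a non-trivial commutator of two elements of $(R_U)_{(U)}$, which is what $\wt R\ne\{1\}$ requires. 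Moreover, the part you call the technical heart --- producing elements of $R_U$ supported in $U$ --- is actually free and needs no commutator identity: since $R_U$ is by definition the normal closure of $G_{(U)}{}'$, we have $G_{(U)}{}'\le R_U\cap G_{(U)}=(R_U)_{(U)}$, hence $G_{(U)}{}''\le (R_U)_{(U)}{}'$. The missing idea is the paper's separation argument showing $G_{(U)}{}''\ne\{1\}$: note first that $G_{(U)}{}'\ne\{1\}$ (otherwise $R_U=\{1\}$, contradicting the basis hypothesis); pick $g\in G_{(U)}{}'$ moving a point $x\in U$, a neighborhood $W\subseteq U$ of $x$ with $W\cap gW=\emptyset$, and --- here the basis hypothesis enters --- an $h\in R_U$ with $hU\subseteq W$. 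Then $G_{(hU)}{}'=hG_{(U)}{}'h^{-1}$ is a non-trivial subgroup of $G_{(U)}{}'$ (as $hU\subseteq U$), and for a non-trivial $g'\in G_{(hU)}{}'$ the elements $g'$ and $gg'g^{-1}$ have disjoint supports (contained in $hU$ and $gW$ respectively), so $[g,g']\ne 1$. Thus $G_{(U)}{}''\ne\{1\}$, hence $(R_U)_{(U)}{}'\ne\{1\}$ and $\wt R\ne\{1\}$; with this inserted, your argument completes the proof.
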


\begin{proof}
The group $G_{(U)}{}'$ is non-trivial, since
otherwise its normal closure $R_U$ is trivial, which contradicts the
fact that the $R_U$-orbit of $U$ is a basis of topology. It follows that
there exists $g\in G_{(U)}{}'$ moving a point $x\in\X$. We have then $x\in U$
and $g(x)\in U$. There exists a neighborhood $W$ of $x$ such that
$W\cap gW=\emptyset$. Then there exists an element $h\in R_U$ such
that $hU\subset W$, and there exists a non-trivial element $g'\in
G_{(hU)}{}'\le G_{(U)}{}'$. We have $[g, g']\ne 1$, since $g'$ and $gg'g^{-1}$
have disjoint supports. This shows that $G_{(U)}{}'$ is non-abelian,
i.e., that $G_{(U)}{}''$ is non-trivial.

The subgroup $R_U$ is contained in every
non-trivial normal subgroup of $G$, by Lemma~\ref{lem:Ru}.
We also have that the normal closure in $R_U$ of the group
$(R_U)_{(U)}{}'$ is contained in every normal subgroup of $R_U$.

Note that $G_{(U)}{}'$ is contained in $G_{(U)}\cap R_U=(R_U)_{(U)}$,
hence $G_{(U)}{}''\le(R_U)_{(U)}{}'$. Conjugating by an element $g\in
G$ (and using that
$R_U$ is normal in $G$), we get that $G_{(gU)}{}''\le (R_U)_{(gU)}{}'$.

It follows that for any non-trivial subgroup $N\unlhd R_U$ we have
 \[N\supseteq\bigcup_{g\in
  R_U}(R_U)_{(gU)}{}'\supseteq\bigcup_{g\in R_U}G_{(gU)}{}''=\bigcup_{g\in
  G}G_{(gU)}{}''.\]
Consequently, $N$ contains the group generated by the set $\bigcup_{g\in
  G}G_{(gU)}{}''$, which is normal in $G$ and non-trivial, hence
contains $R_U$. We have proved that every non-trivial normal subgroup of $R_U$
contains $R_U$, i.e., that $R_U$ is simple.
\end{proof}

\subsection{Simplicity of $\V_G'$}

Let $(G, X)$ be a self-similar group, and let $\V_G$ be the corresponding
group of homeomorphisms of the Cantor set $X^\omega$.

Fix a linear ordering ``$<$'' of the elements of
$X$. Extend it to the lexicographic ordering on $X^*$. Namely, if
$x_1x_2\ldots x_n$ and $y_1y_2\ldots y_m$ are incomparable with
respect to the order $\preceq$, then
$x_1x_2\ldots x_n<y_1y_2\ldots y_m$ if and only if $x_i<y_i$, where
$i$ is the smallest index such that $x_i\ne y_i$. If a word $v$ is a
beginning of a word $w$, then $v\le w$.

Suppose that $\left(\begin{array}{cccc}v_1 & v_2 & \ldots & v_n\\ u_1 &
u_2 & \ldots & u_n\end{array}\right)$ is a table defining an
element $g\in\V_X$, i.e., that $g=\sum_{i=1}^nS_{u_i}S_{v_i}^*$.
Assume that its first row is ordered in the increasing
lexicographic order. If the permutation putting the second row into
the increasing lexicographic order is even, then we say that the table
is even.

Note that if $d=|X|$ is even, then every table has an even splitting. On the
other hand, if $d$ is odd, then the set of elements defined by even
tables is a subgroup of index 2 in $\V_d$. The following is proved
in~\cite{hgthomp}.

\begin{theorem}
\label{th:vdprime}
If $d$ is even, then $\V_d$ is simple. If $d$ is odd, then the
commutator subgroup $\V_d'$ is the group of elements defined by even
tables, and is a simple subgroup of index 2 in $\V_d$.
\end{theorem}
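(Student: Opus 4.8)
The plan is to deduce everything from the general machinery of Subsection~\ref{ss:somegeneralfacts}, applied to $G=\V_d$ acting on the Cantor set $\X=X^\omega$, together with an explicit analysis of how the parity of a table behaves under elementary splitting.

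First I would pin down the sign. Given a reduced table with lexicographically ordered top row, let $\mathrm{inv}$ be the number of inversions of the permutation sorting the bottom row, and call the table even when $\mathrm{inv}$ is even. A direct count shows that an elementary splitting of one column replaces $\mathrm{inv}$ by $\mathrm{inv}+(d-1)k$, where $k$ is the number of inversions in which that column participates: the $d$ new columns are mutually in order and create no inversions among themselves, while each old inversion involving the split column is multiplied into $d$ inversions. When $d$ is odd the correction $(d-1)k$ is even, so parity is a splitting invariant; hence $\mathrm{sgn}\colon\V_d\to\Z/2\Z$ sending an element to the parity of any of its tables is a well-defined homomorphism, surjective since a transposition of two cylinders is odd. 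Let $H=\ker(\mathrm{sgn})$ be the even-table subgroup, of index $2$, so that $\V_d'\subseteq H$. When $d$ is even the correction is odd, and one checks that every table then admits an even splitting; thus no such homomorphism exists, and I expect $\V_d$ itself to be simple.

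Next I would set up Proposition~\ref{pr:RU}. Take $U=xX^\omega$ for a letter $x\in X$; identifying $U$ with $X^\omega$ via $S_x$ gives $G_{(U)}\cong\V_d$, so $G_{(U)}{}'\cong\V_d'$. The $\V_d$-orbit of $U$ consists of all cylinders, which is a basis of the topology of $\X$, so Lemma~\ref{lem:Ru} already applies. The crux is the stronger hypothesis of Proposition~\ref{pr:RU}, that $R_U$ moves $U$ onto every set in this orbit. For this I would realize a cyclic permutation $v_1X^\omega\to v_2X^\omega\to v_3X^\omega\to v_1X^\omega$ of three disjoint cylinders as a product of commutators of homeomorphisms supported on $v_1X^\omega\cup v_2X^\omega\cup v_3X^\omega$, each such cylinder being a translate $gU$; such $3$-cycles lie in $R_U$ and carry $U$ to an arbitrary cylinder. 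This makes the $R_U$-orbit of $U$ equal to its $\V_d$-orbit and a basis of topology, so Proposition~\ref{pr:RU} yields that $R_U$ is simple and contained in every non-trivial normal subgroup of $\V_d$.

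Finally I would identify $R_U$. Using the $3$-cycle presentation one shows $\V_d$ is generated by cylinder permutations together with $G_{(U)}{}'$, and that all of these lie in $R_U$ when $d$ is even (so $R_U=\V_d$), whereas for $d$ odd exactly the even ones do (so $R_U=H$). In the even case $R_U=\V_d$ is therefore simple. In the odd case $R_U=H$ is simple and non-abelian, whence $H=[H,H]\subseteq\V_d'$; combined with $\V_d'\subseteq H$ from the first step this forces $\V_d'=H$, a simple subgroup of index~$2$. The main obstacle is the combinatorial content of the third step: exhibiting cylinder $3$-cycles explicitly inside $R_U$ and pinning $R_U$ down exactly as all of $\V_d$ versus the even subgroup, which is precisely where the parity of $d$ enters and where the distinction between $\V_d$ and $\V_d'$ is decided.
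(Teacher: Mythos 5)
The first thing to say is that the paper does not prove Theorem~\ref{th:vdprime} at all: it quotes it from Higman's notes~\cite{hgthomp}, and in the proof of Theorem~\ref{th:commutatorsimple} it uses Theorem~\ref{th:vdprime} as an \emph{input}, precisely in order to conclude that $R_U$ contains $\V_X'$ and hence that the orbit hypothesis of Proposition~\ref{pr:RU} is satisfied. Your plan is to re-prove Higman's theorem by running the machinery of Subsection~\ref{ss:somegeneralfacts} on $\V_d$ itself, which is a sensible outline, and your first step is correct: the inversion count $\mathrm{inv}\mapsto\mathrm{inv}+(d-1)k$ under an elementary splitting is right, so for odd $d$ the sign is a well-defined surjective homomorphism and yields the index-$2$ subgroup $H\supseteq\V_d'$. (One small overstatement: for even $d$ "the correction is odd" only when $k$ is odd; for the reversal table $3\,2\,1$ every column meets an even number of inversions, so a single elementary splitting never changes parity and one needs at least two successive splittings to reach an even table.)

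However, there are genuine gaps at exactly the points that carry the content of the theorem. First, your argument that cylinder $3$-cycles lie in $R_U$ does not follow from what you wrote: $R_U$ is generated by the subgroups $G_{(gU)}'$, i.e.\ by commutators of pairs of elements supported in one \emph{common} translate $gU$, and the fact that each $v_iX^\omega$ is separately a translate of $U$ is irrelevant. You must exhibit a single translate containing $v_1X^\omega\cup v_2X^\omega\cup v_3X^\omega$; by Proposition~\ref{pr:VXorbits} this requires the union to be a proper subset and requires an $m$-adjustment when $d\ge4$ (the union has $m=3\ne1$, so it must be enlarged by $d-3$ further cylinders), and it fails outright when the three cylinders form a complete antichain (e.g.\ $\{0,10,11\}$ for $d=2$), a case needing separate treatment. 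Second, even granting the $3$-cycles, you obtain only that the $R_U$-orbit of $U$ contains all cylinders, whereas Proposition~\ref{pr:RU} demands that the $R_U$-orbit \emph{equal} the $\V_d$-orbit, which by Proposition~\ref{pr:VXorbits} consists of all non-empty proper clopen sets $V$ with $m(V)=1$, not just cylinders; the equality (not merely "is a basis") is used in the proof of Proposition~\ref{pr:RU}, in the step $\bigcup_{g\in R_U}G_{(gU)}''=\bigcup_{g\in G}G_{(gU)}''$ and in the normality in $G$ of the subgroup this union generates. Third, and most importantly, your final step --- that $\V_d$ is generated by permutations of complete antichains together with $G_{(U)}'$, and that this pins $R_U$ down as $\V_d$ (even $d$) or as $H$ (odd $d$) --- is precisely the combinatorial core of Higman's theorem, and you explicitly defer it. Until it is carried out, the argument establishes only $R_U\subseteq\V_d'\subseteq H$ together with conditional statements, not simplicity in either case. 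All three gaps are fillable (the needed transitivity and generation facts are true), but filling them essentially amounts to reconstructing Higman's proof, which is why the paper simply cites~\cite{hgthomp}.
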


The following theorem is a proved in~\cite[Theorem~9.11]{nek:bim}.

\begin{theorem}
\label{th:vgabelian}
All proper quotients of $\V_G$ are abelian.
\end{theorem}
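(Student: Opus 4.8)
The plan is to prove the equivalent statement that $\V_G'$ is contained in every non-trivial normal subgroup $N\trianglelefteq\V_G$, working throughout with the faithful action of $\V_G$ on the Cantor set $\X=X^\omega$. Fix a proper basic clopen set $U=\partial T_v$ with $v\neq\varnothing$. Since $\V_X\le\V_G$ and the Higman--Thompson group acts flexibly on clopen sets, the $\V_G$-orbit of $U$ contains \emph{every} proper clopen subset of $X^\omega$: given $W=\partial T_{b_1}\sqcup\cdots\sqcup\partial T_{b_k}$, split $U$ into $k$ pieces and apply Lemma~\ref{lem:incomplete} to the bijection between the (incomplete, since $U$ and $W$ are proper) antichains to produce $g\in\V_X$ with $g(U)=W$. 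In particular this orbit is a basis of topology, so Lemma~\ref{lem:Ru} gives $N\supseteq R_U$, and the description of $R_U$ in the proof of that lemma shows $R_U\supseteq(\V_G)_{(W)}'$ for every proper clopen $W$. Hence it suffices to prove the single statement that $Q:=\V_G/R_U$ is abelian, i.e.\ $R_U\supseteq\V_G'$; then $N\supseteq R_U\supseteq\V_G'$ for every such $N$.

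First I would record that $\V_X'\le R_U$. Indeed $(\V_X)_{(U)}'\le(\V_G)_{(U)}'\le R_U$, and $(\V_X)_{(U)}'$ is a non-trivial subgroup of $\V_X'$ (a copy of $\V_X'$ acting on $U\cong X^\omega$, non-trivial because $\V_X$ is non-abelian). As $R_U$ is normal in $\V_G$, the intersection $R_U\cap\V_X'$ is a non-trivial normal subgroup of the simple group $\V_X'$ (Theorem~\ref{th:vdprime}), so $\V_X'\le R_U$. Consequently the image $\overline{\V_X}$ of $\V_X$ in $Q$ is abelian: it is trivial when $d=|X|$ is even (then $\V_X$ is simple), and a quotient of $\V_X/\V_X'\cong\Z/2\Z$ when $d$ is odd.

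Next I would introduce the subgroup $M\le Q$ generated by the images of all elements of $\V_G$ supported on a single proper basic set, and show $M$ is abelian. Two proper basic sets are either disjoint, so the elements commute already in $\V_G$, or nested inside a common proper basic set $W=\partial T_a$ ($a\neq\varnothing$), so both elements lie in $(\V_G)_{(W)}$ and their commutator lies in $(\V_G)_{(W)}'\le R_U$; either way their images commute. The link to $Q$ is the factorization read off any table of an arbitrary $g\in\V_G$: after one splitting we may assume the top row $A_1$ consists of non-empty words, and then
\[g=\Big(\sum_{v\in A_1}S_{\alpha(v)}S_v^*\Big)\cdot\prod_{v\in A_1}\big(S_vg_vS_v^*+\mathrm{id}\big),\]
where the first factor lies in $\V_X$ and each factor of the product is supported on the proper basic set $\partial T_v$. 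Thus $\bar g\in\overline{\V_X}\cdot M$ for every $g\in\V_G$, so $Q=\langle\overline{\V_X},M\rangle$.

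It remains to see that $\langle\overline{\V_X},M\rangle$ is abelian. For $d$ even this is immediate, since $\overline{\V_X}=1$ and $Q=M$. The main obstacle is the odd case, where I must neutralize the residual $\Z/2\Z$ coming from the sign of permutations in the Higman--Thompson group: write $\overline{\V_X}=\langle\bar\tau\rangle$ for some $\tau\in\V_X$ given by an odd table, and show $\bar\tau$ is central in $Q$. It commutes with $\overline{\V_X}$ automatically; and given a generator $\bar m$ of $M$ with $m$ supported on a proper basic set $W$, I would replace $\tau$ by another odd representative $\tau'$ supported inside a basic set disjoint from $W$ (for instance the transposition of $\partial T_{c0}$ and $\partial T_{c1}$ for some $\partial T_c\subseteq W^c$), which exists precisely because $W$ is proper. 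Then $[\tau',m]=1$ by disjointness of supports, while $\bar\tau'=\bar\tau$ because $\tau'\tau^{-1}$ is even, hence lies in $\V_X'\le R_U$; so $[\bar\tau,\bar m]=1$. With $\bar\tau$ central of order dividing $2$ and $M$ abelian, $Q=\langle\bar\tau\rangle M$ is abelian, which completes the argument.
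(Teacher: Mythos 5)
Your proposal cannot be compared with an in-paper proof, because the paper does not actually prove Theorem~\ref{th:vgabelian}: it quotes it from~\cite[Theorem~9.11]{nek:bim} and uses it only as an input (in the proof of Theorem~\ref{th:commutatorsimple}). What you give is a self-contained argument assembled from the paper's own Section~4 toolkit: Lemma~\ref{lem:Ru} gives $R_U\le N$ for every non-trivial normal subgroup $N$, and you then prove the genuinely new ingredient, namely that $\V_G/R_U$ is abelian, i.e.\ $\V_G'\le R_U$. Your three steps for this --- (i) $\V_X'\le R_U$ via simplicity of $\V_X'$ (Theorem~\ref{th:vdprime}), (ii) the factorization $g=\bigl(\sum_{v\in A_1}S_{\alpha(v)}S_v^*\bigr)\cdot\prod_{v\in A_1}\bigl(S_vg_vS_v^*+\mathrm{id}\bigr)$ of an arbitrary element into a Higman--Thompson element and factors supported on proper cylinders, and (iii) the parity trick replacing an odd $\tau\in\V_X$ by an odd transposition supported away from a given cylinder --- are all sound; (ii) in particular is correct because the factors have pairwise disjoint supports and compose to $vw\mapsto\alpha(v)g_v(w)$. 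A pleasant by-product: combined with Proposition~\ref{pr:RU}, your computation $\V_G'\le R_U$ immediately reproves Theorem~\ref{th:commutatorsimple} as well, since it forces $\V_G'=R_U$.

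One claim does need repair. The assertion that the $\V_G$-orbit of $U=\partial T_v$ contains \emph{every} proper clopen subset of $X^\omega$ is false whenever $d=|X|\ge 3$: your splitting argument cannot decompose $U$ into $k$ cylinders unless $k\equiv 1\pmod{d-1}$, and indeed the paper's Proposition~\ref{pr:VXorbits} says precisely that the orbits of proper non-empty clopen sets are classified by the invariant $m(U)\in\Z/(d-1)\Z$. Likewise, your claim that $R_U\supseteq(\V_G)_{(W)}'$ for \emph{every} proper clopen $W$ does not follow from the description of $R_U$ in Lemma~\ref{lem:Ru} alone (it is true, but requires enlarging $W$ inside its complement to a clopen set with $m$-invariant $1$). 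Fortunately, you never use either claim in that generality: what the proof actually needs is (a) that the orbit of $U$ contains all cylinders $\partial T_a$, $a\ne\varnothing$ --- true by Lemma~\ref{lem:incomplete} applied to the singleton antichains $\{v\}$, $\{a\}$ --- hence is a basis of the topology, so Lemma~\ref{lem:Ru} applies; and (b) that $(\V_G)_{(W)}'\le R_U$ for $W$ a proper \emph{basic} set, which holds because such $W$ lies in the orbit of $U$ and $R_U$ is generated by the conjugates $(\V_G)_{(gU)}'$. If you restate those two claims for cylinders only, the rest of your proof goes through verbatim.
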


Let us describe the $\V_d$-orbits of clopen subsets of $X^\omega$.

\begin{proposition}
\label{pr:VXorbits}
Let $d=|X|$, and let $U$ be a clopen subset of $X^\omega$. Let us
decompose $U$ into a disjoint union $\bigsqcup_{i=1}^n v_iX^\omega$
for $v_i\in X^*$. Then the residue of $n$ modulo $d-1$ does not depend
on the decomposition. Let us denote it $m(U)\in\Z/(d-1)\Z$. Let $U_1,
U_2\subset X^\omega$ be non-empty clopen proper subsets. Then the
following conditions are equivalent.
\begin{enumerate}
\item $U_1$ and $U_2$ belong to one $\V_X$-orbit.
\item $U_1$ and $U_2$ belong to one $\V_X'$-orbit.
\item $m(U_1)=m(U_2)$.
\end{enumerate}
\end{proposition}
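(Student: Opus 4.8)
The plan is to first establish the well-definedness of the invariant $m(U)\in\Z/(d-1)\Z$, and then prove the chain of equivalences $(2)\Rightarrow(1)\Rightarrow(3)\Rightarrow(2)$. For well-definedness, I would observe that replacing a single cylinder $vX^\omega$ by its $d$ children $vx_1X^\omega,\ldots,vx_dX^\omega$ changes the count $n$ by $d-1\equiv 0\pmod{d-1}$; since any two decompositions of $U$ into cylinders have a common refinement obtained by such elementary splittings, the residue $n\bmod(d-1)$ is an invariant of $U$.

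For the implications, the direction $(2)\Rightarrow(1)$ is immediate since $\V_X'\le\V_X$. The implication $(1)\Rightarrow(3)$ requires showing $m$ is a $\V_X$-orbit invariant: an element $g\in\V_X$ is given by a table carrying each cylinder $v_iX^\omega$ homeomorphically onto $\alpha(v_i)X^\omega$ via $S_{\alpha(v_i)}S_{v_i}^*$, which sends cylinders to cylinders bijectively. Thus if $U=\bigsqcup_{i=1}^n v_iX^\omega$ then, after splitting the table of $g$ so that each $v_iX^\omega$ is a union of domain-cylinders, $g(U)$ is a disjoint union of the same number of image-cylinders, whence $m(g(U))=m(U)$. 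The main work is the implication $(3)\Rightarrow(2)$: assuming $m(U_1)=m(U_2)$, I must produce an element of the \emph{commutator} subgroup $\V_X'$ carrying $U_1$ to $U_2$.

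For $(3)\Rightarrow(2)$, my strategy is to first find \emph{some} $g\in\V_X$ with $g(U_1)=U_2$, then correct it by an element supported off $U_2$ to land inside $\V_X'$. Writing $U_1=\bigsqcup_{i=1}^{n_1}v_iX^\omega$ and $U_2=\bigsqcup_{j=1}^{n_2}w_jX^\omega$ with $n_1\equiv n_2\pmod{d-1}$, I can split one decomposition further (each split adds $d-1$ cylinders) until the two families have equal cardinality; here I use that $U_1,U_2$ are proper and nonempty, so each admits arbitrarily fine cylinder decompositions and the antichains involved are incomplete, letting me invoke Lemma~\ref{lem:incomplete} to realize a bijection between the cylinder families by an element of $\V_X$. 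This yields $g\in\V_X$ with $g(U_1)=U_2$. To push $g$ into $\V_X'$: when $d$ is even, $\V_X=\V_d$ is already simple by Theorem~\ref{th:vdprime}, so $\V_X'=\V_X$ and there is nothing more to do. When $d$ is odd, $\V_X'$ is the index-$2$ subgroup of even tables; if the table realizing $g$ on $U_1\to U_2$ is odd, I compose $g$ with a suitable odd-table element $h$ supported on the complement $X^\omega\setminus U_2$ (which is nonempty and proper, again admitting such an $h$ by Lemma~\ref{lem:incomplete} applied to incomplete antichains there). Then $hg$ is even, lies in $\V_X'$, and still satisfies $hg(U_1)=h(U_2)=U_2$ since $h$ fixes $U_2$ pointwise.

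I expect the well-definedness of $m$ and the orbit-invariance $(1)\Rightarrow(3)$ to be routine bookkeeping with cylinder splittings. The genuine obstacle is the parity correction in $(3)\Rightarrow(2)$ for odd $d$: I must verify both that the naive $g$ can be chosen (counting the cylinders correctly modulo $d-1$ and applying Lemma~\ref{lem:incomplete}), and that a parity-fixing element $h$ can always be found supported strictly inside the complement of $U_2$ without disturbing the map on $U_2$. The key points to check carefully are that the complement of $U_2$ is large enough to support an odd element of $\V_X$ — which holds because $U_2$ is a proper clopen set, so its complement contains a cylinder and hence supports a full copy of $\V_d$ — and that composing with such an $h$ indeed toggles the parity of the table while fixing $U_2$.
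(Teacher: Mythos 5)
Your proposal is correct and follows essentially the same route as the paper: well-definedness of $m$ via elementary splittings and common refinements, equal-cardinality decompositions plus Lemma~\ref{lem:incomplete} to produce $g\in\V_X$ with $g(U_1)=U_2$, and the same parity-correction trick of composing with an odd element supported off the relevant set (the paper multiplies on the right by $h\notin\V_X'$ fixing $U_1$, you multiply on the left by $h$ fixing $U_2$ — an immaterial difference). Your explicit even/odd case split is also only a cosmetic variation, since for even $d$ the paper's correction step is vacuous because $\V_X'=\V_X$.
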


\begin{proof}
Let $U=\bigsqcup_{i=1}^n v_iX^\omega$ be a decomposition of $U$. We can
\emph{split} it by replacing one set $v_iX^\omega$ by the collection
of $d$ sets $v_ixX^\omega$, for $x\in X$. This way we increase the
number of sets in the decomposition by $d-1$. It is easy to see that
for any two decompositions of $U$ into cylindrical sets 
there exist sequences of successive splittings of each of
the decompositions leading to the same decomposition. This implies
that $m(U)$ is well defined.

It is obvious that $m(U)$ is preserved under the action of
$\V_X$. Suppose that $m(U_1)=m(U_2)$ for non-empty proper clopen
subsets of $X^\omega$. Then there exist decompositions
$U_1=\bigsqcup_{i=1}^nv_iX^\omega$ and $U_2=\bigsqcup_{i=1}^n
u_iX^\omega$ of the sets $U_i$ into equal number of cylindrical
subsets. The sets $\{v_i\}_{i=1}^n$ and $\{u_i\}_{i=1}^n$ are
incomplete antichains in $X^*$, hence (see Lemma~\ref{lem:incomplete})
there exists $g\in \V_X$ such that
$g(v_iX^\omega)=u_iX^\omega$ for all $i$. If $g\notin\V_X'$, then
we can compose it with an element $h\in\V_X\setminus\V_X'$ acting
trivially on $\bigsqcup_{i=1}^nv_iX^\omega$, and get an element
$gh\in\V_X'$ such that $gh(v_iX^\omega)=u_iX^\omega$ for all $i$. It
follows that $U_1$ and $U_2$ belong to one $\V_X'$-orbit and to one
$\V_X$-orbit.
\end{proof}

It easily follows from the definitions that $\V_X$-orbits of clopen
subsets of $X^\omega$ coincide with their $\V_G$-orbits for any
self-similar group $(G, X)$.

\begin{theorem}
\label{th:commutatorsimple}
The commutator subgroup of $\V_G$ is simple.
\end{theorem}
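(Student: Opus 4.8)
The plan is to apply Proposition~\ref{pr:RU} to the action of $G=\V_G$ on $\X=X^\omega$, taking for $U$ a single first-level cylinder $U=xX^\omega$ with $x\in X$ fixed, and then to identify the resulting subgroup $R_U$ with the commutator subgroup $\V_G'$. The payoff is immediate: Proposition~\ref{pr:RU} asserts that $R_U$ is simple, so once $R_U=\V_G'$ is established the theorem follows.

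First I would record two structural facts. Since the Higman--Thompson group $\V_X$ sits inside $\V_G$ as the set of elements all of whose sections are trivial, we have $\V_X'\le\V_G'$; and as $|X|\ge2$ the group $\V_X=\V_d$ is non-abelian, whence $\V_G'\ne1$. Next I would compute $R_U$. On one hand $(\V_G)_{(U)}{}'\le\V_G'$ and $\V_G'$ is normal in $\V_G$, so the normal closure $R_U$ of $(\V_G)_{(U)}{}'$ satisfies $R_U\le\V_G'$. On the other hand $(\V_G)_{(U)}$ contains a copy of $\V_d$ supported on the clopen set $U$ (acting on $U\cong X^\omega$ via $xw\mapsto xg_0(w)$ and trivially elsewhere), which is non-abelian, so $(\V_G)_{(U)}{}'\ne1$ and hence $R_U\ne1$. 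Being a non-trivial normal subgroup of $\V_G$, $R_U$ contains $\V_G'$ by Theorem~\ref{th:vgabelian}. Combining the two inclusions gives $R_U=\V_G'$. I would stress that this identification uses only Theorem~\ref{th:vgabelian} and not the orbit hypotheses of Proposition~\ref{pr:RU}, so there is no circularity.

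It then remains to verify the two orbit conditions of Proposition~\ref{pr:RU} for this $U$. Since $m(U)=1$, Proposition~\ref{pr:VXorbits} shows that the $\V_X$-orbit of $U$ is exactly the family of proper non-empty clopen sets $W$ with $m(W)\equiv1\pmod{d-1}$; this family contains every cylinder $wX^\omega$ with $|w|\ge1$ (each has $m$-value $1$), and these cylinders form a basis of the topology of $X^\omega$, so the orbit is a basis. By the remark following Proposition~\ref{pr:VXorbits}, this $\V_X$-orbit coincides with the $\V_G$-orbit of $U$. For the equality of the $R_U$-orbit with the $G$-orbit, I would use $R_U=\V_G'\supseteq\V_X'$ together with the equivalence of conditions (1) and (2) in Proposition~\ref{pr:VXorbits}, namely that $\V_X'$ and $\V_X$ have the same orbits on clopen sets: thus $\V_X'$ already acts transitively on the $\V_G$-orbit of $U$, so the $R_U$-orbit of $U$ contains that orbit, while it is trivially contained in it, giving equality.

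With both orbit conditions verified, Proposition~\ref{pr:RU} yields that $R_U$ is simple, and since $R_U=\V_G'$ this proves the theorem. I expect the main obstacle to be the transitivity step: the orbit condition hinges entirely on the fact that passing from $\V_X$ to its commutator subgroup $\V_X'$ does not shrink orbits of clopen sets, which is the crucial content of Proposition~\ref{pr:VXorbits} and is precisely what allows a subgroup as small as $\V_X'\le\V_G'$ to realize the full $\V_G$-orbit of $U$. The clean identification $R_U=\V_G'$, carried out independently of the orbit hypotheses, is the other point that must be handled carefully.
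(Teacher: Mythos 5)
Your proposal is correct and takes essentially the same route as the paper's proof: both apply Proposition~\ref{pr:RU} to $U=xX^\omega$, verify its orbit hypotheses with Proposition~\ref{pr:VXorbits} (including the crucial equivalence of $\V_X$- and $\V_X'$-orbits of clopen sets), and identify $R_U$ with $\V_G'$ using Theorem~\ref{th:vgabelian}. The only difference is the order of deductions: the paper first proves $\V_X'\le R_U$ by invoking Theorem~\ref{th:vdprime} (simplicity of $\V_X'$ forces the normal closure of $(\V_X)_{(U)}{}'$ in $\V_X$ to be $\V_X'$), whereas you first establish $R_U=\V_G'$ from Theorem~\ref{th:vgabelian} together with the elementary non-abelianness of $(\V_G)_{(U)}$, and only then deduce $\V_X'\le\V_G'=R_U$ --- a harmless reshuffling that replaces the citation of Theorem~\ref{th:vdprime} at that step by an elementary observation.
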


\begin{proof}
Let $U=xX^\omega$ for some $x\in X$. Then the $\V_G$-orbit of $U$
coincides with its $\V_X$-orbit and its $\V_X'$-orbit, and is a basis
of the topology on $X^\omega$, see Proposition~\ref{pr:VXorbits}. The
group $\V_X'$ is contained in $R_U$, since the normal closure of
$(\V_X)_{(U)}{}'$ in $\V_X$ is equal to $\V_X'$, by
Theorem~\ref{th:vdprime}. It follows that the $R_U$-orbit of $U$ is
equal to its $G$-orbit, and is a basis of the topology.

Consequently, by Proposition~\ref{pr:RU}, $R_U$ is simple and is
contained in every non-trivial normal subgroup of $\V_G$. On the other hand, by
Theorem~\ref{th:vgabelian}, every non-trivial normal subgroup of
$\V_G$ contains the commutator subgroup $\V_G{}'$. It follows that
$\V_G'=R_U$, and $\V_G'$ is simple.
\end{proof}

Abelianization of $\V_G$ is described in~\cite[Theorem~9.14]{nek:bim}.

\begin{theorem}
\label{th:abelquotient}
Let $(G, X)$ be a self-similar group. Let $\pi:G\arr G/G'$ be the
abelianization epimorphism.

Suppose that $d$ is even. Then $\V_G/\V_G'$ is isomorphic to the
quotient of $G/G'$ by the relations
$\pi(g)=\sum_{x\in X}\pi(g|_x)$ for all $g\in G$.

Suppose that $d$ is odd. Then $\V_G/\V_G'$ is isomorphic to
$\Z/2\Z\oplus G/G'$ modulo the relations
$\pi(g)=\mathop{\mathrm{sign}}(g)\oplus\sum_{x\in X}\pi(g|_x)$ for
all $g\in G$; where $\mathop{\mathrm{sign}}(g)=0$ if $g$ defines an even permutation on
the first level $X$ of $X^*$, and $\mathop{\mathrm{sign}}(g)=1$ otherwise.
\end{theorem}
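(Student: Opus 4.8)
The plan is to compute the abelianization $\V_G/\V_G'$ directly from the table presentation of $\V_G$, by understanding which products of generators become trivial after passing to the commutator quotient. The key observation is that $\V_G$ is generated by the Higman--Thompson group $\V_X$ together with elements of $G$ sitting inside $\V_G$ (via the diagonal table $\left(\begin{smallmatrix} v\\ g\\ v\end{smallmatrix}\right)$ placing $g$ at a single vertex). First I would recall from Theorem~\ref{th:vdprime} that $\V_X' = R_U$ and that $\V_X^{\mathrm{ab}}$ is trivial when $d$ is even and $\Z/2\Z$ (detected by $\mathop{\mathrm{sign}}$) when $d$ is odd; this gives the ``$\V_X$-part'' of the abelianization. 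The remaining contribution comes from the copy of $G$, which maps to $G/G'$ after abelianizing, contributing the summand $G/G'$.

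The heart of the argument is to identify the relations imposed on $G/G'$ inside $\V_G^{\mathrm{ab}}$. For this I would abelianize the splitting rule~\eqref{eq:tablesspittingG}: an element $g$ placed at a vertex $v$ equals, after one splitting, the product of its sections $g|_{x_1}, \ldots, g|_{x_d}$ placed at the children $vx_1, \ldots, vx_d$ composed with the permutation $\sigma_g$ on the second row. Since all of these commute in the abelianization and permutations of cylinder sets live in $\V_X$, passing to $\V_G^{\mathrm{ab}}$ forces the relation $\pi(g) = \sum_{x\in X}\pi(g|_x)$ in the even case, with the extra $\mathop{\mathrm{sign}}(g)$ term recording the parity of $\sigma_g$ in the odd case (where it cannot be absorbed into $\V_X'$). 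I would argue that, conversely, these are \emph{all} the relations, by checking that the map from $\Z/2\Z\oplus G/G'$ (or $G/G'$) modulo these relations to $\V_G^{\mathrm{ab}}$ is a well-defined two-sided inverse: every element of $\V_G$ is, up to an element of $\V_X'$, a product of single-vertex elements of $G$, so its class is determined by the images of those $G$-elements summed with a sign.

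The main obstacle I anticipate is the bookkeeping of the $\mathop{\mathrm{sign}}$ term in the odd case: one must verify that the parity of the permutation induced on the second row of a table is a well-defined homomorphism $\V_G\arr\Z/2\Z$ that is compatible with splitting, and that it genuinely fails to be a consequence of the $G$-relations (i.e., that the $\Z/2\Z$ summand does not collapse). Here the delicate point is that an elementary splitting multiplies the number of columns by $d$ and replaces one column by $d$ columns carrying the local permutation $\sigma_g$; when $d$ is odd this local permutation's parity must be tracked coherently, whereas when $d$ is even every element of $\V_X$ is automatically even and the $\Z/2\Z$ disappears. I would reconcile this with the stated formula $\pi(g)=\mathop{\mathrm{sign}}(g)\oplus\sum_{x\in X}\pi(g|_x)$ by checking consistency under a double splitting, using that $\mathop{\mathrm{sign}}$ is multiplicative under the wreath product composition~\eqref{eq:multiplicationwreath}.

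Since the preceding theorems (Theorem~\ref{th:vdprime}, Theorem~\ref{th:vgabelian}, Theorem~\ref{th:commutatorsimple}) already pin down $\V_G'$ as the unique smallest non-trivial normal subgroup and describe $\V_X'$ precisely, the proof reduces to the explicit computation above rather than any further structural input; I would therefore present it as a direct verification that the abelianized table presentation yields exactly the claimed quotient.
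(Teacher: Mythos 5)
The first thing to note is that the paper does not prove Theorem~\ref{th:abelquotient} at all: it is quoted from~\cite[Theorem~9.14]{nek:bim}, so there is no internal proof to compare against, and any complete argument you give is necessarily ``a different route'' relative to this paper. Your architecture is the natural one and is essentially sound: (a) the relations hold in $\V_G/\V_G'$ because the splitting rule~\eqref{eq:tablesspittingG} writes ``$g$ placed at $v$'' as a product of an element of $\V_X$ (realizing $\sigma_g$ on the children of $v$) with the sections $g|_x$ placed at the children $vx$; (b) conversely, one defines a map $\V_G\arr(\Z/2\Z\oplus G/G')/N$ (or $(G/G')/N$ for $d$ even) by reading off a table its parity in the sense of Theorem~\ref{th:vdprime} together with $\sum_i\pi(g_i)$, checks invariance under elementary splittings and multiplicativity, and verifies that this and your map in the other direction are mutually inverse. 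The fact (Proposition~\ref{pr:symbolic}) that two tables define the same element exactly when they admit a common splitting is what reduces well-definedness to a single elementary splitting, and your decomposition of an arbitrary element as an element of $\V_X$ times single-vertex $G$-elements gives surjectivity.

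However, two of the verification goals you set yourself are wrong as stated, and one of them is the crux of the odd case. First, the parity of the permutation carried by a table is \emph{not} a well-defined homomorphism $\V_G\arr\Z/2\Z$: an elementary splitting of a column $\left(\begin{smallmatrix}v\\ g\\ u\end{smallmatrix}\right)$ changes that parity by exactly $\mathop{\mathrm{sign}}(\sigma_g)$, which is generally nonzero when $d$ is odd. What \emph{is} well defined is the pair $\bigl(\text{parity},\,\sum_i\pi(g_i)\bigr)$ taken modulo the subgroup $N\le\Z/2\Z\oplus G/G'$ generated by the vectors $\bigl(\mathop{\mathrm{sign}}(g),\,\sum_{x\in X}\pi(g|_x)-\pi(g)\bigr)$ --- and this is then immediate, because the change of the pair under one elementary splitting is literally one of the generators of $N$. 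Your proposal wavers between these two formulations; the first one, as literally stated, would fail. Second, you should not attempt to prove ``that the $\Z/2\Z$ summand does not collapse'': it can collapse, depending on $G$. The theorem only asserts an isomorphism with the stated quotient, and Proposition~\ref{pr:hyperbolicrational} exhibits odd-degree examples where $(1,0)$ lies in $N$ and no $\Z/2\Z$ survives in $\V_f/\V_f'$; so this is not a proof obligation but a phenomenon the formula already accounts for. With these two corrections (and with the even case handled by perfectness of $\V_X$, which forces all of $\V_X$ to die in $\V_G/\V_G'$), your outline does yield a complete and correct proof.
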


Proof of the following lemma follows directly from the multiplication
rule~\eqref{eq:multiplicationwreath}.

\begin{lemma} 
\label{lem:transfer}
Let $(G, X)$ be a self-similar group, and let
$\pi:G\arr G/G'$ be the canonical homomorphism.
Then $\sigma:\pi(g)\mapsto\sum_{x\in X}\pi(g|_x)$ is a
well defined endomorphism of $G/G'$.
\end{lemma}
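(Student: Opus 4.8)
The plan is to lift the proposed map to the whole of $G$ and then invoke the universal property of abelianization. Define $\widetilde\sigma\colon G\arr G/G'$ by $\widetilde\sigma(g)=\sum_{x\in X}\pi(g|_x)$, the sum taken in the abelian group $G/G'$. If I can show that $\widetilde\sigma$ is a group homomorphism, then, since its target is abelian, it must vanish on the commutator subgroup $G'$, and hence it factors through $\pi$ as $\widetilde\sigma=\sigma\circ\pi$ for a unique homomorphism $\sigma\colon G/G'\arr G/G'$. This $\sigma$ is automatically a well-defined endomorphism satisfying $\sigma(\pi(g))=\sum_{x\in X}\pi(g|_x)$, which is exactly the assertion. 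Thus the whole lemma reduces to checking that $\widetilde\sigma$ respects products.

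For that, first I would record the formula for the sections of a product. Writing the wreath recursions as $\psi(g)=\sigma_g(g|_1,\ldots,g|_d)$ and $\psi(h)=\sigma_h(h|_1,\ldots,h|_d)$ and applying the multiplication rule~\eqref{eq:multiplicationwreath}, I obtain
\[
\psi(gh)=\sigma_g\sigma_h\bigl(g|_{\sigma_h(1)}h|_1,\ldots,g|_{\sigma_h(d)}h|_d\bigr),
\]
so that $(gh)|_x=g|_{\sigma_h(x)}\,h|_x$ for every $x\in X$. Here $\sigma_g,\sigma_h\in\symm{X}$ are the permutations of the first level induced by $g$ and $h$; note the clash with the symbol $\sigma$ of the statement, which denotes the endomorphism being constructed.

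Now applying $\pi$ and summing over $x$, and using that $\pi$ is a homomorphism into the abelian group $G/G'$,
\[
\widetilde\sigma(gh)=\sum_{x\in X}\pi\bigl(g|_{\sigma_h(x)}\bigr)+\sum_{x\in X}\pi(h|_x).
\]
The second sum is $\widetilde\sigma(h)$. In the first sum, as $x$ ranges over $X$ the index $\sigma_h(x)$ also ranges over all of $X$, because $\sigma_h$ is a bijection; reindexing by $y=\sigma_h(x)$ turns it into $\sum_{y\in X}\pi(g|_y)=\widetilde\sigma(g)$. Hence $\widetilde\sigma(gh)=\widetilde\sigma(g)+\widetilde\sigma(h)$, so $\widetilde\sigma$ is a homomorphism and the reduction of the first paragraph applies.

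There is no serious obstacle here: the content is the section-of-product identity coming from~\eqref{eq:multiplicationwreath}. The only point requiring a little care is the reindexing of the cross-term, where one must use that the permutation part $\sigma_h$ is a bijection of $X$, so that $\sum_{x\in X}\pi(g|_{\sigma_h(x)})$ is independent of $h$ and equals $\widetilde\sigma(g)$. This is precisely what makes additivity hold, and hence both the well-definedness and the homomorphism property follow simultaneously.
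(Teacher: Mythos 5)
Your proof is correct and follows essentially the same route as the paper, whose proof is simply the remark that the lemma ``follows directly from the multiplication rule~\eqref{eq:multiplicationwreath}'': your section-of-product identity $(gh)|_x=g|_{\sigma_h(x)}h|_x$ is exactly that rule, and the factorization through the abelianization plus the reindexing over the bijection $\sigma_h$ are the details the paper leaves to the reader.
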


For odd $d$ denote by $\sigma_1$ the endomorphism $(t, g)\mapsto
(t+\mathop{\mathrm{sign}}(g), \sigma(g))$ of $\Z/2\Z\oplus G/G'$, where
$\mathop{\mathrm{sign}}:G/G'\arr\Z/2\Z$ gives the parity of the action
of any preimage of $g$ in $G$ on the first level $X$ of the tree $X^*$.

Then Theorem~\ref{th:abelquotient} can be reformulated as follows. (We
denote here the identical automorphism of a group by $1$.)

\begin{theorem}
\label{th:abelquotient2}
If $d$ is even, then $\V_G/\V_G'$ is isomorphic to the
quotient of $G/G'$ by the range of the endomorphism $1-\sigma$.

If $d$ is odd, then $\V_G/\V_G'$ is isomorphic to the
quotient of $\Z/2\Z\oplus G/G'$ by the range of the endomorphism $1-\sigma_1$.
\end{theorem}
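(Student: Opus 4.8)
The plan is to show that Theorem~\ref{th:abelquotient2} is merely a repackaging of Theorem~\ref{th:abelquotient}, since the latter is already quoted as proved in~\cite[Theorem~9.14]{nek:bim}. So the work is purely algebraic bookkeeping: I need to verify that ``quotient by the relations $\pi(g)=\sum_{x\in X}\pi(g|_x)$'' is the same thing as ``quotient by the range of $1-\sigma$'', and similarly in the odd case. I would open the proof by recording this reduction explicitly, so the reader sees that no new dynamical or group-theoretic input is required.

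First I would handle the even case. By Lemma~\ref{lem:transfer}, the map $\sigma:\pi(g)\mapsto\sum_{x\in X}\pi(g|_x)$ is a well-defined endomorphism of the abelian group $A:=G/G'$. The relation imposed in Theorem~\ref{th:abelquotient} reads $\pi(g)=\sigma(\pi(g))$, i.e.\ $(1-\sigma)(\pi(g))=0$, for every $g\in G$. Since $\pi$ is surjective, the set $\{\pi(g):g\in G\}$ is all of $A$, so the subgroup generated by these relations is exactly $\{(1-\sigma)(a):a\in A\}=\operatorname{range}(1-\sigma)$. Hence the quotient of $A$ by these relations is $A/\operatorname{range}(1-\sigma)$, which is precisely what Theorem~\ref{th:abelquotient2} asserts. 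This step is routine once one notes surjectivity of $\pi$.

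Next I would treat the odd case analogously on the group $B:=\Z/2\Z\oplus A$. The endomorphism $\sigma_1$ is defined by $\sigma_1(t,\pi(g))=(t+\mathop{\mathrm{sign}}(g),\sigma(\pi(g)))$, and I should first confirm it is well defined: the $A$-component is $\sigma$, already known to be well defined by Lemma~\ref{lem:transfer}, while $\mathop{\mathrm{sign}}:A\arr\Z/2\Z$ is well defined because conjugation by $G'$ does not change the parity of the action on the first level $X$ (any commutator acts as an even permutation, a fact I would note explicitly). The relation from Theorem~\ref{th:abelquotient} is $\pi(g)=\mathop{\mathrm{sign}}(g)\oplus\sum_{x\in X}\pi(g|_x)$, which, writing a general element of $B$ as $(t,\pi(g))$, says exactly $(1-\sigma_1)(0,\pi(g))$ and the generator $(1,0)$ together generate the relation subgroup. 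I would check that as $g$ ranges over $G$ and $t$ over $\Z/2\Z$, the elements $(1-\sigma_1)(t,\pi(g))$ generate the same subgroup as $\operatorname{range}(1-\sigma_1)$, again using surjectivity of $\pi$ onto $A$ and the fact that $1-\sigma_1$ is additive; the second coordinate of $(1-\sigma_1)$ recovers $\operatorname{range}(1-\sigma)$ in $A$, and tracking the $\Z/2\Z$-coordinate matches the $\mathop{\mathrm{sign}}$ summand in the stated relation.

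The only genuinely substantive point, and hence the main obstacle, is pinning down precisely how the generating set $\{(1-\sigma_1)(t,\pi(g))\}$ of the relation subgroup in Theorem~\ref{th:abelquotient} coincides with all of $\operatorname{range}(1-\sigma_1)$; the even case is transparent because $\pi$ is onto, but in the odd case one must confirm that the extra $\Z/2\Z$ coordinate does not enlarge or shrink the subgroup generated by the relations. This reduces to checking that $\operatorname{range}(1-\sigma_1)=\{(1-\sigma_1)(b):b\in B\}$ is generated by the images of elements of the form $(0,\pi(g))$ together with the image of $(1,0)$, which holds since $1-\sigma_1$ is a group endomorphism of $B$ and $B$ is generated by $(1,0)$ and the elements $(0,\pi(g))$. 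Once this identification is made, both statements of Theorem~\ref{th:abelquotient2} follow immediately from the corresponding statements of Theorem~\ref{th:abelquotient}, and I would close the proof there.
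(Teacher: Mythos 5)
Your proposal is correct and matches the paper exactly: the paper offers no separate argument for Theorem~\ref{th:abelquotient2}, introducing it only with the words that Theorem~\ref{th:abelquotient} ``can be reformulated as follows,'' and your algebraic bookkeeping (surjectivity of $\pi$, additivity of $1-\sigma$ and $1-\sigma_1$ on the abelian groups in question) is precisely that reformulation. The one step you leave implicit in the odd case --- that adjoining the extra generator $(1,0)$ of $\Z/2\Z\oplus G/G'$ neither enlarges nor shrinks anything --- is the one-line computation $\sigma_1(1,0)=(1+\mathop{\mathrm{sign}}(0),\sigma(0))=(1,0)$, hence $(1-\sigma_1)(1,0)=0$, so the range of $1-\sigma_1$ is generated by the relation elements $(1-\sigma_1)(0,\pi(g))$, $g\in G$, alone.
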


\begin{example}
Consider the double self-covering $f(x)=2x$ of the circle
$\R/\Z$. Then $\img{f}$ is generated by the adding machine
$a=S_1S_0^*+S_0aS_1^*$.
It follows that $\V_f/\V_f'$ is the quotient of $\Z=\img{f}/\img{f}'$ by the range of
$1-\sigma$, where $\sigma(a)=0+a=a$ is the identity
isomorphism. Hence, $1-\sigma=0$, and $\V_f/\V_f'\cong\Z$.
\end{example}

\begin{example}
Let $G=\mathfrak{K}_{x_1x_2\ldots x_{n-1}}$ be the iterated monodromy
group of a quadratic polynomial, as in Example~\ref{ex:kneadingv}. It
is proved in~\cite[Proposition~3.3]{bartnek:mand}
that $G/G'$ is the free abelian group $\Z^n$ freely generated by
the images of the generators $a_i$ of $G$. The recursive
relations defining $\mathfrak{K}_{x_1x_2\ldots x_{n-1}}$ show that
$\V_G/\V_G'$ is $\Z^n=\langle\pi(a_0), \ldots, \pi(a_{n-1})\rangle$
modulo the relations $\pi(a_0)=\pi(a_1)$, and $\pi(a_i)=\pi(a_{i-1})$
for all $i=1, 2, \ldots, n-1$. Consequently, $\V_G/\V_G'$ is
isomorphic to $\Z$.
\end{example}

\begin{example}
Let $G$ be the Grigorchuk group, see Example~\ref{ex:GrigorchukG}.
It is generated by 
\[a=S_1S_0^*+S_0S_1^*, b=S_0aS_0^*+S_1cS_1^*,
c=S_0aS_0^*+S_1dS_1^*, d=S_0S_0^*+S_1bS_1^*.\]

It follows that $\V_G/\V_G'$ is the group $G/G'$ modulo the relations
\[\pi(a)=0, \pi(b)=\pi(a)+\pi(c), \pi(c)=\pi(a)+\pi(d),
\pi(d)=\pi(c),\]
which are equivalent to $\pi(a)=0, \pi(b)=\pi(c)=\pi(d)$. It is easy
to check that $b=cd$ and $b^2=1$ in $G$, which implies
\[\pi(c)=\pi(d)=\pi(b)=\pi(c)+\pi(d)=\pi(b)+\pi(b)=\pi(b^2)=0.\]
Consequently, $\V_G/\V_G'$ is trivial, hence $\V_G$ is
simple. Simplicity of $\V_G$ was proved in~\cite{roever}.
\end{example}

\section{Expanding maps and finite presentation}

\subsection{Contracting self-similar groups and expanding maps}

\begin{defi}
\label{def:nucleus}
Let $(G, X)$ be a self-similar group. We say that it is
\emph{contracting} if there exists a finite set $\nuke\subset G$ such
that for every $g\in G$ there exists $n$ such that $g|_v\in\nuke$ for
all $v\in X^*$, $|v|\ge n$.
\end{defi}

More generally, let $\Phi$ be a covering $G$-biset, and let $X$ be a
right orbit transversal. Define then $g|_v$ for $g\in G$ and $v\in
X^n\subset\Phi^{\otimes n}$ as the unique element of $G$ such that
$g\cdot v=u\cdot g|_v$ for some (also unique) $u\in X^n$. We say that
$\Phi$ is contracting (or \emph{hyperbolic}) if there exists a finite
set $\nuke$ such that for every $g\in G$ there exists $n$ such that
$g|_v\in\nuke$ for all $v\in X^*$ such that $|v|\ge n$. One can show
(see~\cite[Corollary~2.11.7]{nek:book}) that this property depends only on $\Phi$ (but the set
$\nuke$ will depend on $X$).

The smallest set $\nuke$ satisfying the conditions of
Definition~\ref{def:nucleus} is called the \emph{nucleus} of the
self-similar action.

\begin{defi}
\label{def:expandingmap}
Let $f:\M_1\arr\M$ be a partial self-covering such that $\M$ is
compact.
The covering is called
\emph{expanding} if there exist $L>1$, $\epsilon>0$, a positive
integer $n$, and a metric $|x-y|$ on $\M$ such
that \[|f^n(x)-f^n(y)|\ge L|x-y|\]
for all $x, y\in\M_n$ such that $|x-y|<\epsilon$.
\end{defi}

For example, if $\M$ is a connected Riemann manifold, and
$\|Df^n(\xi)\|\ge CL^n\|\xi\|$ for all $n\ge 1$ and everly tangent vector
$\xi$, then $f$ is expanding.

The following proposition is proved in~\cite[Theorem~5.5.3]{nek:book}
for the case when $\M$ is a complete length metric space with finitely
generated fundamental group. We will repeat here the proof in a more
general situation (but avoiding orbispaces, which was one of the technical
issues in~\cite{nek:book}).

\begin{proposition}
\label{pr:expcontracting}
Let $f:\M_1\arr\M$ be an expanding partial self-covering of a compact path
connected space. Then $\img{f}$ is a contracting self-similar group
(with respect to any standard action).
\end{proposition}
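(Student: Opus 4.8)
The plan is to establish contraction directly from the metric expansion estimate, by translating the combinatorial sections $g|_v$ into genuine geometric objects: lifts of representing loops.

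First I would fix a standard action, so that $\img{f}$ is generated by elements $S_\gamma$ for loops $\gamma$ based at $t$, and recall from Lemma~\ref{lem:ssimilarity} that if $g\in\img{f}$ is represented by a loop $\gamma$, then for $v=x_1\ldots x_n\in X^n$ the section $g|_v$ is represented by the loop $\ell_{g(v)}^{-1}\gamma_v\ell_v$, where $\gamma_v$ is the lift of $\gamma$ by $f^n$ starting at the vertex $\Lambda(v)$, and $\ell_v,\ell_{g(v)}$ are the canonical connecting paths inside the tree $\Gamma_t$. The key geometric point is that the expanding condition controls the \emph{length} (or more robustly, the diameter) of these lifted paths. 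Since $\M$ is compact, each generating loop $\gamma$ and each connecting path $\ell_x$ has bounded length; applying the inequality $|f^n(x)-f^n(y)|\ge L|x-y|$ along a lift shows that the lift $\gamma_v$ of a path of length $c$ has diameter at most $c/L^n$ once $n$ is large, so the lifted loops shrink geometrically in $n$.

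The heart of the argument is then a \emph{finiteness} statement: there are only finitely many homotopy classes of short loops based at points of $\M$. Here I would use compactness of $\M$ together with a uniform local contractibility / Lebesgue-number argument — there is an $\epsilon_0>0$ such that any loop of diameter less than $\epsilon_0$ based at a point $p$ is determined, up to the finitely many choices coming from a fixed finite cover by evenly-covered simply-connected pieces, by bounded combinatorial data. Conjugating back to the basepoint $t$ by the connecting paths $\ell_{g(v)},\ell_v$ (whose lengths are uniformly bounded), I obtain that for $n$ large the loops $\ell_{g(v)}^{-1}\gamma_v\ell_v$ representing $g|_v$ fall into a \emph{finite} set of elements of $\img{f}$, independent of $g$ once $n$ exceeds a threshold depending only on the length of $\gamma$. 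Taking $\nuke$ to be this finite set, and noting that every $g\in\img{f}$ is a product of generators so that its sections of depth $n$ are products of the corresponding generator-sections, gives contraction: one checks that a uniform nucleus exists by iterating the estimate over the (finitely many) generators and using that the collection of short loops is closed under the section operation once it stabilizes.

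The main obstacle I expect is making the ``finitely many short loops'' step rigorous without assuming finite generation of $\pi_1(\M)$ or a length metric — precisely the hypotheses that \cite{nek:book} used and that this proof is meant to drop. The careful move is to argue at the level of the homeomorphisms $S_\gamma$ of $\partial T_t$ rather than homotopy classes in $\M$: two short lifted loops based near the same point induce the same tree-automorphism germ, so I would replace ``finitely many homotopy classes'' by ``finitely many germs / finitely many induced sections,'' using a uniform evenly-covered neighborhood basis coming from compactness of $\M$ and the covering property of each $f^n$. The remaining bookkeeping — verifying that the candidate set $\nuke$ is actually closed under taking further sections and contains all deep-enough sections of every generator — is routine once the uniform length bound and the finiteness of germs are in hand.
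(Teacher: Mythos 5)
Your first half (expressing sections via Lemma~\ref{lem:ssimilarity} and using the expansion to make lifts shrink geometrically) matches the paper's proof. The genuine gap is in what you call the heart of the argument: the claim that there are only finitely many homotopy classes of short loops, justified by ``uniform local contractibility / Lebesgue-number'' reasoning, uses a hypothesis the proposition does not grant. Here $\M$ is only a compact path connected metric space; it need not be locally contractible or even semi-locally simply connected, and dropping exactly such hypotheses (finitely generated fundamental group, length metric) is the stated purpose of reproving the result in this generality. A compact path connected space can carry arbitrarily short loops lying in infinitely many homotopy classes, so this step fails as stated. Your fallback remark --- work with the induced transformations of the preimage trees instead of homotopy classes --- is precisely how the paper proceeds, but you leave it as an unexecuted assertion, and the finiteness still needs a mechanism.

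The two points you defer to ``routine bookkeeping'' are where the content lies. First, the correct form of your ``short loops give the same germ'' claim is: by the expanding property one can choose $\epsilon>0$ so that every subset of $\M$ of diameter less than $\epsilon$ is evenly covered by every iterate $f^n$; hence every loop of diameter less than $\epsilon$ lifts to closed loops under all $f^n$ and therefore defines the \emph{trivial} element of $\img{f}$. This is a statement about the quotient $\img{f}$ of $\pi_1(\M,t)$, and it is what survives when $\pi_1$ itself is wild. Second, finiteness comes from a truncation argument that is absent from your sketch: there is a level $n_0$, independent of $g$, beyond which each connecting path $\ell_v$, $v\in X^m$ with $m>n_0$, differs from its truncation at level $n_0$ by a path of small diameter; consequently every deep enough section of $g$ is represented by a loop $\ell_u^{-1}\alpha\ell_v$ with $u,v\in X^{n_0}$ and $\alpha$ of small diameter, and by the first point any two such representatives with the same pair $(u,v)$ define \emph{equal} elements of $\img{f}$. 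Thus all sufficiently deep sections of all elements lie in one set of at most $|X|^{2n_0}$ elements, which serves as the nucleus, with no closure-under-sections verification needed. Note also that this applies to every $g\in\img{f}$ directly, since each element is $S_\gamma$ for a single loop $\gamma$; your closing appeal to iterating over ``(finitely many) generators'' is both unnecessary and unjustified, because nothing in the hypotheses guarantees that $\pi_1(\M,t)$ or $\img{f}$ is finitely generated.
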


\begin{proof}
Let $\{\ell_i\}_{i=1, \ldots, d}$ be paths connecting the basepoint
$t\in\M$ to the preimages $z_i\in f^{-1}(t)$. We consider the standard
action of $\img{f}$ on $X^*$ defined by these connecting paths, where
$X=\{1, \ldots, d\}$.

It follows from Definition~\ref{def:expandingmap} that
there exist $\epsilon>0, L>1, C>1$ such that for any subset $A\subset\M$ of
diameter less than $\epsilon$ and every $n\ge 1$, the set $f^{-n}(A)$ is a
disjoint union of $d^n$ sets $A_i$ such that $f^n:A_i\arr A$ are
homeomorphisms, and diameters of $A_i$ are less than $CL^{-n}$.

Let $g\in\img{f}$ be defined by a loop $\gamma$. Since we can
represent $\gamma$ as a union of sub-paths of diameter less than
$\epsilon$, there exists $C_\gamma>1$ such that
diameter of any lift of $\gamma$ by $f^n$ is less than $C_\gamma L^{-n}$.

By Lemma~\ref{lem:ssimilarity}, section $g|_{i_1i_2\ldots i_n}$ is
defined by the loop $\ell_{j_1j_2\ldots j_n}^{-1}\gamma_{i_1i_2\ldots
  i_n}\ell_{i_1i_2\ldots i_n}$, where $\gamma_{i_1i_2\ldots i_n}$ is a
lift of $\gamma$ by $f^n$, and  $\ell_{i_1i_2\ldots i_n}$,
$\ell_{j_1j_2\ldots j_n}$ are paths inside the tree $\Gamma_t$ formed
by lifts of the connecting paths $\ell_i$. Since diameters of lifts of
paths by $f^n$ exponentially decrease with $n$, there exists $n_0$
(depending only on the connecting paths $\ell_i$) such that if $n_0\le
n<m$, then for all
sequences $i_1i_2\ldots i_m\in X^*$, the
path $\ell_{i_1i_2\ldots i_m}$ is a continuation of the path
$\ell_{i_1i_2\ldots i_n}$ by a path of diameter less than
$\epsilon/6$.

There exists $n_1\ge n_0$ (depending on $\gamma$) 
such that if $n\ge n_1$, then all lifts of $\gamma$
by $f^n$ have diameters less than $\epsilon/6$. Then for every $n\ge
n_1$, the section $g|_{i_1i_2\ldots i_n}$ is defined by a loop of the
form $\beta=\ell_{j_1j_2\ldots j_{n_0}}^{-1}\alpha\ell_{i_1i_2\ldots i_{n_0}}$,
where $\alpha$ is a path of diameter less than
$\epsilon/2$. If $\beta'=\ell_{j_1j_2\ldots j_{n_0}}^{-1}
\alpha'\ell_{i_1i_2\ldots i_{n_0}}$ is another path such
that $\alpha'$ has diameter less than $\epsilon/2$, then
\[\beta'\beta^{-1}=\ell_{j_1j_2\ldots
  j_{n_0}}^{-1}\alpha'\alpha^{-1}\ell_{j_1j_2\ldots j_{n_0}},\]
where $\alpha'\alpha^{-1}$ is a loop of diameter less than
$\epsilon$. By the choice of $\epsilon$, all lifts of $\alpha'\alpha^{-1}$
by iterations of $f$ are loops, which implies that $\beta$ and
$\beta'$ define equal elements of $\img{f}$. Consequently,
$g|_{i_1i_2\ldots i_n}$ is uniquely determined by the pair
$(i_1i_2\ldots i_{n_0}, j_1j_2\ldots j_{n_0})$. Since $n_0$ does not
depend on $g$, it follows that $\img{f}$ is contracting.
\end{proof}

\subsection{General definition of $\V_f$ for expanding maps}
\label{ss:generaldefinition}

The group $\V_f$ can be defined for an expanding self-covering
$f:\M\arr\M$, even if $\M$ is not path connected.

Let $f:\M\arr\M$ be an expanding self-covering of a compact metric
space $\M$, such that for every $t\in\M$ the set $\bigcup_{n\ge
  0}f^{-n}(t)$ is dense in $\M$. This condition is always satisfied
for a path-connected space $\M$.

Let $\delta>0$ be such that for any two points $t_1, t_2\in\M$ such
that $t_1\ne t_2$ and $f(t_1)=f(t_2)$ we have $|t_1-t_2|>\delta$. It
is easy to prove that such $\delta$ exists for any self-covering
$f:\M\arr\M$ of a compact metric space. It follows from the definition
of an expanding self-covering, that there exists $\epsilon>0$ such
that for any two points $z_1, z_2\in\M$ and for any $n\ge 1$ there
exists an isomorphism $S_{z_1, z_2}:T_{z_1}\arr T_{z_2}$ of the trees
of preimages such that $|S_{z_1, z_2}(v)-v|<\delta/2$ for all $v\in
T_{z_1}$. Moreover, it is easy to prove (by induction on the level number)
that the isomorphism $S_{z_1, z_2}$ is unique.

Fix a basepoint $t\in\M$, and define $\V_f$ as the group of
homeomorphisms of $\partial T_t$ piecewise equal to the isomorphisms
$S_{z_1, z_2}:\partial T_{z_1}\arr\partial T_{z_2}$ for $z_1, z_2\in
T_t$.

Note that if $\gamma:[0, 1]\arr\M$ is a path in $\M$, then there
exists $n$ such that all lifts of $\gamma$ by $f^m$ for $m\ge n$ have
diameter less than $\delta/2$. This implies that if $\M$ is path
connected, then our original definition of $\V_f$ agrees with the
given definition for expanding maps.

\begin{example}
Consider the one-sided shift $\mathsf{s}:X^\omega\arr X^\omega$ 
\[\mathsf{s}(x_1x_2\ldots)=x_2x_3\ldots.\] Consider the metric
$|w_1-w_2|=2^{-n}$, where $n$ is the smallest index for which $x_n\ne
y_n$, where $w_1=x_1x_2\ldots$ and $w_2=y_1y_2\ldots$.

Then $|\mathsf{s}(w_1)-\mathsf{s}(w_2)|=2|w_1-w_2|$ whenever
$|w_1-w_2|\le 1/2$. Consequently, $\mathsf{s}$ is expanding. 

For any
$w\in X^\omega$ the set $\mathsf{s}^{-n}(w)$ is equal to the set of
sequences of the form $vw$, where $v\in X^n$. Hence, we can identify
the $n$th level of the tree $T_w$ with the set $X^n$ by the map
$vw\mapsto v$. Note that the tree $T_w$ after this identification
becomes the left Cayley graph of the monoid $X^*$: two vertices are
connected by an edge if and only if they are of the form $v, xv$ for
$v\in X^*$, $x\in X$. In particular, the boundary $\partial T_w$ is
naturally identified with the space $X^{-\omega}$ of left-infinite
sequences $\ldots x_2x_1$.

It follows directly from the definitions that
the maps $S_{wv_1, wv_2}:T_{wv_1}\arr T_{wv_2}$ act by the rule
\[S_{wv_1, wv_2}(uv_1)=uv_2.\]
It follows that the homeomorphism $\ldots x_2x_1\mapsto x_1x_2\ldots$
of $X^{-\omega}=\partial T_w$ with $X^\omega$ conjugates the action of
$\V_{\mathsf{s}}$ with the Higman-Thompson group $\V_X$.
\end{example}

\begin{example}
If $\M$ is not connected, then a covering $f:\M\arr\M$ needs not to be
of constant degree. For example, $f:\M\arr\M$ can be a one-sided shift
of finite type. The corresponding group $\V_f$ is the topological
full groups of a shift of finite type (the dual of $f$). These groups were
studied in~\cite{matui:fullonesided}.
\end{example}

\subsection{Abelianization of $\V_f$ in expanding case}

Self-similar contracting groups acting faithfully on $X^*$ are
typically infinitely presented. 
On the other hand, for every contracting group $G$ there exists a
finitely presented group $\tilde G$ and a hyperbolic covering $\tilde G$-biset
$\Phi$ such that the faithful quotient of $\tilde G$ is $G$. More
precisely, we have the following description of $\tilde G$, given
in~\cite[Section~2.13.2.]{nek:book}.

\begin{proposition}
\label{pr:lengththree}
Let $(G, X)$ be a contracting group. Suppose that the nucleus $\nuke$ 
generates $G$.

Let $\tilde G$ be the group given by the presentation $\langle
\nuke\;|\;R\rangle$, where $R$ is the set of all relations
$g_1g_2g_3=1$ of length at most 3 that hold for elements of $\nuke$ in
$G$. Let $\Phi$ be the $\tilde G$-biset of pairs $x\cdot g$, for $x\in
X$ and $g\in\tilde G$ with the actions given by the usual rules:
\[(x\cdot g)\cdot h=x\cdot (gh),\qquad h\cdot (x\cdot g)=h(x)\cdot
(h|_xg),\]
where $g\in\tilde G$, $h\in\nuke$, $x\in X$; and $h(x)\in X$, $h|_x\in\nuke$ are
defined as in $G$. Then $\Phi$ is contracting.
\end{proposition}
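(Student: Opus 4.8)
The plan is to first upgrade $\Phi$ to a genuine $\tilde G$-biset by checking that the prescribed left action of the generators extends over all of $\tilde G$, and then to prove contraction by showing that $\nuke$, regarded inside $\tilde G$, already serves as a nucleus for $\Phi$. Throughout I would use two standard features of the nucleus of a contracting group: it is closed under taking sections ($h|_x\in\nuke$ for $h\in\nuke$ and $x\in X$) and under inversion ($\nuke^{-1}=\nuke$). Because of section-closure, the rules in the statement assign to each generator $h\in\nuke$ an automorphism $\tilde\psi(h)$ of the free right $\tilde G$-set $\Phi$, i.e.\ an element $\sigma_h\,(h|_x)_{x\in X}$ of the wreath product $\symm{X}\ltimes\tilde G^X$. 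The first task is to verify that $\tilde\psi$ respects the defining relations $R$, so that it descends to a homomorphism $\tilde\psi:\tilde G\arr\symm{X}\ltimes\tilde G^X$ and hence to a left $\tilde G$-action. Take a relator $g_1g_2g_3=1$ of $R$ with $g_i\in\nuke$. Computing the product $\tilde\psi(g_1)\tilde\psi(g_2)\tilde\psi(g_3)$ by the wreath multiplication rule~\eqref{eq:multiplicationwreath}, its permutation part is the image of $g_1g_2g_3$ acting on $X$, which is trivial, and its coordinate at $x\in X$ is $g_1|_{(g_2g_3)(x)}\,g_2|_{g_3(x)}\,g_3|_x=(g_1g_2g_3)|_x$, a product of three elements of $\nuke$. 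In $G$ this equals $1$, so it is itself a length-three relation among nucleus elements, i.e.\ a relator in $R$, hence trivial in $\tilde G$. Thus $\tilde\psi(g_1)\tilde\psi(g_2)\tilde\psi(g_3)=1$, as required, and $\Phi$ becomes a $\tilde G$-biset; since its right action is visibly free with $d=|X|$ orbits, it is a covering biset.

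The heart of the argument is a reduction exploiting that $R$ collapses a product of two nucleus sections into one. Because $\nuke\cdot\nuke$ is finite and $G$ is contracting, there is a single $N_0$ such that $(\alpha\beta)|_u\in\nuke$ in $G$ for all $\alpha,\beta\in\nuke$ and all $u$ with $|u|\ge N_0$. Writing $(\alpha\beta)|_u=\alpha|_{\beta(u)}\,\beta|_u$, the equality $\alpha|_{\beta(u)}\,\beta|_u=\mu$ with $\mu\in\nuke$, together with $\mu^{-1}\in\nuke$, is a length-three relation among nucleus elements and so holds in $\tilde G$; hence $(\alpha\beta)|_u$ is already a single element of $\nuke$ in $\tilde G$ whenever $|u|\ge N_0$.

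With this in hand I would show, for an arbitrary $g\in\tilde G$ written as a word $\mu_1\cdots\mu_k$ in $\nuke$, that $g|_v\in\nuke$ in $\tilde G$ once $|v|\ge(k-1)N_0$ (these $g|_v$ for $v\in X^n$ are precisely the biset sections of the definition of a contracting biset). The induction is on $k$, the cases $k\le 1$ being immediate from section-closure. For the step, peel off a prefix $s$ of $v$ of length $N_0$, so $g|_v=(g|_s)|_p$ with $v=sp$. Grouping $g=(\mu_1\mu_2)(\mu_3\cdots\mu_k)$ and applying the product rule gives $g|_s=(\mu_1\mu_2)|_{w(s)}\cdot(\mu_3\cdots\mu_k)|_s$, where $w=\mu_3\cdots\mu_k$ and $|w(s)|=|s|=N_0$; by the reduction above the first factor collapses to a single nucleus element, so $g|_s$ is represented by a word of length $k-1$ in $\nuke$. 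Since $|p|\ge(k-2)N_0$, the inductive hypothesis applied to $g|_s$ yields $g|_v=(g|_s)|_p\in\nuke$. This exhibits $\nuke$ as a (finite, section-closed) nucleus for $\Phi$, proving that $\Phi$ is contracting.

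The step I expect to be the main obstacle is this last reduction, precisely because in $\tilde G$ one is allowed only the relations $R$ — the relations of length at most three among nucleus elements — rather than all relations of $G$. The content of the proof is that these suffice: every collapse invoked along the way is a genuine length-three relation among nucleus elements, and the bookkeeping of depth against word length (the bound $(k-1)N_0$) must be arranged so that each application of the reduction occurs at level at least $N_0$. By contrast, the well-definedness of the left action and the covering-biset property are routine once section-closure and symmetry of $\nuke$ are invoked.
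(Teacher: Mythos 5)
The paper does not actually prove Proposition~\ref{pr:lengththree} --- it only cites \cite[Section~2.13.2]{nek:book} --- so there is no in-paper argument to diverge from; your proof is correct and is the standard one used in that source: the length-3 relations make the wreath recursion well defined on $\tilde G$ (each coordinate of $\tilde\psi(g_1)\tilde\psi(g_2)\tilde\psi(g_3)$ is itself a length-3 nucleus relation), the finiteness of $\nuke\cdot\nuke$ gives a uniform depth $N_0$ at which $\alpha|_{\beta(u)}\,\beta|_u\,\mu^{-1}=1$ collapses a product of two generators to a single one inside $\tilde G$, and the induction on word length with the bound $(k-1)N_0$ exhibits the image of $\nuke$ as a nucleus for $\Phi$. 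All the facts you invoke ($\nuke=\nuke^{-1}$, closure of $\nuke$ under sections, and the identification of sections of generators in $\tilde G$ with their sections in $G$) are correct and used where needed, so the argument is complete.
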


Note that for any contracting group $G$, the group generated by the
nucleus $\nuke$ is self-similar contracting, and
$\V_{\langle\nuke\rangle}=\V_G$.

The following is proved in~\cite[Proposition~2.13.2]{nek:book}.

\begin{proposition}
\label{pr:kernelcontracting}
Let $\Phi$ be a contracting $G$-biset. Let $\rho:G\arr\overline G$ be
the canonical epimorphism onto the faithful quotient of $G$. If
$\rho(g)\ne 1$ for every non-trivial element of the nucleus of $G$
(defined using some right orbit transversal $X$),
then the kernel of $\rho$ is equal to the union of the kernels $K_n$ 
of the left actions of $G$ on $\Phi^{\otimes n}$.
\end{proposition}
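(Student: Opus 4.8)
The plan is to establish the asserted equality $\ker\rho=\bigcup_{n\ge 0}K_n$ by proving the two inclusions separately. The inclusion $\bigcup_n K_n\subseteq\ker\rho$ is essentially formal, while $\ker\rho\subseteq\bigcup_n K_n$ is where the contraction hypothesis and the condition on the nucleus are used. Recall from Proposition~\ref{pr:kerneln} that $K_n\subseteq K_{n+1}$, so $\bigcup_n K_n$ is a genuine increasing union of subgroups. For the easy inclusion I would unwind the definition of $K_n$: since the right $G$-set $\Phi^{\otimes n}$ is free with transversal $X^n$, an element $g\in K_n$ (acting trivially from the left on $\Phi^{\otimes n}$) is exactly one with $g(v)=v$ and $g|_v=1$ for every $v\in X^n$. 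Triviality of all level-$n$ sections makes the action on every vertex below level $n$ trivial, and fixing level $n$ pointwise forces a tree automorphism to fix every vertex above level $n$ as well; hence $g$ acts trivially on all of $X^*$, that is, $g\in\ker\rho$.

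The heart of the argument is the reverse inclusion, and the key observation I would isolate first is that acting trivially on $X^*$ is inherited by sections. If $g\in\ker\rho$ and $v\in X^*$, then from $vw=g(vw)=g(v)\,g|_v(w)=v\,g|_v(w)$ for all $w$ one reads off $g|_v(w)=w$, so $g|_v\in\ker\rho$ as well. Now take an arbitrary $g\in\ker\rho$. Since $\Phi$ is contracting, there is an $n$ (depending on $g$) with $g|_v\in\nuke$ for all $v$ with $|v|\ge n$. By the inheritance observation each such $g|_v$ lies in $\nuke\cap\ker\rho$; but the hypothesis that $\rho(h)\ne 1$ for every nontrivial $h\in\nuke$ says precisely that $\nuke\cap\ker\rho=\{1\}$, whence $g|_v=1$ for all $|v|\ge n$. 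In particular $g(v)=v$ and $g|_v=1$ for every $v\in X^n$, which is exactly the statement $g\in K_n$, completing the inclusion.

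I do not expect a serious obstacle: the proof is short once the section-inheritance step is in place, and its force comes entirely from the contraction (which pushes every section eventually into $\nuke$) combined with the nucleus-faithfulness hypothesis (which kills any nucleus element in $\ker\rho$). The points needing care are bookkeeping ones. First, one must keep straight the two guises of the same condition, ``$g$ fixes $X^*$ pointwise'' versus ``$\rho(g)=1$'', since $\ker\rho$ is by definition the kernel of the action on $X^*$. Second, the translation of ``$g$ acts trivially on $\Phi^{\otimes n}$'' into the concrete conditions $g(v)=v$ and $g|_v=1$ on the transversal $X^n$ relies on freeness of the right action of $G$ on $\Phi^{\otimes n}$, so I would state that reduction explicitly rather than treat it as automatic.
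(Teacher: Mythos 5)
Your proof is correct and complete. The paper itself gives no argument for this proposition (it simply cites \cite[Proposition~2.13.2]{nek:book}), and your route---translating membership in $K_n$ via freeness of the right action into the conditions $g(v)=v$ and $g|_v=1$ for all $v\in X^n$, observing that sections of elements of $\ker\rho$ again lie in $\ker\rho$, and then combining the contraction property with the hypothesis $\nuke\cap\ker\rho=\{1\}$---is precisely the standard argument establishing it there.
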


Let $\Phi$ be a $G$-biset, and let $d$ be the number of
orbits of the right action of $G$ on $\Phi$. Choose a right orbit
transversal $X\subset\Phi$, and define, for $g\in G$ and $x\in X$, the
section $g|_x$ as the unique element of $G$ such that $g\cdot x=y\cdot
g|_x$ for $y\in X$. Let $\pi:G\arr G/G'$ be the abelianization
epimorphism.

By the same arguments as in Lemma~\ref{lem:transfer}, 
the map $\sigma:\pi(g)\mapsto\sum_{x\in X}\pi(g|_x)$ is a well defined
endomorphism of $G/G'$. It is also checked directly that it does not
depend on the choice of the right orbit transversal $X$.
If $d$ is odd, then define homomorphism
$\mathop{\mathrm{sign}}:G/G'\arr\Z/2\Z$ as in
Theorem~\ref{th:abelquotient}.

The following is a direct corollary of
Propositions~\ref{pr:kernelcontracting},~\ref{pr:kerneln}, and
Theorem~\ref{th:abelquotient2}. 

\begin{corollary}
\label{cor:abelquotcontracting}
Let $\Phi$ be a $G$-biset satisfying the conditions of
Proposition~\ref{pr:kernelcontracting}.

If $d$ is even, then $\V_\Phi/\V_\Phi'$ is isomorphic to the quotient
of $G/G'$ by the range of the homomorphism $1-\sigma$. If $d$ is odd,
then $\V_\Phi/\V_\Phi'$ is isomorphic to the quotient of $\Z/2\Z\oplus
G/G'$ by the range of the endomorphism $1-\sigma_1$, where
$\sigma_1(t, g)=(t+\mathop{\mathrm{sign}}(g), \sigma(g))$.
\end{corollary}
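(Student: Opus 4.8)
The plan is to reduce everything to Theorem~\ref{th:abelquotient2} applied to the faithful quotient $\overline G$, and then to transport the answer back along the epimorphism $\rho:G\arr\overline G$. First I would combine the two cited propositions: by Proposition~\ref{pr:kernelcontracting} the hypothesis on $\Phi$ forces $\ker\rho=\bigcup_{n\ge 0}K_n$, where $K_n$ is the kernel of the left action on $\Phi^{\otimes n}$; feeding this equality into Proposition~\ref{pr:kerneln} shows that the canonical map $\V_\Phi\arr\V_{\overline G}$ is an isomorphism. Since $\overline G$ is self-similar, Theorem~\ref{th:abelquotient2} computes $\V_{\overline G}/\V_{\overline G}'$ as the quotient of $\overline G/\overline G'$ (even $d$) by the range of $1-\overline\sigma$, respectively of $\Z/2\Z\oplus\overline G/\overline G'$ (odd $d$) by the range of $1-\overline\sigma_1$. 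What remains is to identify these quotients with the ones built from $G/G'$.

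For the comparison, note that the abelianized epimorphism $\rho_*:G/G'\arr\overline G/\overline G'$ is surjective and, because $\rho$ respects sections ($\rho(g|_x)=\rho(g)|_x$) and the action on the first level $X$, it intertwines the transfer endomorphisms: $\rho_*\circ\sigma=\overline\sigma\circ\rho_*$, and likewise $(\mathrm{id}\oplus\rho_*)\circ\sigma_1=\overline\sigma_1\circ(\mathrm{id}\oplus\rho_*)$ in the odd case (here $\mathop{\mathrm{sign}}$ is preserved since $\rho(g)$ and $g$ induce the same permutation of $X$). Hence $\rho_*$ descends to a surjection between the two quotient groups. A direct diagram chase, using surjectivity of $\rho_*$ to see that $\mathrm{im}(1-\overline\sigma)=\rho_*(\mathrm{im}(1-\sigma))$, shows that this descended map is injective precisely when $\ker\rho_*\subseteq\mathrm{im}(1-\sigma)$ (even case), and $\{0\}\oplus\ker\rho_*\subseteq\mathrm{im}(1-\sigma_1)$ (odd case). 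Since $\ker\rho_*=\pi(\ker\rho)=\bigcup_n\pi(K_n)$, it suffices to treat a single $g\in K_n$.

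The key tool is the telescoping identity $1-\sigma^n=(1-\sigma)(1+\sigma+\cdots+\sigma^{n-1})$, valid in the commutative ring generated by $\sigma$, which yields $\mathrm{im}(1-\sigma^n)\subseteq\mathrm{im}(1-\sigma)$, combined with the chain rule $\sigma^n(\pi(g))=\sum_{v\in X^n}\pi(g|_v)$. For $g\in K_n$ one has $g|_v=1$ and $g(v)=v$ for every $v\in X^n$, so $\sigma^n(\pi(g))=0$ and therefore $\pi(g)=(1-\sigma^n)(\pi(g))\in\mathrm{im}(1-\sigma)$, which settles the even case.

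For odd $d$ the same telescoping applies to $\sigma_1$, so I only need to understand $\sigma_1^n(0,\pi(g))$, whose second coordinate is again $\sigma^n(\pi(g))=0$ and whose first coordinate is the accumulated sign $s_n=\sum_{k=0}^{n-1}\mathop{\mathrm{sign}}\bigl(\sigma^k\pi(g)\bigr)$. The \emph{main obstacle} is to show $s_n=0$: writing $p_k(g)$ for the parity of the permutation of $X^k$ induced by $g$, the recursion for parities of tree automorphisms gives $p_{k+1}(g)=d\cdot p_k(g)+\sum_{v\in X^k}\mathop{\mathrm{sign}}(g|_v)$, and since $d$ is odd this collapses to $p_{k+1}(g)-p_k(g)=\mathop{\mathrm{sign}}(\sigma^k\pi(g))$; summing telescopes $s_n$ to $p_n(g)-p_0(g)=p_n(g)$. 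As $g\in K_n$ fixes $X^n$ pointwise, $p_n(g)=0$, hence $s_n=0$ and $\sigma_1^n(0,\pi(g))=(0,0)$. Thus $(0,\pi(g))=(1-\sigma_1^n)(0,\pi(g))\in\mathrm{im}(1-\sigma_1)$, which completes the odd case and the corollary.
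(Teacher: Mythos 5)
Your proposal is correct and follows precisely the route the paper itself takes: the paper's entire proof is the single sentence declaring the statement ``a direct corollary'' of Propositions~\ref{pr:kernelcontracting}, \ref{pr:kerneln} and Theorem~\ref{th:abelquotient2}, which is exactly your reduction $\V_\Phi\cong\V_{\overline G}$ followed by the abelianization formula for the faithful quotient. The material you add --- the intertwining $\rho_*\circ\sigma=\overline\sigma\circ\rho_*$ (and its signed version), and the telescoping identity combined with the parity recursion $p_{k+1}(g)=d\,p_k(g)+\sum_{v\in X^k}\mathop{\mathrm{sign}}(g|_v)$ showing that $\pi(K_n)\subseteq\mathrm{im}(1-\sigma)$, respectively $(0,\pi(g))\in\mathrm{im}(1-\sigma_1)$ for $g\in K_n$ --- is exactly the verification the paper leaves implicit in the word ``direct'', and you carry it out correctly.
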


\begin{proposition}
\label{pr:expandingpi1}
Suppose that $f:\M_1\arr\M$ is expanding, $\M$ is path-connected and
semi-locally simply connected. Then the $\pi_1(\M)$-biset associated with $f$
is contracting.

If $g\in\pi_1(\M)$ has trivial image in $\img{f}$, then there exists
$n$ such that $g$ acts trivially from the left on $\Phi_f^{\otimes
  n}$.
\end{proposition}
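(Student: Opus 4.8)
The plan is to transfer the proof of Proposition~\ref{pr:expcontracting} from $\img{f}$ to $\pi_1(\M)$, isolating the one place where the passage to $\img{f}$ was used and replacing it by semi-local simple connectedness. First I would fix connecting paths $\ell_1,\dots,\ell_d$ from $t$ to the points of $f^{-1}(t)$, so that $X=\{(z_i,\ell_i)\}$ is a right orbit transversal of $\Phi_f$. The right action $(z,\ell)\cdot\gamma=(z,\ell\gamma)$ is free (right multiplication on a $\pi_1(\M,t)$-torsor) and has exactly $d$ orbits, one for each $z\in f^{-1}(t)$, so $\Phi_f$ is a covering biset. Iterating the computation of Lemma~\ref{lem:ssimilarity}, for $g=\gamma\in\pi_1(\M,t)$ and a word $v$ of length $n$ the section is
\[
g|_v=\ell_{g(v)}^{-1}\gamma_v\ell_v\in\pi_1(\M,t),
\]
where $\gamma_v$ is the lift of $\gamma$ by $f^n$ starting at the vertex $\Lambda(v)\in f^{-n}(t)$, and $\ell_v,\ell_{g(v)}$ are the paths inside the tree $\Gamma_t$. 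This is the same formula as for $\img{f}$, except that now we record it as a homotopy class in $\pi_1(\M)$ rather than its image in $\img{f}$.

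For the contracting property I would first extract a uniform constant from the hypotheses. Since $\M$ is semi-locally simply connected and compact, applying the Lebesgue number lemma to a finite subcover of neighborhoods $U$ with $\pi_1(U)\to\pi_1(\M)$ trivial yields $\epsilon>0$ such that every loop in $\M$ of diameter less than $\epsilon$ is null-homotopic in $\M$. With this $\epsilon$ in hand I would repeat the argument of Proposition~\ref{pr:expcontracting}: by expansion the lifts of $\gamma$ by $f^n$ have exponentially decreasing diameter, so there is $n_0$ (depending only on the $\ell_i$) and $n_1\ge n_0$ (depending on $\gamma$) such that for $n\ge n_1$ every section $g|_v$ has the form $\ell_q^{-1}\alpha\ell_p$, where $(p,q)$ are the length-$n_0$ prefixes and $\alpha$ is a path of diameter less than $\epsilon/2$ joining $\Lambda(p)$ to $\Lambda(q)$. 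The crucial step is that any two admissible paths $\alpha,\alpha'$ differ by the loop $\alpha'\alpha^{-1}$ of diameter less than $\epsilon$; in Proposition~\ref{pr:expcontracting} this loop was merely trivial in $\img{f}$, whereas here it is genuinely null-homotopic in $\M$, so $\alpha\simeq\alpha'$ rel endpoints and $g|_v$ depends only on the pair $(p,q)$. Since there are at most $d^{2n_0}$ such pairs, all sections of sufficiently deep level lie in the finite set $\nuke=\{\ell_q^{-1}\alpha_{p,q}\ell_p\}$, which is independent of $g$; hence $\Phi_f$ is contracting.

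For the second statement, suppose $g=\gamma$ has trivial image in $\img{f}$. Then $g$ fixes every vertex of $T_t$, so $g(v)=v$ for all $v$, and the section becomes $g|_v=\ell_v^{-1}\gamma_v\ell_v$ with $\gamma_v$ a \emph{loop} based at $\Lambda(v)$. By expansion there is $n$ such that every lift of $\gamma$ by $f^n$ has diameter less than $\epsilon$; each such loop $\gamma_v$ is then null-homotopic in $\M$ by the choice of $\epsilon$, whence $g|_v=1$ in $\pi_1(\M)$ for all $v$ of length $n$. Since moreover $g(v)=v$, the left action on $\Phi_f^{\otimes n}$ satisfies $g\cdot(v\cdot h)=g(v)\cdot(g|_v\,h)=v\cdot h$, i.e. $g$ acts trivially from the left on $\Phi_f^{\otimes n}$.

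The main obstacle, and the only genuinely new ingredient beyond Proposition~\ref{pr:expcontracting}, is producing the single constant $\epsilon$ that simultaneously controls all small loops; this is exactly where compactness and semi-local simple connectedness enter, since without them a small loop need only be trivial in $\img{f}$ and could represent a nontrivial---indeed arbitrarily complicated---element of $\pi_1(\M)$, destroying finiteness of the nucleus.
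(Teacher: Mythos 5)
Your proof is correct and is essentially the paper's own argument: the paper proves the first claim ``in the same way as Proposition~\ref{pr:expcontracting}'' and derives the second from the exponential shrinking of lifted paths together with semi-local simple connectedness, which is exactly what you do, with the useful extra observation that the only change to the proof of Proposition~\ref{pr:expcontracting} is replacing ``trivial in $\img{f}$'' by ``null-homotopic in $\M$'' for the small comparison loops. One caveat worth recording: your Lebesgue-number step silently needs the semi-locally-simply-connected neighborhoods to be path-connected (e.g.\ it suffices that $\M$ be locally path-connected, as it is in all the paper's applications), since the defining condition of semi-local simple connectedness only controls loops based at the \emph{center} of the neighborhood, and a small essential loop could a priori lie in a different path component of the covering set; the paper's own two-sentence proof elides the same point.
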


The first paragraph of the proposition is proved in the same way as 
Proposition~\ref{pr:expcontracting}. The second paragraph follows
directly from exponential decreasing of diameters of lifts of paths by
iterations of $f$ and the condition that $\M$ is semi-locally simply
connected.

\begin{corollary}
Suppose that $f:\M_1\arr\M$ satisfies the conditions of
Proposition~\ref{pr:expandingpi1}, and let $\Phi$ be the
$\pi_1(\M)$-biset associated with $f$. Then $\V_\Phi$ is isomorphic to
$\V_f=\V_{\img{f}}$.
\end{corollary}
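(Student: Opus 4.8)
The plan is to recognize this corollary as a formal consequence of the machinery assembled in Proposition~\ref{pr:kerneln}, with the substantive analytic input supplied by Proposition~\ref{pr:expandingpi1}. Set $G=\pi_1(\M)$, and let $\Phi$ be the associated covering $G$-biset; it is a covering biset as established in the earlier example, and its wreath recursion realizes the standard action of $\img{f}$ on $X^*$. By the definition of the iterated monodromy group, the faithful quotient $\overline{G}$ of $G$ is exactly $\img{f}$, so $\V_{\overline G}=\V_{\img{f}}=\V_f$ by the identification $\V_f=\V_{\img{f}}$ recorded earlier. The task thus reduces to showing that the natural epimorphism $\V_\Phi\arr\V_{\overline G}$ furnished by Proposition~\ref{pr:kerneln} is in fact an isomorphism.

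By Proposition~\ref{pr:kerneln} it suffices to verify that $\bigcup_{n\ge 0}K_n$ coincides with the kernel of $G\arr\overline G$, where $K_n$ is the subgroup of elements of $G$ acting trivially from the left on $\Phi^{\otimes n}$. First I would dispose of the easy inclusion $\bigcup_n K_n\subseteq\ker(G\arr\overline G)$: if $g\in K_n$, then $g$ fixes every vertex of $X^n$ and has trivial sections $g|_v=1\in G$ for all $v\in X^n$, so for every $w=vu$ with $|v|=n$ one computes $g(w)=g(v)\,g|_v(u)=vu=w$. Hence $g$ acts trivially on all of $X^*$ and therefore maps to $1$ in $\overline G$.

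The reverse inclusion is precisely the content of the second paragraph of Proposition~\ref{pr:expandingpi1}: if $g\in\pi_1(\M)$ has trivial image in $\img{f}=\overline G$, then there exists $n$ such that $g$ acts trivially from the left on $\Phi^{\otimes n}$, that is, $g\in K_n\subseteq\bigcup_n K_n$. Combining the two inclusions gives $\bigcup_n K_n=\ker(G\arr\overline G)$, and Proposition~\ref{pr:kerneln} then yields $\V_\Phi\cong\V_{\overline G}=\V_{\img{f}}=\V_f$, as claimed.

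I do not expect a genuine obstacle here, since the hard work—the exponential decrease of diameters of lifts of paths by iterations of $f$, which forces every element of the monodromy kernel to act trivially on some finite level—is already packaged into Proposition~\ref{pr:expandingpi1}, where it rests on the expanding hypothesis together with the semi-local simple connectedness of $\M$. The only point demanding care is bookkeeping: one must keep in mind that $\Phi$ is a biset over the \emph{full} group $\pi_1(\M)$ rather than over its faithful quotient, so that $\V_\Phi$ and $\V_{\overline G}=\V_f$ are a priori distinct groups linked only by the epimorphism of Proposition~\ref{pr:kerneln}, and one must interpret the sections $g|_v$ in the easy inclusion inside $G$ (not in $\overline G$), so that $g|_v=1$ means literal triviality in $G$ and the identity $g|_{vu}=(g|_v)|_u=1$ is valid.
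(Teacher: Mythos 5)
Your proof is correct and is precisely the argument the paper intends: the paper states this corollary without proof, treating it as an immediate consequence of Proposition~\ref{pr:kerneln} combined with the second paragraph of Proposition~\ref{pr:expandingpi1}, which is exactly the reduction you carry out (identifying the faithful quotient of $\pi_1(\M)$ with $\img{f}$, verifying $\bigcup_n K_n=\ker\bigl(\pi_1(\M)\arr\img{f}\bigr)$, and invoking Proposition~\ref{pr:kerneln}). Your explicit check of the easy inclusion and the bookkeeping remark about sections living in $G$ rather than $\overline G$ are correct fillings-in of details the paper leaves tacit.
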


Let $f:\M_1\arr\M$ be a partial self-covering satisfying the conditions of
Proposition~\ref{pr:expandingpi1}. Let $\iota:\M_1\arr\M$ be the
identical embedding. 

The group
$\pi_1(\M)/\pi_1(\M)'$ is naturally isomorphic to the first homology
group $H_1(\M)$. The map $\sigma:H_1(\M)\arr H_1(\M)$ from
Corollary~\ref{cor:abelquotcontracting} is equal to $\iota_*\circ
f^!$, where $f^!:H_1(\M)\arr H_1(\M_1)$ is the map (called the
\emph{transfer map}) given by the condition that image of a chain $c$
is equal to its full preimage $f^{-1}(c)$.

Suppose that $c\in H_1(\M)$ is
defined by a loop $\gamma$. Then parity of the monodromy action of
$\gamma$ on fibers of $f$ is well defined and generates a homomorphism
from $H_1(\M)$ to $\Z/2\Z$. Let us denote it by $\mathop{\mathrm{sign}}:H_1(\M)\arr\Z/2\Z$.

Then the following description of $\V_f/\V_f'$ 
follows directly from Corollary~\ref{cor:abelquotcontracting}.

\begin{proposition}
\label{pr:homology}
Suppose that $f:\M_1\arr\M$ is expanding, $\M$ is path-connected and
semi-locally simply connected. 

If $\deg f$ is even, then $\V_f/\V_f'$ is isomorphic to the quotient
of $H_1(\M)$ by the range of the endomorphism $1-\iota_*\circ f^!$.

If $\deg f$ is odd, then $\V_f/\V_f'$ is isomorphic to the quotient of
$\Z/2\Z\oplus H_1(\M)$ by the range of the endomorphism $1-\sigma_1$,
where $\sigma_1(t, c)=(t+\mathop{\mathrm{sign}}(c), \iota_*\circ f^!(c))$.
\end{proposition}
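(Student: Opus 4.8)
The plan is to deduce the statement by specializing Corollary~\ref{cor:abelquotcontracting} to the $\pi_1(\M)$-biset attached to $f$. First I would take $G=\pi_1(\M,t)$ and let $\Phi=\Phi_f$ be the associated covering biset, whose right orbits are indexed by the fiber $f^{-1}(t)$, so that $d=\deg f$. By Proposition~\ref{pr:expandingpi1} this biset is contracting and satisfies the hypotheses of Proposition~\ref{pr:kernelcontracting}: indeed, $\bigcup_n K_n\subseteq\ker\rho$ always, and the second paragraph of Proposition~\ref{pr:expandingpi1} supplies the reverse inclusion, so $\bigcup_n K_n=\ker\rho$. Consequently, by Proposition~\ref{pr:kerneln}, the natural epimorphism $\V_\Phi\arr\V_{\img{f}}=\V_f$ is an isomorphism, whence $\V_f/\V_f'\cong\V_\Phi/\V_\Phi'$, and the right-hand side is computed by Corollary~\ref{cor:abelquotcontracting} in terms of $G/G'=\pi_1(\M)/\pi_1(\M)'\cong H_1(\M)$.

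It then remains to identify, under this isomorphism, the endomorphism $\sigma$ and the homomorphism $\mathop{\mathrm{sign}}$ of Corollary~\ref{cor:abelquotcontracting} with $\iota_*\circ f^!$ and the monodromy parity, respectively. By Lemma~\ref{lem:transfer}, $\sigma(\pi(g))=\sum_{x\in X}\pi(g|_x)$, where $X$ is the chosen fiber together with its connecting paths; for $g$ represented by a loop $\gamma$ at $t$, the section $g|_x$ is the class of $\ell_{g(x)}^{-1}\iota(\gamma_x)\ell_x$, with $\gamma_x$ the lift of $\gamma$ starting at $x$. The key computation is that, upon abelianizing, the connecting-path contributions cancel: as $x$ ranges over the fiber its image $g(x)$ ranges over the same fiber, so $\sum_x\bigl([\ell_x]-[\ell_{g(x)}]\bigr)=0$ in $H_1(\M)$, leaving $\sum_x[\iota(\gamma_x)]=\iota_*[f^{-1}(\gamma)]=\iota_*\circ f^!(\pi(g))$. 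Since the $\mathop{\mathrm{sign}}$ of Corollary~\ref{cor:abelquotcontracting} is by definition the parity of the permutation induced on the first level, which under the monodromy identification is exactly the parity of the action of $\gamma$ on the fiber, it agrees with the $\mathop{\mathrm{sign}}:H_1(\M)\arr\Z/2\Z$ of the statement.

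With these identifications the even case of Corollary~\ref{cor:abelquotcontracting} yields $\V_f/\V_f'\cong H_1(\M)$ modulo the range of $1-\iota_*\circ f^!$, and the odd case yields $\Z/2\Z\oplus H_1(\M)$ modulo the range of $1-\sigma_1$ with $\sigma_1(t,c)=(t+\mathop{\mathrm{sign}}(c),\iota_*\circ f^!(c))$, as asserted. I expect the main obstacle to be the homological identification $\sigma=\iota_*\circ f^!$, and within it the cancellation of the connecting-path terms: this uses that the endpoints of the lifts of $\gamma$ form a permutation of the fiber, and that the transfer $f^!$ is realized geometrically by passing to the full preimage $f^{-1}(\gamma)$. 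Everything else is a direct transcription of the already established Corollary~\ref{cor:abelquotcontracting}.
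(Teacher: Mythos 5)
Your proposal is correct and follows essentially the same route as the paper: the paper likewise obtains the result by applying Corollary~\ref{cor:abelquotcontracting} to the $\pi_1(\M)$-biset of $f$ (with Proposition~\ref{pr:expandingpi1} and Propositions~\ref{pr:kernelcontracting},~\ref{pr:kerneln} giving $\V_\Phi\cong\V_f$), and then identifying $\sigma$ with $\iota_*\circ f^!$ and $\mathop{\mathrm{sign}}$ with the monodromy parity. The only difference is that you spell out the cancellation of the connecting-path terms in the identification $\sigma=\iota_*\circ f^!$, which the paper asserts without detail.
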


\subsection{Example: Hyperbolic rational functions}

Let $f$ be a complex rational function, and let $C_f$ be the set of
critical points of $f:\widehat\C\arr\widehat\C$. The
\emph{post-critical set} of $f$ is the union
$P_f=\bigcup_{n\ge 1}f^n(C_f)$ of forward orbits of critical values.
Suppose that $P_f$ is finite (we say
then that $f$ is \emph{post-critically finite}). 

Let us additionally suppose that every cycle of $f:P_f\arr P_f$ contains a critical
point. Then $f$ is \emph{hyperbolic}, i.e., is expanding on a
neighborhood of its Julia set, see~\cite[Section~19]{milnor}.

One can find disjoint open topological discs around points of
$P_f$ such that if $\M$ is the complement of the union of these discs
in the Riemann sphere, then $\M$ contains the Julia set of $f$,
$\M_1=f^{-1}(\M)\subset\M$, and there exists a
metric on $\M$ such that $f:\M_1\arr\M$ is strictly expanding. For
instance, one can take discs bounded by the equipotential lines of the
basins of attraction, see~\cite[Section~9]{milnor}.

Then $H_1(\M)$ is isomorphic to the quotient of the free group
$\Z^{|P_f|}$ generated by elements $a_z$, $z\in P_f$,
corresponding to the boundaries of the discs, modulo the relation
$\sum_{z\in P_f}a_z=0$. It is easy to see that the map
$\sigma=\iota_*\circ f^!$ acts by the rule 
\[\sigma(a_z)=\sum_{y\in f^{-1}(z)\cap P_f}a_y.\]

Suppose now that $\deg f$ is odd. We say that $z$ is a \emph{critical
value mod 2} if $|f^{-1}(z)|$ is even. Note that if $z$ is a
critical value mod 2, then it is a critical value, since then
$|f^{-1}(z)|\ne\deg f$. In particular, all critical values mod 2
belong to $P_f$. It is also easy to see that $z$ is a critical value
mod 2 if and only if the monodromy action of a small simple loop
around $z$ is an odd permutation. Namely, lengths of cycles of the
monodromy action are equal local degrees of $f$ in the preimages of
$z$. The sum of local degrees is equal to $\deg f$, i.e., is odd,
hence the number of odd local degrees is odd.
Parity of the monodromy action is equal
to parity of the number of cycles of even length, which is equal to
parity of $|f^{-1}(z)|$ minus the number of odd local degrees,
which is equal to parity of $|f^{-1}(z)|+1$.

\begin{proposition}
\label{pr:hyperbolicrational}
Let $f$ be a hyperbolic post-critically finite rational function. Let
$k$ be the number of attracting cycles of $f$. Let $l$ be the greatest
common divisor of their lengths.

If $\deg f$ is even, then
$\V_f/\V_f'$ is isomorphic to $\Z^{k-1}\oplus\Z/l\Z$.

If $\deg f$ is odd, and there exists an attracting cycle $C$ such that
the number of critical values mod 2 whose forward $f$-orbits are
attracted to $C$ is odd, then $\V_f/\V_f'$ is also isomorphic to
$\Z^{k-1}\oplus\Z/l\Z$. Otherwise, $\V_f/\V_f'\cong\Z/2\Z\oplus\Z^{k-1}\oplus\Z/l\Z$.
\end{proposition}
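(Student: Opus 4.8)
The plan is to deduce both formulas from Proposition~\ref{pr:homology}, which reduces the computation of $\V_f/\V_f'$ to the cokernel of $1-\sigma$ (for $\deg f$ even) or of $1-\sigma_1$ (for $\deg f$ odd), where $\sigma=\iota_*\circ f^!$. Writing $P=P_f$, the text already identifies $H_1(\M)$ with the free abelian group on generators $a_z$, $z\in P$, modulo $R:=\sum_{z\in P}a_z=0$, and gives $\sigma(a_z)=\sum_{y\in f^{-1}(z)\cap P}a_y$. The key structural input is the \emph{functional digraph} on $P$ with an edge $y\to f(y)$: every vertex has out-degree $1$, so each connected component contains exactly one cycle, and by hyperbolicity (every cycle of $f|_P$ contains a critical point) these $k$ cycles are precisely the attracting cycles, of lengths $l_1,\dots,l_k$ with $\gcd=l$. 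I first note $\sigma$ descends to $H_1(\M)$, since $\sum_z\sigma(a_z)=\sum_y a_y=R$ forces $(1-\sigma)R=0$.

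For even $\deg f$ I compute $\operatorname{coker}\bigl(1-\sigma\colon\Z^{P}\to\Z^{P}\bigr)$ and then quotient by the image of $R$. In the basis $(a_z)$ the matrix of $\sigma$ is the incidence matrix $[\,f(y)=z\,]$, whose transpose is the pushforward $T(a_y)=a_{f(y)}$; as a matrix and its transpose have the same Smith normal form, $\operatorname{coker}(1-\sigma)\cong\operatorname{coker}(1-T)$. In $\operatorname{coker}(1-T)=\Z^{P}/\langle a_y-a_{f(y)}\rangle$ each vertex is identified with its image, so every component collapses to one free generator and $\operatorname{coker}(1-\sigma)\cong\Z^{k}$. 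Pairing with the $\sigma$-coinvariant functionals $c^{(i)}$, the indicator of the $i$-th cycle (which satisfies $c(z)=\sum_{f(y)=z}c(y)$), identifies this $\Z^k$ so that the class of a cycle vertex of component $i$ is the $i$-th basis vector while tree vertices vanish; hence $R\mapsto(l_1,\dots,l_k)$. Therefore $\V_f/\V_f'\cong\Z^{k}/\langle(l_1,\dots,l_k)\rangle\cong\Z^{k-1}\oplus\Z/l\Z$ by Smith normal form, proving the even case.

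For odd $\deg f$ one has $(1-\sigma_1)(t,c)=(\mathop{\mathrm{sign}}(c),(1-\sigma)c)$, so $Q:=\V_f/\V_f'=\operatorname{coker}(1-\sigma_1)$ admits the presentation with generators $\tau,\{a_z\}$ and relations $2\tau=0$, $\sum_z a_z=0$, and $a_z=\sigma(a_z)+\epsilon_z\tau$, where $\epsilon_z=\mathop{\mathrm{sign}}(a_z)\in\{0,1\}$ equals $1$ exactly when $z$ is a critical value mod $2$. Summing the last relation over one component $i$ and using $\sum_{z\in P_i}\sigma(a_z)=\sum_{y\in P_i}a_y$ yields the single consequence $s_i\tau=0$, where $s_i=\sum_{z\in P_i}\epsilon_z\bmod 2$ is the parity of the number of critical values mod $2$ attracted to the $i$-th cycle; moreover $\sum_i s_i=\mathop{\mathrm{sign}}(R)=0$ because $\mathop{\mathrm{sign}}$ is well defined on $H_1(\M)$. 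If some $s_i$ is odd, then $\tau=0$, the twisted relations become $(1-\sigma)a_z=0$, and $Q$ collapses to the even-case answer $\Z^{k-1}\oplus\Z/l\Z$, exactly the first alternative.

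It remains to treat the case where every $s_i$ is even. Setting $\tau=0$ recovers $Q/\langle\tau\rangle\cong\Z^{k-1}\oplus\Z/l\Z$, so $Q$ is an extension of $\Z^{k-1}\oplus\Z/l\Z$ by the cyclic group $\langle\tau\rangle$ of order dividing $2$, and I must show this group is $\Z/2\Z$ and that the extension splits. A homomorphism $Q\to\Z/2\Z$ with $\tau\mapsto1$ is the same as a homomorphism $\mu\colon H_1(\M)\to\Z/2\Z$ with $\mu\circ(1-\sigma)=\mathop{\mathrm{sign}}$; its existence over $(\Z/2\Z)^P$ is equivalent to $\mathop{\mathrm{sign}}$ pairing trivially with the $\sigma$-invariant classes $\mathbf 1_{P_i}$, which is precisely the hypothesis $s_i=0$. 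When $l$ is odd one can further normalize $\mu$ to satisfy $\mu(R)=0$ and invoke $\operatorname{Ext}^1(\Z/l\Z,\Z/2\Z)=0$, so the extension splits and $Q\cong\Z/2\Z\oplus\Z^{k-1}\oplus\Z/l\Z$. The hard part, and the main obstacle, is the subcase where $l$ is even (all attracting cycles have even length): then $\operatorname{Ext}^1(\Z/l\Z,\Z/2\Z)=\Z/2\Z$, and the digraph combinatorics alone do not force the retraction $\mu$ to be compatible with $\mu(R)=0$, so one must rule out the non-split extension $\Z/2l\Z$. I expect to settle this last compatibility using the monodromy origin of $\mathop{\mathrm{sign}}$ together with the Riemann--Hurwitz parity constraints on the numbers $|f^{-1}(z)|$ (recall $\epsilon_z\equiv|f^{-1}(z)|+1$), which is where the argument is most delicate.
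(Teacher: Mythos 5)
Your even-degree argument is correct, but it takes a genuinely different route from the paper's. The paper never invokes Smith normal form or duality: it observes that $\sigma^n(a_z)=0$ for every non-periodic $z\in P_f$ (so those classes die in the quotient), that the relations $\sigma(a_{f(z)})=a_z$ identify the generators along each cycle, and that the single defining relation $\sum_{z\in P_f}a_z=0$ of $H_1(\M)$ then becomes $l_1e_1+\cdots+l_ke_k=0$ in $\Z^k$. Your route (cokernel of the transpose, the Hopfian property of $\Z^k$, then pairing with the cycle-indicator functionals to locate the image of $R$) is sound, and the functional step is exactly what the transpose trick alone cannot provide, so you were right to add it. Your odd-degree analysis is also correct through the first alternative (some $s_i$ odd forces $\tau=0$) and through the subcase $l$ odd, where your normalization of $\mu$ works because $R\notin\mathrm{im}(1-\sigma)$ modulo $2$ as soon as some cycle has odd length.

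The remaining subcase ($\deg f$ odd, all $s_i$ even, all cycle lengths even) is a genuine gap, and it cannot be closed by the Riemann--Hurwitz parity argument you hope for: the compatibility you need is false. You have in fact put your finger on the precise point where the paper's own proof is deficient. After the non-periodic generators are eliminated via $a_z=m_z\tau$, the relation $\sum_{z\in P_f}a_z=0$ does not disappear: rewritten in the paper's coordinates $b_{z_i}$ it becomes $\sum_i l_ie_i=c\tau$, where $c\in\Z/2\Z$ is exactly your obstruction, namely $\mathop{\mathrm{sign}}(w)$ for a mod-$2$ class $w$ with $(1-\sigma)w=\sum_{z\in P_f}a_z$ (an alternating $2$-colouring of $P_f$, which exists precisely because all cycles are even). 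The paper's closing presentation of $\V_f/\V_f'$ as the quotient of $\Z/2\Z\oplus H$ by the relations $b_{z_i}=b_{z_{i+1}}$ alone omits this relation; one gets the advertised $\Z/2\Z\oplus\Z^{k-1}\oplus\Z/l\Z$ only when $c=0$, and $\Z^{k-1}\oplus\Z/2l\Z$ when $c=1$ --- non-isomorphic groups when $l$ is even.

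Moreover, $c=1$ actually occurs, so no parity principle can rule it out. Take the cubic portrait $P_f=\{p_0,p_1,q\}$ with $f(p_0)=p_1$ of local degree $2$, $f(p_1)=p_0$ and $f(q)=p_1$ of local degree $1$, a second simple critical point mapped to $p_0$, and a critical point of local degree $3$ mapped to $q$. Riemann--Hurwitz holds ($1+1+2=2\cdot 3-2$), the required monodromy exists in $\symm{3}$, and since $|P_f|=3$ there are no essential non-peripheral curves in the complement of $P_f$, hence no Thurston obstruction; so this portrait is realized by a hyperbolic post-critically finite cubic rational map (its unique cycle $\{p_0,p_1\}$ contains the critical point $p_0$). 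For it, $k=1$, $l=2$, and there are exactly two critical values mod $2$ (namely $p_0$ and $p_1$), both attracted to the unique cycle, so the Proposition predicts $\Z/2\Z\oplus\Z/2\Z$. But Proposition~\ref{pr:homology}, with $\sigma(a_{p_0})=a_{p_1}$, $\sigma(a_{p_1})=a_{p_0}+a_q$, $\sigma(a_q)=0$ and $\mathop{\mathrm{sign}}(a_{p_0})=\mathop{\mathrm{sign}}(a_{p_1})=1$, $\mathop{\mathrm{sign}}(a_q)=0$, yields the relations $a_q=0$, $a_{p_0}+a_{p_1}=0$, $\tau=a_{p_1}-a_{p_0}$ and $\tau=a_{p_0}-a_{p_1}$, whence $4a_{p_0}=0$, $\tau=2a_{p_0}$, and $\V_f/\V_f'\cong\Z/4\Z$. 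So in this last subcase not only is your argument incomplete; the statement being proved fails, and the paper's proof breaks at exactly the relation it omitted. The honest conclusion of your computation is the dichotomy $\Z/2\Z\oplus\Z^{k-1}\oplus\Z/l\Z$ or $\Z^{k-1}\oplus\Z/2l\Z$, governed by whether $\mathop{\mathrm{sign}}$ vanishes on the alternating colouring class.
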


Note that $\Z^{k-1}\oplus\Z/l\Z$ coincides with the $K_1$-group of the
$C^*$-algebraic analog of $\V_f$, see~\cite{nek:cpalg}. Its
$K_0$-group $\Z/(d-1)\Z$ has also appeared in our paper, see Proposition~\ref{pr:VXorbits}.

\begin{proof}
Every attracting cycle of $f$ is superattracting (i.e., contains
critical points of $f$), hence it belongs to the post-critical set $P_f$.

Suppose at first that $\deg f$ is even.
If $z\in P_f$ does not belong to a cycle of $f:P_f\arr P_f$, then
there exists $n$ such that $f^{-n}(z)\cap P_f=\emptyset$ and hence
$\sigma^n(a_z)=0$. It follows that the images of such elements $a_z$
under the epimorphism $\pi:H_1(\M)\arr\V_f/\V_f'$ are equal to zero.

If $C$ is a cycle of $f:P_f\arr P_f$, then the images of $a_z$ in
$\V_f/\V_f'$, for $z\in C$, satisfy the relations $\pi(a_z)=\pi(a_{f(z)})$, since we
have $\sigma(a_{f(z)})=a_z$. It follows that $\V_f/\V_f'$ is the
quotient of $H_1(\M)$ by the relations making elements corresponding
to the points of each cycle of $f:P_f\arr P_f$ equal, and making equal
to zero all elements corresponding to elements of $P_f$ not belonging
to cycles. It follows that $\V_f/\V_f'$ is the quotient of the free
abelian group $\Z^k=\langle e_1, e_2, \ldots, e_k\rangle$ by the
relation $l_1e_1+l_2e_2+\cdots+l_ke_k=0$, where $l_i$ are the lengths
of the corresponding cycles of $f:P_f\arr P_f$. Consequently,
$\V_f/\V_f'\cong\Z^{k-1}\oplus\Z/l\Z$, where $l$ is the g.c.d.\ of
$l_1, l_2, \ldots, l_k$.

Suppose now that $\deg f$ is odd. Then $\V_f/\V_f'$ is isomorphic to
the quotient of $\Z/2\Z\oplus H_1(\M)$ by the relations
$\sigma_1(a)=a$, where $\sigma_1(t, g)=(t+\mathop{\mathrm{sign}}(g),
\sigma(g))$, where $\mathop{\mathrm{sign}}(g)$ is the parity of the
monodromy action of $g$ for the covering map $f:\M_1\arr\M$.

It follows that $\sigma_1$ acts on the elements of the form $(0,
a_z)$, for $z\in P_f$, by the rule
\[\sigma_1(0, a_z)=\left\{\begin{array}{ll} \left(1, \sum_{y\in
      f^{-1}(z)\cap P_f}a_y\right) & \text{if $z$ is a critical value mod
      2,}\\
 \left(0, \sum_{y\in
      f^{-1}(z)\cap P_f}a_y\right) &
  \text{otherwise.}\end{array}\right.\]

Suppose that $z\in P_f$ is such that no point of
$\bigcup_{n\ge 0}f^{-n}(z)$ is a critical value mod 2, and $z$ does not belong to a
cycle. Then there exists $n$ such that $\sigma_1^n(0, a_z)=0$, hence
image of $(0, a_z)$ in $\V_f/\V_f'$ is zero.

If $z\in P_f$ is a critical value mod 2, but no point
of $\bigcup_{n\ge 1}f^{-n}(z)$ is a critical value mod 2, then
$\sigma_1(0, a_z)=(1, \sigma(a_z))$, and hence the image of $(0, a_z)$ in $\V_f/\V_f'$
is equal to the image of $(1, 0)\in\Z/2\Z\oplus H_1(\M)$.

It follows by induction that if $z\in P_f$ does not belong to a cycle,
then the image of $(0, a_z)$ in $V_f/\V_f'$ is equal to the image of $(m,
0)\in\Z/2\Z\oplus H_1(\M)$, where $m$ is the parity of the number of
critical values mod 2 in the set $\bigcup_{n\ge 0}f^{-n}(z)$. In
particular, $\V_f/\V_f'$ is a quotient of $\Z/2\Z\oplus H$, where
$H\le H_1(\M)$ is the subgroup generated by $a_z$ for $z$ belonging to cycles of
$f:P_f\arr P_f$.

Suppose now that $C$ is a cycle of length $r$ of the map $f:P_f\arr
P_f$. For $z\in C$, denote by $z'$ the unique element of $C$ such that
$f(z')=z$, and by $t_z$ the parity of the number of
critical values mod 2 in the set
$B_z=\{z\}\cup\bigcup_{y\in f^{-1}(z)\setminus z'}\bigcup_{n\ge 0}f^{-1}(y)$, see Figure~\ref{fig:cycle}.
Then $\V_f/\V_f'$ is isomorphic to the quotient of $\Z/2\Z\oplus H$ by
the relations $(0, a_z)=(t_z, a_{z'})$.

\begin{figure}
\centering
\includegraphics{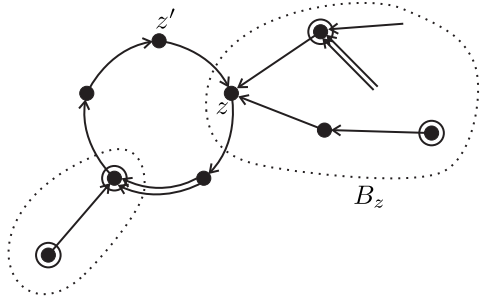}
\caption{A post-critical cycle}
\label{fig:cycle}
\end{figure}

Note that $t_C=\sum_{z\in C}t_z$ is the number of points $y$ that are
critical values mod 2 
and $f^n(y)\in C$ for all $n$ big enough. If $t_C$ is odd, then we
have a relation $(0, a_z)=(1, a_z)$, which implies that $(1, 0)$
belongs to the kernel of the epimorphism $\Z/2\Z\oplus
H_1(\M)\arr\V_f/\V_f'$, and the arguments for even $\deg f$ show that
$\V_f/\V_f'\cong\Z^{k-1}\oplus\Z/l\Z$. 

Suppose that $t_C$ is even for every cycle $C$ of $P_f$. Order
elements of every cycle $C\subset P_f$ into a sequence $z_0, z_1,
\ldots, z_{r-1}$ so that $f(z_i)=z_{i-1}$ for all $i=1, 2, \ldots,
r-1$, and $f(z_0)=z_{r-1}$. Denote then $b_{z_0}=(0, a_{z_0}),
b_{z_1}=(t_{z_0}, a_{z_1}), b_{z_2}=(t_{z_0}+t_{z_1}, a_{z_2}),
\ldots, b_{z_{r-1}}=(t_{z_0}+t_{z_1}+\cdots+t_{z_{r-2}},
a_{z_0})$. Then $\V_f/\V_f'$ is the quotient of $\Z/2\Z\oplus H$ by
the relations $b_{z_i}=b_{z_{i+1}}$ and $b_{z_{r-1}}=b_{z_0}$. It
follows that $\V_f/\V_f'$ is isomorphic to $\Z/2\Z\oplus\Z^{k-1}\oplus\Z/l\Z$.
\end{proof}

\begin{example}
If $f(z)=z^2+c$ is a hyperbolic post-critically finite quadratic
polynomial, then it has two
attracting cycles: $\{\infty\}$ and the orbit of the critical point
0. It follows that $\V_f/\V_f'$ is isomorphic to $\Z$.
\end{example}

\begin{example}
Suppose now that $f$ is a hyperbolic post-critically finite cubic
polynomial. If it has only one finite critical point $c$, then
$|f^{-1}(f(c))|=1$, hence there are no critical values mod 2. 

If $f$ has two critical points $c_1, c_2$, then $f(c_1)$ and $f(c_2)$
are critical values mod 2, and we have one of the following
possibilities:
\begin{itemize}
\item[(a)] forward orbits of $c_1$ and $c_2$ are disjoint cycles;
\item[(b)] both points $c_1$ and $c_2$ belong to a common cycle;
\item[(c)] one of the critical points belongs to a cycle $C$, and the
forward orbit of the other critical point eventually belongs to $C$.
\end{itemize}

It follows now from Proposition~\ref{pr:hyperbolicrational}
that $\V_f/\V_f'\cong\Z/2\Z\oplus\Z$ if the number of finite attracting
cycles of $f$ is 1, and $\V_f/\V_f'\cong\Z^2$ if it is 2.
\end{example}

\subsection{Finite presentation}
\begin{theorem}
\label{th:finitepresentation}
If $G$ is a contracting self-similar group, then the group $\V_G$ is
finitely presented.
\end{theorem}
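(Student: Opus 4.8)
To prove that $\V_G$ is finitely presented for a contracting self-similar group $G$, I would first reduce to a more tractable model by replacing $G$ with a finitely presented group. By Proposition~\ref{pr:lengththree}, if the nucleus $\nuke$ generates $G$, there is a finitely presented group $\tilde G=\langle\nuke\mid R\rangle$ (with $R$ the length-at-most-$3$ relations holding in $G$) carrying a contracting biset $\Phi$ whose faithful quotient is $G$. By Propositions~\ref{pr:kernelcontracting} and~\ref{pr:kerneln}, the natural epimorphism $\V_\Phi\arr\V_{\overline G}=\V_G$ is an isomorphism. Since $\V_{\langle\nuke\rangle}=\V_G$ (noted after Proposition~\ref{pr:lengththree}), it therefore suffices to exhibit a finite presentation of $\V_\Phi$ for this specific finitely presented $\tilde G$ and contracting $\Phi$. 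This moves the problem from an infinitely presented group to a finitely presented one, at the cost of working with the biset formalism.

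\emph{Generators.} For the generating set I would take the Higman-Thompson group $\V_X$ together with the (finitely many) generators $\nuke$ of $\tilde G$, each realized as an element of $\V_\Phi$ via its table $\sum_{x\in X}S_{g(x)}\,g|_x\,S_x^*$. That $\V_\Phi$ is generated by these is exactly the content of Proposition~\ref{pr:symbolic}/Definition~\ref{def:VG}: every element is $\sum_{v\in A_1}S_{\alpha(v)}g_vS_v^*$, the prefix-replacement part lies in $\V_X$, and the remaining sections $g_v$ lie in $\langle\nuke\rangle=\tilde G$. So the generating set is finite.

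\emph{Relations.} The relations split into three finite families. First, a finite presentation of $\V_X$ itself (Higman-Thompson groups are finitely presented, cf.\ \cite{hgthomp,brown:finiteness}). Second, the finitely many relations $R$ of $\tilde G$. Third — and this is the crux — the \emph{interaction relations} encoding how the generators $g\in\nuke$ commute past the tree-automorphisms, i.e.\ the splitting/wreath-recursion identity~\eqref{eq:gsplitting}, $g=\sum_{x\in X}S_{g(x)}\,g|_x\,S_x^*$. In presentation terms, each generator $g$ equals a word in $\V_X$ and the generators $g|_x$; since the nucleus is closed under taking sections, every $g|_x$ is again in $\nuke$, so this yields only finitely many such relations. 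The heart of the argument is to show these three families \emph{suffice}: that any relation among the generators in $\V_\Phi$ is a consequence of them.

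\textbf{The main obstacle.}
The hard part is the completeness of the relations. Proposition~\ref{pr:symbolic} tells us two tables define the same element iff they become equal after repeated splitting, so the free task is to show that a single common refinement of two tables — reached by applying~\eqref{eq:gsplitting} finitely often — is derivable from the chosen relations, and that the move between any two tables representing the same element factors through such refinements using only $\V_X$-relations and $R$. The finiteness of the nucleus is what rescues this: under splitting, sections stay in $\nuke$, so the infinitely many potential consistency conditions collapse onto the finite set of relations among nucleus elements. Concretely, I would argue that an arbitrary word in the generators evaluating to the identity can be rewritten, using the commutation relations to push all $\nuke$-generators to deep enough levels (where contraction forces the sections into $\nuke$) and then using $R$ and the $\V_X$-relations to trivialize what remains; the delicate point is controlling this rewriting uniformly, which is precisely where contraction must be invoked quantitatively rather than qualitatively.
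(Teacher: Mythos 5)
There is a genuine gap at the first concrete step of your plan: the generating set is too small, and the error propagates into your relations. The elements you take as generators, $\sum_{x\in X}S_{g(x)}g|_xS_x^*$ for $g\in\nuke$, are the \emph{global} copies of the nucleus elements (by~\eqref{eq:gsplitting} this expression is just $g$ itself). But a general element decomposes as $\sum_{v\in A_1}S_{\alpha(v)}g_vS_v^*=\left(\sum_{v\in A_1}S_{\alpha(v)}S_v^*\right)\cdot\prod_{v\in A_1}L_v(g_v)$, where $L_v(g_v)$ is the \emph{localized} homeomorphism acting as $g_v$ inside the cylinder $vX^\omega$ and as the identity elsewhere. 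Your sentence ``the remaining sections $g_v$ lie in $\langle\nuke\rangle$'' conflates the abstract group element $g_v\in G$ with the localized transformation $L_v(g_v)$; what generation actually requires is that each $L_v(g_v)$ be a word in $\V_X$ and the global generators, and this is false in general. Concretely, take $X=\{0,1\}$ and $G=\Z/2\Z=\{1,g\}$ with wreath recursion $g=\sigma(g,g)$, so that $g$ flips every letter of a sequence; this group is contracting with nucleus $\{1,g\}$. One computes $g\,S_uS_v^*\,g=S_{\bar u}S_{\bar v}^*$ (bars denoting letterwise flip), so $g$ normalizes $\V_X$ and $\langle\V_X,g\rangle=\V_X\cup\V_X g$. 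But $L_0(g)\in\V_G$ lies in neither coset: every element of $\V_X$ preserves the tail of each sequence, every element of $\V_X g$ flips the tail of each sequence, while $L_0(g)$ flips tails on $0X^\omega$ and preserves them on $1X^\omega$. Hence $\langle\V_X,g\rangle\ne\V_G$. (Global copies may suffice in the self-replicating case, which is what the introduction's remark about $\img{f}$ and the Higman--Thompson group refers to, but Theorem~\ref{th:finitepresentation} concerns arbitrary contracting groups.) The same conflation makes your third family of relations ill-formed: $g=\sum_{x\in X}S_{g(x)}g|_xS_x^*$ is not a word identity among $g$, $\V_X$-elements and the global $g|_x$'s; writing it as a group word forces the localized elements $L_x(g|_x)$ into the picture.

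This is precisely what the paper's proof is built around. Its generators are a finite generating set of $\V_X$ together with the localized elements $L(g)=L_{x_1}(g)$ for $g\in\nuke$ (generation is then easy, since every $L_v(g)$ is a $\V_X$-conjugate of $L(g)$), and its relations are: a finite presentation of $\V_X$; the nucleus relations (\textbf{N}) (your ``relations of $\tilde G$''); the splitting relations (\textbf{S}) written in localized form; and, crucially, the commutation relations (\textbf{C}) between localized copies supported on disjoint cylinders and between $L(g)$ and elements of $\V_X$ supported off $x_1X^\omega$ --- a family with no counterpart in your list and not derivable from it. The completeness argument is then a normal-form rewriting (Lemmas~\ref{lem:action} and~\ref{lem:Lcommutation}): push all localized factors to a common level, factor the word as a product over disjoint cylinders, and trivialize each factor using (\textbf{N}), (\textbf{S}) and Propositions~\ref{pr:lengththree},~\ref{pr:kernelcontracting} --- the quantitative use of contraction you correctly anticipate at the end (and your opening reduction via Propositions~\ref{pr:kerneln} and~\ref{pr:kernelcontracting} is sound and parallels the paper), but none of this rewriting can even be set up without the localized generators and the relations (\textbf{C}).
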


\begin{corollary}
Let $f:\M\arr\M$ be an expanding self-covering of a compact path
connected metric space. Then the group $\V_f$ is finitely presented.
\end{corollary}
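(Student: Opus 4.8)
The plan is to realize $\V_f$ as a group of the form $\V_G$ for a contracting self-similar group $G$ and then invoke Theorem~\ref{th:finitepresentation} directly; the corollary is then a formal deduction from cited results. The natural candidate is $G=\img{f}$ equipped with a standard action. Since $\M$ is path connected, I would first fix a basepoint $t\in\M$ together with connecting paths $\ell_i$ from $t$ to the points of $f^{-1}(t)$, which determines a standard action of $\img{f}$ on $X^*$ with $X=\{1,\dots,\deg f\}$. With respect to this action $\img{f}$ is a self-similar group in the sense of Definition~\ref{def:ssimilar}: faithfulness is automatic, since $\img{f}$ is defined as a group of homeomorphisms of $\partial T_t$, and the defining self-similarity relation $gS_x=S_yh$ is precisely the content of Lemma~\ref{lem:ssimilarity}.

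The second ingredient is the identification $\V_f=\V_{\img{f}}$ for the corresponding standard action, which is the proposition recorded just after Definition~\ref{def:VG}. Thus $\V_f$ is literally of the form $\V_G$ with $G=\img{f}$, and it remains only to verify that this $G$ is contracting and to apply Theorem~\ref{th:finitepresentation}.

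Contraction is supplied by Proposition~\ref{pr:expcontracting}: since $f$ is an expanding self-covering of a compact path connected space (the full-covering case $\M_1=\M$ of that proposition), $\img{f}$ is a contracting self-similar group with respect to any standard action, in particular the one fixed above. Applying Theorem~\ref{th:finitepresentation} to $G=\img{f}$ then gives that $\V_{\img{f}}=\V_f$ is finitely presented, which is the assertion of the corollary.

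Because the argument is a composition of three already-established statements, there is no genuinely hard step inside the corollary itself; the only point requiring care is the bookkeeping of the standard action. One must use a single fixed choice of connecting paths $\ell_i$ (equivalently, a single right orbit transversal $X$) throughout, since it is this one standard action that simultaneously makes $\img{f}$ self-similar via Lemma~\ref{lem:ssimilarity}, realizes $\V_f$ as $\V_{\img{f}}$, and is covered by the conclusion of Proposition~\ref{pr:expcontracting}. As all three statements are phrased for an arbitrary standard action, any fixed choice works and the chain closes without obstruction.
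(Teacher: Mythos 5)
Your proposal is correct and matches the paper's (implicit) argument exactly: the corollary is deduced by fixing a standard action, noting $\V_f=\V_{\img{f}}$, invoking Proposition~\ref{pr:expcontracting} to see that $\img{f}$ is contracting, and applying Theorem~\ref{th:finitepresentation}. Your extra remark about keeping a single fixed choice of connecting paths throughout is sound bookkeeping, consistent with how the paper uses its standard actions.
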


\begin{proof}
Let $\nuke$ be the nucleus of $G$. We may assume that $\nuke$ is a
generating set of $G$, since otherwise we can replace $G$ by
$\langle\nuke\rangle$ without changing $\V_G$.

For $v\in X^*$, and $g\in\V_G$,
denote by $L_v(g)$ the element of $\V_G$ defined by the rule
\[L_v(g)(w)=\left\{\begin{array}{cl} vg(u) & \text{if $w=vu$ for some
      $u\in X^\omega$}\\ w & \text{if $w$ does not start with
      $v$.}\end{array}\right.\]

The following is straightforward.

\begin{proposition}
For every $v\in X^*$ the map $L_v:\V_G\arr\V_G$ is a group
monomorphism. If $v, u\in X^*$ are not comparable, then the subgroups $L_v(\V_G)$
and $L_u(\V_G)$ of $\V_G$ commute. If $v, u\in X^*$ are non-empty, and
$h\in\V_G$ is such that $h(vw)=uw$ for all $w\in X^\omega$, then
$h\cdot L_v(g)\cdot h^{-1}=L_u(g)$ for all $g\in\V_G$.
\end{proposition}

Fix a letter $x_1\in X$. We will denote $L(g)=L_{x_1}(g)$.
For every pair $x, y\in X$ choose elements $A_{x, y}$ and $B_x$ of $\V_X$ such that
\[A_{x, y}(yw)=xyw,\quad B_x(x_1w)=xw,\]
for all $w\in X^\omega$. We assume that $B_{x_1}=1$.

Let $\langle S\;|\;R\rangle$ be a finite presentation of the
Higman-Thompson group $\V_X$, see~\cite{hgthomp}.
Let $S_1$ be the set of elements of $\V_G$ of the form $L(g)$ for
$g\in\nuke$.

\begin{lemma}
The set $S\cup S_1$ generates $\V_G$.
\end{lemma}

\begin{proof}
For every non-empty $v\in X^*$ we can find an element $h_v\in\V_X$ such that
$h_v(vw)=x_1w$ for all $w\in X^\omega$, see
Lemma~\ref{lem:incomplete}. Then $h_v^{-1}L(g)h_v=L_v(g)$
for all $g\in\V_G$. It follows that $L_v(g)\in\langle S\cup
S_1\rangle$ for all $g\in\nuke$ and $v\in X^*\setminus\{\varnothing\}$.
Every element $g\in\V_G$ can be
represented by a table $\left(\begin{array}{cccc} v_1 & v_2 & \ldots &
    v_n\\ g_1 & g_2 & \ldots & g_n\\ u_1 & u_2 & \ldots &
    u_n\end{array}\right)$, where $v_i, u_i$ are non-empty, and
$g_i\in\nuke$. But then
\[g=\left(\begin{array}{cccc} v_1 & v_2 & \ldots &
    v_n\\ 1 & 1 & \ldots & 1\\ u_1 & u_2 & \ldots &
    u_n\end{array}\right)L_{u_1}(g_1)L_{u_2}(g_2)\cdots
  L_{u_n}(g_n)\in\langle S\cup S_1\rangle.\]
\end{proof}

Represent each $A_{x, y}$, $B_x$ as group words $\overline A_{x, y}$,
$\overline B_x$ in $S$, and denote, for $y_1y_2\ldots y_n\in X^n$ and
$y\in X$,
\[\overline A_{y_1y_2\ldots y_n, y}=
\overline A_{y_1, y_2}\cdots\overline A_{y_{n-1}, y_n}\overline A_{y_n, y}.\]
Let $A_{v, y}$ be the image of $\overline A_{v, y}$ in $\V_X$.
Then $A_{v, y}$ satisfies
\[A_{v, y}(yw)=vyw\]
for all $w\in X^\omega$.

For every word $v=y_1y_2\ldots y_n$ of length at least 2 and every $g\in\V_G$ we have
\[L_v(g)= A_{y_1y_2\ldots y_{n-1}, y_n}B_{y_n} L(g)B_{y_n}^{-1}
A_{y_1y_2\ldots y_{n-1}, y_n}^{-1}.\]
For every $v=y_1y_2\ldots y_n\in X^*$ and $g\in\nuke$ denote by $\overline L_v(g)$ the
word
\[\overline A_{y_1y_2\ldots y_{n-1}, y_n}\overline B_{y_n}
L(g)\overline B_{y_n}^{-1}\overline A_{y_1y_2\ldots y_{n-1}, y_n}^{-1}\]
in generators $S\cup S_1$. Also denote by $\overline L_x(g)$ the
word $\overline B_xL(g)\overline B_x^{-1}$.

Denote by $\symm{d^n}$ the subgroup of $\V_X$ consisting of all
elements of the form $\sum_{i=1}^{d^n}S_{u_i}S_{v_i}^*$,
where $\{v_1, v_2, \ldots, v_{d^n}\}=\{u_1, u_2, \ldots, u_{d^n}\}=X^n$.
It is isomorphic to the symmetric group of degree $d^n$. Here $d=|X|$.

For every $x\in X$ choose a finite generating set $W_x$ (as a
set of group words in $S$) of the group $(\V_X)_{(X^\omega\setminus
xX^\omega)}$ of elements of $\V_X$ acting trivially on
$xX^\omega$. This group is isomorphic to the Higman-Thompson group
$G_{d, d-1}$, hence is finitely generated (see~\cite{hgthomp}).

Let $R_1$ be the union of the following sets of relations.
\begin{itemize}
\item[(\textbf{C})] \textbf{Commutation.} Relations of the
  form \[[\overline L_x(g_1), \overline L_y(g_2)]=[\overline L_{v_1}(g_1),
 \overline L_{v_2}(g_2)]=[L(g), h]=1\] for all $g, g_1, g_2\in\nuke,
 x, y \in X, v_1, v_2\in X^2, h\in W_{x_1}$, where $x\ne y$ and
 $v_1\ne v_2$.

\item[(\textbf{N})] \textbf{Nucleus.} Relations of the form \[L(g_1)L(g_2)L(g_3)=1\]
  for all $x\in X$ and $g_1, g_2, g_3\in\nuke$ such that $g_1g_2g_3=1$
  in $G$.

\item[(\textbf{S})] \textbf{Splitting.} Relations of the form
\[L(g)=\overline h\cdot\overline
  L_{x_1y_1}(g|_{y_1})\overline L_{x_1y_2}(g|_{y_2})\cdots\overline
  L_{x_1y_d}(g|_{y_d}),\] for all $g\in\nuke$, where $\overline h$ is a word in the generators
$S$ representing an element $h\in\symm{d^2}$ such that
$L(g)=h L_{x_1y_1}(g|_{y_1})L_{x_1y_2}(g|_{y_2})\cdots L_{x_1y_d}(g|_{y_d})$.
\end{itemize}

Let us show that 
the set $R\cup R_1$ is a set of defining relations for the group $\V_G$.
Denote by $\hat\V_G$ the group defined by the presentation $\langle
S\cup S_1\;|\;R\cup R_1\rangle$.
All relations $R\cup R_1$ hold in $\V_G$, hence $\V_G$ is a quotient
of $\hat\V_G$, and it is enough to show
that all relations in $\V_G$ also hold in $\hat\V_G$.

Note that since $R$ is a set of defining relations of $\V_X$, a group
word in $S$ is trivial in $\hat\V_G$ if
and only if it is trivial in $\V_X$. We will identify, therefore, the
elements of the subgroup $\langle S\rangle\le\hat\V_G$ with the corresponding elements of $\V_X$.

\begin{lemma}
\label{lem:action}
Suppose that $h\in\V_X$ and $u, v\in X^*$ are such that $h(uw)=vw$ for
all $w\in X^\omega$. Then $h\overline L_u(g)h^{-1}=\overline L_v(g)$
holds in $\hat\V_G$.
\end{lemma}

\begin{proof}
Let $u=a_1a_2\ldots a_n$ and $v=b_1b_2\ldots b_m$ for $a_i, b_i\in
X$. Then \[\overline L_u(g)=A_{a_1a_2\ldots a_{n-1},
  a_n}B_{a_n}L(g)B_{a_n}^{-1}A_{a_1a_2\ldots a_{n-1}, a_n}^{-1}\] and
\[\overline L_v(g)=A_{b_1b_2\ldots b_{m-1},
  b_m}B_{b_m}L(g)B_{b_m}^{-1}A_{b_1b_2\ldots b_{m-1}, b_m}^{-1},\] by
definition. We have dropped the lines above the letters $A$ and $B$,
because the corresponding elements belong to $\V_x$.

We have then
\begin{multline*}h\overline L_u(g)h^{-1}=
hA_{a_1a_2\ldots a_{n-1},
  a_n}B_{a_n}L(g)B_{a_n}^{-1}A_{a_1a_2\ldots a_{n-1},
  a_n}^{-1}h^{-1}=\\
A_{b_1b_2\ldots b_{m-1},  b_m}B_{b_m}\cdot 
B_{b_m}^{-1}A_{b_1b_2\ldots b_{m-1}, b_m}^{-1}hA_{a_1a_2\ldots a_{n-1}, a_n}B_{a_n}\cdot\\
L(g)\cdot\\
B_{a_n}^{-1}A_{a_1a_2\ldots a_{n-1}, a_n}^{-1}h^{-1}A_{b_1b_2\ldots b_{m-1}, b_m}B_{b_m}\cdot
B_{b_m}^{-1}A_{b_1b_2\ldots b_{m-1}, b_m}^{-1}.
\end{multline*}

The element 
\[f=B_{b_m}^{-1}A_{b_1b_2\ldots b_{m-1}, b_m}^{-1}hA_{a_1a_2\ldots a_{n-1}, a_n}B_{a_n}\]
satisfies
\[f(x_1w)=A_{v, y_m}^{-1}hA_{u, x_n}a_{y_m, x_n}(y_mw)=x_1w,\] for all
$w\in X^\omega$. Hence, by relations (\textbf{C}), $L(g)$ commutes with $f$,
i.e., $f\cdot L(g)\cdot f^{-1}=L(g)$ in
$\hat\V_G$, which finishes the proof.
\end{proof}

\begin{lemma}
\label{lem:Lcommutation}
If $v, u\in X^*$ are incomparable, then $\overline L_v(g_1)$ and
$\overline L_u(g_2)$ commute in $\hat\V_G$ for all $g_1, g_2\in\nuke$.
\end{lemma}

\begin{proof}
Let $x, y\in X$ be a pair of different letters. Then
$[\overline L_{xx}(g_1), \overline L_{xy}(g_2)]=1$ in
$\hat\V_G$, by (\textbf{C}). Since $v, u$ are incomparable,
either they both have length 1, or they form an incomplete
antichain. In the first case commutation of $\overline L_v(g_1)$ and
$\overline L_u(g_2)$ is a part of relations (\textbf{C}). In the second case, there exists
$a\in\V_X$ such that $a(uw)=xxw$ and $a(vw)=xyw$ for all
$w$ (see Lemma~\ref{lem:incomplete}). Then, by Lemma~\ref{lem:action},
\[[\overline L_v(g_1), \overline L_u(g_2)]=a[\overline
L_{xx}(g_1), \overline L_{xy}(g_2)]a^{-1}=1\]
in $\hat\V_G$.
\end{proof}

Let us prove now that any group word in $S\cup S_1$ that is trivial in
$\V_G$ is trivial in $\hat\V_G$. Note that relations (\textbf{S})
and Lemma~\ref{lem:action} imply relations
\begin{itemize}
\item[(\textbf{S'})] \[\overline L_v(g)=h\cdot\overline
  L_{vy_1}(g|_{y_1})\overline
  L_{vy_2}(g|_{y_2})\cdots\overline L_{vy_d}(g|_{y_d})\] for all
  $g\in\nuke$ and non-empty $v\in X^*$, where $h$ is an element of
  $\V_d$ such that $L_v(g)=hL_{vy_1}(g|_{y_1})L_{vy_2}(g|_{y_2})\cdots
  L_{vy_d}(g|_{y_d})$.
\end{itemize}

Every element of $\hat\V_G$ can be written in the form
$hL(g_1)^{h_1}L(g_2)^{h_2}\cdots L(g_n)^{h_n}$ for $h,
h_i\in\V_X$, and $g_i\in\nuke$.

Let $n_1$ be such that the element $h_n$ can be written as
$\sum_{i=1}^{d^{n_1}}S_{u_i}S_{v_i}^*$,
where $\{u_1, u_2, \ldots, u_{d^{n_1}}\}=X^{n_1}$. 
Using relations
(\textbf{S'}) and Lemma~\ref{lem:action}, we can rewrite the element
$L(g_n)$ in the form $\alpha\prod_{v\in X^{n_1-1}}\overline
L_{x_1v}(g_n|_v)$ for $\alpha\in\symm{d^{n_1}}$. (Note that
the factors 
$\overline L_{x_1v}(g_n|_v)$ commute with each other, by
Lemma~\ref{lem:Lcommutation}.)
Then for every $v\in X^{n_1-1}$ there exists $i$ such that $x_1v=u_i$,
and then by Lemma~\ref{lem:action}, we have
\[{\overline L_{x_1v}(g_n|_v)}^{h_n}=\overline L_{v_i}(g_n|_v),\]
so that $L(g_n)^{h_n}$ can be rewritten as a product of
$\alpha^{h_n}$ followed by a product of elements
of the form $\overline L_v(g_{v, n})$ for some $v\in X^{n_1}$ and
$g|_{v, n}\in\nuke$.

It follows by induction that every element of $\hat\V_G$ can be
written in the form
\begin{equation}
\label{eq:word}
g=h\overline L_{v_1}(g_1)\overline L_{v_2}(g_2)\cdots\overline
L_{v_m}(g_m)
\end{equation}
for some $v_i\in X^*$, $g_i\in\nuke$, and $h\in\V_X$.

Suppose that not all words $v_i$ are of the same length. Let $v_i$ be
the shortest, and let $k>|v_i|$ be the shortest length of words $v_j$
strictly longer than $v_i$. Using relations (\textbf{S'}), we can
rewrite $\overline L_{v_i}(g_i)$ as $\alpha_i\prod_{u\in
  X^{k-|v_i|}}L_{v_iu}(g_i|_u)$, and then, using
Lemma~\ref{lem:action}, move $\alpha_i\in\symm{d^k}$ to the beginning of the
product~\eqref{eq:word}. This procedure will increase the length of
the shortest word $v_i$ in~\eqref{eq:word} without changing the
length of the longest one. Repeating this procedure, we will
change~\eqref{eq:word} to a product of the same form, but in which all
words $v_i$ are of the same length.

Therefore, we may assume that in~\eqref{eq:word} all words $v_i$ are
of the same length $k$. Note that then $\overline
L_{v_1}(g_1)\overline L_{v_2}(g_2)\cdots\overline L_{v_m}(g_m)$ does
not change the beginning of length $k$ in any word $w\in
X^\omega$. Since $g$ is trivial in $\V_G$, this implies that $h$ does
not change the beginning of length $k$ in any word. It follows that we
can write $h$ as a product $\prod_{v\in X^k}L_v(h_v)$ for some
$h_v\in\V_X$. Using Lemmas~\ref{lem:action} and~\ref{lem:Lcommutation} we can
now rearrange the factors of~\eqref{eq:word} in such a way that
$g=\prod_{v\in X^k}f_v$, where $f_v=L_v(h_v)\overline L_v(g_{v,
  1})\overline L_v(g_{v, 2})\cdots\overline L_v(g_{v, m_v})$ for
$g_{v, i}\in\nuke$ and $h_v\in\V_X$. Note that $f_v$ are trivial in
$\V_G$. The latter implies that $h_v\in\symm{d^l}$ for some
$l$, and that the action of $h_vg_{v, 1}g_{v, 2}\cdots g_{v, m_v}$ on
$X^l$ is trivial. Consequently, using relations (\textbf{S'}), we can
rewrite $f_v$ as a product of elements of the form $\overline L_{vu}(g)$
for $u\in X^l$. Therefore, we may assume that $h_v$ are trivial. Then
$g_{v, 1}g_{v, 2}\cdots g_{v, m_v}$ is trivial in $G$. Relations
(\textbf{N}), (\textbf{S}), and
Propositions~\ref{pr:lengththree},~\ref{pr:kernelcontracting} finish the proof.
\end{proof}

\section{Dynamical systems and groupoids}

This section is an overview of relations between expanding
dynamical systems and self-similar groups, basic definitions of the
theory of \'etale groupoids, and properties of hyperbolic groupoids. 
For more details and proofs,
see~\cite{nek:book,nek:filling,nek:models} 
and~\cite{renault:groupoids,paterson:gr,haefl:foliations,nek:hyperbolic}.

\subsection{Limit dynamical system of a contracting group}

Let $(G, X)$ be a contracting self-similar group. Denote by
$X^{-\omega}$ the space of all left-infinite sequences $\ldots x_2x_1$
of elements of $X$ with the direct product topology. 

\begin{defi}
Sequences $\ldots x_2x_1, \ldots y_2y_1\in X^{-\omega}$ are
\emph{asymptotically equivalent} if there exists a finite set
$N\subset G$ and a sequence $g_n\in N$
such that \[g_n(x_n\ldots x_2x_1)=y_n\ldots
y_2y_1,\] 
for all $n\in\mathbb{N}$.
\end{defi}

Denote by $\J_G$ the quotient of the space $X^{-\omega}$ by the
asymptotic equivalence relation.

The asymptotic equivalence relation on $X^{-\omega}$ is
invariant with respect to the shift $\ldots x_2x_1\mapsto\ldots
x_3x_2$. Therefore, the shift
induces a continuous map $f:\J_G\arr\J_G$. The dynamical
system $(f, \J_G)$ is called the \emph{limit dynamical system} of $G$.

The map $f:\J_G\arr\J_G$ is expanding in the sense of
Definition~\ref{def:expandingmap} (even though it is not a covering
in general). Namely, we can represent $\J_G$ as the boundary of a
naturally defined Gromov hyperbolic graph (see~\cite{nek:hyplim}
and~\cite[Section~3.8]{nek:book}),
and some iteration
of $f$ will be locally uniformly expanding with respect to the visual
metric on the boundary.

\begin{defi}
\label{def:regular}
A self-similar group $(G, X)$ is said to be \emph{regular} if for every $g\in G$ there
exists a positive integer $n$ such that for every $v\in X^n$ either
$g(v)\ne v$, or $g|_v=1$.
\end{defi}

Note that it is enough to check the conditions of
Definition~\ref{def:regular} for elements $g$ of the nucleus of $G$.

The following proposition is proved in~\cite[Proposition~6.1]{nek:cpalg}.

\begin{proposition}
The shift map $f:\J_G\arr\J_G$ is a covering if and only if $G$ is regular.
\end{proposition}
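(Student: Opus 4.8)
The plan is to work on the two levels linked by the shift. On $X^{-\omega}$ the shift $\tilde f:\ldots x_2x_1\mapsto\ldots x_3x_2$ is a local homeomorphism that is exactly $d$-to-one ($d=|X|$), its fibre over $w$ being $\{wx:x\in X\}$. Writing $\pi:X^{-\omega}\arr\J_G$ for the quotient by the asymptotic equivalence, we have $f\circ\pi=\pi\circ\tilde f$, the space $\J_G$ is compact metrizable, and $\pi$ is open (the limit space is the boundary of the hyperbolic graph mentioned after the definition of $\J_G$, for which the boundary projection is open). Hence $f$ is continuous, open, closed (compact source, Hausdorff target), surjective and at most $d$-to-one: given a representative $w$ of $\eta$, any preimage of $\eta$ is represented by some $wx$, because an equivalence $w'\sim w$ realised by a nucleus path $(g_n)$ extends one step downward to $w'x\sim w(g_0(x))$ with $g_0=g_1|_{w_1}\in\nuke$. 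The first reduction is therefore: \emph{a continuous, open, closed, surjective map all of whose fibres have the same cardinality $d$ is a covering.} This is the standard separation argument — separate the $d$ preimages of $\eta$ by disjoint $V_i$, use closedness to get $W\ni\eta$ with $f^{-1}(W)\subseteq\bigsqcup V_i$, shrink $W$ inside $\bigcap_i f(V_i)$ using openness, so each $V_i\cap f^{-1}(W)\arr W$ is a continuous open bijection. Thus \emph{$f$ is a covering if and only if every fibre has exactly $d$ points}, i.e.\ if and only if no ``collapse'' $[\,\ldots y_2y_1x\,]=[\,\ldots y_2y_1x'\,]$ with $x\neq x'$ occurs.

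Next I would record the collapse criterion explicitly. Unwinding the definition of asymptotic equivalence for the two sequences $\ldots y_2y_1x$ and $\ldots y_2y_1x'$ (which differ only in the last letter), the classes coincide precisely when there is a left-infinite path $(g_n)_{n\ge1}$ in $\nuke$ with
\[g_1(x)=x',\qquad g_n(y_{n-1})=y_{n-1}\ \text{and}\ g_n|_{y_{n-1}}=g_{n-1}\ \ (n\ge 2).\]
In words: a nucleus path reading $\ldots y_2y_1$ upward, all of whose arrows \emph{fix} their input letter, whose \emph{bottom} section moves $x$ to $x'$.

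For the direction ``regular $\Rightarrow$ covering'' I would rule out collapses. From a collapse, the relations $g_n|_{y_{n-1}}=g_{n-1}$ and $g_1(x)=x'\neq x$ force every $g_n$ to be nontrivial and to fix the word $y_{n-1}\ldots y_1$ with $g_n|_{y_{n-1}\ldots y_1}=g_1\neq 1$. Since $\nuke$ is finite, some $h\in\nuke$ equals $g_n$ for infinitely many $n$, hence fixes arbitrarily long words $v$ with $h|_v\neq 1$; passing from such a $v$ (of length $\ge n$) to its length-$n$ prefix (which $h$ still fixes, with a nontrivial section) contradicts regularity via Definition~\ref{def:regular} and the remark that it suffices to test nucleus elements. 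So regularity gives exactly $d$ preimages everywhere, and by the reduction $f$ is a covering.

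For the converse I would build a collapse out of a failure of regularity, and this extraction is the main obstacle. A non-regular nucleus element produces, by König's lemma, a ray $w=y_1y_2\ldots$ with $g$ fixing every prefix and $g|_{y_1\ldots y_k}\neq 1$ for all $k$; by finiteness of $\nuke$ this yields a cycle of nontrivial states, each fixing its input letter. The delicate point is that the collapse criterion needs the \emph{bottom} section to move a \emph{letter}, whereas the cycle states only fix their ray-letters. I would resolve this using faithfulness: a cycle state $h^{*}$ is nontrivial, so it moves some shortest word $q$; writing $q=q'z$, $h^{*}$ fixes $q'$ and $h^{*}|_{q'}$ moves $z$. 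Combining with periodicity $h^{*}|_{P}=h^{*}$ (one cycle period $P$), the element $h^{*}$ fixes $P^mq'$ for all $m$ with $h^{*}|_{P^mq'}=h^{*}|_{q'}$ moving a letter. This realises the criterion for the left-infinite sequence ${}^{\infty}P\,q'$, giving a genuine point $\eta\in\J_G$ with fewer than $d$ preimages; call $\xi$ one of its merged preimages, lying in two tiles $T_x,T_{x'}$. To conclude that $f$ is not a covering — indeed not even a local homeomorphism at $\xi$ — I would perturb the representative far to the left (changing letters at high positions keeps the point $\sim$-close while destroying the periodic fixing path), obtaining sequences $\sigma_k\arr{}^{\infty}P\,q'$ with $[\sigma_k x]\neq[\sigma_k x']$; these are distinct points converging to $\xi$ with equal image $[\sigma_k]$, so $f$ is not injective on any neighbourhood of $\xi$. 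The two places needing care are exactly this last step (that the collapse locus $\pi(\{$sequences admitting such a fixing path$\})$ is closed with empty interior, which follows from boundedness of the asymptotic classes for a contracting group) and, earlier, the openness of $\pi$; both are structural facts about limit spaces that I would isolate as lemmas before assembling the equivalence.
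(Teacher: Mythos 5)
First, a point of reference: the paper does not prove this proposition at all --- it quotes it from \cite[Proposition~6.1]{nek:cpalg}, and the argument there (as the remarks in Subsection~\ref{ss:solenoid} suggest) runs through the limit $G$-space $\X_G$, where $\J_G=\X_G/G$ is a quotient by a \emph{proper group action} (such quotient maps are automatically open) and regularity is exactly what makes $P:\X_G\arr\J_G$ and the maps $F_{v\cdot g}$ behave like coverings. Your route stays inside $X^{-\omega}$, and its combinatorial half is sound: the identification of the fiber $f^{-1}([\sigma])$ with $\{[\sigma x]:x\in X\}$, the nucleus-path criterion for a collapse, and the implication ``regular $\Rightarrow$ no collapse'' (K\"onig's lemma, pigeonhole on $\nuke$, the shortest-moved-word trick) are all correct. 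The topological half, however, has a genuine gap: the assertion that $\pi:X^{-\omega}\arr\J_G$ is open is \emph{false}, already for the adding machine of Example~\ref{ex:addingmachine}. There $\pi$ is the binary-expansion map onto $\R/\Z$, and the image of the clopen cylinder of sequences ending in $1$ is the closed arc $[1/2,1]$, which is not open. Since your reduction genuinely needs openness of $f$ --- a continuous closed surjection with constant finite fibers need not be a covering (glue a tent map to a second sheet having the complementary fiber defect: all fibers have $3$ points, yet the fold point has no injective neighborhood) --- and you derive openness of $f$ solely from openness of $\pi$, the direction ``regular $\Rightarrow$ covering'' is unproved as written. Openness of $f$ is in fact true and recoverable inside your framework, but by a different argument: if $\eta_k\to f(\xi)$ with $\eta_k\notin f(U)$, pick representatives with $\tau^{(k)}\to\tau$ in $X^{-\omega}$, conclude $\pi(\tau)=f(\xi)$ by Hausdorffness, write $\xi=\pi(\tau x_0)$ using your fiber lemma, and observe $\pi(\tau^{(k)}x_0)\in U$ for large $k$, so $\eta_k\in f(U)$, a contradiction.

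The second gap is the last step of the converse. Perturbing ${}^{\infty}P\,q'$ far to the left destroys \emph{that} periodic witnessing path, but nothing prevents \emph{other} nucleus paths from witnessing $[\sigma_k z]=[\sigma_k z']$; what you need is that the collapse locus has empty interior, and this does not follow from boundedness of the asymptotic equivalence classes, which is the reason you cite. It is true, but by a different mechanism, resting on faithfulness: if a whole cylinder consisted of collapsing sequences, then (unwinding the path criterion) every word $v\in X^*$ would satisfy $h(v)=v$ and $h|_v\ne 1$ for some $h\in\nuke$. The sets $W_h=\{v\in X^*: h(v)=v,\ h|_v\ne1\}$ are closed under taking prefixes and contain no full subtree $uX^*$, since $W_h\supseteq uX^*$ would force $h|_u$ to act trivially on $X^*$ while being nontrivial, contradicting faithfulness of the action. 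Hence each set of rays passing through $W_h$ is closed and nowhere dense in $X^\omega$, while the covering condition forces $X^\omega$ to be the union of these finitely many sets --- contradicting the Baire category theorem. With openness of $f$ proved correctly and with this empty-interior lemma in place, your two-sided strategy does close up; without them, both directions of the proposed proof are incomplete.
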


\begin{defi}
\label{def:selfreplicating}
A self-similar action of $G$ on $X^*$ is said to be
\emph{self-replicating} (\emph{recurrent} in~\cite{nek:book})
if the left action of $G$ on the associated
biset is transitive, i.e., if for every $x, y\in X$ there exists $g\in
G$ such that $g\cdot x=y\cdot 1$.

An automorphism group $G$ of the rooted tree $X^*$ is said to be
\emph{level-transitive} if it is transitive on $X^n$ for every $n$.
\end{defi}

Note that every self-replicating group is level transitive. 

\begin{theorem}
Let $G$ be a contracting group. The space $\J_G$ is connected if and
only if $G$ is level-transitive. It is path connected if and only if
$G$ is self-replicating. If $G$ is self-replicating, then $\J_G$ is also
locally path connected.
\end{theorem}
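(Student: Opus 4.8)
The plan is to study $\J_G$ through the compact \emph{tiles} $\mathcal{T}_v=p(X^{-\omega}v)$, $v\in X^n$, where $p\colon X^{-\omega}\arr\J_G$ is the quotient map; for each fixed $n$ the $d^n$ tiles cover $\J_G$. After replacing $G$ by $\langle\nuke\rangle$ (which changes neither $\J_G$ nor the two combinatorial conditions, since the asymptotic equivalence and the orbits relevant below involve only $\nuke$) I may assume that $\nuke$ generates $G$. For the connectedness statement I would argue through clopen sets. Since $\sim$ is a closed equivalence relation on the Cantor set $X^{-\omega}$, a set $A\subseteq\J_G$ is clopen iff $p^{-1}(A)$ is clopen and saturated, and every clopen subset of $X^{-\omega}$ is a cylinder union $\bigcup_{v\in V}X^{-\omega}v$ at some level $n$. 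The key point is that $p^{-1}(A)$ is saturated iff $V$ is closed under the relation $\approx_n$ on $X^n$ defined by $v\approx_n u\iff\mathcal{T}_v\cap\mathcal{T}_u\neq\emptyset$.

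Next I would identify $\approx_n$ with the edge relation of the nucleus Schreier graph $\Gamma_n$ on $X^n$, i.e. $v\approx_n u\iff g(v)=u$ for some $g\in\nuke$. One direction is read off the level-$n$ part of the element of $\nuke$ witnessing an asymptotic equivalence. For the converse one builds the two witnessing left-infinite sequences letter by letter, using that $\nuke$ is \emph{co-reachable}: every $g\in\nuke$ equals $h|_x$ for some $h\in\nuke$, $x\in X$, so a datum $g_n=g$, $g_m=g_{m+1}|_{x_{m+1}}$ can be extended indefinitely to the left, with $y_{m+1}=g_{m+1}(x_{m+1})$ producing the partner sequence. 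Hence the $\approx_n$-saturated subsets of $X^n$ are exactly the unions of connected components of $\Gamma_n$, so $\J_G$ is connected iff every $\Gamma_n$ is connected. As $\nuke$ generates $G$, connectedness of $\Gamma_n$ is transitivity of $G$ on $X^n$, and $\J_G$ is connected iff $G$ is level-transitive.

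For the path-connected statement the plan is to show that self-replication makes $\J_G$ a Peano continuum. First I would prove each tile is connected by showing that the shift restricts to a homeomorphism $\mathsf{s}^{|v|}\colon\mathcal{T}_v\arr\J_G$. Surjectivity is clear, and injectivity is exactly where recurrence enters: if two sequences ending in $v$ have shift-equivalent truncations, then applying self-replication to $\Phi^{\otimes|v|}$ (a tensor power of a recurrent biset is recurrent) one finds, for $u,u'$ in one orbit, an element $g$ with $g(u)=u'$ and $g|_u=1$, which transports the equivalence past the final block $v$. Thus $\mathcal{T}_v\cong\J_G$, which is connected by the first part. Since the level-$n$ tiles have diameters tending to $0$ (expansion of the shift) and form a neighborhood basis, $\J_G$ is locally connected; being compact, connected, locally connected and metrizable (as the boundary of the associated hyperbolic graph with its visual metric), it is a Peano continuum, hence path connected and locally path connected by the Hahn--Mazurkiewicz theorem.

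It remains to show that path connectivity forces self-replication. Path connectivity yields connectivity, hence level-transitivity, so I would argue the contrapositive for the extra content: if $G$ is level-transitive but \emph{not} self-replicating, the injectivity step above fails, $\mathsf{s}\colon\mathcal{T}_x\arr\J_G$ identifies distinct points, and I expect this collapsing to appear as a failure of local connectivity at the junctions between tiles, obstructing arcs. Turning this into a genuine topological separation — rather than a mere combinatorial defect — is the step I expect to be the main obstacle, and is where connectedness of $\J_G$ must be played off against path connectedness. I would handle it by extracting, from a proper left-orbit of $\Phi$, a point of $\J_G$ all of whose basic neighborhoods are disconnected, contradicting the local path connectivity that a path-connected Peano-type space would force; this simultaneously gives the final assertion that self-replication implies local path connectivity.
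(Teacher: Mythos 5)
Your treatment of the first equivalence (connectedness versus level-transitivity) is sound in outline and in fact does more than the paper, which derives disconnectedness from the partition of $X^n$ into $G$-orbits and cites \cite[Theorem~3.6.3]{nek:book} for the converse. One caveat: your parenthetical claim that passing to $\langle\nuke\rangle$ ``changes neither $\J_G$ nor the two combinatorial conditions'' is false as stated. For a contracting group, $G$-orbits and $\langle\nuke\rangle$-orbits on $X^n$ can differ: let $G=\Z/2\Z$ act on $\{0,1\}^*$ by changing only the first letter; the nucleus is $\{1\}$, yet $G$ is transitive on $X^1$. So your argument really proves that $\J_G$ is connected if and only if $\langle\nuke\rangle$ is level-transitive, and the bridge between level-transitivity of $G$ and of $\langle\nuke\rangle$ is not a harmless normalization---it is essentially the nontrivial direction of the statement.

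The serious gap is in the path-connectedness part. Your key lemma---that self-replication makes $\mathsf{s}^{|v|}\colon\mathcal{T}_v\arr\J_G$ a homeomorphism---is false. Take the adding machine $a=\sigma(1,a)$, which is self-replicating: $\J_G$ is the circle $\R/\Z$, $\mathcal{T}_0=[0,1/2]$, and $\mathsf{s}$ is the doubling map, which identifies the two endpoints $p(\ldots 000\,0)=0$ and $p(\ldots 111\,0)=1/2$ of the tile. Your injectivity argument conflates ``the truncations lie in one $G$-orbit'' with asymptotic equivalence: the latter requires the witnesses $g_n$ to lie in a single finite set. Self-replication does produce, for each $k$, an element carrying $1^k0$ to $0^{k+1}$ with trivial section, but in the example those elements are the powers $a^m$ with $m\equiv 1-2^k\pmod{2^{k+1}}$, and no finite set contains such witnesses for all $k$. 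What is true (and is what the cited book actually proves) is the weaker statement that the tiles are connected; that would suffice for your Peano-continuum conclusion, which is the right endgame, but it needs a different proof.

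Finally, your strategy for ``path connected $\Rightarrow$ self-replicating'' cannot work even in principle: you aim to produce a point all of whose small neighborhoods are disconnected and to ``contradict the local path connectivity that a path-connected Peano-type space would force,'' but path connectedness does not force local connectedness---the cone over a Cantor set is a path-connected compact metric space that is locally connected only at its apex. You yourself flag this step as unresolved, and it is exactly where your proposal breaks down. The paper's route is different and avoids the issue entirely: if $G$ is not self-replicating, then elements $u_1\otimes v_1\otimes w_1$ and $u_2\otimes v_2\otimes w_2$ with $v_1, v_2$ in different left orbits of $\Phi^{\otimes n}$ remain in different left orbits, so the limit $G$-space $\X_G$ has uncountably many connected components, hence uncountably many path components; since $G$ is countable and $\J_G=\X_G/G$, the space $\J_G$ still has uncountably many path components and therefore is not path connected. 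A counting argument of this kind is what your proposal is missing.
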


\begin{proof}
Proof of connectedness, local connectedness, and path connectedness of
$\J_G$ under the appropriate conditions is given 
in~\cite[Theorem~3.6.3.]{nek:book}. 

If $\ldots x_2x_1$ and $\ldots y_2y_1$ are asymptotically equivalent
elements of $X^{-\omega}$, then for any $n$ the words $x_n\ldots
x_2x_1$ and $y_n\ldots y_2y_1$ belong to the same $G$-orbit. If there
exists $n$ such that the action of $G$ on $X^n$ is not transitive,
then partition of $X^n$ into $G$-orbits defines a partition of
$X^{-\omega}$ into clopen sets such that the asymptotic equivalence
relation identifies only points inside the sets of the partition. This
implies that $\J_G$ is disconnected.

The same arguments shows that if $G$ is not self-replicating, then
$\X_G$ is disconnected. Moreover, if $v_1, v_2\in X^n\cdot G$ belong
to different orbits of the left action, then for any $k, m\ge 0$, and
any $u_1, u_2\in X^k\cdot G$ and $w_1, w_2\in X^m\cdot G$ the elements
$u_1\otimes v_1\otimes w_1$ and $u_2\otimes v_2\otimes w_2$ belong to
different orbits of the left action. It follows that the set of
connected components of $\X_G$ is then uncountable. Consequently, the
set of path connected components of $\X_G$ is uncountable, and since
$G$ is countable, the set of path connected components of $\J_G=\X_G/G$ is
also uncountable.
\end{proof}

The following theorem is proved
in~\cite[Sections~5.3,~5.5]{nek:book} (in the context of length
metric spaces, but all the arguments remain to be valid in the general
case, if we use diameters of paths instead of their lengths, as in the
proof of Proposition~\ref{pr:expcontracting}.)

\begin{theorem}
Suppose that $f:\J\arr\J$ is an expanding self-covering of a path connected
space. Then $\img{f}$ is contracting, regular, self-replicating,
and the limit dynamical system of
$\img{f}$ is topologically conjugate to $(f, \J)$.

Let $G$ be a contracting regular self-replicating group. Then it is
equivalent, as a self-similar group (see
Definition~\ref{def:ssequivalent}), to the iterated monodromy group
of its limit dynamical system.
\end{theorem}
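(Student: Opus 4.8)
The plan is to split the statement into its algebraic content (that $\img f$ is contracting, regular and self-replicating) and its dynamical content (the two conjugacies), and to handle these in turn. The contracting property is nothing new: it is exactly Proposition~\ref{pr:expcontracting}. For regularity I would reuse the expanding estimates from the proof of that proposition. Fix the finite nucleus $\nuke$ and choose $n$ so large that every lift by $f^n$ of any of the (finitely many) loops defining elements of $\nuke$ has diameter less than the constant $\epsilon$ of Definition~\ref{def:expandingmap}, chosen (as in the proof of Proposition~\ref{pr:expcontracting}) so that loops of diameter below $\epsilon$ lift to loops under all iterations of $f$. If $g\in\nuke$ fixes a vertex $v\in X^n$, the corresponding lift $\gamma_v$ is then a loop of small diameter, so it acts trivially on all further preimages; since $\img f$ acts faithfully, $g|_v=1$, which is precisely regularity. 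Self-replication I would check directly from path-connectedness: given first-level letters $x,y$ with connecting paths $\ell_x,\ell_y$ joining $t$ to preimages $z_x,z_y$, the path $\gamma_x=\ell_y\ell_x^{-1}$ runs from $z_x$ to $z_y$, its image $\gamma=f\circ\gamma_x$ is a loop at $t$ whose lift at $z_x$ is $\gamma_x$, and the resulting section $\ell_y^{-1}\gamma_x\ell_x$ is null-homotopic; hence $g\cdot x=y\cdot 1$ in the biset.

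The substantial point is the conjugacy of the limit dynamical system with $(f,\J)$. I would build a map $\Psi:X^{-\omega}\arr\J$ by the method of shrinking tiles. For each finite word $x_n\ldots x_1$ let $T_{x_n\ldots x_1}\subset\J$ be the image of a fixed small ball around $t$ under the branch of $f^{-n}$ selected by the connecting paths; the expanding estimate gives $\mathrm{diam}\,T_{x_n\ldots x_1}\le CL^{-n}$ and $T_{x_{n+1}x_n\ldots x_1}\subseteq T_{x_n\ldots x_1}$, so $\Psi(\ldots x_2x_1):=\bigcap_n T_{x_n\ldots x_1}$ is a single point. Continuity and surjectivity follow because the branches cover $\J$ and the preimages of $t$ are dense, and since $f(T_{x_n\ldots x_1})=T_{x_n\ldots x_2}$ by construction, $\Psi$ intertwines the shift with $f$. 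The heart of the matter is to prove that $\Psi(\xi)=\Psi(\eta)$ holds exactly when $\xi,\eta$ are asymptotically equivalent. The forward implication is the easy direction: a bounded witnessing sequence $g_n\in N$ keeps the tiles $T_{x_n\ldots x_1}$ and $T_{y_n\ldots y_1}$ within distance tending to $0$, so the limits agree. The converse is the step I expect to be the main obstacle. Here I would argue that if the two tile sequences shrink to a common point $p$, then for each $n$ the identification of $x_n\ldots x_1$ with $y_n\ldots y_1$ is realized by a loop whose diameter is bounded independently of $n$ (using $\mathrm{diam}\le CL^{-n}$), and that this uniform bound forces the associated group elements into the finite nucleus; the resulting bounded sequence is exactly a witness of asymptotic equivalence. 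Granting this, $\Psi$ descends to a continuous bijection $\J_{\img f}\arr\J$ of compact Hausdorff spaces, hence a homeomorphism conjugating the shift to $f$.

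For the second assertion, let $G$ be contracting, regular and self-replicating, and form its limit dynamical system $(f,\J_G)$. By the theorem immediately preceding the statement, regularity makes $f$ a covering and self-replication makes $\J_G$ path connected, so $f:\J_G\arr\J_G$ is an expanding self-covering of a path-connected space; applying the first part to it produces $\img f$ together with a conjugacy of its limit dynamical system with $(f,\J_G)$. It remains to identify $G$ with $\img f$ as self-similar groups, i.e.\ to exhibit an isomorphism of their bisets in the sense of Definition~\ref{def:ssequivalent}. I would obtain this by showing that the monodromy biset of $f:\J_G\arr\J_G$ is canonically isomorphic to the biset of $G$: through the conjugacy $\Psi$, loops in $\J_G$ and their lifts are encoded by the inverse-branch structure of the shift, which is precisely the datum recording the $G$-action and its sections, while regularity and contraction guarantee that this correspondence is bijective on right orbits and equivariant for both left $G$-actions. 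This is the reconstruction step, and the only delicate issue is to verify that the matching of sections is independent of the choices of connecting paths and transversal, which follows from the uniqueness of the small tree isomorphisms used throughout.
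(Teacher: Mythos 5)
The statement you are proving is one the paper itself does not prove: it is quoted from \cite[Sections~5.3,~5.5]{nek:book}, with only the remark that the length-space arguments given there remain valid for a general compact path connected space once lengths of paths are replaced by diameters, as in the proof of Proposition~\ref{pr:expcontracting}. So your proposal is necessarily a different route, and the question is whether it is complete. Your arguments for regularity (lifts of nucleus loops of diameter below the even-covering constant $\epsilon$ are loops all of whose further lifts are loops, hence the sections act trivially) and for self-replication (the loop $f\circ(\ell_y\ell_x^{-1})$ has lift $\ell_y\ell_x^{-1}$ and null-homotopic section loop $\ell_y^{-1}\ell_y\ell_x^{-1}\ell_x$) are correct, and are exactly the kind of diameter-based adaptation the paper alludes to.

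The dynamical half, however, has genuine gaps. First, your tiles do not nest. If $T_{x_n\ldots x_1}$ is the sheet of $f^{-n}(B)$ over a fixed small ball $B\ni t$ containing $\Lambda(x_n\ldots x_1)$, then $T_{x_{n+1}x_n\ldots x_1}$ contains $\Lambda(x_{n+1}x_n\ldots x_1)$, whose image under $f^n$ is $\Lambda(x_{n+1})\in f^{-1}(t)$; this point lies outside $B$ for every letter with $\Lambda(x_{n+1})\ne t$, so $T_{x_{n+1}x_n\ldots x_1}\not\subseteq f^{-n}(B)$, let alone $\subseteq T_{x_n\ldots x_1}$. (Nesting for all words would force $f^{-1}(B)\subseteq B$, which is impossible for a ball of diameter less than the separation constant of the fiber $f^{-1}(t)$.) The correct shrinking pieces must be built from lifts of the connecting paths, i.e., from the subtree of $\Gamma_t$ below $\Lambda(x_n\ldots x_1)$; equivalently, define $\Psi(\ldots x_2x_1)=\lim_n\Lambda(x_n\ldots x_1)$, a Cauchy sequence because consecutive points are joined by lifts of $\ell_{x_{n+1}}$ by $f^n$, of diameter at most $CL^{-n}$. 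Second, in the converse (hard) direction your mechanism is mis-stated in two ways. (i) The loop at $t$ realizing the identification is $f^n(\sigma_n)$ for a path $\sigma_n$ joining $\Lambda(x_n\ldots x_1)$ to $\Lambda(y_n\ldots y_1)$; it is $\sigma_n$, not the loop, that is small, and producing $\sigma_n$ at all is a problem because $\J$ is only assumed path connected, not locally path connected (local path connectedness is a \emph{consequence} of the theorem, so assuming nearby points are joined by small paths is circular); $\sigma_n$ has to be manufactured as the infinite concatenation of lifted connecting paths converging to the common limit point, concatenated with the reverse of the one for $y$. (ii) A uniform diameter bound does not force the elements ``into the finite nucleus.'' What one actually shows, exactly as in the final paragraph of the proof of Proposition~\ref{pr:expcontracting} and using Lemma~\ref{lem:ssimilarity}, is that each witnessing element $g_n$ is represented by a loop $\ell_u^{-1}\alpha\ell_v$ with $|u|=|v|=n_0$ and $\mathrm{diam}(\alpha)<\epsilon/2$, and that such an element is uniquely determined by the pair $(u,v)$; this places the $g_n$ in a finite set, which is all the definition of asymptotic equivalence requires.

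Finally, the second assertion of the theorem is not really proved in your proposal. Saying that loops in $\J_G$ and their lifts ``are encoded by the inverse-branch structure of the shift, which is precisely the datum recording the $G$-action'' restates the claim rather than proving it. The substantive step is to construct an isomorphism between the biset $X\cdot G$ and the monodromy biset of $f:\J_G\arr\J_G$. In \cite{nek:book} this goes through the limit $G$-space $\X_G$: one uses the maps $F_{x\cdot g}$, joins a base point $\tilde t\in\X_G$ to $F_{x\cdot g}(\tilde t)$ by a path (path connectedness of $\X_G$ is where self-replication enters), projects to $\J_G$, and must then prove that the resulting correspondence is well defined on homotopy classes, bijective, and compatible with both actions — this is where regularity and contraction do real work, and none of it is addressed by your sketch beyond the assertion that the ``matching of sections'' is independent of choices.
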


\begin{corollary}
Let $f_i:\J_i\arr\J_i$, for $i=1, 2$, be expanding self-coverings of
path connected compact spaces. Then $(f_1, \J_1)$ and $(f_2, \J_2)$
are topologically conjugate if and only if $\img{f_1}$ and $\img{f_2}$
are equivalent as self-similar groups.
\end{corollary}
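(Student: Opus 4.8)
The plan is to deduce the corollary directly from the two halves of the preceding theorem, together with the fact that the limit dynamical system is an invariant of the equivalence class of a contracting self-similar group. Write $G_i=\img{f_i}$. By the first half of the theorem each $G_i$ is contracting, regular, and self-replicating, and the shift map on $\J_{G_i}$ is topologically conjugate, as a dynamical system, to $(f_i, \J_i)$.

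For the ``if'' direction, suppose $G_1$ and $G_2$ are equivalent as self-similar groups, i.e.\ their associated bisets $\Phi_1$ and $\Phi_2$ are isomorphic. Since the asymptotic equivalence relation on $X^{-\omega}$ --- and hence the limit space $\J_G$ together with its shift map --- is built from the biset alone and is independent of the choice of orbit transversal $X$, an isomorphism $\Phi_1\arr\Phi_2$ induces a topological conjugacy $\J_{G_1}\arr\J_{G_2}$ intertwining the two shift maps. Chaining the conjugacies $(f_1, \J_1)\cong\J_{G_1}\cong\J_{G_2}\cong(f_2, \J_2)$ then yields a topological conjugacy between the original systems.

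For the ``only if'' direction, suppose $\phi:\J_1\arr\J_2$ is a homeomorphism with $f_1=\phi^{-1}\circ f_2\circ\phi$. I would check that $\phi$ induces an isomorphism of iterated monodromy bisets: it carries a basepoint $t$ to $\phi(t)$, induces an isomorphism $\phi_*:\pi_1(\J_1, t)\arr\pi_1(\J_2, \phi(t))$, and restricts to a bijection $f_1^{-1}(t)\arr f_2^{-1}(\phi(t))$ of the fibers. The conjugacy relation guarantees that $\phi$ sends lifts of paths by $f_1$ to lifts of their $\phi$-images by $f_2$, so that these data are compatible with both the left and the right biset actions and assemble into an isomorphism $\Phi_{f_1}\arr\Phi_{f_2}$. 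By Definition~\ref{def:ssequivalent} this is precisely the assertion that $\img{f_1}$ and $\img{f_2}$ are equivalent as self-similar groups.

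The main obstacle is the invariance used in the ``if'' direction: that the limit dynamical system depends only on the biset up to isomorphism, and not on the chosen faithful action of $G$ on $X^*$. This requires verifying that a biset isomorphism respects the asymptotic equivalence relation --- equivalently, that the finite set $N\subset G$ witnessing asymptotic equivalence of two sequences is transported correctly under the isomorphism --- so that the induced map descends to a well-defined homeomorphism of the quotient spaces commuting with the shift. I would treat this as the technical heart of the argument, relying on the independence of the limit space construction from the orbit transversal established in the theory reviewed in this section.
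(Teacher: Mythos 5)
Your proof is correct and matches the paper's (implicit) argument: the paper states this corollary without proof as an immediate consequence of the preceding theorem, and your deduction---biset-invariance of the limit dynamical system plus the first half of the theorem for the ``if'' direction, and topological naturality of the iterated monodromy biset under conjugacy for the ``only if'' direction---is exactly that intended deduction. The only elided step is the routine descent from the isomorphism of $\pi_1$-bisets to an isomorphism of the bisets of the faithful quotients $\img{f_1}$, $\img{f_2}$ (the conjugacy intertwines the actions on the preimage trees, so it carries the kernel of one standard action onto the other), which is what Definition~\ref{def:ssequivalent} formally requires.
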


\subsection{Limit solenoid and the limit $G$-space}
\label{ss:solenoid}

Let $X^{\Z}$ be the space of all bi-infinite sequences $\ldots
x_{-2}x_{-1}.x_0x_1\ldots$, where the dot denotes the place between the
coordinates number 0 and -1. 
Sequences $(x_n)_{n\in\Z}, (y_n)_{n\in\Z}\in X^\Z$ are
\emph{asymptotically equivalent} if there exists a finite set
$N\subset G$ and a sequence $g_n\in N$
such that
\[g_n(x_nx_{n+1}\ldots)=y_ny_{n+1}\ldots\] for all $n\in \Z$.

The quotient $\mathcal{S}_G$ of $X^\Z$ by the asymptotic equivalence
relation is called the \emph{limit solenoid} of the group $G$. The
shift $\ldots x_{-2}x_{-1}.x_0x_1\ldots\mapsto\ldots
x_{-3}x_{-2}.x_{-1}x_0\ldots$ induces a homeomorphism of
$\mathcal{S}_G$, which we will denote by $\hat f$.

It is shown in~\cite[Proposition~5.7.8.]{nek:book} that
the space $\mathcal{S}_G$ is naturally homeomorphic to the inverse
limit of the backward iterations of the limit dynamical system $f:\J_G\arr\J_G$:
\[\J_G\longleftarrow\J_G\longleftarrow\J_G\longleftarrow\cdots,\]
and the map $\hat f$ is conjugate to the map induced by $f$ on the inverse
limit. In other words, $(\hat f, \mathcal{S}_G)$ is the \emph{natural
extension} of the limit dynamical system $(f, \J_G)$. The
point of $\mathcal{S}_G$ represented by a sequence $\ldots
x_{-2}x_{-1}.x_0x_1\ldots\in X^{\Z}$ corresponds to the point of the
inverse limit represented by the sequence
\[\cdots\mapsto\ldots x_{-2}x_{-1}x_0x_1x_2\mapsto
\ldots x_{-2}x_{-1}x_0x_1\mapsto\ldots x_{-2}x_{-1}x_0\mapsto\ldots x_{-2}x_{-1}.\]

Another natural dynamical system associated with a contracting group $G$
is the limit $G$-space $\X_G$. Consider the topological space $X^{-\omega}\times
G$, where $G$ is discrete.
Two pairs $(\ldots x_2x_1, g), (\ldots y_2y_1, h)\in X^{-\omega}\times
G$ are \emph{asymptotically equivalent} if there exists a sequence
$g_n\in G$ taking a finite set of values such that for all $n\ge 1$
\[g_n\cdot x_n\ldots x_2x_1\cdot g=y_n\ldots y_2y_1\cdot h\]
in the $n$th tensor power $\Phi^{\otimes n}$ of the associated
$G$-biset, i.e., if
\[g_n(x_n\ldots x_2x_1)=y_n\ldots y_2y_1,\qquad g_n|_{x_n\ldots
  x_2x_1}g=h.\]
The quotient of $X^{-\omega}\times G$ by the asymptotic equivalence
relation is called the \emph{limit $G$-space} $\X_G$. We represent the points of the
space $\X_G$ by the sequences $\ldots x_2x_1\cdot g$, where $x_i\in X$
and $g\in G$.

The asymptotic equivalence relation on $X^{-\omega}\times G$ is
invariant with respect to the right action of $G$ on the second factor
of the direct product. It follows that this action induces a right action
of $G$ on $\X_G$ by homeomorphisms. The action of $G$ on $\X_G$ is
proper, and the space of orbits $\X_G/G$ is naturally homeomorphic to
$\J_G$.

The spaces $\J_G, \mathcal{S}_G, \X_G$ and the corresponding dynamical
systems depend only on the biset $\Phi$ associated with the
self-similar group. For example, $\X_G$ can be constructed in the
following way. 

Let $\Omega$ be the direct limit of the spaces $A^{-\omega}$,
where $A$ runs through all finite subsets of $\Phi$. We write a
sequence $(\ldots, a_2, a_1)\in A^{-\omega}$ as $\ldots\otimes
a_2\otimes a_1$. Two sequences $\ldots \otimes a_2\otimes a_1,
\ldots\otimes b_2\otimes b_1\in\Omega$ are said to be equivalent if
there exist a sequence $g_n\in G$ taking values in a finite set, such
that
\[g_n\cdot a_n\otimes \cdots\otimes a_2\otimes a_1=b_n\otimes
\cdots\otimes b_2\otimes b_1\]
in $\Phi^{\otimes n}$ for all $n$.

The quotient of $\Omega$ by this equivalence relation is naturally
homeomorphic to $\X_G$. Moreover, the homeomorphism conjugates the
natural action on $\X_G$ with the action induced by the natural right
action of $G$ on $\Omega$.

For every $v\cdot g\in X^n\cdot G=\Phi^{\otimes |v|}$ we have the map
$w\mapsto w\otimes (v\cdot g)$ on $\Omega$, given in terms of
$X^{-\omega}\times G$ by the rule
\[\ldots x_2x_1\cdot h\mapsto\ldots x_2x_1h(v)\cdot h|_vg.\]
It induces a continuous map $F_{v\cdot g}:\X_G\arr\X_G$. If $G$ is
regular, then $F_{v\cdot g}$ is a covering map.

Since the limit dynamical system 
$f:\J_G\arr\J_G$ is induced by the shift on $X^{-\omega}$, the maps
$F_{v\cdot g}:\X_G\arr\X_G$ are lifts of $f^{-|v|}$ by the quotient map
$P:\X_G\arr\X_G/G=\J_G$, i.e., we have equality $f^{|v|}\circ P\circ F_{v\cdot g}=P$.

\subsection{Groupoids of germs}
\begin{defi}
Let $\X$ be a topological space. A \emph{pseudogroup} acting on $\X$
is a set $\wt\G$ of homeomorphisms between open subset of $\X$ that is
closed under
\begin{enumerate}
\item \emph{compositions}: if $F_1:U_1\arr V_1$ and $F_2:U_2\arr V_2$ are
  elements of $\wt\G$, then $F_1\circ F_2:F_2^{-1}(V_2\cap
  U_1)\arr F_1(V_2\cap U_1)$ is an element of $\wt\G$;
\item \emph{taking inverse}: if $F:U\arr V$ is an element of
  $\wt\G$, then $F^{-1}:V\arr U$ is an element of $\wt\G$;
\item \emph{restriction} to an open subset: if $F:V\arr U$ is an element of
  $\wt\G$ and $V'$ is an open subset of $V$, then $F|_{V'}\in\wt\G$;
\item \emph{unions}: if for a homeomorphism $F:U\arr V$ between open subsets
  of $\X$ there exists a covering $U_i$ of $U$ by open subsets, such
  that $F|_{U_i}\in\wt\G$ for all $i$, then $F\in\wt\G$.
\end{enumerate}
We always assume that the identical homeomorphism $\X\arr\X$ belongs
to the pseudogroup.
\end{defi}

Let $\wt\G$ be a pseudogroup acting on $\X$. A \emph{germ}
of $\wt\G$ is equivalence class of 
a pair $(F, x)$, where $F\in\wt\G$, and $x$ is a point of
the domain of $F$. Two pairs $(F_1, x_1)$ and $(F_2, x_2)$ represent
the same germ (are equivalent) if and only if $x_1=x_2$ and there
exists a neighborhood $U$ of $x_1$ such that $F_1|_U=F_2|_U$.

The set of all germs of $\wt\G$ has a natural topology whose
basis consists of sets of the form $\{(F,
x)\;:\;x\in\mathop{\mathrm{dom}}(F)\}$, where $F\in\wt\G$.

If $(F_1, x_1)$ and $(F_2, x_2)$ are such germs that $F_2(x_2)=x_1$,
then we can compose them:
\[(F_1, x_1)(F_2, x_2)=(F_1\circ F_2, x_2).\]
\emph{Inverse} of a germ $(F, x)$ is the germ $(F^{-1}, F(x))$. The
set $\G$ of all germs of $\wt\G$ is a groupoid with respect to these
operations (i.e., a small category of isomorphisms).

The groupoid $\G$ is \emph{topological}, i.e., the
operations of composition and taking inverse
are continuous.

\begin{example}
If $f:\X\arr\X$ is a covering map, then restrictions of $f$ to open
subsets $U\subset\X$ such that $f:U\arr f(U)$ is a homeomorphism
generate a pseudogroup. Its groupoid of germs $\mathfrak{F}$ will be called
\emph{groupoid of germs generated by $f$}. Every element of
$\mathfrak{F}$ can be represented as a product $(f^n, x)^{-1}(f^m, y)$
for some $x, y\in\X$ such that $f^m(y)=f^n(x)$.
\end{example}

\begin{example}
If $G$ is a group acting on a topological space $\X$, then every germ
of the pseudogroup generated by $G$ is a germ of an element of
$G$. Therefore, the \emph{groupoid of germs of $G$} is the set of
equivalence classes of pairs $(g, x)\in G\times\X$, where $(g_1, x_1)$
and $(g_2, x_2)$ are equivalent if and only if $x_1=x_2$, and
$g_1^{-1}g_2$ fixes all points of a neighborhood of $x_1$. This
groupoid is in general different from the \emph{groupoid of the
action}, which is equal as a set to $G\times\X$.
\end{example}

If $\G$ is a groupoid of germs of a pseudogroup acting on a space
$\X$, then we identify the germ $(1, x)$ of the identical
homeomorphism $1:\X\arr\X$ with the point $x$ of $\X$, and call
elements of the form $(1, x)$ the \emph{units} of the groupoid.
We will sometimes denote $\X$ by $\G^{(0)}$, as the space of units of $\G$.

For $(F, x)\in\G$, we denote by $\be(F, x)=x$ and $\en(F, x)=F(x)$ the
\emph{origin} and \emph{target} of the germ. Two germs $g_1, g_2\in\G$
are \emph{composable} (i.e., $g_1g_2$ is defined) if and only if
$\en(g_2)=\be(g_1)$.

We say that points $x, y\in\G^{(0)}$ \emph{belong to one orbit} if
there exists $g\in\G$ such that $x=\be(g)$ and $y=\en(g)$. This is an
equivalence relation on $\G^{(0)}=\X$, and this notion of orbits
coincides with the natural notion of orbits of pseudogroups.
A set $A\subset\G^{(0)}$ is a \emph{$\G$-transversal} if it intersects
every $\G$-orbit.

If $A$ is a subset of $\G^{(0)}$, then \emph{restriction} $\G|_A$ of $\G$ to
$A$ is the groupoid of all elements $g\in\G$ such that $\be(g),
\en(g)\in A$. The \emph{isotropy} group $\G_x$, for $x\in\G^{(0)}$,
is the group of elements $g\in\G$ such that $\be(g)=\en(g)=x$.

Note that the pseudogroup $\wt\G$ can be reconstructed from the
groupoid of its germs $\G$. Namely, a \emph{bisection} is a
subset $F\subset\G$ of the groupoid, such that
$\be:F\arr\be(F)$ and $\en:F\arr\en(F)$ are homeomorphisms. Every open
bisection $F$ defines a homeomorphism $\be(F)\arr\en(F)$ by the rule
$\be(g)\mapsto\en(g)$ for $g\in F$. The set $\wt\G$ of all open
bisections is a pseudogroup, and if $\G$ is the groupoid of germs of
a pseudogroup, then the pseudogroup of bisections coincides with $\wt\G$. We say
that $\wt\G$ is the \emph{associated pseudogroup} of the groupoid.

\begin{defi}
\label{def:equivalentgroupoids}
Let $\G_1, \G_2$ be groupoids of germs. We say that they are
\emph{equivalent} if there exists a groupoid $\G$ such that $\G^{(0)}$
is the disjoint union $\G_1^{(0)}\sqcup\G_2^{(0)}$, restrictions of
$\G$ to $\G_i^{(0)}$ is equal to $\G_i$ for every $i=1, 2$, and the sets
$\G_i^{(0)}$ are $\G$-transversals.
\end{defi}

The following procedure is a standard way of constructing a groupoid equivalent to a given
one. Namely, let
$p:\mathcal{Y}\arr\G^{(0)}$ be a local homeomorphism, i.e., for every
$y\in\mathcal{Y}$ there exists a neighborhood $U$ of
$y$ such that $p:U\arr p(U)$ is a homeomorphism. Suppose that
$p(\mathcal{Y})$ is a $\G$-transversal. Then \emph{lift} of $\G$ by $p$ is
the groupoid of germs of the pseudogroup generated by all local
homeomorphisms of the form $p'\circ F\circ p:U\arr W$, where 
\begin{itemize}
\item $U$ is such that $p:U\arr p(U)$ is a homeomorphism, 
\item $p(U)$ is contained in the domain of $F$, 
\item $W$ is such that $p:W\arr F(p(U))$ is a homeomorphism, 
\item $p'$ is the inverse of $p:W\arr F(p(U))$.
\end{itemize}
Then the map $p$ induces a morphism from the lift of $\G$ to $\G$,
mapping the germ of $p'\circ F\circ p$ at $x$ to the germ of $F$ at $p(x)$.

\begin{example}
Consider the \emph{trivial groupoid} on a manifold $\M$, i.e., the
groupoid consisting of units only. Let $\pi:\widetilde\M\arr\M$ be the
universal covering. Then lift of the trivial groupoid by $\pi$ is the
groupoid of germs of the action of the fundamental group on
$\widetilde\M$. (In this case it coincides with the groupoid of the action.)
\end{example}

\begin{defi}
Let $\G$ be a groupoid of germs. It is said to be \emph{proper} if the
map $(\be, \en):\G\arr\G^{(0)}\times\G^{(0)}$ is proper, i.e., if
preimages of compact subsets of $\G^{(0)}\times\G^{(0)}$ under this
map are compact.
\end{defi}

The groupoid $\G$ is proper if and only if for every compact subset
$C$ of $\G^{(0)}$ the set of elements $g\in\G$ such that $\be(g),
\en(g)\in C$ is compact.

If $\G$ is proper, then for every $x\in\G^{(0)}$ the isotropy
group $\G_x$ is finite.

Every groupoid equivalent to a proper groupoid is proper. If $\G$ is
proper, then the space of orbits of $\G$ is Hausdorff.

Let $\G$ be a groupoid of germs. Its \emph{topological full group}
$[[\G]]$ is the set of all bisections $F$ such that
$\be(F)=\en(F)=\G^{(0)}$, i.e., the set 
of homeomorphisms $F:\G^{(0)}\arr\G^{(0)}$ such that all germs
of $F$ belong to $\G$. See~\cite{gior:full}, where the notion of a topological
full group (for a groupoid of germs generated by one homeomorphism)
was introduced.

\begin{example}
Let $f:\M_1\arr\M$ be a partial self-covering. Then $\V_f$ is the full
topological group of the groupoid of germs of the local homeomorphisms
$S_\gamma$ of the boundary of the tree $T_t$ for $t\in\M$.
\end{example}

\begin{example}
Let $G$ be a self-similar group acting on $X^\omega$. Let $\G$ be the
groupoid of germs of the pseudogroup generated by the action of $G$
and the germs of the homeomorphisms $S_x(x_1x_2\ldots)=xx_1x_2\ldots$
for $x\in X$. It is easy to see that the topological full group of
$\G$ is the group $\V_G$.
\end{example}

\subsection{Hyperbolic groupoids}

Here we present a very short overview of the basic definitions and
results of the paper~\cite{nek:hyperbolic}.

Let $\G$ be a groupoid of germs. A \emph{compact generating pair} of $\G$ is a
pair $(S, \X_1)$, where $S\subset\G$ and $\X_1\subset\G^{(0)}$ are
compact, $\X_1$ contains an open $\G$-transversal,
$S\subset\G|_{\X_1}$, and for every
$g\in\G|_{\X_1}$ there exists $n$ such that $(S\cup S^{-1})^n$ is a
neighborhood of $g$ in $\G|_{\X_1}$.

A groupoid is \emph{compactly generated} if it has a compact
generating pair. See a variant of this definition
in~\cite{haefliger:compactgen}. A groupoid equivalent to a compactly generated
groupoid is also compactly generated. 

Let $(S, \X_1)$ be a compact generating pair of $\G$. Let
$x\in\X_1$. Then the \emph{Cayley graph} $\Gamma(x, S)$ is the
oriented graph with the set of vertices \[\{g\in\G\;:\;\be(g)=x,
\en(g)\in\X_1\},\] in which there is an arrow from $g_1$ to $g_2$ if
and only if $g_2g_1^{-1}\in S$.

A vertex path $v_1, v_2, \ldots$ 
in a graph $\Gamma$ (i.e., a sequences of vertices such that $v_i$ is
adjacent to $v_{i+1}$) is said to be a $C$-quasi-geodesic (where $C>1$ is
a constant) if $|v_i-v_j|\ge C^{-1}|i-j|+C$ for all $i, j$.

\begin{defi}
\label{def:hyperbolic}
A Hausdorff groupoid of germs $\G$ is \emph{hyperbolic} if there is
a compact generating pair $(S, \X_1)$ of $\G$, a metric $|x-y|$
defined on a neighborhood of $\X_1$, and constants $L, C>1, \Delta>0$
such that
\begin{enumerate}
\item Every element $g\in S$ is a germ of a homeomorphism $F\in\wt\G$ such
  that $|F(x)-F(y)|\le L^{-1}|x-y|$ for all $x, y\in\dom F$.
\item For every $x\in\X_1$ the Cayley graph $\Gamma(x, S)$ is Gromov
  $\Delta$-hyperbolic.
\item For every $x\in\X_1$ there exists a point $\omega_x$ of the
  boundary of $\Gamma(x, S)$ such that every oriented path in the
  Cayley graph $\Gamma(x, S^{-1})$ is a $C$-quasi-geodesic converging
  to $\omega_x$.
\item $\be(S)=\en(S)=\X_1$.
\item All elements of the pseudogroup $\wt\G$.
\end{enumerate}
\end{defi}

\begin{example}
Let $f:\J\arr\J$ be an expanding self-covering of a compact metric space. Then the groupoid of
germs generated by $f$ is hyperbolic. The corresponding generating
pair is $(S, \J)$, where $S$ is the set of germs of $f^{-1}$. The
corresponding Cayley graphs $\Gamma(x, S)$ are trees. The special
point $\omega_x$ of the boundary is the limit of the forward germs
$(f^n, x)$ for $n\to+\infty$. See more in~\cite[Section~5.2]{nek:hyperbolic}
\end{example}

Let $\G$ be a hyperbolic groupoid. For every $x\in\G^{(0)}$ there
exists a generating pair $(S, \X_1)$ satisfying the conditions of
Definition~\ref{def:hyperbolic}
and such that $x\in\X_1$. Denote by $\partial\G_x$ the boundary of the
Cayley graph $\Gamma(x, S)$ minus the point $\omega_x$. The space
$\partial\G_x$ does not depend on the generating pair.

Let $(S, \X_1)$ be a generating pair satisfying the conditions of
Definition~\ref{def:hyperbolic}. Find a finite set of contractions
$\mathcal{F}\subset\tilde\G$ such that $S\subset\bigcup_{F\in\mathcal{F}} F$, i.e.,
every element $s\in S$ is a germ of a contraction $F\in\mathcal{F}$. Every point
$\xi\in\partial\G_x$ can be represented as the limit of a sequence
vertices of the Cayley graph of
the form $g, s_1g, s_2s_1g, \ldots, s_n\cdots s_2s_1g, \ldots$, where
$g\in\G$, and $s_i\in S$.
There exists $\epsilon>0$ (not depending on $\xi$) and a sequence
$F_i\in\mathcal{F}$ such that $s_i$ is equal to a germ $(F_i, x_i)$, and the
$\epsilon$-neighborhood of $x_i=\be(s_i)$ belongs to the domain of
$F_i$. Then there exists $\delta$ (also depending only on $S$ and
$\mathcal{F}$) such that the $\delta$-neighborhood of $\en(g)$ belongs to
the domain of $F_n\circ\cdots\circ F_2\circ F_1$ for all
$n$.

Let $F\in\wt\G$ be such that $g\in F$ and $\en(F)$ belongs to the
$\delta$-neighborhood of $\en(g)$. Then $\be(F_n\circ\cdots\circ
F_2\circ F_1\circ F)=\be(F)$ for every $n$.

It is shown in~\cite{nek:hyperbolic} that there exists a topology on
the disjoint union $\partial\G$ of the spaces $\partial\G_x$,
$x\in\G^{(0)}$, which agrees with the topology on its subsets
$\partial\G_x$ and such that the map
\begin{equation}
\label{eq:locprod}
(y, \xi)\mapsto \lim_{n\to\infty}(F_n\circ
F_{n-1}\circ\cdots\circ F_1\circ F, y)\in\partial\G_y
\end{equation}
is a well defined homeomorphism (if $U=\be(F)$ is small enough)
from the direct product of $U=\be(F)$
with a subset of $\partial\G_x$ to a subset of $\partial\G$, see
Figure~\ref{fig:tube}.

\begin{figure}
\centering
\includegraphics{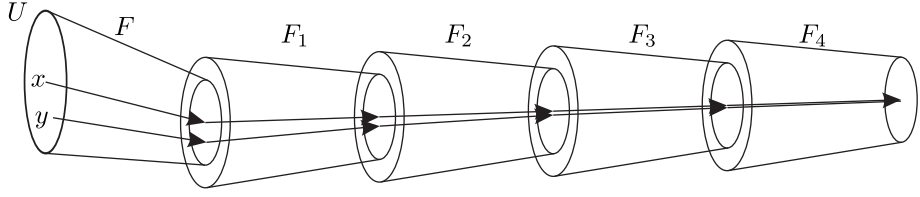}
\caption{Composition of contractions}
\label{fig:tube}
\end{figure}

Moreover, these homeomorphisms agree with a natural \emph{local product
structure} of $\partial\G$. Namely, a basis of the topology on
$\partial\G$ consists of \emph{rectangles}, i.e., sets with a
decomposition into a direct product of topological spaces, such that
the decompositions agree where they overlap, and locally coincide with
the maps given by~\eqref{eq:locprod}.

The groupoid $\G$ acts on the space $\partial\G$ from the
right. Namely, every $g\in\G$ defines a natural homeomorphism
$\partial\G_{\en(g)}\arr\partial\G_{\be(g)}$ mapping the limit of a
sequence $g_n\in\Gamma(\en(g), S)$ to the limit of the sequence
$g_ng\in\Gamma(\be(g), S)$. This action is an action of the topological
groupoid $\G$ on the topological space $\partial\G$ over the projection map
$P:\partial\G\arr\G^{(0)}$ mapping all points of $\partial\G_x$ to
$x$.

The action of $\G$ on $\partial\G$ (i.e., the associated action of
$\tilde\G$ on $\partial\G$ by local homeomorphisms) 
preserves with the local product structure of $\partial\G$. Naturally
defined projection of the action of $\G$ onto the first coordinate of
the local product decomposition is equivalent to $\G$, while the projection
onto the second coordinate is the \emph{dual groupoid} of $\G$.

Let us give an equivalent, and maybe more intuitive, definition of the dual groupoid.

\begin{defi}
Let $\overline{\Gamma(x, S)}$ and $\overline{\Gamma(y, S)}$ be the
Cayley graphs of $\G$ with adjoined boundaries $\partial\G_x$ and
$\partial\G_y$. A homeomorphism $F:U\arr V$ between open neighborhoods
$U\subset\overline{\Gamma(x, S)}$ and $V\subset\overline{\Gamma(y,
  S)}$ of points of $\partial\G_x$ and $\partial\G_y$ is an
\emph{asymptotic morphism} if for every sequence of pairwise different
edges $(g_1, h_1), (g_2, h_2), \ldots$ in $U$ the distance between
$g_ih_i^{-1}$ and $F(g_i)F(h_i)^{-1}$ goes to zero.
\end{defi}

Note that $g_ih_i^{-1}$ and $F(g_i)F(h_i)^{-1}$ belong to a compact
subset of $\G$, hence the notion of convergence of their distance to
zero does not depend on the choice of a metric on $\G$.

\begin{defi}
The groupoid $\bdry\G$ of germs of restrictions of the asymptotic morphisms 
to the spaces $\partial\G_x$, $x\in\G^{(0)}$, is
the \emph{dual groupoid} of $\G$.
\end{defi}

The space of units of $\bdry\G$ is the topologically disjoint union of
the spaces $\partial\G_x$. (In particular, it is not separable.)
If $\G$ is minimal (i.e., if all orbits are dense),
then $\partial\G_x$ is an open transversal of the
dual groupoid for any $x\in\G^{(0)}$,
hence the dual groupoid can be defined as the groupoid
of germs at $\partial\G_x$ of the asymptotic morphisms.
We will denote it $\bdry\G_x$.

We will denote by $\G^\top$ any groupoid equivalent to $\bdry\G$.
The following theorem is proved in~\cite{nek:hyperbolic}.

\begin{theorem}
Let $\G$ be a minimal Hausdorff hyperbolic groupoid. Then the dual
groupoid $\G^\top$ is minimal, Hausdorff, and hyperbolic, and
$(\G^\top)^\top$ is equivalent to $\G$.
\end{theorem}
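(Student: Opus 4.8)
The plan is to organize the whole argument around the symmetry already visible in the local product structure of $\partial\G$. Recall that $\G$ acts on $\partial\G$ preserving a local product decomposition whose projection onto the first coordinate is equivalent to $\G$ and whose projection onto the second coordinate is the dual groupoid $\bdry\G$. Making this symmetry precise is the heart of the matter: I would realize both $\G$ and its dual as the two transverse groupoids of a single space carrying two commuting product directions, so that interchanging the directions is exactly the operation of passing to the dual. Fixing a basepoint $x$ and invoking minimality of $\G$, so that $\partial\G_x$ is an open transversal and we may work with the concrete model $\bdry\G_x$ whose unit space is $\partial\G_x$, the first task is to equip $\partial\G_x$ with the visual metric of the Gromov boundary of $\Gamma(x,S)$, with respect to which the germs of asymptotic morphisms coming from the ``reverse'' dynamics are contractions.

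Next I would prove that $\bdry\G$ is hyperbolic by exhibiting a compact generating pair $(S^\top, Y)$ and verifying the five conditions of Definition~\ref{def:hyperbolic}; since hyperbolicity is an invariant of groupoid equivalence, this yields hyperbolicity of $\G^\top$. The contracting generators $S^\top$ are germs of asymptotic morphisms implementing one step of motion along the second coordinate, so that conditions (4), (5) and the contraction estimate (1) are built into the construction. The core difficulty is condition (2): I must show that the dual Cayley graphs $\Gamma(\xi, S^\top)$ are Gromov hyperbolic with a distinguished boundary point $\omega_\xi$ as in (3). The idea is to identify $\Gamma(\xi, S^\top)$, quasi-isometrically, with a horosphere-type subset of the original hyperbolic geometry, namely the level sets of a Busemann-type cocycle $\coc$ centered at $\omega_x$, so that the uniform expansion of $\G$ in the $\omega_x$-direction becomes uniform contraction for $\bdry\G$ and delivers Gromov hyperbolicity together with the special point $\omega_\xi$.

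Hausdorffness and minimality I would treat alongside this construction. Hausdorffness of $\bdry\G$ follows from rigidity of asymptotic morphisms: two germs of asymptotic morphisms that agree along an infinite net of edges must coincide, so the unit diagonal is closed. Minimality of $\G^\top$ reduces, through the local product structure, to minimality of $\G$: the orbits of $\bdry\G$ are the traces of the second-coordinate foliation, and density of these orbits follows from density of the $\G$-orbits together with topological transitivity of the product decomposition.

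The main obstacle is the double-duality statement that $(\G^\top)^\top$ is equivalent to $\G$. Here I would form the boundary $\partial(\bdry\G)$ with its own local product structure and show that the second dualization swaps the two coordinate directions back, so that the first-coordinate projection of the $\bdry\G$-action on $\partial(\bdry\G)$ recovers $\G$ up to equivalence. Concretely this requires a canonical homeomorphism between the fibres $\partial(\G^\top)_\xi$ and open subsets of $\G^{(0)}$, compatible with both groupoid structures, and then assembling $\G$ and $(\G^\top)^\top$ into a single groupoid in which both unit spaces are transversals, exactly as demanded by Definition~\ref{def:equivalentgroupoids}. The delicate points are tracking the two visual metrics and the product charts coherently through two passages to the boundary, and verifying that the resulting identification is a genuine groupoid equivalence rather than merely a fibrewise homeomorphism; this is where the symmetric roles of $\omega_x$ and $\omega_\xi$ and the bookkeeping of the cocycle $\coc$ must be controlled uniformly.
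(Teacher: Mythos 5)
The first thing to note is that the paper does not prove this theorem at all: the section on hyperbolic groupoids is explicitly ``a very short overview'' of the separate paper \cite{nek:hyperbolic}, and the theorem is quoted there with the remark that it is proved in that reference. So there is no in-paper proof to compare your text against, and your proposal has to stand on its own as a proof. It does not: it is an outline that correctly reproduces the architecture the paper describes (the $\G$-action on $\partial\G$ preserving a local product structure, with the first-coordinate projection equivalent to $\G$ and the second-coordinate projection giving $\bdry\G$, so that duality should be the interchange of the two directions), but every step beyond this framing is asserted rather than proved, and the assertions are exactly where the content of the theorem lies.

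Concretely: (1) Gromov hyperbolicity of the dual Cayley graphs $\Gamma(\xi,S^\top)$, the existence of the special boundary point $\omega_\xi$, and the quasi-geodesic condition (3) of Definition~\ref{def:hyperbolic} are dispatched with ``the idea is to identify $\Gamma(\xi,S^\top)$, quasi-isometrically, with a horosphere-type subset''; this identification is never constructed, and it is not a formality --- the definition of a hyperbolic groupoid is strongly asymmetric (uniform contraction toward a preferred direction), and verifying that the dual inherits this structure is the main technical body of \cite{nek:hyperbolic}. (2) The double-duality claim $(\G^\top)^\top\sim\G$ is the deepest part, and you reduce it to the statement that ``the second dualization swaps the two coordinate directions back''; but you construct neither the canonical identification of $\partial(\G^\top)_\xi$ with subsets of $\G^{(0)}$ nor the ambient groupoid required by Definition~\ref{def:equivalentgroupoids} in which both unit spaces are transversals --- you yourself flag this as the ``delicate point,'' which is an admission that the step is missing, not a proof of it. (3) Minimality of the dual is claimed to follow from ``density of the $\G$-orbits together with topological transitivity of the product decomposition''; the $\bdry\G$-orbits in $\partial\G_x$ are transverse to the $\G$-direction, and their density is a separate statement needing its own argument. (4) Hausdorffness is claimed via a ``rigidity of asymptotic morphisms'' (two asymptotic morphisms agreeing along an infinite net of edges have equal germs); this rigidity is precisely what would have to be proved, since Hausdorffness of a groupoid of germs is exactly the issue of germs agreeing at a point versus on a neighborhood. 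In short, your plan names the right ingredients and the right obstacles, but overcomes none of them; each of (1)--(4) would have to be carried out in full before this could be called a proof of the theorem.
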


\subsection{Groupoid of germs generated by an expanding self-covering}

Let $f:\J\arr\J$ be an expanding self-covering of a path connected
compact metric space. Denote by $\mathfrak{F}$ the groupoid of germs
generated by $f$. Every element of $\mathfrak{F}$ can be written as a
product $(f^n, x)^{-1}(f^m, y)$, for $n, m\in\N$, and $x, y\in\J$ such
that $f^n(x)=f^m(y)$.

A \emph{natural extension} of $f$ is the inverse limit $\hat\J$ of the
maps $f$ together with the homeomorphism $\hat f$ of $\hat\J$ induced
by $f$, see~\ref{ss:solenoid}. Let $P_S:\hat\J\arr\J$ be the natural projection.

For every point $x\in\J$ there exists a neighborhood $U$ that is
evenly covered by each map $f^n:\J\arr\J$, since $f$ is expanding. It
follows that the set $P^{-1}(U)$ is naturally decomposed into the
direct product of $U$ with the boundary $\partial T_x$ of the tree of
preimages of any point $x\in U$. 

The groupoid $\mathfrak{F}$ is hyperbolic, and we can
consider the space $\partial\mathfrak{F}$ together with the projection
$P:\partial\mathfrak{F}\arr\mathfrak{F}^{(0)}=\J$.
Let us use the generating set $S$ of
$\mathfrak{F}$ equal to the set of germs of the inverse map
$f^{-1}$. Then the Cayley graphs $\Gamma(x, S)$ are regular trees such
that every vertex has one incoming and $d=\deg f$ outgoing arrows. The
fiber $\partial\mathfrak{F}_x$ is equal to the boundary of this tree
minus the limit of the path $(f^n, x)$, $n\ge 0$. In other words, it
is the natural inductive limit of the boundaries of the preimage
trees $T_{f^n(x)}$ for $n\ge 0$.

Let $\coc:\mathfrak{F}\arr\Z$ be the homomorphism (\emph{cocycle})
given by the rule $\coc(f, x)=-1$, so that $\coc((f^n, x)^{-1}(f^m,
y))=n-m$. See Figure~\ref{fig:cayleye}, where the Cayley graph $\Gamma(x, S)$
together with the levels of the cocycle $\coc$ are shown.

\begin{figure}
\centering
\includegraphics{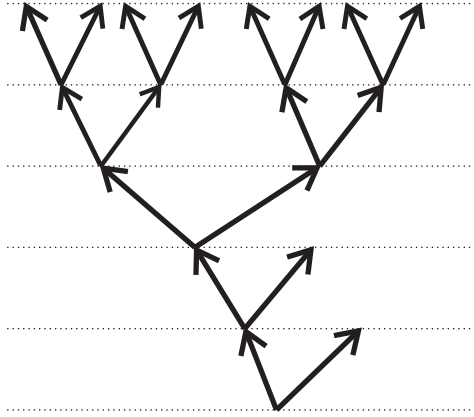}
\caption{Cayley graph of $\mathfrak{F}$}
\label{fig:cayleye}
\end{figure}

Every point of $\partial\mathfrak{F}_x$ can be uniquely represented as the
limit of a sequence $s_n\cdots s_2s_1\cdot g$, where $s_i\in S$, and
$g\in\mathfrak{F}$ is such that $\be(g)=x$ and $\coc(g)=0$. Note that
the set of limits of the sequences $s_n\cdots s_2s_1\cdot g$ for a
fixed $g$ and all possible choices of $s_i\in S$ is
naturally identified with the fiber $P_S^{-1}(\en(g))$ of the solenoid
$\hat\J$ (i.e., with $\partial T_{\en(g)}$). It
follows then directly from the definitions that the space
$\partial\mathfrak{F}$ is homeomorphic to the subset 
$\{(\zeta, g)\;:\;P_S(\zeta)=\en(g)\}$
of the direct product $\hat\J\times\mathfrak{F}_0$, 
where $\mathfrak{F}_0$ is the subgroupoid
$\coc^{-1}(0)\subset\mathfrak{F}$. 
The action of
$\mathfrak{F}$ on $\partial\mathfrak{F}$ is given in these terms by the rules
\[(\zeta, g)\cdot h=\left\{\begin{array}{ll}({\hat f}^n(\zeta),
    f^n\circ gh), & \text{if $\coc(h)=n>0$,}\\
({\hat f}^{-n}(\zeta), s_n\cdots s_2s_1gh), & \text{if $\coc(h)=-n<0$,}\\
(\zeta, gh), &\text{if $\coc(h)=0$,}
\end{array}\right.\]
where $s_i\in S$ are such that $\zeta=\lim_{m\to\infty}s_m\cdots
s_2s_1$.

We get hence the following description of the natural extension $\hat
f:\hat\J\arr\hat\J$ in terms of the $\mathfrak{F}$-space
$\partial\mathfrak{F}$.

\begin{proposition}
\label{pr:solenoidflow}
The quotient of the space $\partial\mathfrak{F}$ by the action of
$\mathfrak{F}_0=\coc^{-1}(0)$ is homeomorphic to $\hat\J$. If
$F\in\tilde{\mathfrak{F}}$ is such that $\coc(F)=\{n\}$, then the
germs of the map induced by $F$ on the quotient space
$\hat\J=\partial\mathfrak{F}/\mathfrak{F}_0$ are germs of the map
${\hat f}^{-n}$.
\end{proposition}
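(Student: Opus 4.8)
The plan is to read off both assertions from the explicit coordinate description of $\partial\mathfrak{F}$ obtained just before the proposition, namely the homeomorphism identifying $\partial\mathfrak{F}$ with the subset $\{(\zeta, g) : P_S(\zeta) = \en(g)\}$ of $\hat\J\times\mathfrak{F}_0$, together with the displayed formula for the right action of $\mathfrak{F}$ in these coordinates.

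First I would treat the quotient. In the coordinates $(\zeta, g)$ the action of a germ $h$ with $\coc(h) = 0$ is $(\zeta, g)\cdot h = (\zeta, gh)$, so $\mathfrak{F}_0$ acts only on the second coordinate and fixes $\zeta$. I define $\Pi:\partial\mathfrak{F}\arr\hat\J$ by $(\zeta, g)\mapsto\zeta$. Surjectivity is immediate, taking $g$ to be the unit at $P_S(\zeta)$, which lies in $\mathfrak{F}_0$. The map $\Pi$ is constant on $\mathfrak{F}_0$-orbits, and conversely if $\Pi(\zeta, g_1) = \Pi(\zeta, g_2)$ then $\en(g_1) = \en(g_2) = P_S(\zeta)$, so $h = g_1^{-1}g_2\in\mathfrak{F}_0$ is composable and $(\zeta, g_1)\cdot h = (\zeta, g_2)$; hence the fibers of $\Pi$ are exactly the $\mathfrak{F}_0$-orbits. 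Thus $\Pi$ descends to a continuous bijection $\bar\Pi:\partial\mathfrak{F}/\mathfrak{F}_0\arr\hat\J$. Using the local product structure of $\partial\mathfrak{F}$, along which $\Pi$ is a coordinate projection, the map $\Pi$ is open, so $\bar\Pi$ is a homeomorphism.

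For the second statement I would first note that $\mathfrak{F}_0 = \coc^{-1}(0)$ is the kernel of the cocycle $\coc$, hence is preserved under conjugation in $\mathfrak{F}$; this is exactly what is needed for a pseudogroup element to descend to a well-defined transformation of the quotient $\hat\J$. A bisection $F\in\tilde{\mathfrak{F}}$ acts on $\partial\mathfrak{F}$ by sending $\xi\in\partial\mathfrak{F}_x$ with $x\in\be(F)$ to $\xi\cdot h^{-1}\in\partial\mathfrak{F}_{\en(h)}$, where $h\in F$ is the unique germ with $\be(h) = x$; the inverse appears because $F$ realizes the homeomorphism $\be(F)\arr\en(F)$ of the base while $\mathfrak{F}$ acts on $\partial\mathfrak{F}$ on the right. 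If $\coc(F) = \{n\}$ then $\coc(h^{-1}) = -n$, and the displayed action formula gives $(\zeta, g)\cdot h^{-1} = (\hat f^{-n}(\zeta), \dots)$ in every case. Composing with $\bar\Pi$, the induced transformation of $\hat\J$ sends $\zeta\mapsto\hat f^{-n}(\zeta)$; since this is precisely the map $\hat f^{-n}$, its germs are germs of $\hat f^{-n}$, as claimed.

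The routine verifications, continuity and openness of $\Pi$ from the local product structure, are straightforward, and well-definedness of the induced map on the quotient follows from $\mathfrak{F}_0$ being the kernel of $\coc$. The point requiring the most care is the bookkeeping of conventions in the second part: the right action of the germ $h$ itself moves $\zeta$ by $\hat f^{+n}$, whereas the pseudogroup transformation determined by $F$ uses $h^{-1}$ and therefore produces $\hat f^{-n}$. Reconciling this with the convention $\coc(f, x) = -1$, so that the generating germs of $f^{-1}$ carry $\coc = +1$, is where I would be most careful, but the sign is forced by the formulas already established.
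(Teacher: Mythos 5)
Your proof is correct and follows essentially the paper's own route: the paper offers no separate argument for this proposition, stating it as an immediate consequence of the coordinate identification of $\partial\mathfrak{F}$ with $\{(\zeta,g)\;:\;P_S(\zeta)=\en(g)\}\subset\hat\J\times\mathfrak{F}_0$ and the displayed action formula, which is exactly what you use. Your added verifications --- that the fibers of $(\zeta,g)\mapsto\zeta$ are precisely the $\mathfrak{F}_0$-orbits, openness of the projection, and the sign bookkeeping whereby a bisection $F$ acts through $h^{-1}$ so as to cover the base homeomorphism $\be(F)\arr\en(F)$ and hence induces $\hat f^{-n}$ when $\coc(F)=\{n\}$ --- simply make explicit what the paper leaves implicit, and your resolution of the sign convention is the one consistent with the statement.
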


We say that points $\xi, \zeta\in\hat\J$ are
\emph{unstably equivalent} if distance between
$\hat f^{-n}(\xi)$ and $\hat f^{-n}(\zeta)$ goes to zero as $n\to
+\infty$. They are said to be \emph{stably equivalent} if distance between
$\hat f^n(\xi)$ and $\hat f^n(\zeta)$ goes to zero as $n\to +\infty$.

A \emph{leaf} of $\hat\J$ is its path connected component. (Recall
that we assume that $\J$ is path connected.) Every leaf 
is an equivalence class of the unstable
equivalence relation on $\hat\J$. 

Each leaf is dense in $\hat\J$, and it is more natural to consider it
with the \emph{inductive limit topology}.
Namely, a subset $U$ of a leaf $\mathcal{L}$ is open if and only if
its intersection with every compact subset $C\subset\mathcal{L}$ is
open in $C$. Note that for every compact set $C\subset\mathcal{L}$ the
map $P:C\arr\J$ is finite-to-one.

Restriction of the map
$P:\hat\J\arr\J$ to any leaf $\mathcal{L}$ of $\hat\J$ is a covering
map. Let $\mathfrak{F}_{\mathcal{L}}$ be the lift of the groupoid
$\mathfrak{F}$ to the leaf $\mathcal{L}$ by this covering. Then the
groupoid $\mathfrak{F}_{\mathcal{L}}$ is equivalent to
$\mathfrak{F}$. The cocycle $\coc$ lifts to the cocycle $\coc\circ P$ on
$\mathfrak{F}_{\mathcal{L}}$, which we will denote by
$\coc_{\mathcal{L}}$ or just $\coc$.

It follows then from the definition of $\partial\G$ for a hyperbolic
groupoid $\G$, that 
the space $\partial\mathfrak{F}_{\mathcal{L}}$ is the fiber product of
the maps $P_S:\mathcal{L}\arr\J$ and $P:\partial\mathfrak{F}\arr\J$,
i.e., the subset $\{(x, y)\;:\;P_S(x)=P(y)\}$ of
$\mathcal{L}\times\partial\mathfrak{F}$. We get the following
corollary of Proposition~\ref{pr:solenoidflow}.

\begin{corollary}
\label{cor:solenoidflow}
Let $\mathcal{L}$ be a leaf of $\hat\J$, and let
$\mathfrak{F}_{\mathcal{L}}$ be the lift of $\mathfrak{F}$ by the
covering $P_S:\mathcal{L}\arr\J$.
Then the quotient of the space $\partial\mathfrak{F}_{\mathcal{L}}$ by the action of
$\mathfrak{F}_{\mathcal{L}, 0}=\coc_{\mathcal{L}}^{-1}(0)$ is
homeomorphic to $\hat\J$. If $F\in\tilde{\mathfrak{F}_{\mathcal{L}}}$
is such that $\coc_{\mathcal{L}}(F)=\{n\}$, then the
germs of the map induced by $F$ on the quotient space
$\hat\J=\partial\mathfrak{F}_{\mathcal{L}}/\mathfrak{F}_{\mathcal{L},
0}$ are germs of the map ${\hat f}^{-n}$.
\end{corollary}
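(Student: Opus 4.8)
The plan is to deduce the statement directly from Proposition~\ref{pr:solenoidflow}, exploiting the identification, recorded just above, of $\partial\mathfrak{F}_{\mathcal{L}}$ with the fiber product $\{(x,y)\;:\;P_S(x)=P(y)\}\subseteq\mathcal{L}\times\partial\mathfrak{F}$. Since $\mathfrak{F}_{\mathcal{L}}$ is the lift of $\mathfrak{F}$ along the covering $P_S:\mathcal{L}\arr\J$, the lift construction supplies a canonical morphism $\Pi:\mathfrak{F}_{\mathcal{L}}\arr\mathfrak{F}$, and by definition $\coc_{\mathcal{L}}=\coc\circ\Pi$. First I would check that the second projection $q:\partial\mathfrak{F}_{\mathcal{L}}\arr\partial\mathfrak{F}$, $(x,y)\mapsto y$, is $\Pi$-equivariant: for $g\in\mathfrak{F}_{\mathcal{L}}$ with $\en(g)=x$ the action reads $(x,y)\cdot g=(\be(g),\,y\cdot\Pi(g))$, so $q((x,y)\cdot g)=q(x,y)\cdot\Pi(g)$. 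Because $\Pi(\mathfrak{F}_{\mathcal{L},0})\subseteq\mathfrak{F}_0$, the map $q$ sends $\mathfrak{F}_{\mathcal{L},0}$-orbits into $\mathfrak{F}_0$-orbits and thus descends to a continuous map $\bar q:\partial\mathfrak{F}_{\mathcal{L}}/\mathfrak{F}_{\mathcal{L},0}\arr\partial\mathfrak{F}/\mathfrak{F}_0$, whose target is homeomorphic to $\hat\J$ by Proposition~\ref{pr:solenoidflow}.

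The heart of the argument is to show that $\bar q$ is a homeomorphism. Openness and the local-homeomorphism property of $\bar q$ follow from those of $q$, whose fibers over $y$ are exactly the (discrete) fibers $P_S^{-1}(P(y))$ of the covering; surjectivity follows from surjectivity of $P_S$. For injectivity, suppose $(x_1,y_1)$ and $(x_2,y_2)$ have the same image in $\hat\J$, i.e. $y_1\cdot\bar h=y_2$ for some $\bar h\in\mathfrak{F}_0$ with $\en(\bar h)=P(y_1)=P_S(x_1)$. Using that $P_S$ is a covering, I lift the local homeomorphism representing $\bar h$ to a germ $\hat h\in\mathfrak{F}_{\mathcal{L},0}$ with $\en(\hat h)=x_1$; then $(x_1,y_1)\cdot\hat h=(x_2',y_2)$ for some $x_2'\in P_S^{-1}(P_S(x_2))$. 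It remains to connect $(x_2',y_2)$ to $(x_2,y_2)$ inside one $\mathfrak{F}_{\mathcal{L},0}$-orbit, that is, to absorb the residual ambiguity of $x_2'$ within the $P_S$-fiber.

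I expect this last point to be the main obstacle, and it is precisely where path connectedness enters. The key observation is that following the covering $P_S$ along a path in the leaf $\mathcal{L}$ produces germs of maps of the form $p'\circ F\circ p$ with $F$ the identity of $\J$ (but with range on a different sheet); such germs lie in $\mathfrak{F}_{\mathcal{L}}$, have $\coc_{\mathcal{L}}=0$, and are sent by $\Pi$ to units of $\mathfrak{F}$. Since $\mathcal{L}$ is path connected (because $\J$ is), any two points $x_2,x_2'$ of a single $P_S$-fiber are joined by a path in $\mathcal{L}$ projecting to a loop in $\J$, and the corresponding composite germ $\hat k\in\mathfrak{F}_{\mathcal{L},0}$ satisfies $\en(\hat k)=x_2'$, $\be(\hat k)=x_2$, and $\Pi(\hat k)=1$. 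Hence $(x_2',y_2)\cdot\hat k=(x_2,\,y_2\cdot 1)=(x_2,y_2)$: the fiber directions of $q$ collapse in the quotient, which gives injectivity of $\bar q$ and completes the identification of $\partial\mathfrak{F}_{\mathcal{L}}/\mathfrak{F}_{\mathcal{L},0}$ with $\hat\J$.

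Finally, for the dynamical half of the statement I would take $F\in\tilde{\mathfrak{F}_{\mathcal{L}}}$ with $\coc_{\mathcal{L}}(F)=\{n\}$ and push it forward by $\Pi$ to an element $\tilde F\in\tilde{\mathfrak{F}}$ with $\coc(\tilde F)=\{n\}$. Equivariance of $\bar q$ means that $\bar q$ intertwines the map induced by $F$ on $\partial\mathfrak{F}_{\mathcal{L}}/\mathfrak{F}_{\mathcal{L},0}$ with the map induced by $\tilde F$ on $\partial\mathfrak{F}/\mathfrak{F}_0=\hat\J$. By Proposition~\ref{pr:solenoidflow} the latter has the germs of $\hat f^{-n}$, so, transporting along the homeomorphism $\bar q$, the map induced by $F$ has the germs of $\hat f^{-n}$ as well, which is the assertion of the corollary.
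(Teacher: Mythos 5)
Your proposal is correct and takes essentially the same route as the paper, which obtains the corollary directly from the fiber-product description of $\partial\mathfrak{F}_{\mathcal{L}}$ (as $\{(x,y):P_S(x)=P(y)\}\subseteq\mathcal{L}\times\partial\mathfrak{F}$) combined with Proposition~\ref{pr:solenoidflow}; the paper leaves this verification implicit, and you have supplied the details (the equivariant projection $q$, its descent $\bar q$ to the quotients, and the collapsing of the $P_S$-fiber direction). One minor remark: path connectedness of $\mathcal{L}$ is not actually needed for your injectivity step, since the paper's definition of the lift already provides, for any two points of a single $P_S$-fiber, a germ of $p'\circ\mathrm{id}\circ p$ between sheets containing them, which lies in $\mathfrak{F}_{\mathcal{L},0}$ and is sent to a unit of $\mathfrak{F}$.
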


Let $G$ be a contracting regular self-replicating group. Let $\G$ be
the groupoid of
germs of the action of the group $\V_G$ on $X^\omega$. It is generated
by the groupoid of germs of $G$ and the germs of the maps
$S_x:x_1x_2\ldots\mapsto xx_1x_2\ldots$. It is shown
in~\cite[Subsection~5.3.]{nek:hyperbolic} that $\G$
is hyperbolic, and its dual is the groupoid generated by the limit
dynamical system $f:\J_G\arr\J_G$.

More explicitly, let $w\in X^\omega$. Then the boundary
$\partial\G_w$ is the leaf of the limit solenoid $\mathcal{S}_G$
consisting of points representable by sequences $\ldots
x_{-2}x_{-1}\cdot x_0x_1\ldots$, where $x_0x_1\ldots$ belongs to the
$G$-orbit of $w$. The groupoid $\bdry\G_w$ is equal to the
lift of the groupoid generated by the limit dynamical system
$f:\J_G\arr\J_G$ to the leaf $\partial\G_w$ (by the covering induced by the 
projection $\mathcal{S}_G\arr\J_G$ of the natural extension onto $\J_G$).

\section{Reconstruction of the dynamical system from $\V_f$}

The main result of this section is the following classification of the
groups $\V_f$.

\begin{theorem}
\label{th:classification}
Let $f_i:\J_i\arr\J_i$, for $i=1, 2$, be expanding self-coverings of
path connected compact metric spaces. Then $\V_{f_1}$ and $\V_{f_2}$
are isomorphic as abstract groups if and only if the dynamical systems
$(f_1, \J_1)$ and $(f_2, \J_2)$ are topologically conjugate.
\end{theorem}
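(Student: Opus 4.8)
The ``if'' direction is routine, so the plan is to prove the converse. A topological conjugacy $\phi\colon\J_1\arr\J_2$ with $f_1=\phi^{-1}\circ f_2\circ\phi$ carries the tree of preimages of a basepoint for $f_1$ isomorphically onto the tree of preimages of its image for $f_2$, and intertwines the path-lifting that defines the transformations $S_\gamma$; hence it induces an isomorphism $\V_{f_1}\arr\V_{f_2}$. The substance is the ``only if'' direction, which I would organize as the four-step reduction already sketched in the introduction: (i) recover the action from the abstract group, (ii) pass to the groupoid of germs, (iii) dualize, and (iv) reconstruct $f$ from the dual.

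First I would realize each $\V_{f_i}$ as its standard action on the Cantor set $X_i^\omega$ and invoke M.~Rubin's reconstruction theorem~\cite{rubin:reconstr}. For this I must verify Rubin's hypotheses for the action of $\V_{f_i}$ on $X_i^\omega$: the space is perfect, and for every non-empty open set the subgroup acting trivially off it still moves points throughout it. Both are immediate because $\V_{f_i}$ contains the Higman--Thompson group $\V_{X_i}$, whose elements can be supported in any cylinder $vX_i^\omega$ and act there as richly as $\V_{X_i}$ itself. Rubin's theorem then promotes any abstract isomorphism $\V_{f_1}\arr\V_{f_2}$ to a homeomorphism $X_1^\omega\arr X_2^\omega$ conjugating the two actions. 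Topological conjugacy of the actions identifies the groupoids of germs $\G_1\cong\G_2$, where $\G_i$ is the groupoid of germs of the action of $\V_{f_i}$; since the germs of the contractions $S_x$ already occur as germs of local pieces of elements of $\V_{f_i}$, this $\G_i$ is exactly the hyperbolic groupoid whose topological full group is $\V_{f_i}$. Thus the abstract group determines $\G_i$ up to isomorphism.

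Next I would dualize. Because $f_i$ is expanding and $\J_i$ is path connected, $\img{f_i}$ is contracting, regular and self-replicating, so $\G_i$ is a minimal Hausdorff hyperbolic groupoid whose dual $\G_i^\top=\bdry\G_i$ is equivalent to the groupoid $\mathfrak{F}_i$ generated by the germs of $f_i$ (see~\cite{nek:hyperbolic} and the last subsection). Since the equivalence class of the dual is an invariant of a hyperbolic groupoid, $\G_1\cong\G_2$ forces $\mathfrak{F}_1$ and $\mathfrak{F}_2$ to be equivalent groupoids in the sense of Definition~\ref{def:equivalentgroupoids}.

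The hard part will be the final step: reconstructing $(f_i,\J_i)$ up to topological conjugacy from the equivalence class of $\mathfrak{F}_i$ alone. Here the plan is to use the hyperbolic structure on $\mathfrak{F}_i$, which furnishes the boundary $\partial\mathfrak{F}_i$ with its local product structure and the grading cocycle $\coc_i\colon\mathfrak{F}_i\arr\Z$ normalized by $\coc_i(f_i,x)=-1$, all invariants of the equivalence class. By Proposition~\ref{pr:solenoidflow} and Corollary~\ref{cor:solenoidflow}, quotienting $\partial\mathfrak{F}_i$ by $\coc_i^{-1}(0)$ reconstructs the natural extension $(\hat\J_i,\hat f_i)$ as a topological dynamical system. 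The essential and most delicate move is then to descend from the invertible solenoid $(\hat\J_i,\hat f_i)$ to the non-invertible $(f_i,\J_i)$: locally $\hat\J_i$ splits as a product of a path-connected leaf (unstable) direction and a totally disconnected Cantor (stable) transverse direction, and \emph{because $\J_i$ is path connected} these two factors are canonically distinguished. One recovers $\J_i$ as the quotient of $\hat\J_i$ collapsing the transverse Cantor factor, and $f_i$ as the induced map; being canonical, this construction commutes with any groupoid equivalence. This is precisely where connectedness is indispensable, since without it the stable foliation to be collapsed cannot be singled out intrinsically. Assembling the steps, the equivalence $\mathfrak{F}_1\sim\mathfrak{F}_2$ yields a conjugacy $(\hat\J_1,\hat f_1)\cong(\hat\J_2,\hat f_2)$ respecting the local product structures, and hence the desired topological conjugacy $(f_1,\J_1)\cong(f_2,\J_2)$.
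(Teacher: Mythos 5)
Your steps (i)--(iii) follow the paper's route exactly: Rubin's theorem (Theorem~\ref{th:vfrubin}) upgrades the abstract isomorphism to a conjugacy of the actions on $X_i^\omega$, hence an isomorphism of the groupoids of germs $\G_i$, and hyperbolic duality transports this to an equivalence of the groupoids $\mathfrak{F}_i$ generated by the germs of $f_i$. One verification you skip already here: you call the cocycle $\coc_i\colon\mathfrak{F}_i\arr\Z$, normalized by $\coc_i(f_i,x)=-1$, an ``invariant of the equivalence class.'' As defined, $\coc_i$ refers to $f_i$ itself, so before you may quotient by $\coc_i^{-1}(0)$ (Corollary~\ref{cor:solenoidflow}) and claim the natural extension is determined by the groupoid, you must prove that the cocycle can be recovered from the abstract topological groupoid. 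The paper does exactly this in Propositions~\ref{pr:concomp} and~\ref{pr:cocycleunique}: a connected component generates a proper subgroupoid if and only if the cocycle vanishes on it, the group of components modulo these is $\Z$, which determines $\{\coc,-\coc\}$, and a further argument from the hyperbolic groupoid theory separates $\coc$ from $-\coc$. This is a real (if repairable) omission.

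The genuine gap is your final step. You claim that $\J_i$ is recovered from $(\hat\J_i,\hat f_i)$ by ``collapsing the transverse Cantor factor,'' and that this collapse is canonical because path connectedness distinguishes the two local factors. Connectedness does distinguish stable from unstable directions \emph{locally}, but it does not single out the global partition you need to collapse. If you collapse the global stable equivalence relation, the quotient is degenerate: since $\bigcup_n f_i^{-n}(x)$ is dense in $\J_i$, every stable class is dense in $\hat\J_i$, so the quotient is not even Hausdorff, let alone $\J_i$. What one actually collapses is a particular global slicing of the stable direction into Cantor fibers, namely the fibers of the projection $P_i:\hat\J_i\arr\J_i$ --- and that slicing is extra data not determined by the local product structure: for instance $P_i$ and $P_i\circ\hat f_i$ are different slicings (the fibers of the latter are unions of $\deg f_i$ fibers of the former), both compatible with the local product structure. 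So the substance of the final step is precisely a uniqueness theorem: any two realizations of the same solenoid as the natural extension of an expanding self-covering of a path-connected compactum yield conjugate maps. The paper proves this directly: it takes the two projections $P_1,P_2$, forms the image $\tilde\J$ of $(P_1,P_2)$ in $\J_1\times\J_2$ with the induced map $\tilde f$, uses compactness and the local product structure to show that $\tilde f$ and the projections $\tilde P_i:\tilde\J\arr\J_i$ are finite-degree coverings (here one uses that $P_2\circ\hat f^{\,n}\circ P_1^{-1}$ is eventually single-valued, by stable contraction), and then forces $\deg\tilde P_i=1$ by the disjointness of the preimage trees. This is where path connectedness does its essential work, and nothing in your proposal substitutes for that argument; ``being canonical, this construction commutes with any groupoid equivalence'' is an assertion of exactly the statement to be proved.
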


\begin{example}
One can show that the limit dynamical systems of two groups
$\mathfrak{K}_{v_i}$, $i=1, 2$, are topologically conjugate if and
only if either $v_1=v_2$, or $v_1$ can be obtained from $v_2$ by
replacing each 0 by 1 and each 1 by 0. Namely, the sequence of letters
of $v_i$ can be interpreted as a \emph{kneading sequence} of the
dynamical system, which in turn can be defined in purely topological
terms. This gives a complete classification of the groups
$\V_{\mathfrak{K}_v}$ up to isomorphism.
\end{example}

\subsection{M.~Rubin's theorem}

Recall that if $G$ is a group acting on a topological space $\X$, and
$U\subseteq\X$ is an open subset, then we denote by $G_{(U)}$ the
group of elements $g\in G$ acting trivially outside of $U$, see
Subsection~\ref{ss:somegeneralfacts}.

The following theorem is proved in~\cite[Theorem~0.2]{rubin:reconstr}.

\begin{theorem}
\label{th:rubin}
Let $G_i$, for $i=1, 2$, be groups acting faithfully by homeomorphisms on Hausdorff
topological spaces $\X_i$. Suppose that the following conditions hold
for both pairs $(G, \X)=(G_i, \X_i)$, $i=1, 2$.
\begin{enumerate}
\item For every non-empty open subset $U\subset\X$ the group $G_{(U)}$
  is non-trivial.
\item For every non-empty open subset $U\subset\X$ there exists a
  non-empty open subset $U_1\subseteq U$ such that if $V, W\subset
  U_1$ are open sets such that there exists $g\in G$ such that
  $g(V)\cap W\ne\emptyset$, then there exists $g\in G_{(U)}$ such that
  $g(V)\cap W\ne\emptyset$.
\end{enumerate}
Then for every isomorphism $\phi:G_1\arr G_2$ there exists a
homeomorphism $F:\X_1\arr\X_2$ inducing it, i.e., such that
$\phi(g)=F\circ g\circ F^{-1}$ for all $g\in G$.
\end{theorem}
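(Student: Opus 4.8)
The plan is to recover the topological space $\X$, up to homeomorphism, purely from the abstract group $G$ together with hypotheses (1)--(2), and then observe that any abstract isomorphism must respect this reconstruction. The engine is the family of rigid stabilizers $G_{(U)}$: I will show that (1)--(2) allow one to identify, intrinsically inside $G$, the relation of \emph{support-disjointness} between elements, and from it the whole Boolean algebra $\mathrm{RO}(\X)$ of regular open sets.

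First I would attach to each $g\in G$ its support $\mathrm{supp}(g)=\{x:g(x)\ne x\}$ and prove the key intrinsic characterizations: (a) if $\mathrm{supp}(g)\cap\mathrm{supp}(h)=\emptyset$ then $[g,h]=1$, and (b), using (1) and (2), a converse-type statement recognizing disjointness through suitable commutator and centralizer conditions. Condition (1) guarantees $G_{(U)}\ne 1$ for every non-empty open $U$, so supports are dense enough that an element whose reconstructed support is empty must be trivial; condition (2) is the local-richness hypothesis ensuring that $G_{(U)}$ already realizes, inside a small subregion, every local overlap that $G$ realizes, which is exactly what makes the subgroups $G_{(U)}$ detectable inside $G$ and supplies the converse direction of (b). From the disjointness relation I would assemble the poset of subgroups $\{G_{(U)}: U\text{ regular open}\}$ and show that $U\mapsto G_{(U)}$ is an isomorphism from $\mathrm{RO}(\X)$ onto this intrinsically defined lattice.

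Next I would recover the points and the topology of $\X$. Points are realized as the appropriate convergent maximal filters (ultrafilters) of the reconstructed algebra $\mathrm{RO}(\X)$, and the recovered set carries the topology generated by the images of the $G_{(U)}$; faithfulness of the action together with the Hausdorff hypothesis ensures this recovered space is canonically homeomorphic to $\X$. Because an isomorphism $\phi:G_1\arr G_2$ carries every intrinsically defined object of $G_1$ (rigid stabilizers, the disjointness relation, the Boolean algebra, its convergent ultrafilters) to the corresponding object of $G_2$, it induces a Boolean-algebra isomorphism $\mathrm{RO}(\X_1)\arr\mathrm{RO}(\X_2)$ and hence a homeomorphism $F:\X_1\arr\X_2$. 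Finally I would check that $F$ induces $\phi$: for $g\in G_1$, the maps $\phi(g)$ and $F\circ g\circ F^{-1}$ act identically on every recovered regular open set, hence on every recovered point, so by faithfulness $\phi(g)=F\circ g\circ F^{-1}$.

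The main obstacle is the purely group-theoretic characterization of the rigid stabilizers $G_{(U)}$ and of support-disjointness: extracting, from (1)--(2) alone, a description of these subgroups and of the order relation among them that refers only to the group operation, and then promoting the resulting Boolean-algebra isomorphism to a genuine point-set homeomorphism of $\X_1$ and $\X_2$. This is the delicate combinatorial core of Rubin's argument; everything afterward---transport of the reconstructed data along $\phi$ and the verification that $F$ conjugates the actions---is formal.
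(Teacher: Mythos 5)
The paper does not prove this statement at all: it is quoted as Theorem~0.2 of M.~Rubin's paper \cite{rubin:reconstr}, and the sentence immediately preceding it in the text says exactly that. So the only meaningful comparison is between your outline and Rubin's actual argument, whose overall strategy you have correctly identified: intrinsic (group-theoretic) recognition of support-disjointness, reconstruction of the Boolean algebra of regular open sets through the rigid stabilizers $G_{(U)}$, realization of points as suitable ultrafilters, and formal transport of all of this along $\phi$.

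The problem is that your proposal is a road map rather than a proof, and the step you defer --- ``the purely group-theoretic characterization of the rigid stabilizers $G_{(U)}$ and of support-disjointness'' --- is not a routine verification but essentially the entire content of Rubin's (long and delicate) paper. Two places where the gap is concrete. First, your step (b): hypotheses (1)--(2) are stated in terms of the action, so to recognize disjointness ``through suitable commutator and centralizer conditions'' you must exhibit formulas in the pure group language and prove that they define the correct relation in \emph{every} action satisfying (1)--(2); your sketch never indicates how hypothesis (2) actually enters such an argument, and that is precisely where the difficulty lives. Second, the passage from the Boolean algebra $\mathrm{RO}(\X)$ back to the space $\X$ is not formal: non-homeomorphic spaces can have isomorphic regular open algebras (for instance $\mathbb{R}$ and $\mathbb{Q}$ both have the Cohen algebra as their algebra of regular open sets), so ``points are the convergent maximal filters'' is circular unless the class of point-filters is itself shown to be definable from the group data --- convergence is a topological notion, and making it intrinsic again requires the local-movement hypothesis in an essential way. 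Since you explicitly flag this core as an obstacle rather than overcome it, the proposal accurately describes the architecture of Rubin's proof but does not establish the theorem; as far as this paper is concerned, the honest ``proof'' is the citation.
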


We say that a group $G$ acting on a topological space $\X$
is \emph{locally transitive} if there exists a basis of open sets
$\mathcal{U}$ such that for every $U\in\mathcal{U}$ the
group $G_{(U)}$ has a dense orbit in $U$.

The following is a direct corollary of Theorem~\ref{th:rubin}.

\begin{corollary}
\label{cor:rubin}
If $G_i$ are locally transitive groups of homeomorphisms of
topological spaces $\X_i$, then every isomorphism $\phi:G_1\arr G_2$
is induced by a homeomorphism $F:\X_1\arr\X_2$.
\end{corollary}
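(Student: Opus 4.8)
The plan is to derive the corollary directly from M.~Rubin's theorem (Theorem~\ref{th:rubin}): it suffices to check that local transitivity of $(G_i,\X_i)$ implies Rubin's conditions (1) and (2) for both $i$, after which any isomorphism $\phi:G_1\arr G_2$ is automatically induced by a homeomorphism. Fix one pair and write $(G,\X)$, and let $\mathcal{U}$ be a basis of open sets as in the definition of local transitivity, so that $G_{(U)}$ has a dense orbit in $U$ for every $U\in\mathcal{U}$.

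First I would dispose of condition (1). Given a non-empty open $U\subseteq\X$, since $\mathcal{U}$ is a basis I can choose $U_1\in\mathcal{U}$ with $\emptyset\neq U_1\subseteq U$, and then $G_{(U_1)}$ has an orbit dense in $U_1$. In the spaces of interest the set $\X=X^\omega$ is a Cantor set, so every non-empty open $U_1$ is infinite; as $\X$ is Hausdorff, no single point is dense in such a $U_1$, so the dense orbit contains more than one point and some element of $G_{(U_1)}$ moves a point. Hence $G_{(U_1)}$, and therefore $G_{(U)}\supseteq G_{(U_1)}$, is non-trivial.

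The substantive point is condition (2), and here local transitivity does essentially all the work. Given a non-empty open $U$, I again pick $U_1\in\mathcal{U}$ with $U_1\subseteq U$ and let $x_0\in U_1$ have $G_{(U_1)}$-orbit dense in $U_1$; I claim this single $U_1$ witnesses condition (2). Indeed, suppose $V,W\subseteq U_1$ are open with $g(V)\cap W\neq\emptyset$ for some $g\in G$, so in particular $V$ and $W$ are non-empty. Density of the orbit of $x_0$ produces $h_1,h_2\in G_{(U_1)}$ with $h_1(x_0)\in V$ and $h_2(x_0)\in W$; then $h_2h_1^{-1}\in G_{(U_1)}\subseteq G_{(U)}$ satisfies $(h_2h_1^{-1})(h_1(x_0))=h_2(x_0)\in W$ with $h_1(x_0)\in V$, so $(h_2h_1^{-1})(V)\cap W\neq\emptyset$. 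This is exactly the element of $G_{(U)}$ required. Note that the hypothesis on the global element $g$ is never used: a dense $G_{(U_1)}$-orbit already connects any two non-empty open subsets of $U_1$ inside $G_{(U_1)}$, which is a cleaner and stronger statement than condition (2) demands.

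With both conditions verified for $(G_1,\X_1)$ and $(G_2,\X_2)$, Theorem~\ref{th:rubin} applies to any isomorphism $\phi:G_1\arr G_2$ and yields a homeomorphism $F:\X_1\arr\X_2$ with $\phi(g)=F\circ g\circ F^{-1}$ for all $g$. The only mild obstacle is the bookkeeping in condition (1)---ensuring the dense orbit is genuinely non-trivial---which is automatic once one records that the relevant spaces have no isolated points; condition (2), which is ordinarily the delicate hypothesis of Rubin's theorem, falls out immediately and uniformly from the presence of a dense $G_{(U_1)}$-orbit.
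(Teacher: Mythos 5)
Your proposal is correct and follows exactly the route the paper intends: the paper offers no separate argument, calling the statement a direct corollary of Theorem~\ref{th:rubin}, and your verification of Rubin's two conditions is precisely the missing routine check --- in particular your observation that a dense $G_{(U_1)}$-orbit already carries any non-empty open $V\subseteq U_1$ onto any non-empty open $W\subseteq U_1$ by an element of $G_{(U_1)}\subseteq G_{(U)}$, so condition (2) holds without ever using the global element $g$. Your caveat on condition (1) is also well taken: local transitivity by itself does not exclude isolated points (at an isolated point $x$ the group $G_{(\{x\})}$ is forced to be trivial, so Rubin's condition (1) fails), so the corollary implicitly assumes the spaces are Hausdorff and without isolated points, which holds in all of the paper's applications, namely the Cantor sets $X^\omega$ of Theorem~\ref{th:vfrubin}.
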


Similar results (with simpler proofs), which can be applied to many
groups $\V_G$, are proved in~\cite{gior:full,medynets:reconstruction,matui:fullonesided}.

It is easy to see that the Higman-Thompson group $\V_{|X|}$ acting on the space
$X^\omega$ is locally transitive. It follows that every group of
homeomorphisms of $X^\omega$ containing the Higman-Thompson group is
locally transitive, which implies the following fact.

\begin{theorem}
\label{th:vfrubin}
Let $G_i$ be groups acting on the Cantor sets $X_i^\omega$ and
containing the Higman-Thompson groups $\V_{|X_i|}$. Then every
isomorphism $\phi:G_1\arr G_2$ is induced by a homeomorphism
$F:X_1^\omega\arr X_2^\omega$.
\end{theorem}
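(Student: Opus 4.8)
The plan is to apply Corollary~\ref{cor:rubin} directly, so the only task is to verify that each group $G_i$ containing the Higman-Thompson group $\V_{|X_i|}$ is locally transitive as a group of homeomorphisms of $X_i^\omega$. Since local transitivity of a subgroup follows immediately from local transitivity of any subgroup it contains (the witnessing elements live in the smaller group), it suffices to prove that $\V_{|X|}$ itself is locally transitive on $X^\omega$. I would take the natural basis $\mathcal{U}=\{vX^\omega : v\in X^*, |v|\ge 1\}$ of cylinder sets.

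First I would fix a cylinder $U=vX^\omega$ with $|v|\ge 1$ and exhibit, for a chosen base sequence in $U$, a $G_{(U)}$-orbit that is dense in $U$. The key observation is that elements of $\V_{|X|}$ supported inside $U$ can permute the proper sub-cylinders $vxX^\omega$ ($x\in X$) and, more generally, act as the full Higman-Thompson group on $U\cong X^\omega$ while fixing everything outside $U$. Concretely, for any two proper sub-cylinders $vwX^\omega$ and $vw'X^\omega$ of $U$ (with $w,w'\in X^*$ nonempty, forming incomplete antichains), Lemma~\ref{lem:incomplete} provides an element of $\V_{|X|}$ carrying one onto the other by the rule $vwu\mapsto vw'u$; and by padding the antichain with the remaining first-level pieces one arranges this element to act trivially outside $U$, so it lies in $(\V_{|X|})_{(U)}$.

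With these elements in hand, local transitivity is the statement that for a fixed point $\xi\in U$ the set of its images under $(\V_{|X|})_{(U)}$ is dense in $U$. Given any nonempty open subset of $U$, shrink it to a sub-cylinder $vw'X^\omega$; since $\xi$ itself lies in some sub-cylinder $vwX^\omega$, the element of $(\V_{|X|})_{(U)}$ described above sends $\xi$ into $vw'X^\omega$, meeting the target open set. This shows the orbit of $\xi$ meets every nonempty open subset of $U$, i.e.\ is dense. Hence $\V_{|X|}$ is locally transitive, and so is every $G_i\supseteq\V_{|X_i|}$.

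I do not expect a genuine obstacle here: the theorem is essentially a packaging of Corollary~\ref{cor:rubin} together with the elementary transitivity properties of the Higman-Thompson group already encoded in Lemma~\ref{lem:incomplete}. The only point requiring a little care is ensuring the transporting elements can be taken with support \emph{strictly inside} $U$ (so that they belong to $G_{(U)}$ rather than merely to $G$), which is exactly why one works with proper sub-cylinders and invokes the incomplete-antichain version of the lemma rather than a complete-antichain bijection. Once that is arranged, the conclusion is immediate from Corollary~\ref{cor:rubin}.
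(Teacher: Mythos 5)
Your proposal is correct and takes essentially the same route as the paper: there, too, the theorem is deduced from Corollary~\ref{cor:rubin} after observing that $\V_{|X|}$ is locally transitive and that local transitivity passes to any group of homeomorphisms containing it. Your padded-antichain argument via Lemma~\ref{lem:incomplete} merely fills in the step the paper dismisses as ``easy to see,'' and it does so correctly (the key point being that the element supplied by the lemma is the identity on the cylinders covering $X^\omega\setminus U$, hence lies in $(\V_{|X|})_{(U)}$).
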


\subsection{Proof of Theorem~\ref{th:classification}}

If $f_1:\J_1\arr\J_1$ and $f_2:\J_2\arr\J_2$ are topologically
conjugate self-coverings of path-connected spaces, then the groups
$\V_{f_1}$ and $\V_{f_2}$ are obviously isomorphic, since they were
defined in purely topological terms.

Let us prove the converse implication for expanding maps. 
By Theorem~\ref{th:vfrubin},
if groups $\V_{f_1}$ and $\V_{f_2}$ are isomorphic, then their action
on the corresponding spaces $X_i^\omega$ are topologically conjugate,
hence the groupoid of germs of the action of $\V_{f_i}$ on $X_i^\omega$
are isomorphic.

Therefore, it is enough to show that if $f:\J\arr\J$ is an
expanding self-covering of a compact path-connected metric space, then
the dynamical system $(f, \J)$ can be reconstructed from the
topological groupoid $\G$ of germs of the action of $\V_f$ on
$X^\omega$.

Denote by $\mathfrak{F}$ the groupoid of germs generated by
$f:\J\arr\J$. We identify $\J$ with the limit space $\J_G$ of the
self-similar group $G=\img{f}$, and hence encode points of $\J$ by
sequences $\ldots x_2x_1\in X^{-\omega}$. Recall that $f$ acts then by
the shift $\ldots x_2x_1\mapsto\ldots x_3x_2$. Let
$\coc:\mathfrak{F}\arr\Z$ be the cocycle (groupoid homomorphism)
defined by the condition that $\coc(f, x)=-1$ for all $x\in\J$.

The groupoids $\mathfrak{F}$ and $\G$ are hyperbolic and mutually
dual. Let $w\in X^\omega$ be an arbitrary point, and
denote $\gH=\bdry\G_w$ and $\cH=\partial\G_w=\gH^{(0)}$. It is enough
to show that $(f, \J)$ is uniquely determined (up to a topological
conjugacy) by the groupoid $\gH$.

Denote by $\Omega_w$ the set of bi-infinite sequences $\ldots
x_{-2}x_{-1}.x_0x_1\ldots$ such that $x_0x_1\ldots$ belongs to the
$G$-orbit of $w$. Note that $\J$ is path connected,
$G$ is self-replicating, hence $G$-orbits coincide with the
$\V_f=\V_G$-orbits. We consider $\Omega_w$ with the topology of the disjoint
union of the set of the form $X^{-\omega}.x_0x_1\ldots$.

Then the space $\cH=\partial\G_w$ is naturally identified with the quotient of the
space $\Omega_w$ by the asymptotic equivalence relation (defined in
the same way as on $X^{\Z}$, see Subsection~\ref{ss:solenoid}). Let
$P_S:\cH\arr\J$ be the natural
projection induced by $\ldots x_{-2}x_{-1}.x_0x_1\ldots\mapsto\ldots
x_{-2}x_{-1}$. It is a covering map, and $\gH$ is the
lift of $\mathfrak{F}$ by $P_S$.
We will also denote by $P_S$ the corresponding functor (homomorphism of groupoids)
$P_S:\gH\arr\mathfrak{F}$. 

Let us show at first that the cocycle $\coc:\gH\arr\Z$ (equal to the
lift of the cocycle $\coc:\mathfrak{F}\arr\Z$) is
uniquely determined by the structure of the topological groupoid
$\gH$.

\begin{proposition}
\label{pr:concomp}
Let $\mathcal{C}$ be a connected component of $\gH$. Then
$\be:\mathcal{C}\arr\cH$, $\en:\mathcal{C}\arr\cH$
are coverings. 

If $\coc(\mathcal{C})\ne 0$, then $\mathcal{C}$
contains a non-trivial element of infinite order in the isotropy group $\gH_x$
of a point.

If $\coc(\mathcal{C})=0$, then the groupoid generated by $\mathcal{C}$
is proper.
\end{proposition}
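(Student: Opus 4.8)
The plan is to transport all three assertions to the base groupoid $\mathfrak{F}$ through the covering functor $P_S\colon\gH\arr\mathfrak{F}$ induced by the covering $P_S\colon\cH\arr\J$, prove them for connected components of $\mathfrak{F}$, and lift. For the covering statement, recall that $\gH$ is \'etale, so $\be$ and $\en$ are already local homeomorphisms; the point is to upgrade $\be|_{\mathcal{C}}$ to a covering by establishing unique path lifting over the locally path connected space $\cH$. Given $g\in\mathcal{C}$ and a path in $\cH$ issuing from $\be(g)$, I would continue the open bisection representing $g$ along that path. The continuation never fails, because the generators of $\wt{\mathfrak{F}}$ are germs of branches of $f^{-n}\circ f^{m}$, and these are defined along \emph{every} path since $f$ is a finite-degree covering of the compact $\J$ (ordinary covering-space continuation); uniqueness is automatic for germs. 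Hence $\be|_{\mathcal{C}}$, and by passing to inverses also $\en|_{\mathcal{C}}$, have the unique path-lifting property and are coverings; in particular $\be|_{\mathcal{C}}$ is onto the connected space $\cH$.

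For the second assertion, assume $\coc(\mathcal{C})=k\neq 0$; replacing $\mathcal{C}$ by the component of its inverses we may take $k>0$. By Definition~\ref{def:hyperbolic}\,(1) every germ of $\mathcal{C}$ is a germ of a homeomorphism contracting the leaf metric by a definite factor at most $L^{-k}<1$. I would pick $g\in\mathcal{C}$ represented by such a contraction $F\colon U\arr\cH$, and the heart of the argument is to manufacture a fixed point: using the continuation mechanism of the first part, the domain of this contracting bisection can be enlarged to a complete ball on which $F$ is a genuine contraction, whence the Banach fixed-point theorem supplies $\xi$ with $F(\xi)=\xi$. The germ $(F,\xi)$ then lies in $\mathcal{C}$ (it is the continuation of $g$ along a path inside $U$ terminating at $\xi$) and in the isotropy group $\gH_\xi$. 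Since $\coc$ of its $m$-th power equals $mk\neq 0$ for every $m\ge 1$, no power is a unit, so this element has infinite order. (Via Corollary~\ref{cor:solenoidflow} the point $\xi$ projects to a periodic point of $\hat f$, which is the dynamical source of the infinite order.)

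For the third assertion, assume $\coc(\mathcal{C})=0$ and let $\mathcal{K}$ be the groupoid it generates, so $\coc(\mathcal{K})=0$ as well. By Corollary~\ref{cor:solenoidflow}, every element of cocycle $0$ induces a germ of $\hat f^{0}=\mathrm{id}$ on the compact Hausdorff quotient $\hat\J=\partial\mathfrak{F}_{\mathcal{L}}/\mathfrak{F}_{\mathcal{L},0}$; thus $\mathcal{K}$ moves points only within the fibres of the continuous projection of $\cH$ into $\hat\J$. Properness amounts to showing that $(\be,\en)\colon\mathcal{K}\arr\cH\times\cH$ is proper, so I would fix a compact $C\subseteq\cH$ and bound the set of $h\in\mathcal{K}$ with $\be(h),\en(h)\in C$. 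Downstairs these project to cocycle-$0$ germs of $\mathfrak{F}$ between points of the compact set $P_S(C)$, and here the contracting hypothesis enters: by finiteness of the nucleus $\nuke$, the number of cocycle-$0$ germs joining two points of a compact set, within the single asymptotic class determined by $\mathcal{C}$, is finite and locally uniformly bounded; equivalently $\be|_{\mathcal{C}}$ is a \emph{finite} covering. Consequently the set above is compact, and $\mathcal{K}$ is proper.

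I expect the main obstacle to be exactly this last finiteness. The delicate step is to translate ``generated by a single connected component of cocycle $0$'' into a uniform bound on the admissible nucleus witnesses, so that the cocycle-$0$ germs between points of a compact set do not accumulate. This is where the finiteness of $\nuke$, i.e. the contracting (expanding) property, is used in an essential way, and where one must check that passing from $\mathcal{C}$ to the generated groupoid $\mathcal{K}$ does not enlarge the relevant witness set beyond a finite one over each compact subset of $\cH$.
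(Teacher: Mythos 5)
Your proposal does not go through as written; the decisive gap is the one you yourself flag in part (3), and the two earlier parts have related problems. For properness, the paper does not count nucleus witnesses at all. It lifts $\gH$ through the map $P_w:\X_G\arr\cH$ from the limit $G$-space (the quotient by the free and proper action of the stabilizer $G_w$, with $G=\img{f}$ self-replicating because $\J$ is path connected). After this lift, every element of $\gH$ is a germ of a globally defined transformation $\xi\otimes v\cdot g\mapsto\xi\otimes u\cdot h$ of $\X_G$; when $\coc(\mathcal{C})=0$, self-replication produces a single $g_1\in G$ with $u\cdot h=g_1\cdot v\cdot g$, so \emph{all} of $\mathcal{C}$ lifts to germs of the action of this one element $g_1$, and the generated groupoid $\mathcal{K}$ lifts to a subgroupoid of the action groupoid of $G$ on $\X_G$, which is proper because the $G$-action on $\X_G$ is proper. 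This is exactly the step you could not supply, and it cannot be replaced by a finiteness count over compact sets without using membership in a single component in an essential structural way: the full cocycle-zero subgroupoid $\coc^{-1}(0)\subset\gH$ is a strictly increasing union of compact open subsets, hence is \emph{not} proper, so no bound that applies uniformly to cocycle-zero germs can exist.

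The earlier parts have gaps of the same origin, namely that you argue by path continuation on $\cH$ while the theorem makes no simple-connectedness assumptions. In part (1), ``\'etale plus continuation along all paths'' yields only a semicovering; the implication to ``covering'' requires the base to be locally path connected \emph{and} semi-locally simply connected, and neither $\J$ nor the leaf $\cH$ is assumed or known to be such here (the paper only invokes semi-local simple connectedness as an extra hypothesis in the homology section). The paper instead identifies the component, after lifting to $\X_G$, with germs of $F_{u\cdot h}\circ F_{v\cdot g}^{-1}$, where $F_{v\cdot g}:\xi\mapsto\xi\otimes v\cdot g$ and $P_w$ are globally defined covering maps of the \emph{connected} space $\X_G$ (here regularity of $G$ enters); the covering property of $\be|_{\mathcal{C}}$, $\en|_{\mathcal{C}}$ is read off from these explicit coverings. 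In part (2), your Banach argument fails at the enlargement step: continuation of the bisection along different paths into a large ball gives different branches (monodromy of the coverings $f^n$), so there is no single-valued contraction on a ball unless the ball is simply connected; moreover a contraction $F:B\arr\cH$ need not map $B$ into itself, which Banach requires. The paper sidesteps metric arguments: self-replication converts the germ into a germ of the global map $\xi\mapsto\xi\otimes u_1\cdot g_1$, which visibly fixes the explicitly constructed point $\zeta=\ldots\otimes u_1\cdot g_1\otimes u_1\cdot g_1$ (satisfying $\zeta=\zeta\otimes u_1\cdot g_1$), and the germ there is a nontrivial contracting, hence infinite order, isotropy element lying in $\mathcal{C}$. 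Your metric intuition is the right heuristic, but making it rigorous in this generality is precisely what the limit-space formalism is for.
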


\begin{proof}
Let $\X_G$ be the limit $G$-space. The action of $G$ on
$\X_G$ is free, and the maps $F_v:\xi\mapsto\xi\otimes v$ are coverings
for all $v\in X^n\cdot G$.

For $w\in X^\omega$, the leaf $\partial\G_w=\cH$ is the image of $\X_G$
under the map $P_w:\xi\mapsto \xi\cdot w$.
This map coincides with the quotient of $\X_G$ by the action of the
stabilizer $G_w$.

Let $\mathfrak{X}$ be the groupoid of germs with the space of units
$\X_G$ generated by the germs of the action of $G$ and the germs of
the maps $F_v(\xi)=\xi\otimes v$ for $v\in X^*\cdot G$. Then
$\mathfrak{X}$ is the lift
of $\gH$ by the quotient map $P_w:\X_G\arr\cH$.

Every element of $\gH$ is a germ of the transformation 
\[F_{v\cdot g, u\cdot h}:\xi\otimes v.g(w)\mapsto\xi\otimes u.h(w),\]
for some $g, h\in G$ and $u, v\in X^*$. 

The germ $(F_{v\cdot g, u\cdot h}, \zeta\otimes v.g(w))$
can be lifted to the germ $(\tilde F_{v\cdot g, u\cdot h},
\zeta\otimes v\cdot g)$ of
the local homeomorphism
\[\tilde F_{v\cdot g, u\cdot h}:\xi\otimes v\cdot g\mapsto\xi\otimes u\cdot h\]
of $\mathfrak{X}$. It follows that every element of $\gH$ is
a germ of $P_wF_{u\cdot h}F_{v\cdot g}^{-1}P_w^{-1}$. The
space $\X_G$ is connected, the maps $F_{u\cdot h}, F_{v\cdot g},
P_w$ are coverings, hence if $\mathcal{C}$ is the connected component of
the germ $(F_{v\cdot g, u\cdot h}, \zeta\otimes v.g(w))$, then
$\be:\mathcal{C}\arr\cH$ and $\en:\mathcal{C}\arr\cH$ are covering maps.

Suppose that $\coc(\mathcal{C})\ne 0$. It means that every germ
$(F_{v\cdot g, u\cdot h}, \zeta\otimes v.g(w))\in\mathcal{C}$ is such that $|v|\ne
|u|$. Without loss of generality, we may assume that $|u|>|v|$. Let
$u=u_1v_1$, where $|v_1|=|v|$. Since $G$ is self-replicating, there
exists $g_1\in G$ such that $g_1\cdot v\cdot g=v_1\cdot h$ in the
biset. Then a lift of the germ $(F_{v\cdot g, u\cdot h}, \zeta\otimes
v.g(w))$ to $\mathfrak{X}$ is a germ of the
transformation
\[\xi\otimes v\cdot
g\mapsto \xi\otimes u_1\otimes v_1\cdot h=\xi\otimes u_1\cdot
g_1\otimes v\cdot g.\] The point $\zeta=\ldots u_1\cdot g_1\otimes u_1\cdot
g_1\otimes u_1\cdot g_1\in\X_G$ is well defined (as it is the image of
a point of $\Omega$, see Subsection~\ref{ss:solenoid}), and it satisfies
$\zeta=\zeta\otimes u_1\cdot g_1$. Then the germ of the transformation
\[\xi\otimes v\cdot g\mapsto \xi\otimes u_1\cdot g\otimes v\cdot g\]
at $\zeta\otimes v\cdot g$ is a non-trivial
contracting element of the isotropy group of $\zeta\otimes v\cdot
g$. It is contained in the connected component of the germs of the
transformation
$\xi\otimes v\cdot g\mapsto \xi\otimes u\cdot h$. Mapping everything to
$\gH$ by $P_w$, we find a non-trivial contracting (hence infinite
order) element of an isotropy group.

If $\coc(\mathcal{C})=0$, then elements of $\mathcal{C}$ are germs of
transformations of the form $\xi\otimes
v.g(w)\mapsto\xi\otimes u.h(w)$, where $g, h\in G$ and $v, u\in X^*$
are such that $|v|=|u|$. There exists $g_1\in G$ such that $u\cdot
h=g_1\cdot v\cdot g$ in $X^n\cdot G$. It follows that elements of
$\mathcal{C}$ are lifted by $P_wF_{v\cdot g}:\mathfrak{X}\arr\gH$ to the
action of $g_1$ on $\X_G$. It follows that the groupoid generated by
$\mathcal{C}$ lifts by $P_wF_{v\cdot g}$ to a subgroupoid of the action of $G$ on $\X_G$,
and hence is proper.
\end{proof}

\begin{proposition}
\label{pr:cocycleunique}
The cocycle $\coc:\gH\arr\Z$ is uniquely determined by the topological
groupoid $\gH$.
\end{proposition}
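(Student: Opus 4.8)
The plan is to express $\coc$ entirely in terms of the topological groupoid structure of $\gH$, so that any isomorphism of topological groupoids is forced to carry $\coc$ to $\coc$. First I would observe that $\coc$ is a continuous homomorphism into the discrete group $\Z$, hence locally constant, and therefore constant on every connected component of $\gH$. Since the unit space $\cH=\partial\G_w$ is a single leaf and is connected, and since by Proposition~\ref{pr:concomp} both $\be$ and $\en$ restrict to covering maps on each connected component $\mathcal{C}$, composition of germs descends to a well-defined operation on the set $\pi_0(\gH)$ of connected components; establishing this well-definedness (that the component containing a product $g_1g_2$ depends only on the components of $g_1$ and $g_2$) is a routine but necessary point, using path-connectedness of $\cH$ together with path-lifting through the coverings $\be$ and $\en$. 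Thus $\coc$ factors through a homomorphism $\overline{\coc}\colon\pi_0(\gH)\to\Z$.

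Next I would recover the kernel $\coc^{-1}(0)$ intrinsically. By Proposition~\ref{pr:concomp}, a component $\mathcal{C}$ with $\coc(\mathcal{C})=0$ generates a proper subgroupoid, while a component with $\coc(\mathcal{C})\neq 0$ contains an infinite-order element of an isotropy group; since a proper groupoid has only finite isotropy groups, the two conditions are mutually exclusive and jointly exhaustive. Hence $\coc^{-1}(0)$ is exactly the union of those components that generate proper subgroupoids, a condition phrased purely in terms of $\gH$ as a topological groupoid. Consequently $\ker\overline{\coc}$ is intrinsic, and $\overline{\coc}$ induces an isomorphism $\pi_0(\gH)/\ker\overline{\coc}\xrightarrow{\ \sim\ }\Z$. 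As $\coc$ is surjective, this identifies the canonical quotient group with $\Z$, so at this stage $\coc$ is determined up to composition with an automorphism of $\Z$, that is, up to sign.

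The remaining, and genuinely delicate, step is to pin down this sign intrinsically, and this is where the expanding nature of the dynamics must enter. I would argue that the sign of $\coc$ on a non-kernel component is detected by the contraction/expansion dichotomy of its isotropy germs. Indeed, the construction in the proof of Proposition~\ref{pr:concomp} produces, for a component with $\coc(\mathcal{C})>0$, an isotropy germ $g$ at a point $\zeta$ that is a topological contraction: it is represented by a bisection $F$ restricting to a self-homeomorphism of a neighborhood $U$ of $\zeta$ with $\overline{F(U')}\subset U'$ and $\bigcap_n F^n(U')=\{\zeta\}$ for some smaller neighborhood $U'$. Replacing $g$ by $g^{-1}$ reverses these roles and shows that components with $\coc(\mathcal{C})<0$ contain expanding isotropy germs. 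The properties ``contains a contracting isotropy germ'' and ``contains an expanding isotropy germ'' refer only to the action of $\gH$ on open subsets of $\cH$, and are therefore invariant under isomorphisms of topological groupoids; declaring $\coc(\mathcal{C})>0$ precisely when $\mathcal{C}$ contains a contracting isotropy germ fixes the sign and, combined with the previous steps, determines $\coc$.

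The main obstacle is to show that this dichotomy is clean and consistent: that every infinite-order isotropy germ arising on a non-kernel component is either a genuine topological contraction or a genuine topological expansion, never a ``mixed'' germ, and that all such germs on a fixed component have the same type. For this I would use that $\gH=\bdry\G_w$ is the dual of the hyperbolic groupoid $\G$ and that $\cH$ is an unstable leaf on which the induced dynamics of the limit system $f\colon\J_G\to\J_G$ is uniformly expanding; uniform expansion rules out saddle-type fixed germs and forces each isotropy germ to be purely contracting or purely expanding, with the type constant along the component because $\coc$ is. Verifying that the abstract infinite-order isotropy elements supplied by Proposition~\ref{pr:concomp} are exactly these uniformly hyperbolic germs, so that the topological contraction/expansion test applies to all of them, is the crux of the argument and the point I expect to require the most care.
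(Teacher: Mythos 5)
Your proof follows the paper's almost verbatim up to the last step: the same characterization of the kernel components via properness (Proposition~\ref{pr:concomp} together with finiteness of isotropy groups of proper groupoids), the same observation that the covering property of $\be$ and $\en$ on components makes the set of connected components a group whose quotient by the kernel components is $\Z$, and hence the same conclusion that the pair $\{\coc,-\coc\}$ is an invariant of the topological groupoid $\gH$. The only divergence is how the sign ambiguity is resolved: the paper disposes of it by citing Proposition~3.4.1 of~\cite{nek:hyperbolic}, while you propose a direct dynamical criterion (the positive components are exactly those containing a topologically contracting isotropy germ). Your criterion is correct, and the crux you flag is in fact easier than you anticipate: an isotropy germ of $\gH$ with cocycle value $-n<0$ projects under the covering $P_S:\cH\arr\J$ to an isotropy germ of $\mathfrak{F}$ of the form $(f^a,x)^{-1}(f^{a+n},x)$, which is uniformly expanding near its fixed point because $f$ is expanding; such a germ can therefore never be a topological contraction, while the proof of Proposition~\ref{pr:concomp} already exhibits a contracting isotropy germ on every component of positive cocycle. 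No analysis of ``mixed'' or saddle-type germs is needed: the leaf $\cH$ carries only the unstable direction, so the sign of the cocycle alone dictates uniform contraction versus uniform expansion for every isotropy germ on a non-kernel component, and the criterion only requires existence of contracting germs on positive components and their impossibility elsewhere. What your route buys is a self-contained proof, avoiding the external hyperbolic-groupoid machinery, at the cost of writing out this short expansion estimate; what the paper's citation buys is brevity, at the cost of leaning on~\cite{nek:hyperbolic}.
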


\begin{proof}
It follows from~\ref{pr:concomp} that 
the value of $\coc$ on a connected component
$\mathcal{C}$ of $\gH$ is zero if and only if $\mathcal{C}$
generates a proper groupoid. (Since isotropy groups of a proper
groupoid are finite.)

Let $g_1, g_2$ be arbitrary elements of $\gH$. By the first claim of
Proposition~\ref{pr:concomp}, there exist $g_1', g_2'$ in the components of $g_1$
and $g_2$, respectively, such that the product $g_1'g_2'$ is
defined. Note that the connected component of $g_1'g_2'$ depends only
on the connected components of $g_1$ and $g_2$. It follows that the
set of connected components of $\gH$ is a group. The
quotient of this group by the subgroup of components on which $\coc$
is zero is isomorphic to $\Z$. Since the set of components on which
$\coc$ is zero is uniquely determined by the topological groupoid, we
conclude that the set $\{\coc, -\coc\}$ is uniquely determined by the
structure of the topological groupoid $\gH$. But we can distinguish
between $\coc$ and $-\coc$ using~\cite[Proposition~3.4.1.]{nek:hyperbolic}.
\end{proof}

The next statement follows now directly from
Proposition~\ref{pr:cocycleunique} and Corollary~\ref{cor:solenoidflow}.

\begin{proposition}
The natural extension $\hat f:\hat\J\arr\hat\J$ is uniquely
determined, up to topological conjugacy, by the groupoid $\G$.
\end{proposition}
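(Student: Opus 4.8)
The plan is to reconstruct the natural extension $\hat f$ from the abstract topological groupoid $\gH = \bdry\G_w$ by assembling the pieces already established in the preceding propositions. Recall from Corollary~\ref{cor:solenoidflow} that, for the leaf $\mathcal{L} = \cH = \partial\G_w$, the natural extension $\hat f:\hat\J\arr\hat\J$ is recovered as the quotient of $\partial\mathfrak{F}_{\mathcal{L}}$ by the action of the kernel $\mathfrak{F}_{\mathcal{L},0} = \coc_{\mathcal{L}}^{-1}(0)$, and that the germs of the maps induced by elements $F$ with $\coc_{\mathcal{L}}(F) = \{n\}$ are precisely the germs of $\hat f^{-n}$. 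Since $\gH$ is (equivalent to) $\mathfrak{F}_{\mathcal{L}}$ via the covering $P_S:\cH\arr\J$, everything in this recipe is internal to the groupoid $\gH$ \emph{once the cocycle $\coc$ is known}.

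The key observation is therefore that the entire construction of $\hat f$ from $\gH$ is expressed purely in terms of (i) the topological groupoid structure of $\gH$ and (ii) the integer-valued cocycle $\coc:\gH\arr\Z$. Concretely, I would first invoke Proposition~\ref{pr:cocycleunique}, which asserts that $\coc$ is uniquely determined by the topological groupoid $\gH$ alone. With $\coc$ in hand, the subgroupoid $\mathfrak{F}_{\mathcal{L},0} = \coc^{-1}(0)$ is canonically defined, so the quotient space $\hat\J = \partial\mathfrak{F}_{\mathcal{L}}/\mathfrak{F}_{\mathcal{L},0}$ of Corollary~\ref{cor:solenoidflow} is determined by $\gH$ up to homeomorphism. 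The homeomorphism $\hat f$ is then pinned down as the transformation whose germs are the germs of elements $F$ with $\coc(F) = \{-1\}$ (equivalently, $\hat f^{-1}$ corresponds to $\coc(F) = \{1\}$), again a purely groupoid-theoretic prescription. Thus the pair $(\hat f, \hat\J)$ is reconstructed, up to topological conjugacy, from $\gH$, and since $\gH = \bdry\G_w$ is determined by $\G$ (being its dual, evaluated at the transversal $\partial\G_w$), it follows that $(\hat f, \hat\J)$ is determined by $\G$.

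The one point requiring care is the passage from $\gH$ to $\G$: the statement asks for determination by $\G$, whereas the constructions above are phrased in terms of $\gH = \bdry\G_w$. Here I would note that by the duality theory of hyperbolic groupoids (Section~6.4), $\G$ determines its dual equivalence class $\G^\top$, and the restriction $\bdry\G_w$ to the open transversal $\partial\G_w$ is determined up to equivalence; since the construction of $\hat f$ depends on $\gH$ only through groupoid-equivalence-invariant data (the cocycle and the quotient by its kernel, both of which are equivalence-invariant by the lifting constructions of Section~6.3), no ambiguity arises. The genuinely substantive input is Proposition~\ref{pr:cocycleunique}, which has already been proved; granting it, the present proposition is essentially a formal consequence.

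The step I expect to be the real content---already discharged in the excerpt---is the uniqueness of the cocycle, since that is where the dynamical and hyperbolic structure is genuinely used (distinguishing $\coc$ from $-\coc$ relies on Proposition~3.4.1 of \cite{nek:hyperbolic}). The remaining assembly is bookkeeping: verifying that the quotient construction and the identification of $\hat f$ are invariant under groupoid equivalence, so that they descend from the concrete model $\mathfrak{F}_{\mathcal{L}}$ to the abstractly given $\gH$, and ultimately to $\G$. The proof can therefore be written very briefly, citing Proposition~\ref{pr:cocycleunique} and Corollary~\ref{cor:solenoidflow} and observing that their combination yields exactly the claimed determination of $\hat f:\hat\J\arr\hat\J$ up to topological conjugacy.
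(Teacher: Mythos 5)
Your proposal is correct and takes essentially the same route as the paper: the paper's entire proof is the one-line observation that the proposition "follows now directly from Proposition~\ref{pr:cocycleunique} and Corollary~\ref{cor:solenoidflow}," which is exactly the combination you invoke. Your additional remarks---that the reconstruction is invariant under groupoid equivalence and that $\gH=\bdry\G_w$ is determined by $\G$ via hyperbolic duality---simply spell out bookkeeping the paper leaves implicit in the surrounding discussion.
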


Suppose that $f_1:\J_1\arr\J_1$ and $f_2:\J_2\arr\J_2$ are two
expanding homeomorphisms with the same natural extension
$\hat f:\mathcal{S}\arr\mathcal{S}$. It remains to prove that $(f_1, \J_1)$
and $(f_2, \J_2)$ are topologically conjugate.

Denote by 
$P_i:\mathcal{S}\arr\J_i$ the corresponding projections.
Let $\tilde\J$ be the image of $\mathcal{S}$ in $\J_1\times\J_2$ under the
map $(P_1, P_2)$. It is compact and connected, since so is
$\mathcal{S}$. We will denote by
$\tilde P_i:\tilde\J\arr\J_i$ the restrictions of the projections
$\J_1\times\J_2\arr\J_i$.

Since $P_i$ locally are projections on the unstable coordinate of
the local product decomposition of $\mathcal{S}$ (which depends only
on the conjugacy class of $(\hat f, \mathcal{S})$), for every
$\xi\in\mathcal{S}$ there exists a rectangular neighborhood $U\ni\xi$
such that $P_i:U\arr\pi_i(U)$ is decomposed into the composition of
projection of $U$ onto its unstable direction and a homeomorphism of
this direction with $P_i(U)$. Moreover, since $\mathcal{S}$ is
compact, we can cover $\mathcal{S}$ by a finite number of such
rectangles $U$. 

The map $\hat f:\mathcal{S}\arr\mathcal{S}$ induces a map
$\tilde f:\tilde\J\arr\tilde\J$ by the rule $\tilde f(\xi_1, \xi_2)=(f_1(\xi_1),
f_2(\xi_2))$. The projections $\tilde P_i$ are semi-conjugacies of $\tilde f$ with
$f_i$.

Let $(\xi_1, \xi_2)\in\tilde\J$, i.e., there exists
$\xi\in\mathcal{S}$ such that $\xi_i=P_i(\xi)$. There exists a
rectangular neighborhood $U$ of $\xi$ such that $P_i$ are
projections onto the unstable direction composed with a
homeomorphism, and the unstable direction of $U$ is connected.
If $U$ is small enough, then $(\hat f)^{-1}(U)$ is decomposed
into a union of a finite set $\mathcal{R}$ of rectangles on which each of $P_i$
is a homeomorphism with projection onto the unstable direction.
Consider the sets $(P_1, P_2)(R)$ for $R\in\mathcal{R}$.
We get a finite number of components of $(\tilde
f)^{-1}((P_1, P_2)(U))$ such that $\tilde f$ is a homeomorphism on
each of them. It follows that $\tilde f$ is a finite degree covering map.

For every $\xi\in\J_1$ the set $P_1^{-1}(\xi)$ is a compact subset of
$\mathcal{S}$ contained in one stable equivalence class. Consequently,
there exists $n_0$ such that $P_2(\hat f^n(P_1^{-1}(\xi)))$ is a
single point for all $n\ge n_0$. It follows that there exists a small
neighborhood $U$ of $\xi$ such that the map $P_2\circ \hat f^{n_0}\circ
P_1^{-1}=f_2^{n_0}\circ P_2\circ P_1^{-1}$ is a homeomorphism on $U$.
By compactness, there exists
$n_1$ such that $P_2\circ f^{n_1}\circ P_1^{-1}=f_2^{n_1}\circ 
P_2\circ P_1^{-1}$ is a well defined covering map from $\J_1$ to $\J_2$.

It follows that the projections $\tilde P_i:\tilde\J\arr\J_i$ are finite degree
covering maps.  For every point $t^{(j)}_i\in
{\tilde P}_i^{-1}(t_i)$ we have the corresponding tree $T_{t^{(j)}_i}$
of preimages under iterations of $\tilde f$. They
are disjoint (more precisely, for every $n$ the sets ${\tilde f}^{-n}(t^{(j)})$
are disjoint for different $t^{(j)}$).

By the arguments above, there exists
$n_1$  such that $\tilde P_i(z_1)=\tilde P_i(z_2)$ implies
${\tilde f}^{n_1}(z_1)={\tilde f}^{n_1}(z_2)$.
But this contradicts the fact that the trees
$T_j$ are disjoint. It follows that $\tilde P_i$ have degree 1, i.e., are
homeomorphisms conjugating $f$ with $f_i$.

\subsection{Equivalence of groupoids}

\begin{theorem}
\label{th:groupoidequivalence}
Let $f_i:\J_i\arr\J_i$, for $i=1, 2$, be expanding self-coverings of
connected and locally connected compact metric spaces. Then the
following conditions are equivalent.
\begin{enumerate}
\item The dynamical systems $(f_1, \J_1)$ and $(f_2, \J_2)$ are
  topologically conjugate.
\item The groupoids generated by germs of $f_1$ and $f_2$ are equivalent.
\item The natural extensions of $f_1$ and $f_2$ are topologically
  conjugate.
\item The natural extensions of $f_1$ and $f_2$ generate equivalent
  groupoids of germs.
\item The actions of $\V_{f_1}$ and $\V_{f_2}$ on the corresponding
  Cantor sets are topologically conjugate.
\item The groupoids of germs generated by the actions of $\V_{f_1}$
  and $\V_{f_2}$ on the corresponding Cantor sets are equivalent.
\item The self-similar groups $\img{f_1}$ and $\img{f_2}$ are
  equivalent.
\item The groups $\V_{f_1}$ and $\V_{f_2}$ are isomorphic as abstract
  groups.
\end{enumerate}
\end{theorem}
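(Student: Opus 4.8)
The plan is to use topological conjugacy $(1)$ as a hub and show every other condition is equivalent to it, combining routine functoriality for the implications out of $(1)$ with the reconstruction machinery for the implications back to $(1)$. First I would note that a compact, connected, locally connected metric space is a Peano continuum, hence path connected and locally path connected; consequently the present hypotheses coincide with the path-connected hypotheses of Theorem~\ref{th:classification}, of the limit dynamical system corollary, and of Proposition~\ref{pr:expcontracting}, all of which are then available. The implications emanating from $(1)$ are functorial: a conjugacy $\phi$ with $f_1=\phi^{-1}f_2\phi$ induces an isomorphism, hence an equivalence, of the groupoids $\mathfrak{F}_i$ of germs of $f_i$, giving $(1)\Rightarrow(2)$; it lifts to a conjugacy of the inverse limits $(\hat f_i,\mathcal{S}_i)$, giving $(1)\Rightarrow(3)$ and, again functorially, $(1)\Rightarrow(4)$; since $\V_f$ is defined purely topologically it gives $(1)\Rightarrow(8)$, and composing with Theorem~\ref{th:vfrubin} yields $(1)\Rightarrow(5)$ and thence $(1)\Rightarrow(6)$; finally $(1)\Rightarrow(7)$ is the limit dynamical system corollary.

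For the returns to $(1)$, several are already proved in the paper. The equivalence $(8)\Leftrightarrow(1)$ is Theorem~\ref{th:classification} and $(7)\Leftrightarrow(1)$ is the limit dynamical system corollary. Since conjugate actions trivially give isomorphic groups we have $(5)\Rightarrow(8)$, while $(8)\Rightarrow(5)$ is Theorem~\ref{th:vfrubin} (Rubin's reconstruction for groups containing a Higman--Thompson group), so $(5)\Leftrightarrow(8)\Leftrightarrow(1)$. The implication $(3)\Rightarrow(1)$ is exactly the last part of the proof of Theorem~\ref{th:classification}: from a common natural extension $(\hat f,\mathcal{S})$ one forms $\tilde\J=(P_1,P_2)(\mathcal{S})\subseteq\J_1\times\J_2$, shows the induced map $\tilde f$ is a covering using the local product structure, and then forces the projections $\tilde P_i:\tilde\J\arr\J_i$ to have degree one --- hence to be conjugacies --- by invoking compactness and connectedness of $\mathcal{S}$. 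Thus $(1)\Leftrightarrow(3)$.

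It remains to attach the three groupoid-equivalence conditions $(2),(4),(6)$. The groupoids $\G_i$ of germs of the $\V_{f_i}$-actions and the groupoids $\mathfrak{F}_i$ of germs of $f_i$ are hyperbolic and mutually dual, as developed in Section~6 and in \cite{nek:hyperbolic}; since the duality theorem gives $(\G^\top)^\top$ equivalent to $\G$ and shows the equivalence class of the dual is an invariant of the equivalence class, passage to duals carries equivalence to equivalence, so $(6)\Leftrightarrow(2)$. To discharge $(2)\Rightarrow(3)$ I would rerun the reconstruction of Theorem~\ref{th:classification} directly on a representative of the equivalence class of $\mathfrak{F}$: by Propositions~\ref{pr:concomp} and~\ref{pr:cocycleunique} the grading cocycle $\coc$ counting powers of $f$ is intrinsic to the topological groupoid --- its kernel is the set of connected components generating proper subgroupoids and its sign is pinned down as in \cite[Proposition~3.4.1]{nek:hyperbolic} --- and Corollary~\ref{cor:solenoidflow} then recovers $(\hat f,\mathcal{S})$ up to conjugacy. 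For $(4)$, the germ groupoid of the homeomorphism $\hat f_i$ carries the canonical power-counting $\Z$-cocycle and, through the solenoid correspondence of Proposition~\ref{pr:solenoidflow}, is interchangeable with the cocycle-equipped groupoid $(\mathfrak{F}_i,\coc)$; so I would again use intrinsicality of the cocycle and connectedness to obtain a conjugacy of $(\hat f_1,\mathcal{S}_1)$ with $(\hat f_2,\mathcal{S}_2)$, giving $(4)\Leftrightarrow(3)$.

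The hard part throughout is the passage from \emph{equivalence of groupoids} to \emph{conjugacy of maps}, since groupoid equivalence is a priori much weaker: it may change the unit space and mix orbits, and for a transformation groupoid it amounts only to an orbit-equivalence-type relation. The decisive ingredient is the rigidity of the grading cocycle --- that the $\Z$-cocycle counting iterations is canonically determined by the abstract topological groupoid (Proposition~\ref{pr:cocycleunique} and its solenoid analogue) --- so that every equivalence automatically respects it up to the orientation already fixed. Only with the cocycle pinned down can the reconstruction of the dynamical system from its natural extension, which relies essentially on connectedness of $\J$ exactly as in the proof of Theorem~\ref{th:classification}, be used to upgrade equivalence all the way to topological conjugacy.
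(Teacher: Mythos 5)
Your reduction scheme and the easy implications are fine (including the observation that a compact connected locally connected metric space is a Peano continuum, so the path-connectedness hypotheses of Theorem~\ref{th:classification} apply), and you are right that the whole difficulty lies in passing from \emph{equivalence of groupoids} to \emph{conjugacy of maps}. But your mechanism for that passage does not work as stated, and this is a genuine gap. Propositions~\ref{pr:concomp} and~\ref{pr:cocycleunique} and Corollary~\ref{cor:solenoidflow} are statements about the specific topological groupoid $\gH=\bdry\G_w$ (the lift of $\mathfrak{F}$ to a leaf of the solenoid): they recover the cocycle and then the natural extension from the \emph{isomorphism class} of $\gH$. They cannot be ``rerun on a representative of the equivalence class of $\mathfrak{F}$'': different representatives are by hypothesis only equivalent, not isomorphic, and if you apply the reconstruction to $\gH_1$ and to $\gH_2$ separately, nothing yet guarantees the two outputs are conjugate --- the independence of the reconstruction from the choice of representative is precisely the assertion to be proved, so your appeal to ``rigidity of the cocycle'' (which only says that an \emph{isomorphism} of the groupoid $\gH$ must respect $\coc$) assumes the conclusion.

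The missing ingredient, which is how the paper closes this gap, is an upgrade from equivalence to isomorphism supplied by hyperbolicity: if $\G_1$ and $\G_2$ (the groupoids of germs of the $\V_{f_i}$-actions) are equivalent, take the linking groupoid $\G$ of Definition~\ref{def:equivalentgroupoids}, choose units $w_1\in\G_1^{(0)}$, $w_2\in\G_2^{(0)}$ in one $\G$-orbit and $g\in\G$ with $\be(g)=w_2$, $\en(g)=w_1$; then right translation $h\mapsto hg$ is a quasi-isometry between the Cayley graphs of $\G_1$ at $w_1$ and of $\G_2$ at $w_2$, and such a quasi-isometry induces an \emph{isomorphism} of topological groupoids $\bdry(\G_1)_{w_1}\arr\bdry(\G_2)_{w_2}$, i.e.\ $\gH_1\cong\gH_2$. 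Only after this step do cocycle rigidity and Corollary~\ref{cor:solenoidflow} apply, yielding conjugate natural extensions and then, by the final argument of Theorem~\ref{th:classification}, conjugacy of $(f_1,\J_1)$ and $(f_2,\J_2)$. Your treatment of conditions (2), (4), (6) should be rebuilt around this quasi-isometry argument; the rest of your outline can stand.
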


\begin{proof}
The groupoid of germs $\mathfrak{F}_i$ generated by $f_i$, the groupoid of germs
$\G_i$ generated by $\V_{f_i}$, and the groupoid of germs generated by the
natural extension uniquely determine each other, up to equivalence of
groupoids, since the first two are mutually dual hyperbolic groupoids,
and the third one is their geodesic flow, see~\cite{nek:hyperbolic}.
Equivalence of (7) and (1) is proved in~\cite{nek:book}.

It remains, therefore, to prove that the equivalence class of
$\mathfrak{F}_i$ uniquely determines $(f_i, \J_i)$.

Suppose that $w_1$ and $w_2$ belong to one orbit of the groupoid $\G$
from Definition~\ref{def:equivalentgroupoids}.
Let $g\in\G$ be such that $\be(g)=w_2$ and
$\en(g)=w_1$. Then the map $h\mapsto hg$ is a quasi-isometry between the
Cayley graphs of $\G_1$ and $\G_2$ based at $w_1$ and $w_2$ respectively,
inducing an isomorphism $\bdry\G_{w_1}\arr\bdry\G_{w_2}$. We have
shown during the proof of Theorem~\ref{th:classification}
that the dynamical systems $(f_i, \J_i)$ can be uniquely
reconstructed from the topological groupoids $\bdry\G_{w_i}$, which
implies that $(f_1, \J_1)$ and $(f_2, \J_2)$ are topologically conjugate.
\end{proof}

\def\cprime{$'$}
\providecommand{\bysame}{\leavevmode\hbox to3em{\hrulefill}\thinspace}
\providecommand{\MR}{\relax\ifhmode\unskip\space\fi MR }
\providecommand{\MRhref}[2]{%
  \href{http://www.ams.org/mathscinet-getitem?mr=#1}{#2}
}
\providecommand{\href}[2]{#2}

\end{document}